\documentclass{amsart}

\usepackage{amssymb}
%Centralizer
\newcommand{\cz}[2]{C_{#1}(#2)}
\newcommand{\cc}[1]{C(#1)}
\newcommand{\cstar}[2]{C^{*}_{#1}(#2)}
%Normalizer
\newcommand{\n}[2]{N_{#1}(#2)}
\newcommand{\nn}[1]{N(#1)}

%O_{p}(G)
\newcommand{\op}[2]{O_{#1}(#2)}
%O{G}

%O^{p}{G}
\newcommand{\oupper}[2]{O^{#1}(#2)}
%O_{*}(G)
\newcommand{\ostar}[1]{O_{*}(#1)}
%F_{n}(G)

%F(G)
\newcommand{\ff}[1]{F(#1)}
%comp(G)
\newcommand{\compp}[1]{\operatorname{comp}(#1)}
%comp_{P}(G)
\newcommand{\comp}[2]{\operatorname{comp}_{#1}(#2)}
%comp_{sol}(G)
\newcommand{\compsol}[1]{\comp{\mathrm{sol}}{#1}}
%comp_{A,sol}(G)
\newcommand{\compasol}[1]{\comp{A,\mathrm{sol}}{#1}}
%E(G)
\newcommand{\layerr}[1]{E(#1)}
%E_{P}(G)
\newcommand{\layer}[2]{E_{#1}(#2)}
%E_{sol}(G)
\newcommand{\layersol}[1]{\layer{\mathrm{sol}}{#1}}
%F^{*}(G)
\newcommand{\gfitt}[1]{F^{*}(#1)}
%sol(G)
\newcommand{\sol}[1]{\operatorname{sol}(#1)}
%The Frattini subgroup
\newcommand{\frat}[1]{\Phi(#1)}
%The center
\newcommand{\zz}[1]{Z(#1)}
%Omega_{1}(G)

%|G|
\newcommand{\card}[1]{|\,#1\,|}
%\pi(g)
\newcommand{\primes}[1]{\pi(#1)}
%Syl_{p}(G)
\newcommand{\syl}[2]{\operatorname{Syl}_{#1}(#2)}
%Aut(G)
\newcommand{\aut}[1]{\operatorname{Aut}(#1)}
%Out(G)

%End_{A}(V)

%Hom_{A}(U,V)

%The group generated by. < A | B >
\newcommand{\gen}[2]{\langle \,#1 \mid #2\, \rangle}
%The group generated by a list <a,b,..>
\newcommand{\listgen}[1]{\langle \,#1\, \rangle}
%Hyp(A)
\newcommand{\hyp}[1]{\operatorname{\rm Hyp}(#1)}
%rank
\newcommand{\rank}[1]{\operatorname{rank}(#1)}

%Set builder notation { A | B}
\newcommand{\set}[2]{\{ \;#1 \mid #2\;\}}
\newcommand{\listset}[1]{\{ \,#1\, \}}

%Mov, Fix and Stab for permutation groups

%The cyclic group of order n

%Field

%Galois field

%Polynomial Ring over a field

%General linear group GL_{n}{F}

%L_{2}(q)
\newcommand{\ltwo}[1]{\operatorname{L}_{2}(#1)}
%U_{3}(q)
\newcommand{\uthree}[1]{\operatorname{U}_{3}(#1)}
%Sz(q)
\newcommand{\sz}[1]{\operatorname{Sz}(#1)}
%

%

%

%

%

%Symmetric group
\newcommand{\sym}[1]{\operatorname{Sym}(#1)}

\newcommand{\badfourlist}{$\ltwo{2^{r}}$, $\ltwo{3^{r}}$, $\uthree{2^{r}}$ or $\sz{2^{r}}$}

%Rad(V)

%ker f
\newcommand{\kernel}[1]{\ker #1}
%ker(A on B)

%some relations
\newcommand{\isomorphic}{\cong}
\newcommand{\normal}{\,\unlhd\,}
\newcommand{\subnormal}{\,\unlhd\unlhd\,}
\newcommand{\characteristic}{\operatorname{char}}

%Special Symbols

%Decorations
\newcommand{\br}[1]{\overline{#1}}
\newcommand{\nonid}{^\#}

\newcommand{\fancyP}{\mathcal P}
\newcommand{\fancyL}{\mathcal L}
\newcommand{\fancyLstar}{\fancyL^{*}}
\newcommand{\norm}[1]{||#1||}
\newcommand{\thetasol}{\theta_{\mathrm{sol}}}
\newcommand{\thetapsol}{\theta_{p-\mathrm{sol}}}
\newcommand{\linksto}{\rightsquigarrow}

\theoremstyle{plain}

\newtheorem{Theorem}{Theorem}[section]

\newtheorem{Lemma}[Theorem]{Lemma}
\newtheorem{Corollary}[Theorem]{Corollary}
\newtheorem{Hypothesis}[Theorem]{Hypothesis}

\newtheorem*{SFTheorem}{The Signalizer Functor Theorem}

\newtheorem{Claim}{Claim}

\theoremstyle{definition}
\newtheorem{Definition}[Theorem]{Definition}

\theoremstyle{remark}

\newtheorem*{UnnumberedRemark}{Remark}

\numberwithin{equation}{section}

\begin{document}

% \title[short text for running head]{full title}
\title{A new proof of the Nonsolvable Signalizer Functor Theorem}

%    Only \author and \address are required; other information is
%    optional.  Remove any unused author tags.

%    author one information
% \author[short version for running head]{name for top of paper}
\author{Paul Flavell}
\address{The School of Mathematics\\University of Birmingham\\Birmingham B15 2TT\\Great Britain}
\email{P.J.Flavell@bham.ac.uk}
\thanks{A considerable portion of this research was done whilst the author was in receipt
of a Leverhulme Research Project grant and during visits to the Mathematisches Seminar,
Christian-Albrechts-Universit\"{a}t, Kiel, Germany.
The author expresses his thanks to the Leverhulme Trust for their support and
to the Mathematisches Seminar for its hospitality.}

%    \subjclass is required.
\subjclass[2010]{TO DO  Primary 20D45 20D05 20E34 }

\date{}

%    Abstract is required.
\begin{abstract}
    The Signalizer Functor Method as developed by Gorenstein and Walter
    played a fundamental role in the first proof of the Classification of
    the Finite Simple Groups.
    It plays a similar role in the new proof  of the Classification in the
    Gorenstein-Lyons-Solomon book series.
    The key results are Glauberman's Solvable Signalizer Functor Theorem
    and McBride's Nonsolvable Signalizer Functor Theorem.
    Given their fundamental role,
    it is desirable to have new and different proofs of them.
    This is accomplished in
    {\em A new proof of the Solvable Signalizer Functor Theorem,}
    P. Flavell, J. Algebra, 398 (2014) 350--363 for Glauberman's Theorem.
    The purpose of this paper is to give a new proof of McBride's Theorem.
\end{abstract}

\maketitle
\section{Introduction}\label{intro}
The Signalizer Functor Method as developed by Gorenstein and Walter
played a fundamental role in the first proof of the Classification
of the Finite Simple Groups.
It plays a similar role in the new proof of the Classification in
the Gorenstein-Lyons-Solomon book series \cite{GLS1}.
A discussion of the method may be found in \cite{ALSS,GLS1,GLS2,Gor}.
The key results being Glauberman's Solvable Signalizer Functor Theorem
\cite{GG} and McBride's Nonsolvable Signalizer Functor Theorem \cite{McB1,McB2}.
They are taken as background results in the Gorenstein-Lyons-Solomon project
and not reproved there.
Given their fundamental role,
it is desirable to have new and different proofs.
This is accomplished in \cite{PFsft} for Glauberman's Theorem.
The purpose of this paper is to give a new proof of McBride's Theorem.

We have taken the liberty of combining the theorems of Glauberman and McBride
into a single result.
We shall prove:

\begin{SFTheorem}
    Let $A$ be a finite abelian group of rank at least $3$ that acts on
    the group $G$.
    Let $\theta$ be an $A$-signalizer functor on $G$ and assume that
    $\theta(a)$ is a $K$-group for all $a \in A\nonid$.
    Then $\theta$ is complete.

    Moreover, the composition factors of the completion of $\theta$
    are to be found amongst the composition factors
    of the subgroups $\theta(a); a \in A\nonid$.
\end{SFTheorem}

Recall that by definition,
$\theta$ ia a mapping that assigns to each $a \in A\nonid$
a finite $A$-invariant subgroup $\theta(a)$ of $\cz{G}{a}$
with order coprime to $\card{A}$ that satisfies \[
    \theta(a) \cap \cz{G}{b} \leq \theta(b)
\]
for all $a,b \in A\nonid$.
Note that $G$ is not assumed to be finite.
To say that $\theta$ is complete means there exists a finite
$A$-invariant subgroup $K$,
of order coprime to $\card{A}$,
such that \[
    \theta(a) = \cz{K}{a}
\]
for all $a \in A\nonid$.
In particular,
the subgroup generated by the subgroups $\theta(a)$ is finite
with order coprime to $\card{A}$.
An exposition of elementary signalizer functor theory
may be found in \cite{PFsft}.

Recall also that a $K$-group is a finite group all of whose simple
sections are known simple groups.
The $K$-group assumption indicates that some portions of the argument
rely on properties of simple groups that are established by taxonomy.
The main application of the Signalizer Functor Theorem is to
construct large subgroups in a minimal counterexample to the
Classification Theorem.
Thus, whilst not ideal,
the $K$-group assumption causes no difficulty.

The proof of McBride's Theorem presented here is very different from the original.
It is based on the author's proof of Glauberman's Theorem and a
general theory of automorphisms of finite groups as developed in \cite{PFI,PFII,PFpp,PFcas}.
We prefer the view that the Signalizer Functor Theorem is not a single
isolated result but rather one of the high points of a well developed theory
of automorphisms of finite groups.
Indeed, although much of the material in \cite{PFI,PFII,PFpp,PFcas} was motivated
by the present work,
it has been developed in much greater depth and generality than is
required for the proof of the Signalizer Functor Theorem.

Sections \S\ref{prel},\ldots,\S\ref{er} consist mainly of statements of the
general theory required and in \S\ref{m}, the proof begins.

The author would like to thank Professor George Glauberman for his careful
reading of an earlier version of this manuscript.

\section{Preliminaries}\label{prel}
The reader is assumed to be familiar with elementary
signalizer functor theory,
see for example \cite{PFsft} or \cite{KS}.
An understanding of the author's proof of the
Solvable Signalizer Functor Theorem \cite{PFsft}
would be advantageous.

Unless stated otherwise,
the word group will mean finite group.
The reader is assumed to be familiar with the notions of
the Fitting subgroup, the set of components, the layer
and the generalized Fitting subgroup of a group $G$ denoted by
$\ff{G}, \compp{G}, \layerr{G}$ and $\gfitt{G}$ respectively.
See for example \cite{KS}.
The notation $\sol{G}$ is used to denote the largest normal
solvable subgroup of $G$.
We will need a number of variations of the notion of component
as developed in  \cite{PFI}.

\begin{Definition} \label{p:1}
    A \emph{sol-component} of $G$ is a perfect subnormal subgroup
    of $G$ that maps onto a component of $G/\sol{G}$.
    The set of sol-components of $G$ is denoted by \[
        \compsol{G}
    \]
    and we define \[
        \layersol{G} = \listgen{\compsol{G}} \quad\text{and}\quad
        \ostar{G} = \sol{G}\layersol{G}.
    \]
\end{Definition}

\begin{Lemma} \label{p:2}
    Let $G$ be a group.
    \begin{enumerate}
        \item[(a)]  The sol-components of $G$ are the minimal nonsolvable
                    subnormal subgroups of $G$.

        \item[(b)]  Set $\br{G} = G/\sol{G}$.
                    The map $K \mapsto \br{K}$ is a bijection
                    $\compsol{G} \longrightarrow \compp{G}$.

        \item[(c)]  If $K \in \compsol{G}$ and $N \subnormal G$
                    then $K \leq N$ or $N \leq \n{G}{K}$.

        \item[(d)]  Distinct sol-components of $G$ normalize each other
                    and commute modulo $\sol{G}$.

        \item[(e)]  If $K \in \compsol{G}$ then $K \normal \ostar{G}$.

        \item[(f)]  (McBride) If $H$ satisfies $\ostar{G} \leq H \leq G$
                    then $\ostar{G} = \ostar{H}$.
    \end{enumerate}
\end{Lemma}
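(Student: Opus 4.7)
The overall strategy is to reduce each statement to the corresponding fact about ordinary components of $\br{G} = G/\sol{G}$ via the correspondence $K \mapsto \br{K}$. The enabling observations are that subnormality is preserved under intersection with subgroups and under preimages, that the join of solvable subnormal subgroups is a solvable normal subgroup (so $\sol{H} \leq \sol{G}$ whenever $H \subnormal G$, and in particular $\sol{\br{G}} = 1$), and that every proper subnormal subgroup of a quasisimple group is central.

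For (a), I would first show that a minimal nonsolvable subnormal $M$ must be perfect: otherwise $[M,M]$ is subnormal in $G$ and proper in $M$, hence solvable by minimality, and since $M/[M,M]$ is abelian this forces $M$ itself solvable, a contradiction. Then $\br{M}$ is perfect and subnormal in $\br{G}$; lifting any proper subnormal subgroup back to $M$ and invoking minimality shows every proper subnormal subgroup of $\br{M}$ is solvable. Combined with $\sol{\br{M}} = 1$ (since solvable subnormals of $\br{G}$ are trivial), this forces $\br{M}$ to be simple, hence a component. Conversely, if $K$ is a sol-component and $N \subnormal G$ is nonsolvable with $N \leq K$, then $\br{N} \subnormal \br{K}$ is nonsolvable, so $\br{N} = \br{K}$, and perfectness of $K$ together with solvability of $K \cap \sol{G}$ forces $N = K$. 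Part (b) follows: surjectivity by picking a minimal nonsolvable subnormal subgroup within the preimage of a component, and injectivity because two sol-components with the same image intersect in a subnormal subgroup that surjects onto the component, hence is nonsolvable, and by (a) equals both.

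For (c), the standard dichotomy in $\br{G}$ gives either $\br{K} \leq \br{N}$ or $[\br{K}, \br{N}] = 1$. In the first case, $K \cap N$ is subnormal in $G$ and projects onto $\br{K}$, so is nonsolvable, and by (a) equals $K$, so $K \leq N$. In the second, for each $n \in N$ we have $\br{K^n} = (\br{K})^{\br{n}} = \br{K}$, so by (b), $K^n = K$, giving $N \leq \n{G}{K}$. Part (d) follows by applying (c) to two distinct sol-components $K_1, K_2$: neither can contain the other by minimality, so both fall in the second case and normalize each other, with $[K_1, K_2] \leq \sol{G}$ coming from $[\br{K_1}, \br{K_2}] = 1$ in $\br{G}$. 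Part (e) follows by applying (c) with $N = \sol{G}$ (which normalizes $K$ since $K$ is nonsolvable) and combining with (d).

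I expect the main obstacle to be (f), McBride's lemma. The containment $\sol{G} \leq \sol{H}$ is clear from $\sol{G} \normal G$ and $\sol{G} \leq H$, and each sol-component $K$ of $G$ lies in $H$ and is normal in $\ostar{G} \leq H$ by (e), hence subnormal in $H$ and perfect. The delicate point is to verify that $K$ remains a sol-component of $H$, i.e., that $K \sol{H}/\sol{H}$ is a component of $H/\sol{H}$, and conversely that every sol-component of $H$ is already a sol-component of $G$. For the first direction, I would apply (c) inside $H$ with $N = \sol{H}$ to conclude that $\sol{H}$ normalizes $K$, so $K \cap \sol{H}$ is a normal solvable subgroup of the perfect group $K$; the quasisimple structure of $\br{K}$ should then descend through this further quotient. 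The reverse identification of sol-components of $H$ with those of $G$ is where I expect the proof to be most involved, likely requiring both directions of (b) together with a careful analysis of how the layer of $H/\sol{H}$ sits inside $\br{G}$.
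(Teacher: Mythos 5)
The paper gives no argument here --- (a)--(e) are cited to \cite{PFI} and (f) to \cite{McB1} --- so there is nothing in the text to compare against; the only question is whether your reduction to $\br{G}=G/\sol{G}$ is correct and complete. Parts (a), (d), (e) are fine. In the injectivity step of (b) and in the first case of (c), however, you assert that a certain intersection ($K_1\cap K_2$, respectively $K\cap N$) ``surjects onto $\br{K}$''; this does not follow formally from $\br{K}\le\br{N}$ and is essentially the thing to be proved. The device that repairs it is one you already use in the converse half of (a): if $K$ is perfect and $\br{K}\le\br{N}$ with $N\subnormal G$, then $K\le N\sol{G}$ and $(N\sol{G})/N$ is solvable, so $K=K^{(\infty)}\le(N\sol{G})^{(\infty)}\le N$. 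Taking $N=K_2$ (and symmetrically $N=K_1$) gives injectivity in (b); taking $N$ as given settles case one of (c).

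Part (f) is where the proposal genuinely falls short, as you anticipate. Your ``first direction'' is circular: it invokes (c) inside $H$ with $K$ as a sol-component of $H$, which is precisely what is being established. (A non-circular route: any nonsolvable $L\subnormal H$ with $L\le K$ also satisfies $L\subnormal K$, hence $L\subnormal K\normal\ostar{G}\normal G$, so $L\subnormal G$; minimality of $K$ in $G$ gives $L=K$, and then (a) applied in $H$ finishes it.) More seriously, the reverse containment $\ostar{H}\le\ostar{G}$ needs a specific idea that is absent from your sketch: $\cz{\br{G}}{\layerr{\br{G}}}=1$. This holds because $\sol{\br{G}}=1$ forces $\ff{\br{G}}=1$, hence $\gfitt{\br{G}}=\layerr{\br{G}}$ and so $\cz{\br{G}}{\layerr{\br{G}}}\le\zz{\layerr{\br{G}}}\le\ff{\br{G}}=1$. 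Once each $K\in\compsol{G}$ is known to be a sol-component of $H$, (c) applied in $H$ shows $\sol{H}$ normalises $K$, so $[\sol{H},K]\le\sol{H}\cap K\le\sol{K}=K\cap\sol{G}$; thus the image of $\sol{H}$ in $\br{G}$ centralises each $\br{K}$, hence $\layerr{\br{G}}$, and is therefore trivial, giving $\sol{H}=\sol{G}$. The same vanishing centraliser shows every component of $\br{H}=H/\sol{H}$ lies inside the normal subgroup $\layerr{\br{G}}=\ostar{G}/\sol{G}$, for otherwise it would centralise it; this is exactly $\ostar{H}\le\ostar{G}$. Without this observation, the ``careful analysis of how the layer of $H/\sol{H}$ sits inside $\br{G}$'' that you defer is the whole content of McBride's lemma.
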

\begin{proof}
    (a),\ldots,(e) are well known and elementary,
    see for example \cite[Lemma~3.2]{PFI}.
    For (f) see \cite[Lemma~2.15]{McB1} or \cite[Lemma~8.2]{PFI}.
\end{proof}

Next we bring in a group of automorphisms.

\begin{Definition} \label{p:3}
    Let the group $A$ act on the group $G$.
    \begin{enumerate}
        \item[(a)]  $G$ is \emph{$A$-simple} if $G$ is nonabelian and
                    the only $A$-invariant normal subgroups of $G$
                    are $1$ and $G$.

        \item[(b)]  $G$ is \emph{$A$-quasisimple} of $G$ is perfect
                    and $G/\zz{G}$ is $A$-simple.

        \item[(c)]  An \emph{$A$-component} of $G$ is the subgroup
                    generated by an orbit of $A$ on $\compp{G}$.
                    The set of $A$-components of $G$ is denoted by \[
                        \comp{A}{G}.
                    \]

        \item[(d)]  An \emph{$(A, \mbox{sol})$-component} of $G$ is the subgroup
                    generated by an orbit of $A$ on $\compsol{G}$.
                    The set of $(A, \mbox{sol})$-components of $G$ is denoted by \[
                        \compasol{G}.
                    \]
    \end{enumerate}
\end{Definition}

\noindent The $A$-components of $G$ are the subnormal $A$-quasisimple subgroups of $G$.
The $(A,\mbox{sol})$-components of $G$ are the minimal nonsolvable $A$-invariant
subnormal subgroups of $G$.
A result exactly analogous to Lemma~\ref{p:2} holds for $(A, \mbox{sol})$-components.

Recall that a group $X$ is \emph{semisimple} if $X = \layerr{X}$
and \emph{constrained} if $\cz{X}{\ff{X}} \leq \ff{X}$.
Then any $(A, \mbox{sol})$-component of $G$ is either semisimple or constrained.

The group $A$ acts \emph{coprimely} on the group $G$ if $A$ acts on $G$;
the orders of $A$ and $G$ are coprime; and $A$ or $G$ is solvable.
If $p$ is a prime then we denote by \[
    \syl{p}{G;A}
\]
the set of maximal $A$-invariant $p$-subgroups of $G$
with respect to inclusion.

\begin{Theorem}[Coprime Action] \label{p:4}
    Suppose the group $A$ acts coprimely on the group $G$.
    \begin{enumerate}
        \item[(a)]  Let $p$ be a prime.
                    Then $\syl{p}{G;A} \subseteq \syl{p}{G}$
                    and $\cz{G}{A}$ acts transitively by conjugation
                    on $\syl{p}{G;A}$.

        \item[(b)]  Let $N$ be an $A$-invariant normal subgroup of $G$
                    and set $\br{G} = G/N$.
                    Then $\cz{\br{G}}{A} = \br{\cz{G}{A}}$.

        \item[(c)]  $G = [G,A]\cz{G}{A}$ and $[G,A] = [G,A,A]$.

        \item[(d)]  Suppose $A$ is elementary and noncyclic.
                    Then \[
                        G = \gen{\cz{G}{B}}{B \in \hyp{A}}%
                          = \gen{\cz{G}{a}}{a \in A\nonid}.
                    \]
                    Moreover if $T \leq A$ then \[
                        [G,T] = \gen{ [\cz{G}{B},T] }{B \in \hyp{A}} %
                              = \gen{ [\cz{G}{a},T] }{a \in A\nonid}.
                    \]

        \item[(e)]  If $G = XY$ where $X$ and $Y$ are $A$-invariant
                    subgroups of $G$ then $\cz{G}{A} = \cz{X}{A}\cz{Y}{A}$.

        \item[(f)]  Suppose $K \subnormal G$ and $[K,A] = [\cz{G}{K},A] = 1$.
                    Then $[G,A] = 1$.

        \item[(g)]  If $[\gfitt{G},A] = 1$ then $[G,A] = 1$.

        \item[(h)]  Suppose that $G$ is $p$-solvable for some prime $p$
                    and that $A$ centralizes a Sylow $p$-subgroup of $G$.
                    Then $[G,A] \leq \op{p'}{G}$.

        \item[(i)]  Suppose that $G$ is a $p$-group for some prime $p$;
                    that $A$ centralizes every characteristic abelian subgroup of $G$
                    and that $G = [G,A]$.
                    Then \[
                        G' = \frat{G} = \zz{G} = \cz{G}{A}.
                    \]
    \end{enumerate}
\end{Theorem}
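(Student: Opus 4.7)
The plan is to prove the nine parts in sequence, with each building on earlier ones. All are classical coprime action results and in most expositions would simply be cited from \cite{KS} or a standard finite group theory text, but I would outline the structure of each argument and record its dependencies.

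Parts (a) and (b) form the foundation. For (a), I would induct on $|G|$, using a Frattini-type argument with Schur--Zassenhaus applied inside an $A$-invariant normal subgroup to show that every maximal $A$-invariant $p$-subgroup is a Sylow $p$-subgroup, and that $\cz{G}{A}$ acts transitively on $\syl{p}{G;A}$. For (b), given $gN$ fixed by $A$, the coset $Ng$ is $A$-invariant of order coprime to $|A|$, and the vanishing of the relevant first cohomology (equivalently, a Schur--Zassenhaus-style averaging argument) produces a fixed representative in $\cz{G}{A}$.

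Parts (c)--(e) are quick consequences. Applying (b) to $N = [G,A]$ gives $G = [G,A]\cz{G}{A}$; the identity $[G,A] = [G,A,A]$ follows from a three-subgroups-lemma argument on the abelian quotient $[G,A]/[G,A,A]$, which $A$ centralizes. Part (d) is an induction on $|A|$ using (c): fix a hyperplane $B \leq A$, decompose via $G = [G,B]\cz{G}{B}$, and iterate inside $\cz{G}{B}$ with the action of $A/B$. Part (e) applies (b) with $N = Y$ to lift fixed points across the factorization $G = XY$.

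The structural core is (f)--(h). Part (f) is proved by induction on the length of a subnormal series from $K$ to $G$; in the normal case the map $G \to \aut{K}$ has kernel $\cz{G}{K}$, so since $A$ centralizes $K$ it acts trivially on $G/\cz{G}{K}$, giving $[G,A] \leq \cz{G}{K}$, whence $[G,A,A] = 1$ and (c) yield $[G,A] = 1$. Part (g) is immediate from (f) with $K = \gfitt{G}$ using $\cz{G}{\gfitt{G}} \leq \gfitt{G}$. Part (h) reduces by induction to $\op{p'}{G} = 1$, where $\gfitt{G} = \op{p}{G}$ lies in every Sylow $p$-subgroup and hence is centralized by $A$, so (g) finishes.

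The main obstacle is (i), which is essentially a Thompson-style replacement result and is the only part that is not a routine unwinding of earlier items. The plan is to exploit Thompson's critical subgroup $K$: a characteristic subgroup of class at most $2$ and exponent $p$ (or $4$ when $p = 2$) whose centralizer in $G$ lies in its own centre. Since $A$ centralizes every characteristic abelian subgroup of $G$, it centralizes $\zz{K}$, and by the Thompson $A\times B$-type lemma applied to the coprime action of $A$ on $K$, it centralizes $K$, hence all of $G$ on the quotient $G/\cz{G}{K}$. Combining with $G = [G,A]$, which forces $A$ to act without fixed points on the elementary abelian quotient $G/\frat{G}$, one deduces $\cz{G}{A} \leq \frat{G}$, $\zz{G} \leq \cz{G}{A}$, and then uses (c) successively on the chain $\zz{G} \leq \frat{G} \leq G' \leq \frat{G}$ to collapse it to equality, yielding $G' = \frat{G} = \zz{G} = \cz{G}{A}$.
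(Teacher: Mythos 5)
Parts (a)--(h) of your outline are correct and match the standard treatments the paper cites (or, for (f)--(h), reproduce the paper's own brief arguments: the normal case of (f) via $[G,A]\le \cz{G}{K}$ and then (c); (g) from $\cz{G}{\gfitt{G}}=\zz{\ff{G}}\le\gfitt{G}$; (h) by passing to $G/\op{p'}{G}$ and applying (g)). The problem is part (i).

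Your proof of (i) contains a claim that cannot be right. You assert that, because $A$ centralizes $\zz{K}$ for a Thompson critical subgroup $K$, the $A\times B$ lemma forces $A$ to centralize $K$ and hence all of $G$. But $\cz{G}{K}\le K$, so if $A$ centralized $K$ then part (f) would give $[G,A]=1$, directly contradicting the hypothesis $G=[G,A]$ (unless $G=1$). Concretely, if $G$ is extraspecial then $G$ is its own critical subgroup, its only nontrivial characteristic abelian subgroup is $\zz{G}$, and a coprime $A$ acting fixed-point-freely on $G/\zz{G}$ satisfies all the hypotheses of (i) while certainly not centralizing $G$. The misapplication is that the $A\times B$ lemma needs a second, commuting $p$-group $B$ acting on the ambient group with $[A,\cz{G}{B}]=1$; there is no such $B$ here, and ``$A$ centralizes $\zz{K}$'' is strictly weaker than what that lemma requires.

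The rest of your argument for (i) then also loses force. From $G=[G,A]$ and coprimality you correctly obtain $\cz{G}{A}\le\frat{G}$ (Maschke on $G/\frat{G}$) and $\zz{G}\le\cz{G}{A}$, giving $\zz{G}\le\cz{G}{A}\le\frat{G}$. But the nontrivial direction --- that $\frat{G}\le\zz{G}$, equivalently $G'\le\zz{G}$ --- is never established; your ``chain collapse'' quietly assumes it. This containment is the real content of (i) and is exactly what the hypothesis ``$A$ centralizes every characteristic abelian subgroup'' is there to deliver, via an argument along the lines of the reference the paper cites (\cite[Corollary~3.3]{PFsol}): one works with suitable characteristic subgroups of class $\le 2$ (e.g.\ $\zz{2}(G)$ or $\Omega_{1}(\zz{\frat{G}})$-type subgroups) and uses commutator calculus together with the coprime action to force $[G,\frat{G}]=1$. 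As written, your (i) does not go through.
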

\begin{proof}
    For (a),(b),(c),(e) see \cite[p.184--187]{KS}.
    (d) is \cite[p.484]{SuzII}.

    (f). By induction, we may suppose $K \normal G$.
    Then $[G,A,A] \leq [\cz{G}{K},A] = 1$.
    Apply (c).

    (g). Since $\cz{G}{\gfitt{G}} = \zz{\ff{G}}$,
    this follows from (f).

    (h). Set $\br{G} = G/\op{p'}{G}$ so $\gfitt{\br{G}} = \op{p}{\br{G}}$.
    Then $[\gfitt{\br{G}},A] = 1$. Apply (g).

    (i). This is well known, see \cite[Corollary~3.3]{PFsol} for example.
\end{proof}

\noindent Note that (a) implies that for each prime $p$,
$G$ possesses a unique maximal $A\cz{G}{A}$-invariant $p$-subgroup,
namely the intersection of the members of $\syl{p}{G;A}$.

\begin{Definition} \label{p:5}
    Suppose that group $A$ acts coprimely on the group $G$.
    Let $p$ be a prime.
    Then \[
        \op{p}{G;A}
    \]
    is the intersection of all the $A$-invariant $p$-subgroups of $G$.
\end{Definition}

Finally, we collect together some more specialized results.

\begin{Lemma}\label{p:6}
    Let $R$ be an elementary abelian $r$-group that acts
    coprimely on the $K$-group $X$.
    \begin{enumerate}
        \item[(a)]   $\op{p}{X;R}' \leq \sol{X}$ for all primes $p$.

        \item[(b)]  Suppose $R$ is noncyclic.
                    Then \[
                        \bigcap_{b \in R\nonid}\sol{\cz{X}{b}} \leq \sol{X}.
                    \]

        \item[(c)]  Suppose $R$ is cyclic, $X = [X,R]$, $t$ is a prime and
                    $RX$ acts on the $t$-group $T$ with $\cz{T}{R} = 1$.
                    Set $\br{X} = X/\cz{X}{T}$.
                    Then $\primes{\br{X}} \subseteq \listset{2,t}$
                    and $\br{X}/\op{t}{\br{X}}$ is either trivial or a
                    nonabelian $2$-group.
    \end{enumerate}
\end{Lemma}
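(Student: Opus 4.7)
The three parts share a common philosophy: reduce to the situation $\sol{X}=1$ (where this makes sense), then exploit the $K$-group hypothesis together with the coprime-action toolkit of Theorem~\ref{p:4} and the structure of simple $K$-groups.

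For part (a), I would first observe that, by coprime action (Theorem~\ref{p:4}(a)), every $R$-invariant Sylow $p$-subgroup of $X$ maps onto an $R$-invariant Sylow $p$-subgroup of $\br{X}=X/\sol{X}$, so the image of $\op{p}{X;R}$ lies inside $\op{p}{\br{X};R}$. Thus it suffices to treat the case $\sol{X}=1$. Under that reduction $\ff{X}=1$, $\layerr{X}$ is a direct product of simple $K$-groups, and $X/\layerr{X}$ is solvable by Schreier's conjecture for $K$-groups. The problem then reduces to showing that in each simple $K$-group factor $L$ with its induced coprime elementary abelian $R$-action, the intersection of $R$-invariant Sylow $p$-subgroups is abelian. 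This is a classification-based statement verified family by family (alternating, Lie type, sporadic), and the Lie-type analysis of $p$-local structure is where I expect the main technical obstacle.

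For part (b), set $H=\bigcap_{b\in R\nonid}\sol{\cz{X}{b}}$. Since $\br{\cz{X}{b}}=\cz{\br{X}}{b}$ by Theorem~\ref{p:4}(b), the image of $\sol{\cz{X}{b}}$ lies in $\sol{\cz{\br{X}}{b}}$, so once again I reduce to $\sol{X}=1$ and must show $H=1$. By definition $H\leq\bigcap_{b}\cz{X}{b}=\cz{X}{R}$, so $H$ is $R$-invariant and $R$-centralized. Using Theorem~\ref{p:4}(d) (which needs $R$ noncyclic, exactly our hypothesis), both $X$ and each $R$-invariant component $L$ of $X$ are generated by the centralizers $\cz{L}{b}$, $b\in R\nonid$. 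The plan is to show $[H,L]=1$ for each component $L$: for each $b$ with $L\cap\cz{X}{b}$ of near-full type, $H\leq\sol{\cz{X}{b}}$ is forced to centralize the semisimple contribution coming from $L$, and varying $b$ one concludes $[H,L]=1$. Then $H$ centralizes $\layerr{X}=\gfitt{X}$, whence Theorem~\ref{p:4}(g) forces $H\leq\sol{X}=1$. The main obstacle is handling $R$-orbits on components whose length is greater than one.

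For part (c), first replace $X$ by $\br{X}$ so that $\cz{X}{T}=1$ and $X\leq\aut{T}$; the hypotheses $X=[X,R]$ and $\cz{T}{R}=1$ pass to the quotient. To bound $\primes{X}$, let $q\in\primes{X}\setminus\listset{t}$ and pick an $R$-invariant Sylow $q$-subgroup $Q$ (Theorem~\ref{p:4}(a)); then $RQ$ acts coprimely on $T$ with $\cz{T}{R}=1$, and passing to the Frattini quotient $V=T/\frat{T}$ (on which $\cz{V}{R}=0$ still holds by coprime action), $V$ becomes an $\mathbb{F}_{t}[R]$-module with no trivial submodule. Clifford/character analysis combined with the hypothesis $X=[X,R]$ shows that an odd prime $q\neq t$ is incompatible with the fixed-point-free $R$-action on $V$, yielding $q=2$. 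For the second half of (c), the only remaining primes in $\primes{\br{X}}$ are $2$ and $t$, and if $\br{X}/\op{t}{\br{X}}$ were nontrivial and abelian then lifting an involution would enlarge the fixed-point-free action of $R$ on $T$ to one with a nontrivial fixed point, a contradiction. The main obstacle here is translating the global hypothesis $X=[X,R]$ into effective constraints on individual Sylow subgroups; I would expect to borrow machinery from \cite{PFI,PFII,PFpp,PFcas}.
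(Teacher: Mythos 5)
Your reductions (modding out by $\sol{X}$ in (a) and (b), passing to the Frattini quotient in (c)) are the right first moves, and the paper itself disposes of (a) by citing \cite[Theorem~3.1(c)]{PFpp} and of (c) by citing \cite[Theorem~7.1]{PFI}, so your deferral of the hard taxonomy there is defensible. The real gap is in (b), where the paper's proof is a genuine one-line hint that you miss: \emph{if $L$ is a simple $K$-group of order coprime to $r$, then the Sylow $r$-subgroups of $\aut{L}$ are cyclic.} After reducing to $\sol{X}=1$, this fact plus $R$ noncyclic shows that each component $L$ of $X$ is centralized by some $b \in R\nonid$ (because $R/\cz{R}{L}$ is cyclic once one absorbs the permutation action on the $R$-orbit of $L$). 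Then $L$ is a component of $\cz{X}{b}$ and hence centralizes $\sol{\cz{X}{b}} \supseteq H$, so $H$ centralizes $\gfitt{X}=\layerr{X}$ and lies in $\zz{\layerr{X}}=1$. Your replacement — writing $L = \gen{\cz{L}{b}}{b \in R\nonid}$ via Theorem~\ref{p:4}(d) and hoping $H$ centralizes each $\cz{L}{b}$ — does not close: there is no reason $H \leq \sol{\cz{X}{b}}$ should centralize $\cz{L}{b}$ when $\cz{L}{b}$ is merely a solvable subgroup rather than a component of $\cz{X}{b}$, and precisely for the groups $\ltwo{2^{r}}$, $\ltwo{3^{r}}$, $\uthree{2^{r}}$, $\sz{2^{r}}$ the fixed subgroups $\cz{L}{b}$ are solvable, so this is not a corner case.

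Two smaller points. In (c) your argument for the second half (``lifting an involution would enlarge the fixed-point-free action'') is not a valid step; the clean argument, once one has reduced to $T$ elementary abelian and $RX$-irreducible (forcing $\op{t}{\br{X}}=1$), is: if $\br{X}$ were a nontrivial abelian group, then $\br{X}=[\br{X},R]$ forces $\cz{\br{X}}{R}=1$, so $R\br{X}$ is a Frobenius group with kernel $\br{X}$; the Frobenius fixed-point theorem then gives $\cz{T}{\br{X}}\ne 1$ from $\cz{T}{R}=1$, contradicting faithful irreducibility. In (a), your reduction to the simple factors of $\layerr{X}$ ignores the contribution of the solvable quotient $X/\layerr{X}$; since $\op{p}{X;R}$ need not lie in $\layerr{X}$, establishing that the intersection is abelian inside each simple factor does not by itself give $\op{p}{X;R}'=1$, so the reduction as stated is incomplete even before the family-by-family verification begins.
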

\begin{proof}
    (a). This is \cite[Theorem~3.1(c)]{PFpp}.

    (b). Because if $H$ is a simple $K$-group with order coprime to $r$
    then the Sylow $r$-subgroups of $\aut{K}$ are cyclic.

    (c). This reduces to the case where $T$ is elementary abelian and
    $RX$ acts nontrivially and irreducibly on $T$.
    \cite[Theorem~7.1]{PFI} implies $\br{X}$ is a special $2$-group.
\end{proof}

\section{$A$-simple groups} \label{as}
In the proof of the Signalizer Functor Theorem presented here,
much of the argument concerns $A$-components.
Consequently it is necessary to have an understanding
of $A$-simple groups.
Throughout this section, \[
    \mbox{$r$ is a prime and $A \not= 1$ is an elementary abelian $r$-group.}
\]

\begin{Theorem}\label{as:1}
    Suppose that $A$ acts faithfully and coprimely on the $K$-group $K$
    and that $K$ is $A$-simple.
    \begin{enumerate}
        \item[(a)]  $K = K_{1} \times\cdots\times K_{n}$ where $\listset{K_{1},\ldots,K_{n}}$
                    is a collection of simple subgroups of $K$ that is permuted
                    transitively by $A$.
    \end{enumerate}
    Define \[
        A_{\infty} = \kernel{A \longrightarrow \sym{\listset{K_{1},\ldots,K_{n}}}}.
    \]
    \begin{enumerate}
        \item[(b)]  $\card{A_{\infty}} = 1$ or $r$.

        \item[(c)]  Let $a \in A \setminus A_{\infty}$.
                    Then $\cz{K}{a}$ is a maximal $A\cz{K}{A}$-invariant
                    proper subgroup of $K$.
                    It is $A$-simple and has $\card{A}/r\card{A_{\infty}}$
                    components,
                    each of which is normalized by $A_{\infty}$.
                    Moreover $\cz{A}{\cz{K}{a}} = \listgen{a}$.

        \item[(d)]  Let $a \in A_{\infty}\nonid$.
                    Then either $\cz{K}{a}$ is solvable or
                    $\gfitt{\cz{K}{a}}$ is $A$-simple.
                    In the latter case,
                    $\cz{K}{a}/\gfitt{\cz{K}{a}}$ is abelian.

        \item[(e)]  Assume that $\cz{K}{A}$ is solvable.
                    \begin{enumerate}
                        \item[(i)]  $\card{A_{\infty}} = r$ and $K_{1}$ is isomorphic
                                    to \badfourlist.

                        \item[(ii)] $K$ possesses a unique maximal $A\cz{K}{A}$-invariant
                                    solvable subgroup $S$.

                        \item[(iii)] $\cz{K}{A_{\infty}} \leq S$ and $S$ is maximal subject
                                    to being an $A\cz{K}{A}$-invariant proper subgroup of $K$.
                    \end{enumerate}

        \item[(f)]  Assume that $\cz{K}{A}$ is nonsolvable.
                    \begin{enumerate}
                        \item[(i)]  $\gfitt{\cz{K}{A}}$ is simple and
                                    $\cz{K}{A}/\gfitt{\cz{K}{A}}$ is cyclic.

                        \item[(ii)] $K$ does not possess a nontrivial $A\cz{K}{A}$-invariant
                                    solvable subgroup.
                    \end{enumerate}
    \end{enumerate}
\end{Theorem}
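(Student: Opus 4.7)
The plan is to work through (a)--(f) in order, exploiting that $K$ is a direct product of components permuted by $A$. For (a), $\ff{K}$ and $\zz{K}$ are characteristic in $K$, hence $A$-invariant and proper (since $K$ is nonabelian $A$-simple); so both are trivial, $K = \gfitt{K} = \layerr{K}$, and since $\zz{K}=1$ this is a direct product of simple components. The subgroup generated by any $A$-orbit of components is an $A$-invariant normal subgroup of $K$, hence equal to $K$, giving transitivity. For (b), $A/A_{\infty}$ acts faithfully, transitively and abelianly on $\listset{K_{1},\ldots,K_{n}}$, hence regularly, so $n = \card{A/A_{\infty}}$ is a power of $r$. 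Because $A$ is abelian, an element $b \in A_{\infty}$ acts on $K_{i}^{a}$ in the same way (up to identification via $a$) as on $K_{i}$, so the kernel of $A_{\infty}$ on any single $K_{i}$ equals its kernel on all of $K$, which is trivial by faithfulness. Thus $A_{\infty}$ embeds into $\aut{K_{1}}$, which has cyclic Sylow $r$-subgroup by the taxonomic input invoked in Lemma~\ref{p:6}(b); hence $\card{A_{\infty}} \in \listset{1, r}$.

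For (c), take $a \in A \setminus A_{\infty}$. The coset $aA_{\infty}$ has order $r$ in $A/A_{\infty}$, so every $\listgen{a}$-orbit on $\listset{K_{1},\ldots,K_{n}}$ has size exactly $r$. For such an orbit $\listset{K_{i_{1}},\ldots,K_{i_{r}}}$, $\cz{K_{i_{1}}\cdots K_{i_{r}}}{a}$ is the standard diagonal isomorphic to $K_{i_{1}}$, so $\cz{K}{a}$ is a direct product of $\card{A}/r\card{A_{\infty}}$ such diagonals. $A$ permutes them transitively (since $A$ is transitive on $\listgen{a}$-orbits in $\listset{K_{1},\ldots,K_{n}}$) and $A_{\infty}$, commuting with $a$, normalizes each; this yields $A$-simplicity. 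Writing any $b \in A$ as $b = a^{k}c$ with $c \in A_{\infty}$, and observing that $b$ centralizes a diagonal only when $c$ acts trivially on $K_{1}$, gives $\cz{A}{\cz{K}{a}} = \listgen{a}$. Maximality of $\cz{K}{a}$ as an $A\cz{K}{A}$-invariant proper subgroup follows because any strictly larger $A$-invariant subgroup, after projection, would contain a full $K_{i}$ and then by $A$-transitivity all of $K$.

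For (d), each $a \in A_{\infty}\nonid$ stabilizes every $K_{i}$, so $\cz{K}{a} = \prod_{i} \cz{K_{i}}{a}$, and the classification of centralizers of prime-order coprime automorphisms of simple $K$-groups gives that each $\cz{K_{i}}{a}$ is solvable or has simple generalized Fitting subgroup with abelian outer quotient; bundling across the $A$-orbit produces the desired alternative. For (e) and (f), if $A_{\infty} = 1$ then $A$ is regular on components and $\cz{K}{A}$ is a big diagonal isomorphic to $K_{1}$, simple and nonsolvable, so (f) holds. If $\card{A_{\infty}} = r$ then $\cz{K}{A} \isomorphic \cz{K_{1}}{A_{\infty}}$ (a further diagonal of the $\cz{K_{i}}{A_{\infty}}$), and the dichotomy between (e) and (f) is whether this centralizer is solvable. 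Solvability of $\cz{K_{1}}{A_{\infty}}$ forces $K_{1}$ to be \badfourlist, which is the taxonomic heart of (e)(i); the existence and maximality of $S$ in (e)(ii)--(iii), and its absence in (f)(ii), are then read off from the known maximal subgroup structure of $K_{1}$ in each case.

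The main obstacle is the taxonomic content underlying (e) and (f). Identifying the four bad families, and then showing uniqueness and maximality of the associated solvable $A\cz{K}{A}$-invariant subgroup $S$, requires case-by-case analysis of Borel-like subgroups of $\ltwo{2^{r}}, \ltwo{3^{r}}, \uthree{2^{r}}, \sz{2^{r}}$ under their order-$r$ field automorphisms, together with the complementary statement that no other simple $K$-group admits a proper $A\cz{K}{A}$-invariant solvable subgroup of the required form. This is the step that genuinely depends on Classification-level input; most of it is presumably packaged in \cite{PFI} and its companions so that the surrounding argument can be carried out at the abstract level of manipulating $A$-invariant subgroups.
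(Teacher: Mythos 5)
The paper itself gives no argument here: its proof of Theorem~\ref{as:1} is the one-line citation ``See \cite[\S 6]{PFI},'' so there is no local proof to compare against. Your sketch lays out the natural reductions and is structurally consistent with what one would expect from \cite{PFI}. Parts (a) and (b) are correct. In (c) two steps are stated too loosely. First, the maximality claim --- ``any strictly larger $A$-invariant subgroup, after projection, would contain a full $K_i$'' --- skips the real content: the $a$-diagonal in each orbit product $T^r$ is maximal \emph{and not normal}, so any $\listgen{a}$-invariant subgroup strictly above it must (by a Goursat-type normality argument) be the whole orbit product, and $A$-transitivity on the orbit products then finishes. Second, ``writing any $b\in A$ as $b=a^kc$ with $c\in A_\infty$'' holds for all $b$ only when $A=\listgen{a}A_\infty$, i.e. $n=r$; for general $n$ one must first observe that any $b\in\cz{A}{\cz{K}{a}}$ stabilizes each $\listgen{a}$-orbit of components (since it normalizes each diagonal), so its image in the regular group $A/A_\infty$ lies in $\listgen{\br{a}}$, and only then write $b=a^kc$.

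The substantial concern is that (d), (e) and (f) are not actually proved. You correctly locate the taxonomic input (the list of coprime centralizers of simple $K$-groups, the four bad families, and the maximal-subgroup analysis giving uniqueness and maximality of $S$), but you defer all of it to \cite{PFI}. This is precisely what the paper does, so it is not a gap relative to the paper. Do note, though, that (e)(ii)--(iii) assert that $S$ is maximal among \emph{all} $A\cz{K}{A}$-invariant proper subgroups of $K$, not merely among solvable ones, and that (f)(ii) is elementary only when $A_\infty=1$ (your diagonal argument works there); when $A_\infty\neq 1$ it requires the same case-by-case check. These are genuine pieces of work inside \cite[\S 6]{PFI}, not quite ``read off'' from Borel subgroups as your closing paragraph suggests.
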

\begin{proof}
    See \cite[\S 6]{PFI}.
\end{proof}

We note in particular that if $a \in A\nonid$ then either \[
    \mbox{$\gfitt{\cz{K}{a}}$ is $A$-simple or $\cz{K}{a}$ is solvable.}
\]
By (d), the following \emph{balance} property holds,
for all $a,b \in A\nonid$ \[
    \layerr{\layerr{\cz{K}{a}} \cap \cz{K}{b}} \leq \layerr{\cz{K}{b}}.
\]
These properties characterize $K$ and the collection $\set{\cz{K}{a}}{a \in A\nonid}$
of fixed point subgroups.
It is convenient to state this characterization in the language of
signalizer functor theory.

\begin{Theorem}[Characterization of $A$-Simple Groups \cite{PFcas}] \label{as:2}
    Suppose that $\rank{A} \geq 3$ and that $A$ acts on the (possibly infinite) group $G$.
    Assume the following:
    \begin{enumerate}
        \item[(i)]  $\theta$ is an $A$-signalizer functor on $G$.

        \item[(ii)] $\theta(a)$ is a $K$-group for all $a \in A\nonid$.

        \item[(iii)]   If $a \in A\nonid$ with $\layerr{\theta(a)} \not= 1$ then
                    $\layerr{\theta(a)}$ is $A$-simple, $\ff{\theta(a)} = 1$
                    and $\cz{A}{\layerr{\theta(a)}} = \listgen{a}$.

        \item[(iv)] For all $a,b \in A\nonid$, \[
                        \layerr{\layerr{\theta(a)} \cap \cz{G}{b}} \leq \layerr{\theta(b)}.
                    \]

        \item[(v)]  $G = \gen{ \layerr{\theta(a)}}{a \in A\nonid} \not= 1$.
    \end{enumerate}
    Then $G$ is a finite $r'$-group,
    it is $A$-simple,
    a $K$-group and \[
        \theta(a) = \cz{G}{a}
    \]
    for all $a \in A\nonid$.
    In particular $\theta$ is complete and $G$ is its completion.
\end{Theorem}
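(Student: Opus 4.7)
The plan is to show that $G$ coincides with a single layer $K := \layerr{\theta(a_0)}$ for a maximally chosen $a_0 \in A\nonid$, and then read off $\theta(a) = \cz{G}{a}$ from the $A$-simple structure of $G$ provided by Theorem~\ref{as:1}.

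First, for each $a \in A\nonid$ with $K_a := \layerr{\theta(a)} \not= 1$, hypothesis (iii) together with $\ff{\theta(a)} = 1$ yields $\zz{K_a} = 1$, so Theorem~\ref{as:1} writes $K_a$ as a direct product of nonabelian simple $K$-groups transitively permuted by $A/\listgen{a}$, acting faithfully and coprimely with rank $\geq 2$. Moreover $\gfitt{\theta(a)} = K_a$ gives $\cz{\theta(a)}{K_a} \leq K_a$, so $\theta(a)$ embeds into $\aut{K_a}$.

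For the main step, choose $a_0$ so that $K := K_{a_0}$ has maximal order among the $K_a$, and argue $K_b \leq K$ for every $b \in A\nonid$ with $K_b \not= 1$; hypothesis (v) then delivers $G = K$. If $b \in \listgen{a_0}\nonid$, then $b \in \cz{A}{K}$, so $K \cap \cz{G}{b} = K$ and balance (iv) gives $K \leq K_b$; maximality forces $K_b = K$. If $b \not\in \listgen{a_0}$, apply Theorem~\ref{as:1}(c),(d) to $\cz{K}{b}$: when $\layerr{\cz{K}{b}} \not= 1$, balance (iv) embeds it into $K_b$, and symmetrically $\layerr{\cz{K_b}{a_0}} \leq K$. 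Using $\rank{A} \geq 3$, choose $c \in A \setminus \listgen{a_0,b}$ so that $\listgen{a_0,b,c}/\listgen{a_0}$ has rank $2$ on $K$; Lemma~\ref{p:6}(b) then prevents the layers $\layerr{\cz{K}{d}}$ for $d$ in this rank-$2$ arrangement from all vanishing, providing a third element through which to pump. Iterating the balance and using coprime generation of $K_b$ by its centralizers (Theorem~\ref{p:4}(d)) embeds enough components of $K_b$ inside $K$; together with the $A$-simplicity of $K_b$ this forces $K_b \leq K$.

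With $G = K$ established, $G$ is finite, $A$-simple, a $K$-group, and an $r'$-group (as each $\theta(a)$ has order coprime to $|A|$). For $\theta(a) = \cz{G}{a}$: inclusion $\leq$ is by definition; for $\geq$, Theorem~\ref{as:1}(d) in $G$ gives $\gfitt{\cz{G}{a}}$ $A$-simple (when $\cz{G}{a}$ is nonsolvable) with abelian quotient, and balance identifies $\gfitt{\cz{G}{a}} = K_a$. Both $\theta(a)$ and $\cz{G}{a}$ are $A$-invariant subgroups of $\aut{K_a}$ with image in $\out{K_a}$, and the signalizer functor condition forces equality. The main obstacle is the generic case $b \not\in \listgen{a_0}$: balance delivers only layer containments, so propagating enough of $K_b$ into $K$ requires invoking a third independent element, which is where $\rank{A} \geq 3$ is essential.
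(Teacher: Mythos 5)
The paper does not prove Theorem~\ref{as:2}; it is quoted as a black box from the reference \cite{PFcas}, so there is nothing in this paper to compare your argument against line-by-line. But there is a fatal flaw in your proposal that can be detected independently: the intermediate claim you aim for --- that $K_b \leq K := \layerr{\theta(a_0)}$ for every $b \in A\nonid$, whence $G = K$ by (v) --- contradicts the very conclusion of the theorem. In the situation the theorem describes, $G$ is a (strictly) $A$-simple group $G_1 \times \cdots \times G_n$ with $n > 1$ (the case $n = 1$ is impossible: hypothesis (iii) forces $A/\listgen{a}$, of rank $\geq 2$, to embed in $\out{K_a}$, whose Sylow $r$-subgroup is cyclic for a simple $K$-group of order coprime to $r$), and $\theta(a) = \cz{G}{a}$. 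Then $K_a = \layerr{\cz{G}{a}}$ is a \emph{proper} subgroup of $G$ for every $a$: for $a$ permuting the $G_i$ nontrivially it is a twisted diagonal, and for $a \in A_\infty$ it is $\prod_i \gfitt{\cz{G_i}{a}}$. These subgroups are not nested; e.g.\ a twisted diagonal cannot contain a direct product of pieces of the distinct $G_i$. By Theorem~\ref{as:1}(c) the subgroups $\cz{K}{a}$ for $a \notin A_\infty$ are \emph{maximal} $A\cz{K}{A}$-invariant proper subgroups, so none can contain another. Thus $K_b \leq K_{a_0}$ fails in general and $G \neq K_{a_0}$.

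The place where this surfaces in your argument is the step for $b \notin \listgen{a_0}$, which you leave as ``iterating the balance $\ldots$ embeds enough components of $K_b$ inside $K$; together with the $A$-simplicity of $K_b$ this forces $K_b \leq K$.'' Balance gives only $\layerr{\cz{K_b}{d}} \leq K_d$, a mutual overlap of layers, never a containment of one whole $K_b$ in another; iterating through a rank-$2$ set of $d$'s produces a collection of overlaps but no way to conclude $K_b \leq K$, and indeed no such conclusion is available. The actual difficulty the cited theorem resolves is to show that $\gen{K_a}{a \in A\nonid}$ is a \emph{finite} group on which $A$ acts coprimely and that $\theta(a) = \cz{G}{a}$ --- a completeness assertion --- not that a single $K_{a_0}$ swallows the rest. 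A correct proof has to fuse the overlapping layers into one $A$-simple ambient group, and that is an essentially different and substantially harder argument than what you have sketched.
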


We close this section with three results on $A$-quasisimple groups.

\begin{Definition} \label{as:3}
    Whenever $K$ is an $A$-quasisimple group define \[
        \cstar{K}{A} = \left\{\begin{array}{c@{$\;\;$}l}
                                \cz{K}{A} & \mbox{if $\cz{K}{A}$ is solvable} \\
                                \layerr{\cz{K}{A}} & \mbox{if $\cz{K}{A}$ is nonsolvable.}
                        \end{array}\right.
    \]
\end{Definition}

\begin{Lemma}\label{as:4}
    Suppose that $K$ is an $A$-quasisimple $K$-group on
    which $A$ acts coprimely.
    \begin{enumerate}
        \item[(a)]  $\cstar{K}{A}$ is nonabelian.

        \item[(b)]  Suppose that $H$ is an $A\cz{K}{A}$-invariant
                    nonsolvable subgroup of $K$.
                    Then \[
                        \mbox{$H^{(\infty)} = \layerr{H}$ is $A$-quasisimple %
                        and $\cstar{\layerr{H}}{A} = \cstar{K}{A}$.}
                    \]
    \end{enumerate}
\end{Lemma}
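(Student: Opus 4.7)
The strategy is to pass to the $A$-simple quotient $\br{K} = K/\zz{K}$ and invoke Theorem~\ref{as:1}. Coprime action gives $\cz{\br{K}}{A} = \br{\cz{K}{A}}$ (Theorem~\ref{p:4}(b)), so solvability of $\cz{K}{A}$ is equivalent to solvability of $\cz{\br{K}}{A}$, and the structural dichotomy of Theorem~\ref{as:1}(e),(f) transfers to $K$ after quotienting $A$ by its kernel on $\br{K}$.

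For (a), if $\cz{K}{A}$ is nonsolvable then $\cstar{K}{A} = \layerr{\cz{K}{A}}$ has nontrivial simple image in $\br{K}$ by Theorem~\ref{as:1}(f)(i), hence is nonabelian. If $\cz{K}{A}$ is solvable then $\cstar{K}{A} = \cz{K}{A}$, and Theorem~\ref{as:1}(e)(i) identifies each simple factor of $\br{K}$ as lying in $\badfourset$ with $A_{\infty}$ acting as an order-$r$ field automorphism on a fixed factor $\br{K_{1}}$. The fixed subgroups $\ltwo{2}\isomorphic\sym{3}$, $\ltwo{3}\isomorphic\alt{4}$, $\uthree{2}$ and $\sz{2}$ are each nonabelian; their twisted-diagonal image in $\cz{\br{K}}{A}$ across the $A$-orbit on the simple factors is still nonabelian, so $\cz{K}{A}$ is nonabelian.

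For (b), first note that $\br{H}$ is nonsolvable, since $H$ is a central extension of $\br{H}$ by the abelian subgroup $H \cap \zz{K}$. When $\cz{K}{A}$ is nonsolvable, Theorem~\ref{as:1}(f)(ii) forces every nontrivial $A\cz{K}{A}$-invariant solvable subgroup of $\br{K}$ to vanish; applied to the characteristic subgroups $\sol{\br{H}}$ and $\ff{\br{H}}$, this yields $\ff{\br{H}} = 1$, whence $\gfitt{\br{H}} = \layerr{\br{H}} \neq 1$ and $\cz{\br{H}}{\layerr{\br{H}}} \leq \zz{\layerr{\br{H}}}$. A standard Schreier-type argument, exploiting the $A$-invariance to control the permutation action on the components, shows $\br{H}/\layerr{\br{H}}$ is solvable, so $H^{(\infty)} = \layerr{H}$. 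A compatibility argument between the $A$-action on $\compp{\br{H}}$ and the simple subgroup $\gfitt{\cz{\br{K}}{A}}$ forces $A$-transitivity on $\compp{\br{H}}$, giving $A$-quasisimplicity of $\layerr{H}$. When $\cz{K}{A}$ is solvable, the bad-four structure in Theorem~\ref{as:1}(c),(e)(iii) shows that any $A\cz{K}{A}$-invariant nonsolvable proper subgroup of $\br{K}$ must be contained in one of the $A$-simple subgroups $\cz{\br{K}}{a}$ with $a \notin A_{\infty}$, and a descending induction along such chains of fixed subgroups yields the analogous conclusions.

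The identity $\cstar{\layerr{H}}{A} = \cstar{K}{A}$ follows from $\cz{\layerr{H}}{A} = \layerr{H} \cap \cz{K}{A}$, together with the observation that $\cstar{K}{A}$ — being either a simple group (nonsolvable case) or an explicit nonabelian fixed subgroup (solvable case) — must lie entirely inside $\layerr{H}$, since any proper $A\cz{K}{A}$-invariant decomposition would contradict the rigidity given by Theorem~\ref{as:1}. The main obstacle is the solvable bad-four subcase of (b), where direct appeals to minimality fail and one must trace the induction through nested fixed-point subgroups carefully, and separately verify $A$-transitivity on components.
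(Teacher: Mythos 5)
Your treatment of part (a) runs parallel to the paper's: both pass to $\br{K} = K/\zz{K}$, invoke Theorem~\ref{as:1}, and handle the solvable case by identifying the bad-four groups and their order-$r$ fixed subgroups. The paper routes this through the cited \cite[Theorem~4.1]{PFI} rather than inspecting $\ltwo{2}$, $\ltwo{3}$, $\uthree{2}$, $\sz{2}$ by hand, but the content is the same.

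Part (b) has genuine gaps. Most seriously, ``a standard Schreier-type argument \ldots shows $\br{H}/\layerr{\br{H}}$ is solvable'' does not hold: since $\cz{\br{H}}{\layerr{\br{H}}} = 1$, the quotient $\br{H}/\layerr{\br{H}}$ embeds in $\out{\layerr{\br{H}}}$, which for $\layerr{\br{H}} = L_1 \times\cdots\times L_m$ has a $\sym{m}$ permutation factor that Schreier does nothing to kill, and since the $L_i$ need not be concentrated on distinct factors $\br{K}_j$ (commuting twisted diagonals over the same support can coexist), the permutation action is not automatically trivial. The paper's route is the opposite: it first establishes that there is a unique $A$-component $\br{H}_{0}$ (via \cite[Theorem~4.4]{PFI}, which gives $\cz{\br{H}_{0}}{A} \neq 1$, then the simplicity of $\gfitt{\cz{\br{K}}{A}}$ forces $\gfitt{\cz{\br{H}_{0}}{A}} = \gfitt{\cz{\br{K}}{A}}$ and hence uniqueness), and only then exploits the cyclicity of $\cz{\br{K}}{A}/\gfitt{\cz{\br{K}}{A}}$ and a second appeal to \cite[Theorem~4.4]{PFI} to get solvability of $\br{H}/\layerr{\br{H}}$. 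Your ``compatibility argument'' gestures at the uniqueness step but does not supply it, and your order of deductions (solvability of $\br{H}/\layerr{\br{H}}$ before transitivity on $\compp{\br{H}}$) reverses the logical dependencies. Similarly, your solvable subcase rests on the unproved assertion that every $A\cz{K}{A}$-invariant proper nonsolvable subgroup of $\br{K}$ lies inside some $\cz{\br{K}}{a}$ with $a \notin A_{\infty}$; Theorem~\ref{as:1}(c),(e) exhibit maximal such subgroups but do not say these exhaust them. The paper instead partitions (b) into \emph{overdiagonal} and \emph{underdiagonal} subgroups and quotes \cite[Lemmas~6.5--6.7]{PFI}: Lemma~6.7 shows that when a nontrivial $A\cz{K}{A}$-invariant solvable subgroup exists every underdiagonal subgroup is solvable, so nonsolvable $\br{H}$ must be overdiagonal and Lemma~6.6 pins down its form. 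That dichotomy is the piece of machinery your argument is trying to reconstruct, and without it (or \cite[Theorem~4.4]{PFI}) the proof does not close.
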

\begin{proof}
    Set $\br{K} = K/\zz{K}$,
    so $\br{K}$ is $A$-simple.
    Coprime Action(b) implies $\cz{\br{K}}{A} = \br{\cz{K}{A}}$.
    Write $\br{K} = \br{K}_{1} \times\cdots\times \br{K}_{n}$ where
    $\listset{\br{K}_{1},\ldots,\br{K}_{n}}$ is a collection of
    simple subgroups of $\br{K}$ that is permuted transitively by $A$.
    Set $A_{\infty} = \ker A \longrightarrow \sym{\listset{\br{K}_{1},\ldots,\br{K}_{n}}}$.
    Recall that if $X$ is a group and $Z \leq \zz{X}$ then
    $\layerr{X}$ maps onto $\layerr{X/Z}$.

    (a). If $A_{\infty} = \cz{A}{\br{K}}$ then \cite[Lemma~6.5]{PFI}
    implies $\cz{\br{K}}{A} \isomorphic \br{K}_{1}$,
    so $\cz{\br{K}}{A}$ is simple.
    If $A_{\infty} \not\leq \cz{A}{\br{K}}$ then \cite[Lemma~6.5]{PFI}
    implies $\cz{\br{K}}{A} \isomorphic \cz{\br{K}_{1}}{A_{\infty}}$
    and then \cite[Theorem~4.1]{PFI} implies $\gfitt{\cz{\br{K}_{1}}{A_{\infty}}}$
    is simple or $\cz{\br{K}_{1}}{A_{\infty}}$ is solvable and nonabelian.
    Since $\layerr{\cz{K}{A}}$ maps onto $\layerr{\cz{\br{K}}{A}}$
    it follows that $\cstar{K}{A}$ is nonabelian.

    (b). Recall from \cite{PFI} that $H$ is overdiagonal if $H$ projects
    onto each $\br{K}_{i}$.
    In the contrary case,
    $H$ is underdiagonal.
    Suppose that $H$ is overdiagonal.
    \cite[Lemma~6.6]{PFI} implies $H = \cz{K}{B}(H \cap \zz{K})$
    for some $B \leq A$ with $B \cap A_{\infty} = \cz{A}{K}$.
    Then \cite[Lemma~6.5]{PFI} implies $\cz{K}{B}$ is $A$-quasisimple.
    Consequently $H^{(\infty)} = \layerr{H} = \cz{K}{B}$
    and as $B \leq A$ we have $\cz{\layerr{H}}{A} = \cz{K}{A}$
    and the conclusion holds in this case.
    Hence we may assume that $H$ is underdiagonal.

    If $\br{K}$ possesses a nontrivial $A\cz{K}{A}$-invariant solvable subgroup
    then all $A\cz{\br{K}}{A}$-invariant
    underdiagonal subgroups are solvable by \cite[Lemma~6.7]{PFI}.
    Thus $\br{K}$ possesses no such subgroup.
    In particular,
    $\cz{\br{K}}{A}$ is nonsolvable,
    whence $\gfitt{\cz{\br{K}}{A}}$ is simple.
    Also, $\ff{\br{H}} = 1$ so we may choose $\br{H}_{0} \in \comp{A}{\br{H}}$.

    Since $\br{H}_{0}$ is nonsolvable,
    \cite[Theorem~4.4]{PFI} implies $\cz{\br{H}_{0}}{A} \not= 1$.
    Now $\cz{\br{H}_{0}}{A} \subnormal \cz{\br{K}}{A}$ whence
    $\gfitt{\cz{\br{H}_{0}}{A}} = \gfitt{\cz{\br{K}}{A}}$
    and $\br{H}_{0}$ is uniquely determined.
    Then $\layerr{\br{H}} = \br{H}_{0}$ and $\layerr{\br{H}}$ is $A$-simple.
    Now $\gfitt{\cz{\br{K}}{A}} \leq \layerr{\br{H}}$.
    Recall that $\cz{\br{K}}{A}/\gfitt{\cz{\br{K}}{A}}$ is cyclic.
    Consequently $\cz{\br{H}/\layerr{\br{H}}}{A}$ is cyclic so
    \cite[Theorem~4.4]{PFI} implies $\br{H}/\layerr{\br{H}}$ is solvable.
    We have shown that \[
        \mbox{$\br{H}^{(\infty)} = \layerr{\br{H}}$ is $A$-simple and %
            $\layerr{\cz{\layerr{\br{H}}}{A}} = \layerr{\cz{\br{K}}{A}} \not= 1$.}
    \]
    Then $H^{(\infty)}\zz{K} = \layerr{H}\zz{K}$ so taking the derived group
    yields $H^{(\infty)} = \layerr{H}$.
    Similarly $\layerr{\cz{\layerr{H}}{A}} = \layerr{\cz{K}{A}}$,
    completing the proof.
\end{proof}

\begin{Lemma}[{\cite[Lemma~6.12]{PFI}}]\label{as:5}
    Suppose $A$ acts coprimely on the $A$-quasisimple group $K$.
    \begin{enumerate}
        \item[(a)]  If $A$ is noncyclic then \[
            K = \gen{ \cz{K}{D} }{ \mbox{$D \in \hyp{A}$ and $\cz{K}{d}$ is $A$-quasisimple for all $d \in D\nonid$}}.
                \]

        \item[(b)]  If $D \in \hyp{A}$ and $D$ is noncyclic then \[
            K = \gen{ \cz{K}{d} }{ \mbox{$d \in D\nonid$ and $\cz{K}{d}$ is $A$-quasisimple} }.
            \]
    \end{enumerate}
\end{Lemma}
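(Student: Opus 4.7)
The plan is to work modulo $\zz{K}$: set $\br{K} = K/\zz{K}$, so $\br{K}$ is $A$-simple, and apply Theorem~\ref{as:1} to write $\br{K} = \br{K}_{1} \times \cdots \times \br{K}_{n}$ with $A$ permuting the factors transitively; let $A_{\infty} = \kernel{A \longrightarrow \sym{\listset{\br{K}_{1}, \ldots, \br{K}_{n}}}}$, so $\card{A_{\infty}} \in \listset{1, r}$ by Theorem~\ref{as:1}(b). The preparatory observation is that for every $d \in A \setminus A_{\infty}$, Theorem~\ref{as:1}(c) makes $\cz{\br{K}}{d}$ $A$-simple; since $\cz{K}{d}$ surjects onto $\cz{\br{K}}{d}$ by Coprime Action~(b) with central kernel $C_{\zz{K}}(d)$, a commutator argument exploiting that $\cz{\br{K}}{d}$ is perfect should lift through this central kernel to give $\cz{K}{d}$ itself perfect, hence $A$-quasisimple. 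Call such $d$ \emph{good}.

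For part~(a), let $H = \gen{\cz{K}{D}}{D \in \hyp(A) \text{ with } D\nonid \subseteq A \setminus A_{\infty}}$. Because $\rank{A} \geq 2$ and $A_{\infty}$ has rank at most $1$, there exist hyperplanes $D$ with $D \cap A_{\infty} = 1$, and moreover every $d \in A \setminus A_{\infty}$ lies in some such $D$ (an elementary observation in the $\mathbb{F}_{r}$-vector space $A$: find a linear functional vanishing on $d$ but not on a generator of $A_{\infty}$). Project to $\br{H} \leq \br{K}$; by Theorem~\ref{as:1}(c), $\cz{A}{\cz{\br{K}}{d}} = \listgen{d}$, so no proper $A$-invariant subgroup of $\br{K}$ can simultaneously contain $\cz{\br{K}}{d_{1}}$ and $\cz{\br{K}}{d_{2}}$ for distinct good $d_{1}, d_{2}$. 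Combined with Coprime Action~(d) this forces $\br{H} = \br{K}$, whence $H\zz{K} = K$; perfectness of $K$ then gives $K = K' = (H\zz{K})' \leq H$, so $H = K$.

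For part~(b), since $D$ is noncyclic Coprime Action~(d) applied to the action of $D$ on $K$ immediately yields $K = \gen{\cz{K}{d}}{d \in D\nonid}$. If $D \cap A_{\infty} = 1$, every $d \in D\nonid$ is good and we are done. Otherwise $A_{\infty} \leq D$, so the bad elements of $D\nonid$ all lie in the single cyclic subgroup $A_{\infty}$ of order $r$; the good elements still span $D$ as an $\mathbb{F}_{r}$-vector space, so a second application of Coprime Action inside the subgroup already generated by good centralizers absorbs the remaining contributions.

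The main obstacle I anticipate is the perfection lift at the end of the first paragraph: one must show that $\cz{K}{d}$ itself, not merely its image in $\br{K}$, is perfect whenever $\cz{\br{K}}{d}$ is $A$-simple, which requires controlling the central kernel $C_{\zz{K}}(d)$ inside the derived subgroup of $\cz{K}{d}$. Once this structural fact is secured, the remainder is combinatorial bookkeeping with hyperplanes of $A$ and appeals to Coprime Action.
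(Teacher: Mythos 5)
The paper does not prove this statement; it cites \cite[Lemma~6.12]{PFI} and moves on, so there is no in-paper argument to compare against. Judged on its own terms, your sketch has the right skeleton but two genuine gaps.

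The first is the perfection lift, which you flag but do not resolve, and a ``commutator argument'' alone will not resolve it: if $L/Z_{0}$ is perfect with $Z_{0}$ central one only gets $L = L'Z_{0}$, not $L = L'$. The correct route is structural. Because $K$ is $A$-quasisimple it is semisimple, i.e.\ $K = \layerr{K}$ is the central product of its components $K_{1},\ldots,K_{n}$, and these are permuted by $A$ exactly as the simple factors $\br{K}_{i}$ of $\br{K}$ are. Apply Coprime Action(b) to the natural $A$-equivariant surjection $K_{1}\times\cdots\times K_{n} \twoheadrightarrow K$ with central kernel: for $d \in A \setminus A_{\infty}$, the factors are permuted in $\langle d\rangle$-orbits of length $r$, so $\cz{K_{1}\times\cdots\times K_{n}}{d}$ is a direct product of twisted diagonals, each isomorphic to a quasisimple $K_{i}$ and hence perfect; therefore $\cz{K}{d}$, being its central image, is perfect, and since $\cz{K}{d}/\cz{\zz{K}}{d}\isomorphic\cz{\br{K}}{d}$ is $A$-simple and $\cz{\zz{K}}{d}$ is central, $\cz{K}{d}$ is $A$-quasisimple. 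Without some version of this you only know $\layerr{\cz{K}{d}}$ is $A$-quasisimple, which is not what the lemma needs.

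The second gap is in part (a). Theorem~\ref{as:1}(c) makes $\cz{\br{K}}{d}$ maximal among $A\cz{\br{K}}{A}$-invariant proper subgroups (not merely $A$-invariant, a small slip), but the real problem is that the subgroup $\br{H}$ you form is generated by the intersections $\cz{\br{K}}{D} = \bigcap_{d\in D\nonid}\cz{\br{K}}{d}$, which in general sit strictly inside the $\cz{\br{K}}{d}$. Maximality of $\cz{\br{K}}{d}$ therefore says nothing about $\br{H}$, and ``combined with Coprime Action~(d)'' does not bridge the gap, because Coprime Action(d) reintroduces the bad hyperplanes through $A_{\infty}$. What does work is to note that a good hyperplane $D$ acts regularly on $\compp{\br{K}}$, so $\cz{\br{K}}{D}$ is overdiagonal (projects onto each $\br{K}_{i}$), and then invoke the classification of overdiagonal $A\cz{\br{K}}{A}$-invariant subgroups from \cite{PFI}. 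For part (b), by contrast, your idea is essentially sound once ``$A$-invariant'' is replaced by ``$A\cz{\br{K}}{A}$-invariant'': there $\br{H}$ contains whole subgroups $\cz{\br{K}}{d}$ for good $d$ spanning at least two distinct cyclic subgroups of $D$, and maximality plus $\cz{A}{\cz{\br{K}}{d}} = \listgen{d}$ forces $\br{H} = \br{K}$, hence $H = K$ by perfectness of $K$; the phrase ``a second application of Coprime Action absorbs the remaining contributions'' should be replaced by this maximality argument, since Coprime Action by itself does not deliver it.
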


\begin{Lemma}[{\cite[Theorem~4.4(c)]{PFI}}]\label{as:6}
    Suppose $A$ acts coprimely on the $K$-group $G$ and
    that $K \in \comp{A}{G}$.
    Then $\cz{G}{\cz{K}{A}} = \cz{G}{K}$.
\end{Lemma}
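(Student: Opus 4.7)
The inclusion $\cz{G}{K} \subseteq \cz{G}{\cz{K}{A}}$ is immediate since $\cz{K}{A} \leq K$. Put $L = \cz{G}{\cz{K}{A}}$; this is $A$-invariant, and the task is to show $[L, K] = 1$, which I would accomplish in two steps.

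\emph{Step 1: $L$ normalizes $K$.} By Lemma~\ref{as:4}(a), $\cstar{K}{A}$ is a nonabelian characteristic subgroup of $\cz{K}{A}$, so $L$ centralizes $\cstar{K}{A}$. Writing $K/\zz{K} = \br{K}_{1}\times\cdots\times \br{K}_{n}$ as in Theorem~\ref{as:1}(a), the explicit description of $\cstar{K}{A}$ provided by Theorem~\ref{as:1}(c)--(f) shows that its image in $K/\zz{K}$ has nontrivial projection onto each $\br{K}_{i}$. Since distinct components of $G$ commute, if some $x \in L$ conjugated one of the components in $K$ to a component of $G$ outside $K$, then the projection of $\cstar{K}{A}$ into that component would be moved, contradicting $[x,\cstar{K}{A}]=1$. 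Hence $L \leq \n{G}{K}$.

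\emph{Step 2: $L$ centralizes $K$.} Replacing $G$ by $\n{G}{K}$ and passing to $\n{G}{K}/\cz{G}{K} \hookrightarrow \aut{K}$, let $\br{L}$ denote the image of $L$. It is $A$-invariant, of order coprime to $\card{A}$, and centralizes $\cz{K}{A}$; we must show $\br{L} = 1$. By Coprime Action~(c), $K = [K,A]\cz{K}{A}$, so it suffices to verify that $\br{L}$ centralizes $[K,A]$. When $A$ is noncyclic, Lemma~\ref{as:5}(a) expresses $K$ as the join of $A$-quasisimple subgroups $\cz{K}{D}$ for appropriate $D \in \hyp{A}$; since $\cz{\cz{K}{D}}{A} = \cz{K}{A}$ and $\cz{K}{D}$ is subnormal in $\cz{G}{D}$ (so forms an $A/D$-component there), an induction on $\card{K}$ reduces the problem to the case $A$ cyclic.

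The principal obstacle is the cyclic base case: for $A$ cyclic acting on an $A$-quasisimple $K$-group, one must verify directly from Theorem~\ref{as:1} that $\cz{K}{A}$ determines the centralizer in $\aut{K}$ among $A$-invariant subgroups of coprime order. When $\cz{K}{A}$ is nonsolvable this is straightforward, since $\gfitt{\cz{K}{A}}$ is simple and already generates enough of $K$ to pin down its centralizer. The delicate subcase is that of the exceptional families $\ltwo{2^{r}}$, $\ltwo{3^{r}}$, $\uthree{2^{r}}$ or $\sz{2^{r}}$ of Theorem~\ref{as:1}(e)(i), where $\cz{K}{A}$ is solvable; here the argument becomes taxonomic and relies on explicit knowledge of the outer automorphism groups of these simple $K$-groups.
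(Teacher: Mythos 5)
The paper does not prove Lemma~\ref{as:6}; it is cited from \cite[Theorem~4.4(c)]{PFI}, so there is no in-paper proof to compare against. I will assess your argument on its own terms.

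Your Step~1 is essentially sound: $L = \cz{G}{\cz{K}{A}}$ centralizes the nonabelian subgroup $\cstar{K}{A}$, whose image in $K/\zz{K}$ projects nontrivially to each simple factor $\br{K}_i$, and since distinct components of $G$ commute, an $x \in L$ moving some $K_i$ out of $K$ would force $[\br{K}_i, \cstar{K}{A}\zz{K}/\zz{K}] = 1$ and hence a trivial projection --- a contradiction. So $L$ permutes $\{K_1,\ldots,K_n\}$ and normalizes $K$.

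Step~2, however, has a genuine gap in the claimed reduction to the cyclic case. You propose to use Lemma~\ref{as:5}(a) to write $K = \gen{\cz{K}{D}}{D \in \hyp{A},\ \ldots}$ and then induct on $\card{K}$ via $\cz{K}{D} \in \comp{A/D}{\cz{G}{D}}$. The inductive hypothesis applied to the triple $(\cz{G}{D},\, A/D,\, \cz{K}{D})$ gives, for each suitable $D$,
\[
  \cz{\cz{G}{D}}{\cz{K}{A}} = \cz{\cz{G}{D}}{\cz{K}{D}},
\]
i.e.\ $L \cap \cz{G}{D}$ centralizes $\cz{K}{D}$. But that is not what you need: you need either that $L$ centralizes $\cz{K}{D}$ for every $D$, or that $L \cap \cz{G}{D}$ centralizes all of $K$. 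Neither follows. An element $x \in L$ need not commute with $D$, so $x$ need not even stabilize $\cz{K}{D}$, and you cannot feed $x$ into the inductive setup for $\cz{G}{D}$. Conversely, an element $x \in L \cap \cz{G}{D}$ is only known to centralize $\cz{K}{D}$, and nothing you have established promotes this to $[x,K] = 1$. Writing $L = \gen{\cz{L}{D}}{D \in \hyp{A}}$ and $K = \gen{\cz{K}{D'}}{D'}$ does not allow you to pair them off, since $\cz{L}{D}$ controls only $\cz{K}{D}$ and not $\cz{K}{D'}$ for $D' \not= D$.

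A repair is available but requires a different pivot. Having shown $L \leq \nn{K}$, observe that $[K,L]$ is normal in $K$ and $A$-invariant, so $[K,L] \leq \zz{K}$ or $[K,L] = K$; the former gives $[K,L] = 1$ by the three subgroups lemma since $K$ is perfect. One then needs to exclude $[K,L] = K$, e.g.\ by first showing $\cz{K}{\cz{K}{A}}$ is solvable (if it were nonsolvable, Lemma~\ref{as:4}(b) would place $\cstar{K}{A}$ inside its own centralizer, contradicting Lemma~\ref{as:4}(a)), so that $\br{L} \leq \aut{K}$ is solvable by Schreier, and then working with the image of $A$ in $\out{K}$ --- which is cyclic since $\card{K}$ is coprime to $\card{A}$ --- to pin down $\cz{\aut{K}}{\cz{K}{A}}$. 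This is where the taxonomic case analysis you allude to actually lives, but the route to it is not the one you wrote down.
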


\section{Automorphisms}\label{aut}
Throughout this section we assume:
\begin{Hypothesis} \label{aut:1} \mbox{}
    \begin{itemize}
        \item   $r$ is a prime and $A \not= 1$ is an elementary abelian $r$-group.

        \item   $A$ acts coprimely on the $K$-group $G$.

        \item   $a \in A\nonid$.

        \item   $H$ is an $A\cz{G}{a}$-invariant subgroup of $G$.
    \end{itemize}
\end{Hypothesis}

\noindent The following result relates the structure of $H$
to the structure of $G$ in the case that $G$ is solvable.

\begin{Theorem} \label{aut:2}
    Assume that $G$ is solvable and $H = [H,a]$.
    \begin{enumerate}
        \item[(a)]  Let $p$ be a prime.
                    Then \[
                        \op{p}{H} \leq \op{p}{G}
                    \]
                    or all of the following hold:
                    $p=2$, $r$ is Fermat and
                    the Sylow $2$-subgroups of $H$ are nonabelian.

        \item[(b)]  $\op{2}{\oupper{2}{H}} \leq \op{2}{G}$.
    \end{enumerate}
\end{Theorem}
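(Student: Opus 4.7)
I would first prove (a) by induction on $\card G$, reducing to the case $\op{p'}{G} = 1$. Set $N = \op{p'}{G}$ and, if $N \ne 1$, pass to $\br G = G/N$: by Coprime Action~\ref{p:4}(b), $\br H = HN/N$ is $A\cz{\br G}{a}$-invariant and satisfies $\br H = [\br H, a]$, so the inductive hypothesis applies. The pullback is clean: since $\op{p}{H}$ is a $p$-group and $N$ a $p'$-group, $\op{p}{H} \le \op{p}{G}N$ forces $\op{p}{H} \le \op{p}{G}$; and because any exception forces $p = 2$ and hence odd $N$, Sylow $2$-subgroups of $H$ and $\br H$ are isomorphic, so the exception transfers faithfully. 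Hence I may assume $\op{p'}{G} = 1$, in which case $\ff G = \op{p}{G}$ is self-centralizing in $G$.

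The core of (a) is then to suppose $P := \op{p}{H} \not\le Q := \op{p}{G}$ and derive the Fermat exception. Because $P \not\le Q$ forces $[P, Q] \ne 1$ (else $P \le \cz{G}{Q} \le Q$), and both $P$ and $Q$ are $\listgen{a}$-invariant, I would study the action of $a$ on the coprime $\mathbb F_{p}$-module $V = Q/\frat Q$. Using $H = [H, a]$ together with Coprime Action~\ref{p:4}(c), after possibly replacing $H$ by $[H, a]$ iteratively, I reduce to $P = [P, a]$. The key step is then a Hall-Higman-type analysis of the minimal polynomial of $a$ acting on a suitable $P\listgen{a}$-submodule of $V$, in the refined coprime form of \cite{PFpp}: either $a$ has minimal polynomial of full degree $r^{j}$ with $r^{j} = \card a$, which combined with $H = [H, a]$ and the self-centralizing property of $Q$ contradicts $P \not\le Q$; or the degree drops, forcing $p = 2$ and $r = 2^{k} + 1$ Fermat, with an $\mathrm{SL}_{2}(r)$-like configuration pinning a nonabelian Sylow $2$-subgroup on $H$.

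For (b), I would set $L = [\oupper{2}{H}, a]$: this is $A\cz{G}{a}$-invariant as a characteristic commutator and satisfies $L = [L, a]$ by coprime action, so (a) applies to $L$. If the exception fails for $L$, then $\op{2}{L} \le \op{2}{G}$; combining the coprime decomposition $\op{2}{\oupper{2}{H}} = [\op{2}{\oupper{2}{H}}, a] \cdot \cz{\op{2}{\oupper{2}{H}}}{a}$ with $\oupper{2}{H} = L \cdot \cz{\oupper{2}{H}}{a}$ and Lemma~\ref{p:6}(c) would yield $\op{2}{\oupper{2}{H}} \le \op{2}{G}$. If the exception holds for $L$, I expect the nonabelian Sylow $2$-condition on $L \subseteq \oupper{2}{H}$ to be incompatible with the Fermat constraint $r = 2^{k}+1$, again via Lemma~\ref{p:6}(c), ruling out this case. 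The main obstacle is the Hall-Higman analysis in (a) that pinpoints the Fermat exception, together with the more subtle argument in (b) that this exception genuinely disappears upon passing to $\oupper{2}{H}$.
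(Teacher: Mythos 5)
The paper does not actually prove this theorem; it cites \cite[Corollary~5.2]{PFsol}, so there is no in-paper argument to compare against. That said, your inductive reduction contains a genuine error. Passing to $\br G = G/\op{p'}{G}$ and invoking the inductive hypothesis gives $\br{\op{p}{H}} \leq \op{p}{\br G}$, but this only places $\op{p}{H}$ inside the full preimage of $\op{p}{\br G}$, which is $\op{p',p}{G}$, not $\op{p}{G}N$. The step ``$\op{p}{H} \leq \op{p}{G}N$'' is therefore unjustified: in general $\op{p}{G/\op{p'}{G}}$ is strictly larger than $\op{p}{G}\op{p'}{G}/\op{p'}{G}$. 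For instance, with $G = A_4$ and $p = 3$ one has $\op{3}{G} = 1$ and $N = V_4$, yet $\op{3}{\br G} = \br G$ has order $3$. The standard repair is to induct modulo $\op{p}{G}$ instead: since $\op{p}{G/\op{p}{G}} = 1$ always holds, the inductive conclusion $\op{p}{\br H} = 1$ does pull back to $\op{p}{H} \leq \op{p}{G}$, and the Fermat exception still transfers because quotienting by a $2$-group cannot turn a nonabelian Sylow $2$-subgroup into an abelian one. After that reduction $\ff{G}$ is a $p'$-group on which $\op{p}{H}$ acts faithfully, which is the natural home for the Hall--Higman analysis you outline.

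Two further concerns. The step ``reduce to $P = [P,a]$'' is not justified: $H = [H,a]$ does not yield $\op{p}{H} = [\op{p}{H},a]$, and since $H = [H,a]$ already holds, ``replacing $H$ by $[H,a]$ iteratively'' changes nothing, so the $a$-fixed part of $P$ needs its own treatment. And in (b) the assertion that the Fermat exception for $L = [\oupper{2}{H},a]$ is incompatible with $L \leq \oupper{2}{H}$ is not substantiated: containment in $\oupper{2}{H}$ by itself says nothing against $L$ having a nonabelian Sylow $2$-subgroup with $r$ Fermat, and Lemma~\ref{p:6}(c) does not obviously bear on the situation. The mechanism by which the exception disappears once one passes to $\oupper{2}{H}$ is precisely the substantive content of (b), and the sketch does not supply it.
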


\noindent This result is fundamental to the author's proof of the
Solvable Signalizer Functor Theorem.
It is a consequence of well known results on the
representation theory of solvable groups.
See \cite[Corollary~5.2]{PFsol} for example.
To deal with nonsolvable signalizer functors,
it is necessary to have analogous results for nonsolvable groups.

\begin{Theorem} \label{aut:3}\mbox{}
    \begin{enumerate}
        \item[(a)]  Suppose $K \in \compasol{H}$ and $K = [K,a]$.
                    Then $K \in \compasol{G}$.

        \item[(b)]  $[\ostar{H},a]^{(\infty)} \normal \ostar{G}$.
    \end{enumerate}
\end{Theorem}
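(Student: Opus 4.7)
The plan is to establish (a) first, then derive (b) from it.

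\emph{For (a):} Let $K \in \compasol{H}$ with $K = [K,a]$, and write $K = L_1 \cdots L_n$ as the product of an $A$-orbit of sol-components of $H$. The goal is to upgrade this from ``subnormal in $H$'' to ``subnormal in $G$.'' Passing to $\br{G} = G/\sol{G}$, the image $\br{K}$ is perfect and each $\br{L}_i$ is a perfect subnormal subgroup of $\br{H}$ with simple quotient. The key first step is to prove $\br{K} \leq \layerr{\br{G}}$. By the Schreier property (valid in $K$-groups), $\out{E}$ is solvable for every component $E$ of $\br{G}$, so $\br{G}/\layerr{\br{G}}$ embeds in $\bigl(\prod_j \out{E_j}\bigr) \rtimes \Pi$ where $\Pi$ is the permutation group on isomorphism classes of components. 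The image of $\br{K}$ in the solvable factor $\prod_j \out{E_j}$ is both perfect and solvable, hence trivial. For the image in $\Pi$, the $A\cz{G}{a}$-invariance of $H$ combined with $K = [K,a]$ and the minimality of $K$ as a nonsolvable $A$-invariant subnormal subgroup of $H$ should rule out any nontrivial image.

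Once $\br{K} \leq \layerr{\br{G}} = \prod_j E_j$ is established, each $\br{L}_i$ is a perfect subgroup of this direct product of nonabelian simple groups. An over/underdiagonal argument in the spirit of Lemma~\ref{as:4}(b), using $K = [K,a]$ and the $A\cz{G}{a}$-invariance of $H$ to exclude the underdiagonal case, shows that $\br{L}_i$ coincides with a single component $E_{j_i}$. Lifting through $\sol{G}$, each $L_i$ is a sol-component of $G$, whence $K \in \compasol{G}$.

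\emph{For (b):} Set $M = [\ostar{H}, a]^{(\infty)}$. Since $\ostar{H}/\layersol{H}$ is a quotient of $\sol{H}$ and hence solvable, the perfect subgroup $M$ lies in $\layersol{H}$. Analyzing $M$ modulo $\sol{H}$ shows its image in $\layerr{H/\sol{H}}$ is generated by components $\br{C}$ satisfying $\br{C} = [\br{C}, a]$ together with diagonal ``$a$-shift'' subgroups arising from $a$-orbits of components of length greater than one. Grouping these contributions into $A$-orbits yields a collection of $(A,\mathrm{sol})$-components $K_\ell$ of $H$ with $K_\ell = [K_\ell, a]$ whose product generates $M$. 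By part~(a), each $K_\ell \in \compasol{G}$, so every constituent sol-component of each $K_\ell$ lies in $\compsol{G}$ and is normal in $\ostar{G}$ by Lemma~\ref{p:2}(e). Hence $M$, being generated by subgroups normal in $\ostar{G}$, is itself normal in $\ostar{G}$.

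\emph{Main obstacle.} The hardest part is the first paragraph: establishing $\br{K} \leq \layerr{\br{G}}$ and resolving the over/underdiagonal dichotomy. Both depend essentially on the $A\cz{G}{a}$-invariance of $H$ and the hypothesis $K = [K,a]$, and on the general structural machinery for automorphisms of $K$-groups developed in \cite{PFI,PFII}. The Schreier property underpins the solvable outer automorphism quotients used throughout.
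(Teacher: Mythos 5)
For part (b), your argument is essentially the same as the paper's: establish that $[\ostar{H},a]^{(\infty)}$ is generated by those $K \in \compasol{H}$ satisfying $K = [K,a]$ (the paper phrases this via the dichotomy ``$[K,a] \leq \sol{K}$ or $K = [K,a]$,'' which follows immediately from minimality of $K$ among nonsolvable $A$-invariant subnormal subgroups), then apply part~(a) and the analogue of Lemma~\ref{p:2}(e) to conclude each such $K$ is normal in $\ostar{G}$. Your detour through ``diagonal $a$-shift subgroups'' is unnecessary: once you know every component orbit in $\br{K}$ is either wholly touched or wholly untouched by $a$ (forced by $A$-invariance and abelian $A$), the identification of $[\ostar{H},a]^{(\infty)}$ as a product of whole $(A,\mathrm{sol})$-components is immediate.

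Part (a) is where your proposal has a genuine gap. The paper does not prove (a) in this text at all; it cites \cite[Theorem~7.5(a)]{PFII}, because the statement is itself a substantial theorem about automorphisms of $K$-groups, not a lemma that yields to a few lines. Your sketch correctly sets up the program --- pass to $\br{G}=G/\sol{G}$, try to get $\br{K}\leq\layerr{\br{G}}$, then resolve overdiagonal versus underdiagonal --- but the two decisive steps are left as assertions: ``the minimality of $K$ \ldots\ should rule out any nontrivial image'' in the permutation part, and ``an over/underdiagonal argument in the spirit of Lemma~\ref{as:4}(b) \ldots\ shows $\br{L}_i$ coincides with a single component.'' Neither follows formally from what you have written. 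Indeed, $H$ is only assumed $A\cz{G}{a}$-invariant (not normal, not $\cz{G}{b}$-invariant for other $b$), so $K$ being subnormal in $H$ gives you no direct handle on its placement relative to $\compp{\br{G}}$; the passage from ``perfect subnormal in $H$'' to ``subnormal in $G$'' is precisely the content of the cited theorem and uses the $K$-group taxonomy in an essential way. The Schreier-property reduction to the permutation quotient is sound, but the remainder of your argument is a plan for a proof rather than a proof. You would need to either carry out the argument of \cite[Theorem~7.5]{PFII} in full, or simply cite it as the paper does.
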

\begin{proof}
    (a). This is \cite[Theorem~7.5(a)]{PFII}.

    (b). Note that if $K \in\compasol{H}$ then either $[K,a] \leq \sol{K} \leq \sol{H}$
    or $K = [K,a]$.
    Then $[\ostar{H},a]^{(\infty)} = \gen{K \in \compasol{H}}{K = [K,a]}$.
    Apply (a).
\end{proof}

\begin{Theorem}\label{aut:4}
    Suppose $K \in \comp{A}{H}$.
    Then there exists $\widetilde{K}$ with \[
        K \leq \widetilde{K} \in \compasol{G}.
    \]
    Moreover:
    \begin{enumerate}
        \item[(a)]  If $[K,a] \not= 1$ then $K = [K,a] = \widetilde{K}$.

        \item[(b)]  If $[K,a] = 1$ then $K = \layerr{\cz{\widetilde{K}}{a}}$.

        \item[(c)]  Suppose $\widetilde{K}$ is constrained.
                    Then $[K,a] = 1$ and \[
                        \widetilde{K} = K\sol{\widetilde{K}}.
                    \]
                    Moreover if $b \in A \setminus \cz{A}{K}$ then
                    $\widetilde{K} = \listgen{K, \cz{\sol{\widetilde{K}}}{b}}$.

        \item[(d)]  Let $L \in \compasol{G}$.
                    Assume \[
                        \mbox{$L \not= \widetilde{K}$ and $L = [L,a]$.}
                    \]
                    Then $[\widetilde{K},L] = 1$.
    \end{enumerate}
\end{Theorem}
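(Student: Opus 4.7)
The plan is to dispatch~(a) and the existence of $\widetilde{K}$ in that case via Theorem~\ref{aut:3}(a), handle~(b) by a pumping-up argument (the main technical obstacle), and derive~(c) and~(d) as structural corollaries. The starting observation is that every $A$-component of $H$ is in particular an $(A,\mathrm{sol})$-component of $H$: since $K$ is $A$-quasisimple and semisimple, it is a product of components of $H$ permuted transitively by $A$, and components of $H$ are sol-components of $H$, so $K \in \compasol{H}$.

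For~(a), assume $[K,a] \neq 1$. Since $K/\zz{K}$ is $A$-simple and $[K,a]\zz{K}/\zz{K}$ is a nontrivial $A$-invariant normal subgroup of it, $K = [K,a]\zz{K}$; combined with $K = K'$ and the fact that $\zz{K}$ is central, this forces $K = [K,a]$. Theorem~\ref{aut:3}(a) then places $K$ in $\compasol{G}$, and we take $\widetilde{K} = K$.

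Part~(b), with $[K,a] = 1$, is the main obstacle because $K$ need not be subnormal in $G$. The strategy is to isolate a unique $L \in \compasol{G}$ that fails to centralize $K$ and take $\widetilde{K} := L$. Since $K$ is perfect and nonsolvable, its image in $G/\sol{G}$ is nontrivial and perfect; as $G/\sol{G}$ acts faithfully on $\layerr{G/\sol{G}}$, $K$ must fail to centralize some $(A,\mathrm{sol})$-component of $G$, so the set $\mathcal{L} = \set{L \in \compasol{G}}{[L,K] \neq 1}$ is nonempty. The $A$-analog of Lemma~\ref{p:2}(d) (distinct members of $\compasol{G}$ commute modulo $\sol{G}$), combined with the $A$-simplicity of $K/\zz{K}$, forbids $K$ from splitting across multiple members of $\mathcal{L}$ and so forces $\card{\mathcal{L}} = 1$. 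With $\widetilde{K}$ the unique member, the identification $K = \layerr{\cz{\widetilde{K}}{a}}$ follows by applying Theorem~\ref{as:1}(d) to the $A$-simple group $\widetilde{K}/\sol{\widetilde{K}}$, describing the image of $\cz{\widetilde{K}}{a}$ there, and lifting through $\sol{\widetilde{K}}$ via Lemma~\ref{as:4}(b).

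For~(c), constrainedness of $\widetilde{K}$ gives $\layerr{\widetilde{K}} = 1$, which rules out case~(a), so~(b) supplies $[K,a] = 1$ and $K = \layerr{\cz{\widetilde{K}}{a}}$. Combined with the constrained structure of $\widetilde{K}$ and Theorem~\ref{as:1}, this yields $\widetilde{K} = K\sol{\widetilde{K}}$, and the generation formula $\widetilde{K} = \listgen{K,\cz{\sol{\widetilde{K}}}{b}}$ follows from Theorem~\ref{p:4}(d) applied to the action of $A$ on the solvable group $\sol{\widetilde{K}}$. Finally for~(d), the $A$-analog of Lemma~\ref{p:2}(d) gives $[\widetilde{K}, L] \leq \sol{\widetilde{K}} \cap \sol{L}$, a solvable subgroup; a standard commutator argument using $L = [L,a]$ together with Coprime Action then forces $[\widetilde{K}, L] = 1$.
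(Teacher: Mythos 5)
The paper does not re-prove Theorem~\ref{aut:4}; it cites \cite[Theorem~9.8]{PFI} for parts (a), (b), (d) and the first half of (c), and \cite[Lemma~8.2]{PFI} for the generation statement in (c). You are attempting a self-contained argument, so I will assess it on its own terms.

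Part (a) is fine: $[K,a]\zz{K}/\zz{K}$ is a nontrivial $A$-invariant normal subgroup of the $A$-simple group $K/\zz{K}$, so $K=[K,a]\zz{K}$ and perfectness gives $K=[K,a]$; then Theorem~\ref{aut:3}(a) applies once you note $K\in\compasol{H}$ (your observation that components are sol-components is correct, and $A$-orbits on $\compp{H}$ agree with $A$-orbits on $\compsol{H}$ restricted to components).

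The argument for (b) has a genuine gap, and it is precisely at the point you yourself call ``the main obstacle.'' You define $\mathcal{L}$, show it is nonempty, and then assert that distinct $(A,\mathrm{sol})$-components of $G$ commuting modulo $\sol{G}$, together with $A$-simplicity of $K/\zz{K}$, ``forbids $K$ from splitting across multiple members of $\mathcal{L}$.'' This does not follow: a perfect $A$-quasisimple subgroup can perfectly well fail to centralize several sol-components (think of a diagonally embedded copy across two factors of $\layerr{G/\sol{G}}$). More seriously, even if $\card{\mathcal{L}}=1$ were established, the desired containment $K\leq\widetilde{K}$ does not follow from $[\widetilde{K},K]\neq 1$; $K$ could act on $\widetilde{K}$ from outside. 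The decisive hypothesis --- that $H$ is $A\cz{G}{a}$-invariant, not merely $A$-invariant --- is never invoked in your argument for (b). Without that hypothesis the statement is false (the diagonal example above is a genuine counterexample), so any proof that does not use it cannot be correct. The actual proof in \cite{PFI} exploits the $\cz{G}{a}$-invariance to control the normal closure of $K$ and to locate $K$ inside a single $(A,\mathrm{sol})$-component of $G$; that is where the real work lies. Parts (c) and (d) then inherit this gap, since they rest on the unproven identification of $\widetilde{K}$; moreover the ``standard commutator argument'' claimed for (d) is not spelled out and is not routine --- the paper itself remarks that distinct $(A,\mathrm{sol})$-components need not commute, which is exactly why (d) is nontrivial.
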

\begin{proof}
    This is \cite[Theorem~9.8]{PFI} except for the final assertion in (c)
    which is \cite[Lemma~8.2]{PFI}.
\end{proof}

\begin{UnnumberedRemark}
       In the constrained case, it is in fact possible to show that
       $\widetilde{K} = K\ff{\widetilde{K}}$,
       but we do not need this stronger result.
       Recall that distinct $A$-components of $G$ commute.
       This fact is very useful.
       However, the same is not necessarily true of $(A, \mbox{sol})$-components.
       (d) circumvents this difficulty.
\end{UnnumberedRemark}

\section{$\fancyP$-subgroups} \label{psgps}
Throughout this section, we assume:

\begin{Hypothesis} \label{psgps:1} \mbox{}
    \begin{itemize}
        \item   $r$ is a prime and $A \not= 1$ is an elementary abelian $r$-group.

        \item   $\fancyP$ is a group theoretic property that is closed under
                subgroups, quotients and extensions.
    \end{itemize}
\end{Hypothesis}

\begin{Definition} \label{psgps:2}
    Suppose $A$ acts on the group $G$.
    \begin{align*}
        \op{\fancyP}{G} &= %
                    \gen{ X }{ \mbox{$X$ is an $A$-invariant normal $\fancyP$-subgroup of $G$} }.\\
        \op{\fancyP}{G;A} &= %
                    \gen{ X }{ \mbox{$X$ is an $A\cz{G}{A}$-invariant $\fancyP$-subgroup of $G$} }.
    \end{align*}
\end{Definition}

\noindent It is clear that $\op{\fancyP}{G}$ is itself a $\fancyP$-group
and is thus the unique maximal normal $\fancyP$-subgroup of $G$.
The following is less clear:

\begin{Theorem}[{\cite[Theorem~5.2]{PFII}}] \label{psgps:3}
    Suppose $A$ acts coprimely on the $K$-group $G$.
    Then $\op{\fancyP}{G;A}$ is a $\fancyP$-group.
    In other words,
    $G$ possesses a unique maximal $A\cz{G}{A}$-invariant $\fancyP$-subgroup.
\end{Theorem}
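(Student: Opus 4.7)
The plan is to argue by minimal counterexample. Suppose $G$ is a $K$-group of smallest order on which $A$ acts coprimely such that $P = \op{\fancyP}{G;A}$ fails to be a $\fancyP$-group.

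The first reduction is $P = G$. If $P < G$, then $P$ is a proper $A$-invariant $K$-subgroup on which $A$ acts coprimely, and every $A\cz{G}{A}$-invariant $\fancyP$-subgroup $X \leq P$ is $A\cz{P}{A}$-invariant in $P$, since $\cz{P}{A} \leq \cz{G}{A}$ and $\cz{G}{A}$ normalises $P$. Hence $\op{\fancyP}{P;A} = P$, and minimality applied to $P$ forces $P \in \fancyP$, contrary to assumption. The second reduction is $\op{\fancyP}{G} = 1$. If $N = \op{\fancyP}{G} \neq 1$, which is $A$-invariant by uniqueness, then in $\br{G} = G/N$ Theorem~\ref{p:4}(b) gives $\cz{\br{G}}{A} = \br{\cz{G}{A}}$, and the images of the $A\cz{G}{A}$-invariant $\fancyP$-subgroups of $G$ are $A\cz{\br{G}}{A}$-invariant $\fancyP$-subgroups of $\br{G}$ whose join is all of $\br{G}$. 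Minimality applied to $\br{G}$ yields $\br{G} \in \fancyP$; combined with $N \in \fancyP$ and extension-closure of $\fancyP$, we obtain $G \in \fancyP$, whence $P = G \in \fancyP$, a contradiction.

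So we may assume $G \neq 1$, $G = P$, $\op{\fancyP}{G} = 1$, and $G \notin \fancyP$. Pick a minimal $A$-invariant normal subgroup $M$ of $G$. Applying the argument of the second reduction to $G/M$ (a smaller $K$-group with coprime $A$-action), we obtain $G/M \in \fancyP$; combined with $G \notin \fancyP$ and extension-closure, $M \notin \fancyP$.

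The main obstacle is to derive a contradiction at this point. The subgroup $M$ is either an elementary abelian $p$-group for some prime $p$ (with $p$-groups outside $\fancyP$, whence every $\fancyP$-subgroup of $G$ is a $p'$-group and so meets $M$ trivially), or a direct product of isomorphic nonabelian simple $K$-groups, transitively permuted by $GA$, whose simple factors lie outside $\fancyP$. In either case one exploits the fact that $G$ is generated by the $A\cz{G}{A}$-invariant $\fancyP$-subgroups $X_i$, each intersecting $M$ in a proper $\fancyP$-subgroup, to force either a nontrivial normal $\fancyP$-subgroup of $G$ (violating $\op{\fancyP}{G} = 1$) or $G \in \fancyP$. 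This is where the structural control of $A$-components in $A$-invariant subgroups (Theorems~\ref{aut:3} and~\ref{aut:4}), the classification of $A$-simple $K$-groups (Theorem~\ref{as:1}), and McBride's invariance property (Lemma~\ref{p:2}(f)) are brought to bear, and where the $K$-group hypothesis is essentially used.
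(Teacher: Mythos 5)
The paper does not prove this theorem: it imports it verbatim as \cite[Theorem~5.2]{PFII}, so there is no in-paper argument to compare against. Judged on its own terms, your proposal has sound reductions but stops short of a proof.

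Your two reductions are correct: if $P < G$ then $O_{\fancyP}(P;A) = P$ (because $A\cz{G}{A}$-invariance implies $A\cz{P}{A}$-invariance) and minimality forces $P \in \fancyP$; and if $N = O_{\fancyP}(G) \neq 1$ then the image argument together with Coprime Action(b) gives $G/N \in \fancyP$, whence $G \in \fancyP$ by extension-closure. It is also correct that $G/M \in \fancyP$ for every nontrivial $A$-invariant normal $M$, that a minimal such $M$ is characteristically simple (being minimal normal in $GA$), and that $M \notin \fancyP$. (You do not observe it, but the same argument also yields that $M$ is the \emph{unique} minimal $A$-invariant normal subgroup, since if $M_1 \cap M_2 = 1$ then $G$ embeds in $G/M_1 \times G/M_2 \in \fancyP$.) Your remarks about the $X_i$ meeting $M$ trivially in the abelian case and properly in the semisimple case are also accurate.

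The gap is the entire endgame. The sentence ``one exploits the fact that $G$ is generated by the $A\cz{G}{A}$-invariant $\fancyP$-subgroups $X_i$ \ldots to force either a nontrivial normal $\fancyP$-subgroup of $G$ or $G \in \fancyP$'' is a statement of the goal, not an argument. Concretely: in the abelian case you have a group $G$ generated by $A\cz{G}{A}$-invariant $p'$-subgroups yet possessing a nontrivial normal $p$-subgroup $M$ with $C_G(M) \leq M$; this configuration is not self-evidently contradictory and requires real representation-theoretic or $K$-group input. In the semisimple case the difficulty is worse, since the $X_i$ may meet $M$ in intricate ways. You cite Theorems~\ref{aut:3}, \ref{aut:4}, \ref{as:1} and Lemma~\ref{p:2}(f), but those results are tailored to subgroups invariant under $A\cz{G}{a}$ for a single $a \in A\nonid$, or to $A$-simple groups with $A$-action under extra hypotheses, and you do not explain how they apply to the present situation of $A\cz{G}{A}$-invariant $\fancyP$-subgroups. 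Note also that the statement must hold for cyclic $A$, where the coprime-generation tools (Coprime Action(d), Lemma~\ref{psgps:6}) that drive most arguments in the paper are unavailable; your proposal does not address this. The reductions you have done are the routine part; what remains is the substance of the theorem, and it is not supplied.
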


A useful corollary is the following:

\begin{Corollary}\label{psgps:4}
    Assume the hypotheses of Theorem~\ref{psgps:3}.
    Suppose that $N$ is an $A$-invariant subnormal subgroup of $G$.
    Then \[
        \op{\fancyP}{N;A} = \op{\fancyP}{G;A} \cap N.
    \]
\end{Corollary}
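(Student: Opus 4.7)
The plan is to handle the case $N \normal G$ first and then reduce the general subnormal case to it by induction. The easy inclusion $\op{\fancyP}{G;A} \cap N \leq \op{\fancyP}{N;A}$ holds for any $A$-invariant subgroup $N$: the intersection is a $\fancyP$-subgroup of $N$ by closure of $\fancyP$ under subgroups, it is $A$-invariant, and it is normalized by $\cz{N}{A}$, since $\cz{N}{A}$ lies in $N$ and in $\cz{G}{A}$ (the latter normalizing $\op{\fancyP}{G;A}$). Because $N$ is a $K$-group on which $A$ acts coprimely, Theorem~\ref{psgps:3} applied inside $N$ asserts that $\op{\fancyP}{N;A}$ is the \emph{unique} maximal $A\cz{N}{A}$-invariant $\fancyP$-subgroup of $N$ and hence contains our intersection.

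For the reverse inclusion with $N \normal G$, I would exploit this uniqueness. For $g \in \cz{G}{A}$, conjugation by $g$ normalizes $N$ and centralizes $A$, so it restricts to an $A$-equivariant automorphism of $N$ which carries $\cz{N}{A}$ to itself. Consequently it permutes the $A\cz{N}{A}$-invariant $\fancyP$-subgroups of $N$, and uniqueness forces $g$ to normalize $\op{\fancyP}{N;A}$. Thus $\op{\fancyP}{N;A}$ is an $A\cz{G}{A}$-invariant $\fancyP$-subgroup of $G$, so $\op{\fancyP}{N;A} \leq \op{\fancyP}{G;A}$; intersecting with $N$ completes the normal case. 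This is the main obstacle; the remainder is purely formal.

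For general subnormal $N$, I would induct on the length of an $A$-invariant subnormal chain from $N$ to $G$. Setting $M_0 = G$ and $M_{i+1} = N^{M_i}$ produces a descending chain of $A$-invariant subgroups (each $M_{i+1}$ is visibly $A$-invariant since $N$ and $M_i$ are) with $M_{i+1} \normal M_i$; it terminates at $N$ because $N$ is subnormal in $G$, and a minimum-length ordinary subnormal chain shows that $N \lneq G$ forces $N^G \lneq G$, so the first step strictly descends. Writing $M = N^G$, the inductive hypothesis applied to $N \subnormal M$ and the normal case applied to $M \normal G$ combine to give
\[
    \op{\fancyP}{N;A} = \op{\fancyP}{M;A} \cap N = (\op{\fancyP}{G;A} \cap M) \cap N = \op{\fancyP}{G;A} \cap N,
\]
using $N \leq M$ at the final step.
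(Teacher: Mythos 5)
Your proof is correct and follows essentially the same route as the paper's: establish the easy inclusion $\op{\fancyP}{G;A} \cap N \leq \op{\fancyP}{N;A}$ directly, prove the reverse inclusion for $N \normal G$ by observing that $\cz{G}{A}$ permutes the $A\cz{N}{A}$-invariant $\fancyP$-subgroups of $N$ (hence normalizes $\op{\fancyP}{N;A}$, which is a $\fancyP$-group by Theorem~\ref{psgps:3}), and then reduce the general subnormal case to the normal case by passing to the normal closure $N^G$ and inducting on subnormal depth. The paper's reduction is stated more tersely but is the same argument you spell out.
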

\begin{proof}
    Consider first the case that $N \normal G$.
    Then $\op{\fancyP}{G;A} \cap N$ is an $A\cz{N}{A}$-invariant
    $\fancyP$-subgroup of $N$ so $\op{\fancyP}{G;A} \cap N \leq \op{\fancyP}{N;A}$.
    Now $\cz{N}{A} \normal \cz{G}{A}$ so it follows that $\cz{G}{A}$ permutes the
    $A\cz{N}{A}$-invariant $\fancyP$-subgroups of $N$.
    Then $\cz{G}{A}$ normalizes $\op{\fancyP}{N;A}$.
    Theorem~\ref{psgps:3} implies $\op{\fancyP}{N;A}$ is a $\fancyP$-subgroup
    so $\op{\fancyP}{N;A} \leq \op{\fancyP}{G;A}$
    and the result follows in this case.

    Suppose that $N$ is not normal in $G$.
    Set $G_{0} = \listgen{N^{G}}$.
    Since $N$ is a proper subnormal subgroup of $G$ it follows that
    $G_{0}$ is a proper $A$-invariant normal subgroup of $G$.
    Apply the previous case and induction.
\end{proof}

The main result of this section is the following:
\begin{Theorem} \label{psgps:5}
    Suppose $A$ acts on the (possibly infinite) group $G$
    and that $\theta$ is an $A$-signalizer functor on $G$.
    Assume that $\theta(a)$ is a $K$-group for all $a \in A\nonid$.
    Define $\theta_{\fancyP}$ by \[
        \theta_{\fancyP}(a) = \op{\fancyP}{\theta(a);A}.
    \]
    \begin{enumerate}
        \item[(a)]  $\theta_{\fancyP}$ is an $A$-signalizer functor on $G$.

        \item[(b)]  Assume that $A$ is noncyclic;
                    that $\theta_{\fancyP}$ is complete;
                    and that $\theta_{\fancyP}(G)$ is a $K$-group.
                    Then $\theta_{\fancyP}(G)$ is the unique maximal $\theta(A)$-invariant
                    $(\fancyP, \theta)$-subgroup of $G$.
                    (A $(\fancyP, \theta)$-subgroup is a $\theta$-subgroup that is a $\fancyP$-group.)
    \end{enumerate}
\end{Theorem}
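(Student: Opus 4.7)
The plan for part (a) is to verify the two signalizer functor axioms for $\theta_{\fancyP}$ directly from the definition, using Theorem~\ref{psgps:3} to guarantee that each $\theta_{\fancyP}(a) = \op{\fancyP}{\theta(a);A}$ really is a $\fancyP$-group. The containment $\theta_{\fancyP}(a) \leq \cz{G}{a}$, $A$-invariance, and the coprime-order condition are inherited from $\theta(a)$, so the only substantive work is the balance axiom $\theta_{\fancyP}(a) \cap \cz{G}{b} \leq \theta_{\fancyP}(b)$. First I would note that this intersection already sits inside $\theta(b)$ by the signalizer functor property of $\theta$, and is a $\fancyP$-subgroup of $\theta(b)$ because $\fancyP$ passes to subgroups. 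To show it is $A\cz{\theta(b)}{A}$-invariant, the key observation is that $a$ centralizes $\cz{\theta(b)}{A}$, whence $\cz{\theta(b)}{A} \leq \theta(b) \cap \cz{G}{a} \leq \theta(a)$ and so $\cz{\theta(b)}{A} \leq \cz{\theta(a)}{A}$; the right hand side normalizes $\theta_{\fancyP}(a)$ by construction, and $\cz{\theta(b)}{A}$ plainly normalizes $\cz{G}{b}$.

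For part (b), the approach is to apply Theorem~\ref{psgps:3} to the completion $K := \theta_{\fancyP}(G)$ itself. Completeness gives $\theta_{\fancyP}(a) = \cz{K}{a}$ for all $a \in A\nonid$, and because $A$ is noncyclic and $K$ is a $K$-group on which $A$ acts coprimely, Coprime Action~(d) yields $K = \gen{\cz{K}{a}}{a \in A\nonid}$. Each $\cz{K}{a}$ is an $A$-invariant $\fancyP$-subgroup of $K$ and contains $\cz{K}{A}$, hence is normalized by $A\cz{K}{A}$. Therefore every $\theta_{\fancyP}(a)$ lies inside $\op{\fancyP}{K;A}$, forcing $K \leq \op{\fancyP}{K;A}$ and so $K = \op{\fancyP}{K;A}$. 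In particular $K$ is itself a $\fancyP$-group, and it is a $\theta$-subgroup since $\cz{K}{a} = \theta_{\fancyP}(a) \leq \theta(a)$.

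For the maximality and uniqueness, I would take an arbitrary $\theta(A)$-invariant $(\fancyP, \theta)$-subgroup $X$ of $G$ and show $X \leq K$. For each $a \in A\nonid$, $\cz{X}{a}$ is a $\fancyP$-subgroup of $\theta(a)$; it is $A$-invariant, and the hypothesized $\theta(A)$-invariance of $X$ supplies $\cz{\theta(a)}{A}$-invariance of $\cz{X}{a}$ by exactly the calculation used in (a). Hence $\cz{X}{a} \leq \op{\fancyP}{\theta(a);A} = \theta_{\fancyP}(a) = \cz{K}{a}$, and a final application of Coprime Action~(d) to $X$ gives $X = \gen{\cz{X}{a}}{a \in A\nonid} \leq K$.

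The step I expect to require the most care is checking that $K$ itself genuinely satisfies the ``$\theta(A)$-invariant'' hypothesis, so that it qualifies as a competitor in the maximality assertion, together with pinning down the precise meaning of that phrase (most naturally, invariance under $A$ and under each $\theta(a)$, or at any rate under each $\cz{\theta(a)}{A}$). Once that point and the finiteness needed to apply Coprime Action~(d) to $X$ are settled, the remainder is bookkeeping that reduces cleanly to Theorem~\ref{psgps:3} and standard coprime action.
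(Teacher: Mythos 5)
Your proposal is correct and follows essentially the same route as the paper: part (a) reduces to the observation that $\cz{\theta(a)}{A} = \theta(A) = \cz{\theta(b)}{A}$ together with Theorem~\ref{psgps:3}, and part (b) shows each $\cz{L}{a}$ lies in $\theta_{\fancyP}(a) = \cz{K}{a}$ and finishes with Coprime Action~(d). The one small divergence is how you prove that $K = \theta_{\fancyP}(G)$ is itself a $\fancyP$-group: you deduce $K = \op{\fancyP}{K;A}$ and invoke Theorem~\ref{psgps:3} on $K$, whereas the paper cites Lemma~\ref{psgps:6} directly, which is precisely the statement that a $K$-group whose fixed-point subgroups $\cz{K}{a}$ are all $\fancyP$ is itself $\fancyP$ -- both are valid and of comparable length.
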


\noindent We also need the following:

\begin{Lemma}\label{psgps:6}
    Suppose that $A$ acts coprimely on the $K$-group $G$.
    Assume that $A$ is noncyclic and that $\cz{G}{a}$ is
    a $\fancyP$-group for all $a \in A\nonid$.
    Then $G$ is a $\fancyP$-group.
\end{Lemma}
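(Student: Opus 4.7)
The plan is to deduce this lemma directly from Theorem~\ref{psgps:3} by exhibiting $G$ itself as a maximal $A\cz{G}{A}$-invariant $\fancyP$-subgroup. For each $a \in A\nonid$, the subgroup $\cz{G}{a}$ is $A$-invariant (since $A$ is abelian) and is normalized by $\cz{G}{A}$ (trivially, as $\cz{G}{A} \leq \cz{G}{a}$). By hypothesis, $\cz{G}{a}$ is a $\fancyP$-group. Theorem~\ref{psgps:3} applies to the $K$-group $G$ and produces a $\fancyP$-subgroup $\op{\fancyP}{G;A}$; by maximality we obtain
\[
    \cz{G}{a} \leq \op{\fancyP}{G;A} \quad \text{for every $a \in A\nonid$.}
\]

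To finish, I would invoke the generation statement in Coprime Action, Theorem~\ref{p:4}(d): since $A$ is elementary abelian and noncyclic and acts coprimely on $G$,
\[
    G = \gen{ \cz{G}{a} }{ a \in A\nonid }.
\]
Combined with the previous display, this forces $G = \op{\fancyP}{G;A}$, which is a $\fancyP$-group by Theorem~\ref{psgps:3}. The proof is essentially a one-line reduction, so there is no serious obstacle; the only point worth remarking is that the hypothesis that $A$ is noncyclic is indispensable precisely because Coprime Action (d) requires it, and without that generation step the conclusion genuinely fails.
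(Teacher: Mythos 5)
Your proof is correct, and it takes a genuinely different route from the paper's. You deduce the lemma as an immediate corollary of Theorem~\ref{psgps:3}: each $\cz{G}{a}$ is $A$-invariant (as $A$ is abelian) and is normalized by $\cz{G}{A}$ (as $\cz{G}{A} \leq \cz{G}{a}$), so it is an $A\cz{G}{A}$-invariant $\fancyP$-subgroup and therefore lies in $\op{\fancyP}{G;A}$; then Coprime Action(d) forces $G = \op{\fancyP}{G;A}$, which is a $\fancyP$-group. All the hypotheses you need are in force: $A$ is elementary abelian by the section's standing Hypothesis~\ref{psgps:1}, noncyclic by assumption, and $\fancyP$ is closed under the relevant operations, so the argument goes through without gaps.

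The paper's own proof is more elementary and self-contained: it passes to the characteristically simple case via Coprime Action(b) and then splits into the abelian case (where Coprime Action(d) produces a nontrivial $A$-invariant normal fixed-point subgroup, forcing $G = \cz{G}{a}$), the nonabelian case with $n>1$ (where a twisted diagonal subgroup of $\cz{G}{a}$ isomorphic to a simple direct factor $G_1$ shows $G_1$, and hence $G$, is a $\fancyP$-group), and the simple case (where the $K$-group hypothesis yields cyclic Sylow $r$-subgroups of $\aut{G}$, so $G = \cz{G}{a}$ for some $a$). The trade-off is that your approach is shorter but leans on the considerably heavier cited result Theorem~\ref{psgps:3} (from \cite{PFII}), whereas the paper's argument shows exactly where the $K$-group hypothesis enters and does not tie the lemma to the external theorem. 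It is worth being aware that a statement in the spirit of Lemma~\ref{psgps:6} is likely used somewhere in the proof of \cite[Theorem~5.2]{PFII}; there is no formal circularity here since that proof lives in a different paper, but the author may have preferred an independent argument for just that reason.
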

\begin{proof}
    Using Coprime Action(b) we may suppose that $1$ and $G$ are the
    only $A$-invariant normal subgroups of $G$.
    Then $G$ is characteristically simple.
    Suppose $G$ is abelian.
    Then $G$ is an elementary abelian $p$-group for some prime $p$.
    Coprime Action(d) implies $\cz{G}{a} \not= 1$ for some $a \in A\nonid$.
    Since $\cz{G}{a}$ is $A$-invariant and normal we have $G = \cz{G}{a}$
    so $G$ is a $\fancyP$-group.
    Hence we may suppose that $G$ is nonabelian.
    Then $G = G_{1} \times\cdots\times G_{n}$ where $\listset{G_{1},\ldots,G_{n}}$
    is a collection of simple subgroups of $G$ that is permuted
    transitively by $A$.
    Suppose that $n > 1$.
    Choose $a \in A$ such that $G_{1}^{a} \not= G_{1}$.
    Then $\set{ gg^{a} \cdots g^{a^{r-1}}}{ g \in G_{1} }$
    is a normal subgroup of $\cz{G}{a}$ that is isomorphic to $G_{1}$.
    Then $G_{1}$ is a $\fancyP$-group,
    whence $G$ is also.
    Suppose that $n = 1$.
    Then $G$ is a simple $K$-group.
    Consequently the Sylow $r$-subgroups of $\aut{G}$ are cyclic
    so $G = \cz{G}{a}$ for some $a \in A\nonid$ and $G$ is a $\fancyP$-group.
\end{proof}

\begin{proof}[Proof of Theorem~\ref{psgps:5}]
    (a). Let $a,b \in A\nonid$.
    Note that $\cz{\theta(a)}{A} = \theta(A) = \cz{\theta(b)}{A}$.
    Now \[
        \theta_{\fancyP}(a) \cap \cz{G}{b} \leq %
        \theta_{\fancyP}(a) \cap \theta(b) \leq \theta_{\fancyP}(b),
    \]
    the first inclusion because $\theta$ is an $A$-signalizer functor
    and the second because $\theta_{\fancyP}(a) \cap \theta(b)$
    is an $A\theta(A)$-invariant $\fancyP$-subgroup of $\theta(b)$.
    Hence $\theta_{\fancyP}$ is an $A$-signalizer functor.

    (b). Set $K = \theta_{\fancyP}(G)$.
    Since $\theta_{\fancyP}$ is complete,
    $\theta_{\fancyP}(a) = \cz{K}{a}$ for all $a \in A\nonid$.
    Lemma~\ref{psgps:6} implies $K$ is a $\fancyP$-group.
    Suppose $L$ is a $\theta(A)$-invariant $(\fancyP, \theta)$-subgroup of $G$.
    If $a \in A\nonid$ then $\cz{L}{a}$ is a $\theta(A)$-invariant
    $(\fancyP, \theta)$-subgroup of $\theta(a)$,
    whence $\cz{L}{a} \leq \theta_{\fancyP}(a) \leq K$.
    Coprime Action(d) implies $L \leq K$.
\end{proof}

\section{Bender's Maximal Subgroup Theorem}\label{ben}
The aim of this section is to prove  slight extension of a result of Bender \cite[1.7]{Bab}.
Bender's result gave a criterion for two maximal subgroups $M$ and $N$ of
a simple group to be equal.
First we need some definitions.

\begin{Definition}\label{ben:1}
    Suppose that $M$ and $N$ are finite subgroup of a (possibly infinite) group.
    \begin{itemize}
        \item   \emph{$M$ is maximal with respect to $N$} if \[
                    \n{N}{T} \leq M
                \]
                whenever $1 \not= T \characteristic M$
                with $T \leq M \cap N$.

        \item   \emph{$M$ and $N$ are comaximal} if $M$ is maximal with respect to $N$
                and $N$ is maximal with respect to $M$.

        \item   $M \linksto N$ means
                    \[
                        X\cz{\gfitt{M}}{X} \leq N \quad\text{for some}\quad X \subnormal \gfitt{M}.
                    \]

        \item   If $p$ is a prime then \emph{$M$ has characteristic $p$} if $\gfitt{M} = \op{p}{M}$.
    \end{itemize}
\end{Definition}

\begin{Theorem}[Bender's Maximal Subgroup Theorem]\label{ben:2}
    Suppose that $M$ and $N$ are finite subgroups of a (possibly infinite) group, that $M$ is maximal
    with respect to $N$ and that $M \linksto N$.
    \begin{enumerate}
        \item[(a)]  $\layerr{M} \leq N$ and $M \cap \op{p}{N} = 1$ for all $p \not\in \primes{\ff{M}}$.

        \item[(b)]  Assume that $\layerr{M} \not= 1$ or $\card{\primes{\ff{M}}} \geq 2$.
                    Then $\op{p}{N} \leq M$ for all $p \in \primes{\ff{M}}$.

        \item[(c)]  Assume in addition that $N$ is maximal with respect to $M$
                    and that
                    \begin{enumerate}
                        \item[(i)]  $N \linksto M$ or
                        \item[(ii)] $\layerr{N} \leq M$ and $\primes{\ff{N}} \subseteq \primes{\ff{M}}$.
                    \end{enumerate}
                    Then $M = N$ or $M$ and $N$ have characteristic $p$ for some prime $p$.
    \end{enumerate}
\end{Theorem}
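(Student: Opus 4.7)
The plan is to apply the maximality hypothesis repeatedly in the form: if $T$ is a nontrivial characteristic subgroup of $M$ contained in $M \cap N$ and a subgroup $U \leq N$ normalizes $T$, then $U \leq \n{N}{T} \leq M$. The art lies in manufacturing the right $T$'s to trap successively $\layerr{M}$ and the $p$-parts of $\op{p}{N}$ inside $M$.

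For part (a), I would first unpack $M \linksto N$ to obtain $X \subnormal \gfitt{M}$ with $X\cz{\gfitt{M}}{X} \leq N$. Every component $E$ of $M$ is subnormal in $\gfitt{M}$, and the standard dichotomy for subnormal subgroups of $\gfitt{M}$ shows that $E \leq X$ or $E$ commutes with $X$; in either case $E \leq N$, giving $\layerr{M} \leq N$. For the second assertion, fix $p \notin \primes{\ff{M}}$ and set $P = M \cap \op{p}{N}$. I would build a characteristic subgroup $T$ of $M$ in $M \cap N$ that $\op{p}{N}$ normalizes --- assembled from $\layerr{M}$ and the $q$-parts of $\ff{M}$ captured by $X\cz{\gfitt{M}}{X}$ --- to conclude $\op{p}{N} \leq M$. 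Coprime action of $\op{p}{N}$ on the various $q$-components of $\ff{M}$, together with $\cz{M}{\gfitt{M}} \leq \ff{M}$, then forces $\op{p}{N} \leq \ff{M}$; since $p \notin \primes{\ff{M}}$ this gives $\op{p}{N} = 1$, whence $P = 1$.

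For part (b), with $p \in \primes{\ff{M}}$, the same template delivers $\op{p}{N} \leq M$ once a nontrivial characteristic subgroup $T$ of $M$ in $M \cap N$ normalized by $\op{p}{N}$ is produced. The extra hypothesis is exactly what supplies such a $T$: if $\layerr{M} \neq 1$ then $T = \layerr{M}$ works by part (a), while if $\card{\primes{\ff{M}}} \geq 2$ then I would take $T = \op{q}{M}$ for a suitable prime $q \neq p$ of $\primes{\ff{M}}$, the point being that the product decomposition of $\ff{M}$ together with $X\cz{\gfitt{M}}{X} \leq N$ forces some $\op{q}{M}$ with $q \neq p$ to lie in $N$.

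For part (c), symmetry yields $\layerr{N} \leq M$ (directly under hypothesis (ii), or via part (a) applied to $(N, M)$ under hypothesis (i)), and parts (a), (b) in both directions give $\gfitt{M} \leq N$ and $\gfitt{N} \leq M$ provided the hypothesis of (b) holds on both sides. A short maximality argument, using $\cz{M}{\gfitt{M}} \leq \ff{M}$ and $\cz{N}{\gfitt{N}} \leq \ff{N}$, then forces $\gfitt{M} = \gfitt{N}$ and hence $M = N$. If the hypothesis of (b) fails on one side, both $M$ and $N$ are forced to have trivial layer and Fitting subgroup supported on a single prime, supplying the characteristic $p$ alternative. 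The principal obstacle is the second assertion of (a): when $p \notin \primes{\ff{M}}$ there is no natural characteristic $p$-subgroup of $M$ to exploit, so $T$ must be synthesised from the subnormal witness $X \subnormal \gfitt{M}$ and its centralizer, and forcing $\op{p}{N}$ to normalize it requires a delicate interplay between coprime action and the nonsolvable pieces of $\gfitt{M}$ already trapped in $N$.
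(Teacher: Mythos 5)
The paper does not prove this theorem; it simply cites Bender's original argument (\cite[1.7]{Bab}) and observes that Bender's proof, though stated for maximal subgroups of a simple group, only uses the hypotheses as formulated here. So you are attempting a reconstruction of Bender's argument, not paraphrasing one from the paper. Your treatment of the first assertion of (a) is correct: every component $E$ of $M$ is subnormal in $\gfitt{M}$, hence lies in $X$ or centralizes $X$, and in either case $E \leq X\cz{\gfitt{M}}{X} \leq N$.

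The difficulties begin with the second assertion of (a) and persist through (b). Your plan for showing $M \cap \op{p}{N} = 1$ is first to get $\op{p}{N} \leq M$ and then to trap $\op{p}{N}$ inside $\ff{M}$, concluding $\op{p}{N} = 1$. But this proves a far stronger statement than the theorem asserts: the theorem does \emph{not} say $\op{p}{N}$ is trivial for $p \notin \primes{\ff{M}}$, only that it meets $M$ trivially, and indeed $\op{p}{N}$ can easily be nontrivial here. So the route you propose cannot work as stated; any argument that tries to show $\op{p}{N} \leq M$ in this regime is aiming at something false. The correct target is the intersection, and reaching it requires analysing how $P = M \cap \op{p}{N}$ acts on $\gfitt{M}$ and interacts with the trapped subgroup $X\cz{\gfitt{M}}{X}$ --- that is, a coprime-action argument internal to $M$, not an application of the maximality hypothesis.

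The same misconception underlies your sketch of (b). You write that the maximality hypothesis ``delivers $\op{p}{N} \leq M$ once a nontrivial characteristic subgroup $T$ of $M$ in $M \cap N$ normalized by $\op{p}{N}$ is produced,'' and you then nominate $T = \layerr{M}$ or $T = \op{q}{M}$ as candidates. But the hypothesis only gives $\n{N}{T} \leq M$; it says nothing about whether $\op{p}{N}$ lies in $\n{N}{T}$. Knowing $\layerr{M} \leq N$ (from (a)) does not make $\layerr{M}$ normal in $N$, so $\op{p}{N}$ has no reason to normalize it, and the same applies to $\op{q}{M}$ even granting $\op{q}{M} \leq N$. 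Establishing the containment $\op{p}{N} \leq \n{N}{T}$ is precisely the content of (b) and is where Bender's argument does its real work, via a chain of coprime-action reductions; it is not a byproduct of choosing the right $T$. Part (c) is then built on (a) and (b), so it inherits these gaps, though the overall structure you describe there (two-sided comparison of $\gfitt{M}$ and $\gfitt{N}$, with the exceptional case being characteristic $p$) is sound if (a) and (b) are in place.
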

\begin{proof}
    This is proved in \cite[1.7]{Bab} under the assumption that $M$ and $N$
    are maximal subgroups of a simple group.
    However, only the stated hypotheses are required.
\end{proof}

The result stated below is used to handle the characteristic $p$ case.
Under the given hypotheses,
it leads to the same conclusion.

\begin{Theorem}[{\cite[Theorem~A]{PFpp}}] \label{ben:3}
    Let $p$ be a prime and suppose $M_{1}$ and $M_{2}$
    are finite subgroups of a (possibly infinite) group with the following properties:
    \begin{itemize}
        \item   $M_{1}$ and $M_{2}$ are comaximal.

        \item   $M_{1}$ and $M_{2}$ are $K$-groups with characteristic $p$.

        \item   For each $i$ there is an elementary abelian group $A_{i}$ that
                acts coprimely on $M_{i}$ and $\op{p}{M_{1};A_{1}} = \op{p}{M_{2};A_{2}}$.
    \end{itemize}
    Then $M_{1} = M_{2}$.
\end{Theorem}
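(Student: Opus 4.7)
The plan is to combine Bender's Maximal Subgroup Theorem (Theorem~\ref{ben:2}) with the common $A_{i}$-invariant structure carried by $P := \op{p}{M_{1}; A_{1}} = \op{p}{M_{2}; A_{2}}$ to force $M_{1} = M_{2}$.

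First I would record elementary facts. Set $Q_{i} := \op{p}{M_{i}} = \gfitt{M_{i}}$; this is characteristic in $M_{i}$ (hence $A_{i}$-invariant) and a $p$-subgroup, so $Q_{i} \leq P$, whence $P \leq M_{1} \cap M_{2}$. Each $Q_{i}$ is normal in $P$, since $P \leq M_{i}$ normalizes $Q_{i} \normal M_{i}$. The characteristic $p$ hypothesis (constrainedness) yields $\cz{M_{i}}{P} \leq \cz{M_{i}}{Q_{i}} \leq Q_{i} \leq P$. In particular $\n{M_{i}}{P}/P$ embeds into $\aut{P}$.

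Next I would invoke Bender's Theorem. Taking the trivial subgroup $X \subnormal \gfitt{M_{i}}$, we have $X \cz{\gfitt{M_{i}}}{X} = Q_{i} \leq P \leq M_{3-i}$, so $M_{i} \linksto M_{3-i}$ for both $i$. Combined with comaximality, Theorem~\ref{ben:2}(c)(i) then gives $M_{1} = M_{2}$ or both $M_{i}$ have characteristic some prime. Since we are already in the characteristic $p$ case, Bender alone is inconclusive, and the $A_{i}$-hypothesis must close the remaining gap.

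To finish, I would aim to prove $Q_{1} = Q_{2}$. If so, then $T := Q_{1} = Q_{2}$ is nontrivial, characteristic in each $M_{i}$, and normal in the other; applying the maximality criterion in Definition~\ref{ben:1} to $T$ then gives $M_{3-i} = \n{M_{3-i}}{T} \leq M_{i}$ for both $i$, so $M_{1} = M_{2}$. The proof of $Q_{1} = Q_{2}$ would proceed inside $P$, using that $Q_{1}$ and $Q_{2}$ are both normal in $P$, the embedding $\n{M_{i}}{P}/P \hookrightarrow \aut{P}$, the $K$-group taxonomy, and Theorem~\ref{psgps:3} to analyze how the two coprime actions of $A_{i}\cz{M_{i}}{A_{i}}$ on $P$ interact with the $M_{i}$-module structure. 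The main obstacle is precisely this step: $A_{1}$ and $A_{2}$ are a priori unrelated coprime automorphism groups, so only $P$ — equipped with two distinct coprime actions — bridges $M_{1}$ and $M_{2}$, and extracting from this single coincidence the equality $Q_{1} = Q_{2}$ is where the bulk of the work in \cite{PFpp} should lie.
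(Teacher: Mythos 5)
Your setup and endgame are correct: $P \leq M_1 \cap M_2$, $Q_i := \op{p}{M_i} \leq P$, $M_i \linksto M_{3-i}$ via $X = 1$, and Bender alone is inconclusive in the characteristic-$p$ case. It is also true that producing a nontrivial subgroup $T$ characteristic in both $M_1$ and $M_2$ and contained in $M_1 \cap M_2$ would finish, by the comaximality criterion. However, the heart of the proof is missing: you do not establish $Q_1 = Q_2$, and you acknowledge as much by deferring ``the bulk of the work'' to an unspecified analysis of $\aut{P}$ and the two coprime actions. That is a genuine gap, not merely a compressed argument.

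There is also a strategic problem with targeting $Q_1 = Q_2$ directly. Under the given hypotheses, $Q_1 = Q_2$, $J(\op{p}{M_1}) = J(\op{p}{M_2})$, and $M_1 = M_2$ are all logically equivalent (each implies $M_1 = M_2$ by comaximality, and $M_1 = M_2$ implies each). So $Q_1 = Q_2$ is not a weaker intermediate claim that can be isolated and attacked ``inside $P$''; it already carries the full force of the theorem. The paper's route, stated explicitly in the surrounding text, is that Theorem~\ref{ben:3} is a trivial corollary of Theorem~\ref{ben:4}: applying Theorem~\ref{ben:4}(b) to the pair $(M_1,M_2)$ and, by symmetry of comaximality, to $(M_2,M_1)$, one obtains $J(\op{p}{M_1}) = J(P) = J(\op{p}{M_2})$ where $P = \op{p}{M_1}\op{p}{M_2}$. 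Since $J(\op{p}{M_i})$ is characteristic in $\op{p}{M_i} = \gfitt{M_i} \normal M_i$, hence characteristic in $M_i$, this Thompson-subgroup coincidence is exactly the input your endgame needs. The machinery that makes this tractable — analysis of $J$ of the product $\op{p}{M_1}\op{p}{M_2}$ using the coprime actions — lives in Theorem~\ref{ben:4} (cited from \cite{PFpp}), which your plan does not invoke.
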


\noindent Unfortunately, at one point in the argument this result is not strong enough.
However, the following result,
provides the necessary extra leverage.
Note that Theorem~\ref{ben:3} is a trivial corollary.

\begin{Theorem}[{\cite[Theorem~4.3]{PFpp}}] \label{ben:4}
    Let $p$ be a prime and suppose that $M$ and $S$ are subgroups of a group.
    Assume that:
    \begin{itemize}
        \item   $M$ and $S$ are finite $K$-groups with characteristic $p$.

        \item   $M$ is maximal with respect to $S$.

        \item   There exist elementary abelian groups $A_{m}$ and $A_{s}$
                that act coprimely on $M$ and $S$ respectively and
                $\op{p}{M;A_{m}} = \op{p}{M;A_{s}}$.
    \end{itemize}
    Set \[
        P = \op{p}{M}\op{p}{S}.
    \]
    Then the following hold:
    \begin{enumerate}
        \item[(a)]  If $\op{p}{M}$ is abelian then $J(P) = J(\op{p}{M})$.

        \item[(b)]  $J(P) = J(\op{p}{S})$.
    \end{enumerate}
\end{Theorem}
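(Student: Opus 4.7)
The first step is to verify that $P$ is a $p$-subgroup of $M \cap S$ in which both $\op{p}{M}$ and $\op{p}{S}$ are normal. Set $Q = \op{p}{M;A_m} = \op{p}{S;A_s}$. Since $\op{p}{M}$ is a normal and hence $A_m\cz{M}{A_m}$-invariant $p$-subgroup of $M$, Coprime Action gives $\op{p}{M} \leq Q$; symmetrically $\op{p}{S} \leq Q$. Because $Q$ sits inside both $M$ and $S$, we obtain $\op{p}{M} \leq S$ and $\op{p}{S} \leq M$, so $\op{p}{M}$ and $\op{p}{S}$ normalize each other, $P$ is a $p$-group, and both factors are normal in $P$.

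For (a), the abelian hypothesis on $\op{p}{M}$ permits a direct application of the Thompson replacement theorem with the abelian normal subgroup $\op{p}{M} \normal P$. Given an elementary abelian $E \leq P$ of maximal order with $E \not\leq \op{p}{M}$, replacement produces $E^{*} \leq \op{p}{M} E$ elementary abelian with $|E^{*}| \geq |E|$ and $|E^{*} \cap \op{p}{M}| > |E \cap \op{p}{M}|$. Iterating yields an elementary abelian of order $\geq |E|$ inside $\op{p}{M}$; since $|E|$ was maximal in $P$, the maximal elementary abelian orders in $P$ and in $\op{p}{M}$ agree, whence $J(P) = J(\op{p}{M})$.

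For (b), $\op{p}{S}$ need not be abelian, so Thompson replacement with $A = \op{p}{S}$ is unavailable, and the maximality of $M$ with respect to $S$ becomes essential. Apply maximality to the characteristic subgroup $J(\op{p}{M})$ of $M$, which lies in $M \cap S$ because $\op{p}{M} \leq S$; this yields $\n{S}{J(\op{p}{M})} \leq M$. Combined with the characteristic $p$ conditions $\cz{M}{\op{p}{M}} \leq \op{p}{M}$ and $\cz{S}{\op{p}{S}} \leq \op{p}{S}$, one analyzes how $\op{p}{M}$ acts on $\op{p}{S}$ and vice versa inside $P$: any elementary abelian $E \leq P$ of maximal order must, after a suitable replacement, centralize enough of $\op{p}{S}$ modulo a characteristic subgroup to force $E \leq \op{p}{S}$, delivering $J(P) = J(\op{p}{S})$.

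The main obstacle I anticipate is precisely this last step---ruling out maximal-order elementary abelians that genuinely straddle $\op{p}{M}$ and $\op{p}{S}$. Thompson's abelian replacement is inadequate in the nonabelian setting, so one likely needs a Stellmacher-type offender analysis or a quadratic-action argument, leveraging the common subgroup $Q$ to tie the two $p$-local structures together. The characteristic $p$ hypotheses on both $M$ and $S$ are critical here, since they convert centralizer data into $p$-local data that can be fed into the replacement machinery; the $K$-group hypothesis is presumably used to eliminate exceptional simple-group configurations that obstruct the offender analysis.
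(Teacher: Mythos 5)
This theorem is not proved in the present paper. It is imported as a black box from the author's companion paper~\cite[Theorem~4.3]{PFpp}; the only content the paper supplies is the one-line remark following the statement, namely that $\op{p}{M} \leq \op{p}{M;A_{m}} = \op{p}{S;A_{s}} \leq S$, so that $P$ is indeed a $p$-group. Your opening paragraph reproduces and slightly expands that observation correctly (including implicitly repairing the typo $\op{p}{M;A_s}$ for $\op{p}{S;A_s}$ in the displayed hypothesis), and the deductions $\op{p}{M}\leq S$, $\op{p}{S}\leq M$, and $\op{p}{M},\op{p}{S}\normal P$ are all fine. Beyond that, there is nothing in this paper to compare against.

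As a free-standing proof attempt, however, your argument has a real gap already in part (a). Thompson replacement with the abelian normal subgroup $B=\op{p}{M}$ only produces a new $E^*\in\mathcal{A}(P)$ with $E^*\cap B>E\cap B$ under the hypothesis that $B$ does \emph{not} normalize $E$. The iteration therefore halts as soon as it reaches some $E^*\in\mathcal{A}(P)$ that is normalized by $B$, and at that point one only knows $[B,E^*]\leq B\cap E^*$ and, by maximality of $E^*$, that $\Omega_{1}(\cz{B}{E^*})\leq E^*$; none of this forces $E^*\leq B$. (Indeed, in a bare $p$-group an abelian normal subgroup can easily fail to contain any member of $\mathcal{A}(P)$.) So the statement ``iterating yields an elementary abelian of order $\geq |E|$ inside $\op{p}{M}$'' does not follow, and your argument for (a) never invokes the maximality of $M$ with respect to $S$ nor the characteristic-$p$ hypotheses $\cz{M}{\op{p}{M}}\leq\op{p}{M}$, $\cz{S}{\op{p}{S}}\leq\op{p}{S}$ --- which must be doing essential work, since without them the conclusion is false. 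Part (b) you yourself flag as a sketch with a confessed gap; the observation $\n{S}{J(\op{p}{M})}\leq M$ is a correct and sensible first move, but the replacement/offender analysis you gesture at is not carried out. In short: the setup is right, but both (a) and (b) are missing the argument that actually closes them, and the missing argument is exactly the one that needs the comaximality and characteristic-$p$ structure.
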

\noindent Note that $\op{p}{M} \leq \op{p}{M;A_{m}} = \op{p}{S;A_{s}} \leq S$
whence $P$ is a $p$-group.

\section{Elementary results}\label{er}
A number of elementary results are presented.
In particular, to any signalizer functor $\theta$ we associate
a positive integer $\norm{\theta}$.
Note that the group $G$ in the statement of the Signalizer Functor Theorem
is not assumed to be finite.
Hence this device is needed to enable inductive arguments.

Throughout this section we assume the following:

\begin{Hypothesis}\label{er:1}\mbox{}
    \begin{itemize}
        \item   $A$ is an noncyclic abelian group that acts
                on the (possibly infinite) group $G$.

        \item   $\theta$ is an $A$-signalizer functor on $G$.
    \end{itemize}
\end{Hypothesis}

\begin{Lemma}\label{er:2}
    Let $B \leq A$ be noncyclic and define a $B$-signalizer functor
    $\theta_{0}$ by $\theta_{0}(b) = \theta(b)$ for all $b \in B\nonid$.
    If $\theta_{0}$ is complete then so is $\theta$ and $\theta(G) = \theta_{0}(G)$.
\end{Lemma}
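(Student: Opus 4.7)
I would set $K = \theta_{0}(G)$ and show that this same subgroup witnesses the completeness of $\theta$. Three things need verification: that $K$ is $A$-invariant, that $\theta(a) = \cz{K}{a}$ for every $a \in A\nonid$, and that $\card{K}$ is coprime to $\card{A}$.

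For $A$-invariance, I would note that the noncyclic group $B$ acts coprimely on $K$, hence $K = \gen{\cz{K}{b}}{b \in B\nonid} = \gen{\theta(b)}{b \in B\nonid}$ by the coprime generation principle (Theorem~\ref{p:4}(d) in the elementary abelian case, extending to arbitrary noncyclic abelian groups by a routine induction on $\card{K}$). Each $\theta(b)$ is $A$-invariant by the signalizer functor hypothesis for $\theta$, so $K$ is too.

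For the centralizer identity, the case $a \in B\nonid$ is immediate from the completeness of $\theta_{0}$. For $a \in A \setminus B$, both $\theta(a)$ and $\cz{K}{a}$ are $B$-invariant with order coprime to $\card{B}$ (the first by the signalizer functor axioms, the second because $B$ centralizes $a$ and $\card{K}$ is coprime to $\card{B}$), so the same generation principle gives
\[
    \theta(a) = \gen{\cz{\theta(a)}{b}}{b \in B\nonid} \quad\text{and}\quad \cz{K}{a} = \gen{\cz{\cz{K}{a}}{b}}{b \in B\nonid}.
\]
For each $b \in B\nonid$, the signalizer functor axiom and the hypothesis give $\cz{\theta(a)}{b} = \theta(a) \cap \cz{G}{b} \leq \theta(b) = \cz{K}{b}$, and this subgroup is contained in $\cz{K}{a}$ because it already sits in $\cz{G}{a}$; conversely $\cz{\cz{K}{a}}{b} = \cz{K}{a} \cap \theta(b) \leq \theta(b) \cap \cz{G}{a} \leq \theta(a)$. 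Both inclusions follow.

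Coprimality of $\card{K}$ with $\card{A}$ then falls out of the identity just proved: if a prime $p$ divided both, picking $a \in A$ of order $p$ and an $\listgen{a}$-invariant Sylow $p$-subgroup $P$ of $K$ would give $1 \neq \cz{P}{a} \leq \cz{K}{a} = \theta(a)$, contradicting the coprimality of $\card{\theta(a)}$ with $\card{A}$. No step looks like a real obstacle; the only subtlety is the extension of Coprime Action~(d) to general noncyclic abelian acting groups, which is standard.
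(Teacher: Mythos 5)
Your proof is correct and follows essentially the same line as the paper's: apply the coprime generation principle (Theorem~\ref{p:4}(d)) over $B$ to both $\theta(a)$ and $\cz{K}{a}$, and use the signalizer functor axiom in both directions to identify the two generating families. The paper packages the two inclusions you prove as the single identity $\theta(a) \cap \cz{G}{b} = \cz{G}{a} \cap \theta(b)$ and then reads off $\theta(a)=\cz{K}{a}$ from a chain of equalities, but the content is the same. You additionally spell out the $A$-invariance of $K$ (via generation by the $A$-invariant $\theta(b)$) and the coprimality of $\card{K}$ with $\card{A}$, both of which the paper leaves implicit under ``the conclusion follows''; your coprimality argument is sound once one observes that an $\listgen{a}$-invariant Sylow $p$-subgroup of $K$ exists by taking a Sylow $p$-subgroup of $K\listgen{a}$ containing $a$ and intersecting with $K$.
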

\begin{proof}
    Let $K = \theta_{0}(G)$,
    so $K$ is $A$-invariant and $\theta(b) = \theta_{0}(b) = \cz{K}{b}$
    for all $b \in B\nonid$.
    Let $a \in A\nonid$.
    Note that $\theta(a) \cap \cz{G}{b} = \cz{G}{a} \cap \theta(b)$ for all $b \in B\nonid$.
    Using Coprime Action(d) we have
    \begin{align*}
        \theta(a)   &=  \gen{ \theta(a) \cap \cz{G}{b} }{ b \in B\nonid } \\
                    &=  \gen{ \cz{G}{a} \cap \theta(b) }{ b \in B\nonid } \\
                    &=  \gen{ \cz{G}{a} \cap \cz{K}{b} }{ b \in B\nonid }  = \cz{K}{a}.
    \end{align*}
    The conclusion follows.
\end{proof}

Henceforth we assume in addition to Hypothesis~\ref{er:1} that \[
    \mbox{\it $A$ is an elementary abelian $r$-group for some prime $r$.}
\]
Recall (see \cite{PFsft} for example)
that if $1 \not= B \leq A$ then $\theta(B)$ is defined by \[
    \theta(B) = \bigcap_{b \in B\nonid}\theta(b).
\]
Moreover if $H$ is a $\theta$-subgroup then $\cz{H}{B} = H \cap \theta(B)$.

\begin{Definition}\label{er:3}
    \[
    \norm{\theta} = \card{\theta(A)}\prod_{B \in \hyp{A}} \card{\theta(B):\theta(A)}.
    \]
\end{Definition}
\noindent Note that $\norm{\theta} < \infty$ since by the definition of signalizer functor,
the subgroups $\theta(a)$ are finite.
The definition is motivated by the following:

\begin{Theorem}[The Wielandt Order Formula]
    Suppose that $A$ acts coprimely on the group $H$.
    Then \[
        \card{H} = \card{\cz{H}{A}} \prod_{B \in \hyp{A}} \card{\cz{H}{B}:\cz{H}{A}}.
    \]
\end{Theorem}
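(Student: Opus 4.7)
The proof is by induction on $\rank{A}$. The base case $\rank{A} = 1$ is immediate: the only hyperplane of a cyclic group of prime order is the trivial subgroup, and the formula reduces to the tautology $\card{H} = \card{\cz{H}{A}} \cdot \card{H : \cz{H}{A}}$.

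For the inductive step with $\rank{A} = n \geq 3$, I would fix a hyperplane $B_0$ of $A$ and apply the inductive hypothesis to the coprime action of $B_0$ on $H$:
\[
\card{H} = \card{\cz{H}{B_0}} \prod_{B' \in \hyp{B_0}} \card{\cz{H}{B'} : \cz{H}{B_0}}.
\]
For each $B' \in \hyp{B_0}$, the rank-$2$ quotient $A/B'$ acts coprimely on $\cz{H}{B'}$, and its hyperplanes correspond bijectively to the hyperplanes of $A$ containing $B'$: namely $B_0$ together with $r$ further hyperplanes $B_1^{B'}, \ldots, B_r^{B'}$. Applying the rank-$2$ case of Wielandt, valid under the inductive hypothesis since $2 \leq n-1$, to the action of $A/B'$ on $\cz{H}{B'}$ yields
\[
\card{\cz{H}{B'} : \cz{H}{B_0}} = \prod_{i=1}^{r} \card{\cz{H}{B_i^{B'}} : \cz{H}{A}}.
\]
Substituting and noting that every hyperplane $B \neq B_0$ of $A$ arises exactly once, namely for $B' = B \cap B_0$, produces the desired identity.

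The main obstacle is to establish the rank-$2$ base case, which is not covered by the induction above. Here I would induct on $\card{H}$: given a proper nontrivial $A$-invariant normal subgroup $N$ of $H$, Coprime Action(b) applied to each $B \leq A$ yields $\card{\cz{H}{B}} = \card{\cz{N}{B}} \cdot \card{\cz{H/N}{B}}$, so the Wielandt identity is multiplicative in $(N, H/N)$ and we reduce to $H$ being $A$-simple. If $H$ is abelian, and hence an elementary abelian $p$-group with $p$ coprime to $\card{A}$, then decomposing $H \otimes \overline{\field{F}_p} = \bigoplus_\chi H_\chi$ over linear characters $\chi$ of $A$ shows that each nontrivial $\chi$ has $\ker\chi$ a unique hyperplane of $A$, and Wielandt follows immediately from the dimension identity $\dim H = \dim \cz{H}{A} + \sum_{B \in \hyp{A}}(\dim \cz{H}{B} - \dim \cz{H}{A})$. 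If $H$ is nonabelian $A$-simple, write $H = H_1 \times \cdots \times H_k$ with the $H_i$ simple nonabelian transitively permuted by $A$, set $A_0 = \operatorname{Stab}_A(H_1)$, and use the formula $\card{\cz{H}{B}} = \card{\cz{H_1}{B \cap A_0}}^{\card{A}/\card{BA_0}}$ to reduce the rank-$2$ identity for $H$ to an identity involving centralizer orders in $H_1$; the residual case $k = 1$ of a single nonabelian simple group is settled by orbit counting, classifying $A$-orbits on $H$ by stabilizer, which must be $A$, a hyperplane, or trivial, with orbit sizes $1$, $r$, or $r^2$ respectively.
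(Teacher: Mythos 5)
Your reduction from rank $n \geq 3$ to rank $2$ is sound, as is the multiplicativity over $A$-invariant normal subgroups via Coprime Action(b), and the character-theoretic treatment of the elementary abelian case is correct (indeed it works for arbitrary rank of $A$, which makes the outer rank induction superfluous). The subcases $k = r$ and $k = r^2$ of the nonabelian $A$-simple case also check out via the wreath-product formula you quote. The gap is the residual subcase $k = 1$: it is not settled by orbit counting. Writing $n_A = |C_H(A)|$ and letting $n_B$ denote the number of $A$-orbits on $H$ with stabilizer exactly the hyperplane $B$, orbit counting gives only the additive relations $|C_H(B)| = n_A + r\,n_B$ and $|H| = n_A + r\sum_B n_B + r^2 n_1$. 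The Wielandt identity, rewritten as $|H| \cdot n_A^{\,r} = \prod_{B}\bigl(n_A + r\,n_B\bigr)$, is multiplicative; expanding the right-hand side produces cross-terms such as $n_A^{\,r-1} r^2 \sum_{B \neq B'} n_B\,n_{B'}$ and higher, which are not forced to equal $n_A^{\,r}\,r^2\,n_1$ by the additive data alone. Those counting relations are consistent with many values of $n_1$ and cannot by themselves deliver the identity; some genuine group-theoretic input is required.

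The standard way to sidestep this is to reduce to $p$-groups before passing to $A$-chief factors. For each prime $p$ pick an $A$-invariant $P \in \operatorname{Syl}_p(H)$, which exists by Coprime Action(a); then $C_P(B) \in \operatorname{Syl}_p(C_H(B))$ for every $B \leq A$, so comparing $p$-parts reduces the formula for $H$ to the formula for each such $P$. Since $P$ is a $p$-group, every $A$-chief factor of $P$ is elementary abelian, so the multiplicativity reduction never produces a nonabelian $A$-simple factor and the character argument alone finishes the proof. With this ordering of reductions neither the rank induction nor the nonabelian simple analysis is needed.
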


\begin{Lemma}\label{er:5}
    \begin{enumerate}
        \item[(a)]  Let $\psi$ be a subfunctor of $\theta$.
                    Then $\norm{\psi} \leq \norm{\theta}$
                    with equality if and only if $\psi = \theta$.

        \item[(b)]  Let $H$ be a $\theta$-subgroup of $G$.
                    Then $\card{H} \leq \norm{\theta}$ with equality
                    if and only if $\theta(a) \leq H$ for all $a \in A\nonid$.

        \item[(c)]  If $\theta$ is complete then $\card{\theta(G)} = \norm{\theta}$.

        \item[(d)]  Suppose that $N$ is a normal $\theta$-subgroup of $G$.
                    Set $\br{G} = G/N$ and define $\br{\theta}$ by \[
                        \br{\theta}(a) = \br{\theta(a)}
                    \]
                    for all $a \in A\nonid$.
                    Then:
                    \begin{enumerate}
                        \item[(i)]  $\br{\theta}$ is an $A$-signalizer functor on $\br{G}$.

                        \item[(ii)] $\br{\theta}(B) = \br{\theta(B)}$ for all $1 \not= B \leq A$.

                        \item[(iii)]    $\theta$ is complete if and only if $\br{\theta}$ is complete.

                        \item[(iv)] $\norm{\br{\theta}} \leq \norm{\theta}$ with equality
                                    if and only if $N = 1$.
                    \end{enumerate}

        \item[(e)]  Let $a \in A\nonid$.
                    Then $\theta(a) = \gen{ \theta(B) }{ a \in B \in \hyp{A} }$.
    \end{enumerate}
\end{Lemma}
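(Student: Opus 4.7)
First, I would prove (e), since the equality cases of (a) and (b) both rely on it. Apply Coprime Action(d) to the action of $A$ on the finite coprime subgroup $\theta(a)$:
\[
    \theta(a) = \gen{ \cz{\theta(a)}{B} }{ B \in \hyp{A} }.
\]
If $a \in B$, then $\theta(B) \leq \theta(a)$ and the signalizer functor property gives $\cz{\theta(a)}{B} = \theta(a) \cap \theta(B) = \theta(B)$. If $a \notin B$, then $\listgen{B, a} = A$, so $\cz{\theta(a)}{B} = \cz{\theta(a)}{A} = \theta(A)$; since $A$ is noncyclic there exists some $B' \in \hyp{A}$ with $a \in B'$, and the factor $\theta(A) \leq \theta(B')$ is absorbed.

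Next, I would prove (b) by applying the Wielandt Order Formula to $H$ and invoking the identity $\cz{H}{B} = H \cap \theta(B)$ stated before Definition~\ref{er:3}. Since $\cz{H}{B} \cap \theta(A) = H \cap \theta(A) = \cz{H}{A}$, the second isomorphism theorem gives
\[
    \card{\cz{H}{B}:\cz{H}{A}} = \card{\cz{H}{B}\theta(A):\theta(A)} \leq \card{\theta(B):\theta(A)},
\]
so Wielandt yields $\card{H} \leq \norm{\theta}$; equality forces $\theta(A) \leq H$ and $\theta(B) \leq H$ for every hyperplane $B$, which by (e) is the condition $\theta(a) \leq H$ for all $a \in A\nonid$. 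For (a), each $\psi(a)$ is a $\theta$-subgroup, and the key identity is $\psi(B) \cap \theta(A) = \psi(A)$: given $x$ in the intersection and any $a \in A\nonid$, $b \in B\nonid$, the signalizer functor property of $\psi$ gives $x \in \psi(b) \cap \cz{G}{a} \leq \psi(a)$. Thus $\card{\psi(B):\psi(A)} \leq \card{\theta(B):\theta(A)}$ and $\card{\psi(A)} \leq \card{\theta(A)}$, so $\norm{\psi} \leq \norm{\theta}$; equality forces $\psi(A) = \theta(A)$ and $\psi(B) = \theta(B)$ for every hyperplane, and applying (e) to both $\psi$ and $\theta$ yields $\psi(a) = \theta(a)$ for every $a$. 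Part (c) is immediate from Wielandt applied to $K = \theta(G)$, using $\theta(a) = \cz{K}{a}$ and hence $\theta(B) = \cz{K}{B}$.

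For (d), the substantive work is in (i) and (ii). To verify the balance condition for $\br{\theta}$, apply Coprime Action(e) to the $A$-invariant product $N\theta(a)$ to get $\cz{N\theta(a)}{b} = \cz{N}{b}\cz{\theta(a)}{b} \leq \theta(b)$, using that $N$ is a $\theta$-subgroup and the signalizer functor property of $\theta$; then pass to the quotient via Coprime Action(b). Part (ii) follows by replacing $b$ with $B \leq A$ in the same computation. For (iii), when $\theta$ is complete with completion $K$, Coprime Action(d) gives $N = \gen{\cz{N}{a}}{a \in A\nonid} \leq K$, so $K/N$ serves as completion of $\br{\theta}$; conversely, the preimage in $G$ of any completion of $\br{\theta}$ is the completion of $\theta$, verified by an analogous intersection-with-$\cz{G}{a}$ argument. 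For (iv), the $\theta$-subgroup property of $N$ yields $\theta(B) \cap N = \cz{N}{B}$, hence $\card{\br{\theta}(B)} = \card{\theta(B)}/\card{\cz{N}{B}}$; assembling the ratios across all hyperplanes and invoking Wielandt applied to $N$ itself produces the clean identity $\norm{\br{\theta}} = \norm{\theta}/\card{N}$, from which both the inequality and the equality condition $N = 1$ follow immediately. The main obstacle throughout (d) is ensuring the coprime-action computations pass cleanly through the quotient modulo $N$; since $G$ itself need not be finite, the device is to work inside the finite subgroup $N\theta(a)$, where Coprime Action applies, rather than in $\br{G}$ directly.
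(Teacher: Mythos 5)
Your proof is correct and is precisely the unpacking of the paper's one-line proof ("This follows from Coprime Action and the Wielandt Order Formula"): you use the Wielandt Order Formula for (a)--(c), Coprime Action(d) for (e), and Coprime Action(b),(e) inside the finite group $N\theta(a)$ for (d), which is exactly the intended route. The only spot worth tightening in writing is (d)(ii), where "replacing $b$ with $B$" glosses over the step of fixing $a\in B^{\#}$, computing $\cz{N\theta(a)}{B}=\theta(B)$ via Coprime Action(e), and then applying Coprime Action(b) to the finite group $N\theta(a)$ to identify $\cz{\br{N\theta(a)}}{B}$ with $\br{\theta(B)}$.
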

\begin{proof}
    This follows from Coprime Action and the Wielandt Order Formula.
\end{proof}

Finally we develop an idea of McBride that results in a fundamental dichotomy
in the proof of the Signalizer Functor Theorem.

\begin{Definition} \label{er:6} \mbox{}
    \begin{itemize}
        \item   $\theta$ is \emph{semisimple} if $1$ is the only $\theta(A)$-invariant
                solvable $\theta$-subgroup.

        \item   $\theta$ is \emph{nearsolvable} if $\theta(A)$ is solvable
                and every composition factor of every proper $\theta$-subgroup
                is isomorphic to \badfourlist.
    \end{itemize}
\end{Definition}
\noindent McBride's idea was to separate out the nonsolvable pieces of $\theta$
from the solvable pieces.
This is not possible -- but it nearly is.
The difficulty arises because the groups listed possess an automorphism
of order $r$ whose fixed point subgroup is solvable.
The following result is \cite[Theorem~6.6]{McB1},
a presentation of which may also be found in \cite[Theorem~8.8]{PFII}.

\begin{Theorem}[McBride's Dichotomy] \label{er:7}
    Suppose that $\theta$ is a minimal counterexample to the Signalizer Functor Theorem.
    Then $\theta$ is either semisimple or nearsolvable.
\end{Theorem}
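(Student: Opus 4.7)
The proof will proceed by induction on $\norm{\theta}$ (Definition~\ref{er:3}), so I take $\theta$ to be a counterexample with $\norm{\theta}$ minimal. The central tool is the $\fancyP$-subgroup machinery of Theorem~\ref{psgps:5} applied with $\fancyP$ the class of solvable groups, which is closed under subgroups, quotients, and extensions. This produces the solvable subfunctor $\theta_{\mathrm{sol}}$ given by $\theta_{\mathrm{sol}}(a) = \op{\mathrm{sol}}{\theta(a);A}$, namely the unique maximal $A\theta(A)$-invariant solvable subgroup of $\theta(a)$.

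The first reduction rules out the extreme case $\theta_{\mathrm{sol}} = \theta$: equality would force each $\theta(a)$ to be solvable, whence the Solvable Signalizer Functor Theorem of \cite{PFsft} would give that $\theta$ is complete, contradicting the counterexample hypothesis. Hence $\theta_{\mathrm{sol}}$ is a proper subfunctor of $\theta$; Lemma~\ref{er:5}(a) then yields $\norm{\theta_{\mathrm{sol}}} < \norm{\theta}$, and the minimality of $\theta$ forces $\theta_{\mathrm{sol}}$ to be complete. Writing $S = \theta_{\mathrm{sol}}(G)$, Theorem~\ref{psgps:5}(b) identifies $S$ as the unique maximal $\theta(A)$-invariant solvable $\theta$-subgroup of $G$. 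In particular, if $S = 1$ then no nontrivial $\theta(A)$-invariant solvable $\theta$-subgroup exists, i.e.\ $\theta$ is semisimple and we are done.

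Assume from now on that $S \neq 1$; the goal is to deduce nearsolvability. The composition-factor half of the conclusion is governed by Theorem~\ref{as:1}(e): for any proper $\theta$-subgroup $H$ and any $K \in \comp{A}{H}$, one has $\cz{K}{A} \leq \theta(A)$; once $\theta(A)$ is known to be solvable, Theorem~\ref{as:1}(e)(i) forces the simple factors of $K/\zz{K}$ to be isomorphic to one of \badfourlist. Since $\ff{H}$ is solvable, $\layerr{H}/\zz{\layerr{H}}$ is a product of components, and $H/\gfitt{H}$ is solvable by Schreier for $K$-groups, every nonabelian composition factor of $H$ lies in $\badfourset$, as required. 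The step I expect to be the principal obstacle is therefore establishing that $\theta(A)$ itself is solvable when $S \neq 1$. My approach would be a proof by contradiction: a nonsolvable $\theta(A)$ is itself a nonsolvable $A\theta(A)$-invariant $\theta$-subgroup, and one attempts, using Theorem~\ref{aut:3} and the $A$-component analysis of \S\ref{as} together with the coprime action machinery of Theorem~\ref{p:4}, either to enlarge $S$ by adjoining a $\theta(A)$-invariant solvable piece constructed from $\theta(A)$ (contradicting the maximality of $S$) or to extract a strictly smaller-norm signalizer subfunctor that is still a counterexample (contradicting the minimality of $\norm{\theta}$).
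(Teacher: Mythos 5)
The paper does not actually prove this theorem: it cites \cite[Theorem~6.6]{McB1} and a presentation in \cite[Theorem~8.8]{PFII}, so there is no in-paper argument to compare against. Your proposal is therefore judged on its own terms, and it has two genuine gaps.

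First, the heart of the matter is left unproved. You correctly reduce the semisimple direction to completeness of $\thetasol$ and the observation that $S = 1$ is literally the definition of semisimplicity, and you correctly identify the key obstacle: showing $\theta(A)$ is solvable when $S \neq 1$. But the passage ``one attempts, using Theorem~\ref{aut:3} and the $A$-component analysis\dots either to enlarge $S$ by adjoining a $\theta(A)$-invariant solvable piece\dots or to extract a strictly smaller-norm signalizer subfunctor'' is a plan to find an argument, not an argument. There is no construction of the enlarged solvable subgroup and no construction of the smaller subfunctor, nor any indication of what obstruction would force one or the other to exist. This is precisely the content that makes McBride's Dichotomy a substantive theorem (the difficulty is that the groups $\ltwo{2^r}$, $\ltwo{3^r}$, $\uthree{2^r}$, $\sz{2^r}$ have an automorphism of order $r$ with solvable fixed points, so a nonsolvable $\theta(a)$ cannot be cleanly separated from $S$), and your sketch does not address it.

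Second, even granting $\theta(A)$ solvable, the composition-factor argument is not correct as stated. You claim ``$H/\gfitt{H}$ is solvable by Schreier for $K$-groups,'' but this fails in general: if $\layerr{H} = 1$, then $H/\gfitt{H} = H/\ff{H}$ can be nonsolvable (e.g.\ $H$ a nonsplit or split extension of an elementary abelian $p$-group by a nonabelian simple group). Moreover, the nonabelian composition factors of $H$ are not in general accounted for by $\comp{A}{H}$; they are the components of $H/\sol{H}$, so you must work with $(A,\mathrm{sol})$-components or pass to $\br{H} = H/\sol{H}$ and its $A$-components. Even there, the quotient $\br{H}/\layerr{\br{H}}$ embeds in $\out{\layerr{\br{H}}}$, and because $\layerr{\br{H}}$ is a direct product of many simple factors this outer automorphism group maps onto a subgroup of a symmetric group; the Schreier argument applies only to the factor permuting components among themselves, and an extra argument is needed to control the permutation image. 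So the reduction of the composition-factor claim to Theorem~\ref{as:1}(e)(i) requires more care than you give it, quite apart from the unresolved step about $\theta(A)$.
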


\section{The minimal counterexample} \label{m}
Henceforth we assume the Signalizer Functor Theorem
to be false and let $(A,G,\theta)$ to be a counterexample.
By Lemma~\ref{er:2} we may suppose that $A$ is an elementary
abelian $r$-group with rank 3 for some prime $r$.
Then we may assume that $\norm{\theta}$ has been minimized.
Without loss
\begin{equation} \tag{$1$}
    G = \gen{ \theta(a) }{ a \in A\nonid }.
\end{equation}

In broad outline,
the proof proceeds as follows:
show that the family of subgroups $\set{ \theta(a) }{ a \in A\nonid }$
resembles the family of centralizers $\set{ \cz{G^{*}}{a} }{ a \in A\nonid }$
of some $A$-simple group $G^{*}$.
Then invoke a suitable characterization theorem,
namely Theorem~\ref{as:2}.

Most of the difficulty lies in establishing \[
    \layerr{\theta(a)} \not= 1
\]
for some $a \in A\nonid$ and then that \[
    \layerr{\layerr{\theta(a)} \cap \cz{G}{b}} \leq \layerr{\theta(b)}
\]
for all $a,b \in A\nonid$.

We define some notation:
\begin{itemize}
    \item   $\Theta$ is the set of proper $\theta$-subgroups of $G$.

    \item   $\fancyL$ is the set of $\theta(A)$-invariant members of $\Theta$.

    \item   $\Theta^{*}$ and $\fancyLstar$ denote the sets of maximal members
            of $\Theta$ and $\fancyL$ respectively.
\end{itemize}

\noindent Note that it could be the case that $G$ is itself a $\theta$-subgroup,
but in that case, $G$ is not a $K$-group.

If $H$ is an $A$-invariant subgroup of $G$ and of the $\theta$-subgroups of $G$ contained in $H$
there is a unique maximal one,
then we denote that $\theta$-subgroup by \[
    \theta(H)
\]
and say that \emph{$\theta(H)$ is defined.}
If $X \leq G$ then we abbreviate $\n{G}{X}$ and $\cz{G}{X}$ to $\nn{X}$ and $\cc{X}$ respectively.

\begin{Lemma}\label{m:1}
    \begin{itemize}
        \item[(a)]  The members of $\Theta$ are $K$-groups.

        \item[(b)]  Every member of $\Theta$, resp. $\fancyL$,
                    is contained in a member of $\Theta^{*}$, resp. $\fancyLstar$.

        \item[(c)]  If $H$ is a proper $A$-invariant subgroup of $G$ then
                    $\theta(H)$ is defined and $\theta(H) \in \Theta$.

        \item[(d)]  If $1 \not= H \in \Theta$ then $\nn{H} \not= G$.

        \item[(e)]  If $M \in \Theta^{*}$ then $M = \theta(\nn{X})$
                    for all $1 \not= X \characteristic M$.

        \item[(f)]  If $M,L \in \Theta^{*}$ then $\n{L}{X} \leq M$
                    for all $1 \not= X \characteristic M$.

        \item[(g)]  $\theta(A)$ is contained in every member of $\fancyLstar$
                    and $\fancyLstar \subseteq \Theta^{*}$.
    \end{itemize}
\end{Lemma}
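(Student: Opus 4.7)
The plan is to prove the seven parts in sequence, the primary engine being the construction of subfunctors or quotient functors of $\theta$ with strictly smaller norm, which by the minimality of $(A,G,\theta)$ as a counterexample must satisfy the full conclusion of the Signalizer Functor Theorem. Parts (a), (c) and (d) use this engine directly, while (b), (e), (f) and (g) follow by clean combinatorial arguments.

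For (a), given $H \in \Theta$ I would introduce the subfunctor $\psi$ on $G$ defined by $\psi(a) = \theta(a) \cap H$; the signalizer identity is inherited from $\theta$ by intersection, and each $\psi(a)$ is a subgroup of the $K$-group $\theta(a)$, hence itself a $K$-group. Since $G$ is generated by the subgroups $\theta(a)$ and $H \neq G$, one cannot have $\theta(a) \leq H$ for every $a$, so $\psi \neq \theta$ and Lemma~\ref{er:5}(a) gives $\norm{\psi} < \norm{\theta}$. By minimality, $\psi$ satisfies the Signalizer Functor Theorem; Coprime Action(d) combined with $\cz{H}{a} = H \cap \theta(a) = \psi(a)$ identifies the completion of $\psi$ with $H$, and the moreover clause places the composition factors of $H$ among those of the $\theta(a)$'s, making $H$ a $K$-group. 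Part (b) is immediate from Lemma~\ref{er:5}(b), since every $\theta$-subgroup has order bounded by $\norm{\theta}$, so chains terminate.

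For (c), the same subfunctor attached to a proper $A$-invariant $H$ has completion $\psi(G) \leq H$, a $\theta$-subgroup contained in $H$; any competing $\theta$-subgroup $K \leq H$ satisfies $\cz{K}{a} \leq \theta(a) \cap H = \cz{\psi(G)}{a}$, so Coprime Action(d) forces $K \leq \psi(G)$, yielding $\theta(H) = \psi(G) \in \Theta$. For (d), if $\nn{H} = G$ then $H \normal G$; Lemma~\ref{er:5}(d)(iv) gives $\norm{\br{\theta}} < \norm{\theta}$, so minimality makes $\br{\theta}$ complete, and part (d)(iii) of the same lemma pulls completeness back to $\theta$, contradicting the counterexample assumption. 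Parts (e) and (f) then follow: for $M \in \Theta^{*}$ and $1 \neq X \characteristic M$, the subgroup $X$ itself lies in $\Theta$ (as $\cz{X}{a} \leq \cz{M}{a} \leq \theta(a)$), so (d) gives $\nn{X} \neq G$; then $M \leq \nn{X}$ places $M$ inside $\theta(\nn{X}) \in \Theta$ by (c), and maximality forces equality, while for any other $L \in \Theta^{*}$, $\n{L}{X}$ is an $A$-invariant subgroup of the $\theta$-subgroup $L$, hence a $\theta$-subgroup sitting inside $\nn{X}$, hence inside $\theta(\nn{X}) = M$.

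For (g), $\theta(A)$ is itself a $\theta$-subgroup (finite, $A$-invariant, and $\cz{\theta(A)}{a} \leq \theta(A) \leq \theta(a)$). For $M \in \fancyLstar$ the product $M\theta(A)$ is a subgroup (as $M$ is $\theta(A)$-invariant) and, using $\theta(A) \leq \cz{G}{A}$, the computation $\cz{M\theta(A)}{a} = \cz{M}{a}\theta(A) \leq \theta(a)$ shows $M\theta(A)$ is a $\theta(A)$-invariant $\theta$-subgroup containing $M$; maximality in $\fancyL$ then forces $\theta(A) \leq M$. The inclusion $\fancyLstar \subseteq \Theta^{*}$ is then immediate: any $\theta$-subgroup $N \geq M$ contains $\theta(A)$ and is therefore $\theta(A)$-invariant, hence lies in $\fancyL$, so $N = M$ by maximality. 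The main obstacle is part (a), the only place where mere completeness of the reduced functor $\psi$ is insufficient and one must invoke the moreover clause of the Signalizer Functor Theorem itself to extract the $K$-group property.
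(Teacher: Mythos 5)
Your proposal follows exactly the route the paper indicates (its entire proof is the one-line remark that the lemma ``is a consequence of Lemma~\ref{er:5}, $(1)$ and the minimality of $\norm{\theta}$''), and it is substantially correct: the subfunctor $\psi(a) = \theta(a) \cap H$ together with $\norm{\psi} < \norm{\theta}$ is indeed the engine for (a) and (c), and (b), (e), (f), (g) are the clean combinatorial consequences you describe.

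There are two small gaps worth closing. In part (d), showing that $\br{\theta}$ is complete and hence (via Lemma~\ref{er:5}(d)(iii)) that $\theta$ is complete does not yet contradict $(A,G,\theta)$ being a counterexample: the Signalizer Functor Theorem as stated here also asserts the composition-factor claim, so a counterexample could in principle be complete but fail that clause. You need one more step: $\theta(G)$ is an extension of $H$ (a $K$-group by part (a)) by $\br{\theta}(\br{G})$ (a $K$-group by the moreover clause applied to $\br{\theta}$, or by Lemma~\ref{psgps:6}), hence is itself a $K$-group, and then Lemma~\ref{psgps:6} places its composition factors among those of the $\theta(a)$, giving the genuine contradiction. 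In part (g), before invoking maximality of $M$ in $\fancyL$ you must check that $M\theta(A)$ is a \emph{proper} $\theta$-subgroup; this follows because $M\theta(A) = G$ would force $M \normal G$, contradicting part (d) applied to $M$. Finally, your closing remark that (a) is the ``only place'' where completeness of the auxiliary functor is insufficient slightly undersells the point, since as noted (d) also needs the composition-factor clause.
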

\begin{proof}
    This is a consequence of Lemma~\ref{er:5}, $(1)$ and the minimality of $\norm{\theta}$.
\end{proof}

\begin{Corollary}\label{m:2}
    Let $M,N \in \Theta^{*}$.
    Suppose that $\ostar{M} \leq N$ and $\ostar{N} \leq M$.
    Then $M = N$.
\end{Corollary}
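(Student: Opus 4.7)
The plan is to show that $\ostar{M}=\ostar{N}$ and then invoke Lemma~\ref{m:1}(e) with $X=\ostar{M}$ to conclude $M=\theta(\nn{\ostar{M}})=N$.

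First I would dispose of the degenerate possibility $M=1$ or $N=1$. If, say, $M=1$, then $\ostar{N}\leq M=1$ forces $\gfitt{N}=1$ (since $\gfitt{N}\leq\ostar{N}$), hence $N=1$ and the claim holds. So I assume henceforth that $M\neq 1\neq N$, which ensures $\ostar{M}\neq 1\neq \ostar{N}$ via the inequalities $\gfitt{M}\leq\ostar{M}$ and $\gfitt{N}\leq\ostar{N}$.

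Next, I would consider the intersection $M\cap N$. The hypothesis $\ostar{M}\leq N$ gives the chain $\ostar{M}\leq M\cap N\leq M$, so Lemma~\ref{p:2}(f) applied inside $M$ yields $\ostar{M\cap N}=\ostar{M}$. Symmetrically, from $\ostar{N}\leq M\cap N\leq N$ the same lemma applied inside $N$ gives $\ostar{M\cap N}=\ostar{N}$. Comparing, $\ostar{M}=\ostar{N}$.

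Finally, set $X=\ostar{M}=\ostar{N}$. Since $\ostar{\cdot}$ is a characteristic subgroup, $X$ is a nontrivial characteristic subgroup of both $M$ and $N$. Since $M,N\in\Theta^{*}$, Lemma~\ref{m:1}(e) applies and gives $M=\theta(\nn{X})=N$. The argument is essentially a bookkeeping exercise once one recognises that the hypotheses together with McBride's lemma (\ref{p:2}(f)) force $\ostar{M}$ and $\ostar{N}$ to coincide; the main thing to be careful about is confirming that $\ostar{M}$ is nontrivial before invoking Lemma~\ref{m:1}(e), which is the only obstacle worth naming.
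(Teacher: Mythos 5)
Your proof is correct and follows essentially the same route as the paper's: both establish $\ostar{M}=\ostar{M\cap N}=\ostar{N}$ via Lemma~\ref{p:2}(f) and then invoke Lemma~\ref{m:1}(e) with $X=\ostar{M}$, with the trivial case $\ostar{M}=1$ disposed of separately. The only cosmetic difference is that you front-load the degenerate case $M=1$ rather than treating $\ostar{M}=1$ at the end, but these are equivalent.
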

\begin{proof}
    We have $\ostar{M} \leq M \cap N \leq N$ so Lemma~\ref{p:2}(f)
    implies $\ostar{M} = \ostar{M \cap N}$.
    Similarly $\ostar{N} = \ostar{M \cap N}$.
    Then $\ostar{M} = \ostar{N}$.
    If $\ostar{M} \not= 1$ then the conclusion follows from Lemma~\ref{m:1}(e).
    If $\ostar{M} = 1$ then $M = N = 1$ and again the conclusion holds.
\end{proof}

If $M,N \in \fancyLstar$ then $M$ and $N$ are comaximal by Lemma~\ref{m:1}(f).
However, a little more can be said.

\begin{Lemma}\label{m:3}
    Let $M,N \in \fancyLstar, B \in \hyp{A}$ and $x \in \theta(B)$.
    Then $M$ and $N^{x}$ are comaximal.
\end{Lemma}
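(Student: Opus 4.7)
The plan is to reduce the statement to a direct application of Lemma~\ref{m:1}(f). First observe that comaximality of the pair $(M, N^x)$ is preserved under conjugating both members by $x^{-1}$, producing the pair $(M^{x^{-1}}, N)$. It therefore suffices to show that $M^{x^{-1}}$ and $N$ are comaximal.

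To verify that $N$ is maximal with respect to $M^{x^{-1}}$, I would let $1 \not= T$ be characteristic in $N$ with $T \leq N \cap M^{x^{-1}}$. By Lemma~\ref{m:1}(e), $N = \theta(\nn{T})$, so every $\theta$-subgroup of $\nn{T}$ lies in $N$, and it suffices to exhibit a $\theta$-subgroup of $\nn{T}$ containing $\n{M^{x^{-1}}}{T}$. The symmetric direction --- that $M^{x^{-1}}$ is maximal with respect to $N$ --- is handled by the same recipe, identifying characteristic subgroups of $M^{x^{-1}}$ with those of $M$ via conjugation by $x$ and invoking Lemma~\ref{m:1}(f) for $M \in \Theta^*$.

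The hard part is that $M^{x^{-1}}$, while $B$-invariant (because $x \in \theta(B) \leq \cz{G}{B}$ centralizes $B$), is not in general $A$-invariant, hence $M^{x^{-1}} \notin \Theta^*$ and Lemma~\ref{m:1}(f) cannot be applied directly with $L = M^{x^{-1}}$. To surmount this I would pick $a \in A \setminus B$ and apply Coprime Action~(c) to $\listgen{a}$ acting on the $A$-invariant $\theta$-subgroup $\theta(B)$: since $\theta(B)$ centralizes $B$, one obtains $\theta(B) = [\theta(B), a]\theta(A)$. Factoring $x = yz$ with $y \in [\theta(B), a]$ and $z \in \theta(A)$, Lemma~\ref{m:1}(g) gives $z \in M \cap N$, so $z$ normalizes both $M$ and $N$ and can be absorbed, leaving the more delicate case $x \in [\theta(B), a]$. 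A Frattini-style argument using Coprime Action~(a) --- that $\theta(A) = \cz{\theta(B)}{A}$ controls the fusion of $A$-invariant Sylow subgroups inside $\theta(B)$ --- should then reduce to a configuration where Lemma~\ref{m:1}(f) applies. The principal obstacle is carrying out this final Frattini reduction rigorously, i.e., transferring the characteristic-subgroup structure of $M^{x^{-1}}$ to an honest member of $\Theta^*$ lying inside $\nn{T}$; this seems to be the technical heart of the lemma.
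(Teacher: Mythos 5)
Your reformulation $(M,N^x)\to(M^{x^{-1}},N)$ is harmless, and you correctly identify the obstacle: the conjugate is $B$-invariant but not $A$-invariant, so it is not a $\theta$-subgroup and Lemma~\ref{m:1}(e),(f) cannot be applied to it directly. But the factorization $\theta(B)=[\theta(B),a]\theta(A)$ followed by a Frattini-style reduction does not resolve this obstacle, and you acknowledge you have not carried it through; in fact after absorbing the $\theta(A)$ piece one is still left with a conjugate that fails to be $A$-invariant, so the same difficulty recurs. The proposal as written is therefore incomplete.

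The missing idea, which the paper uses, is that one never needs to restore $A$-invariance: it suffices that $N^x$ is $B$-invariant and that $B$ is \emph{noncyclic} (it has rank $2$ since $\rank{A}=3$). Set $L=\n{N^{x}}{T}$ where $1\ne T\characteristic M$ with $T\le M\cap N^{x}$. For each $b\in B\nonid$, the element $x$ commutes with $b$ and lies in $\theta(B)\le\theta(b)$, so
\[
\cz{L}{b}\le\cz{N^{x}}{b}=\bigl(\cz{N}{b}\bigr)^{x}\le\theta(b)^{x}=\theta(b),
\]
whence $\cz{L}{b}\le\theta(b)\cap\nn{T}\le\theta(\nn{T})=M$ by Lemma~\ref{m:1}(e). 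Since $B$ is noncyclic, Coprime Action(d) applied with $B$ as the acting group gives $L=\gen{\cz{L}{b}}{b\in B\nonid}\le M$. The symmetric argument with the roles of $M$ and $N^{x}$ exchanged completes the proof. In short, Coprime Action(d) lets you assemble $L$ from its $\cz{L}{b}$ pieces, each of which is visible to the signalizer functor even though $L$ itself is not; this sidesteps the $A$-invariance problem entirely rather than trying to repair it.
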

\begin{proof}
    Suppose $1 \not= T \characteristic M$ with $T \leq M \cap N^{x}$.
    Set $L = \n{N^{x}}{T}$.
    Let $b \in B\nonid$.
    Then \[
        \cz{L}{b} \leq \cz{M^{x}}{b} = \left( \cz{M}{b} \right)^{x} \leq \theta(b)^{x} = \theta(b)
    \]
    so $\cz{L}{b} \leq \theta(b) \cap \nn{T} \leq \theta(\nn{T}) = M$.
    Since $B$ is noncyclic,
    Coprime Action(d) implies $L \leq M$.
    Hence $M$ is maximal with respect to $N^{x}$.
    Similarly, $N^{x}$ is maximal with respect to $M$.
\end{proof}

Recall from \S\ref{psgps} that $\thetasol$ is defined by \[
    \thetasol(a) = \op{\mathrm{sol}}{\theta(a);A}
\]
for each $a \in A\nonid$,
where $\op{\mathrm{sol}}{\theta(a);A}$ is the largest
$A\cz{\theta(a)}{A}$-invariant  solvable subgroup of $\theta(a)$.
Note that $\cz{\theta(a)}{A} = \theta(A)$.
Theorem~\ref{psgps:5} implies that $\thetasol(a)$
is itself solvable and that $\thetasol$ is an $A$-signalizer functor on $G$.
The Solvable Signalizer Functor Theorem implies that $\thetasol$ is complete.
Let \[
    S = \thetasol(G).
\]

\begin{Lemma}\label{m:4}
    $S$ is the unique maximal $\theta(A)$-invariant solvable $\theta$-subgroup of $G$.
\end{Lemma}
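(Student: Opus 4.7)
The plan is to obtain this as the specialization of Theorem~\ref{psgps:5}(b) to $\fancyP$ equal to the property of being solvable.  Solvability is plainly closed under subgroups, quotients and extensions, so Hypothesis~\ref{psgps:1} is satisfied.  The three ingredients required by Theorem~\ref{psgps:5}(b) are: (i) $A$ is noncyclic, which holds since $\rank{A} \geq 3$; (ii) $\thetasol$ is complete, which is the Solvable Signalizer Functor Theorem already invoked in the paragraph preceding the lemma; and (iii) $S = \thetasol(G)$ is a $K$-group.  Once all three are in place, Theorem~\ref{psgps:5}(b) gives the conclusion immediately, since a $(\fancyP,\theta)$-subgroup in this situation is precisely a solvable $\theta$-subgroup.

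The nontrivial point is (iii), and my plan is to show that $S$ is a proper, $\theta(A)$-invariant $\theta$-subgroup, so that Lemma~\ref{m:1}(a) supplies the $K$-group conclusion automatically.  First, $S$ is $\theta(A)$-invariant: by construction each $\thetasol(a)$ is $A\theta(A)$-invariant, and completeness of $\thetasol$ gives $\thetasol(a) = \cz{S}{a}$, so Coprime Action(d) applied to the coprime action of $A$ on $S$ yields $S = \gen{\thetasol(a)}{a \in A\nonid}$, which $\theta(A)$ normalizes.  Second, $S \neq G$: otherwise, completeness of $\thetasol$ together with the chain $\cz{G}{a} = \cz{S}{a} = \thetasol(a) \leq \theta(a) \leq \cz{G}{a}$ would force $\theta(a) = \cz{G}{a}$ for every $a \in A\nonid$, and $G$ (which is finite of order coprime to $\card{A}$, being the completion of $\thetasol$) would itself serve as a completion of $\theta$, contradicting the choice of $(A,G,\theta)$ as a counterexample.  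Thus $S \in \fancyL \subseteq \Theta$, and Lemma~\ref{m:1}(a) gives (iii).  The only real obstacle, modest as it is, is this verification of the $K$-group hypothesis (iii); after that, Theorem~\ref{psgps:5}(b) finishes the argument.
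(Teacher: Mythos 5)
Your proposal is correct in overall strategy and matches the paper's one-line proof, which is simply to apply Theorem~\ref{psgps:5}(b) with $\fancyP$ equal to solvability; your contribution is to spell out the verification of the three hypotheses. However, there are two points worth flagging. First, the $K$-group hypothesis (iii) can be dispatched without the detour through $\fancyL$ and Lemma~\ref{m:1}(a): the Solvable Signalizer Functor Theorem as invoked gives not only that $\thetasol$ is complete but also that its completion $S$ is \emph{solvable}, and a finite solvable group is automatically a $K$-group (its simple sections are cyclic of prime order). Second, and more substantively, your argument that $S \neq G$ contains a small gap as stated: you conclude that if $S = G$ then $\theta$ would be complete and hence $(A,G,\theta)$ would not be a counterexample, but mere completeness is not the full conclusion of the Signalizer Functor Theorem -- indeed the paper explicitly warns, just before this lemma, that ``it could be the case that $G$ is itself a $\theta$-subgroup, but in that case, $G$ is not a $K$-group.'' So to rule out $S = G$ you must additionally observe that $G$ would then equal the solvable group $S$, hence be a $K$-group, contradicting that remark (or, equivalently, deduce the composition-factor clause from Lemma~\ref{psgps:6} and so contradict the counterexample hypothesis in full). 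Once one records solvability of $S$, both the verification of (iii) and the properness of $S$ come out immediately, making your route through Lemma~\ref{m:1}(a) unnecessary.
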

\begin{proof}
    Apply Theorem~\ref{psgps:5}
\end{proof}

\noindent McBride's Dichotomy implies that if $S = 1$ then $\theta$ is semisimple
and if $S \not= 1$ then $\theta$ is nearsolvable.

This section concludes by eliminating a certain configuration.

\begin{Lemma}\label{m:6}
    The following is impossible: $e \in A\nonid$,
    $M \in \Theta$ and \[
        [\theta(a),e] \leq M
    \]
    for all $a \in A\nonid$.
\end{Lemma}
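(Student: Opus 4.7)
Suppose for contradiction the configuration $e, M$ exists. If $M = 1$ then $e$ centralizes every $\theta(a)$, hence $G$, and the signalizer functor identity $\theta(a) = \theta(a) \cap \cc{e} \leq \theta(e)$ forces $G = \theta(e)$ with $\theta(a) = \cz{G}{a}$; then $\theta$ is complete with $K$-group completion, contradicting the counterexample hypothesis. Hence $M \neq 1$, and by Lemma~\ref{m:1}(b) we may replace $M$ by a member of $\Theta^{*}$ containing it.

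\textbf{The main set-theoretic inclusion.} Since $A$ acts coprimely on the finite group $\theta(a)$, Coprime Action (Theorem~\ref{p:4}(c)) yields $\theta(a) = [\theta(a), e] \cdot \cz{\theta(a)}{e}$. The first factor lies in $M$ by hypothesis and the second in $\theta(e)$ by the signalizer functor property, so $\theta(a) \subseteq M \cdot \theta(e)$ as a set for every $a \in A\nonid$. Thus $G = \gen{\theta(a)}{a \in A\nonid} \leq \listgen{M, \theta(e)} \leq G$, and so $G = \listgen{M, \theta(e)}$. The completeness argument of the first paragraph rules out $\theta(e) = G$, so $\theta(e) \in \Theta$; and if $\theta(e) \leq M$ then $\theta(a) \leq M \cdot M = M$ for every $a$, forcing $G \leq M$ and contradicting that $M$ is proper. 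Hence $\theta(e) \not\leq M$.

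\textbf{The decisive step.} Enlarging $\theta(e)$ via Lemma~\ref{m:1}(b) to $N \in \Theta^{*}$, we obtain distinct $M, N \in \Theta^{*}$ with $\listgen{M, N} = G$. By Lemma~\ref{m:1}(f) the pair $(M, N)$ is comaximal, and the signalizer functor property gives the key inclusion $\cz{\gfitt{M}}{e} = \gfitt{M} \cap \cc{e} \leq \theta(e) \leq N$. The plan is to leverage this (and a symmetric analogue for $N$) to verify the linking relations $M \linksto N$ and $N \linksto M$ required by Bender's Maximal Subgroup Theorem~\ref{ben:2}(c), which then forces either $M = N$ (excluded) or both $M$ and $N$ to have characteristic $p$ for a single prime $p$.

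\textbf{The main obstacle.} The characteristic $p$ case is the principal technical difficulty. To eliminate it I would invoke Theorem~\ref{ben:4} with $A_{m} = A_{s} = A$, where the task reduces to verifying $\op{p}{M; A} = \op{p}{N; A}$; the hypothesis $[\theta(a), e] \leq M$ must then be pushed through the $p$-radical structures of the two maximal $\theta$-subgroups, using the $(A, \mathrm{sol})$-component machinery of Section~\ref{aut} (especially Theorem~\ref{aut:3} to place the commutators $[\theta(a), e]$ inside $\gfitt{M}$ via the subnormal structure of $\ostar{M}$). Once synchronised, Theorem~\ref{ben:4}(b) gives $J(\op{p}{M}\op{p}{N}) = J(\op{p}{N})$ and its symmetric counterpart, yielding $M = N$ — the final contradiction.
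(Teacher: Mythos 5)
Your first two paragraphs are fine: reducing to $M\in\Theta^{*}$, the set-theoretic product decomposition $\theta(a)\subseteq M\cdot\theta(e)$ via Coprime Action(c), the conclusion $G=\listgen{M,\theta(e)}$, and $\theta(e)\not\leq M$ are all correct and elementary. But the ``decisive step'' and ``main obstacle'' paragraphs are outlines, not proofs, and the outline as written does not go through. The relation $M\linksto N$ requires $X\cz{\gfitt{M}}{X}\leq N$ for some $X\subnormal\gfitt{M}$; you only establish $\cz{\gfitt{M}}{e}\leq N$, and you give no reason why $[\gfitt{M},e]$ (the other Coprime Action(c) factor of $\gfitt{M}$) should land in $N$ -- it lands in $M$ by hypothesis, which is useless. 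Worse, there is no ``symmetric analogue for $N$'': the hypothesis $[\theta(a),e]\leq M$ is asymmetric, $\theta(e)\leq N$ is all the pull you have toward $N$, and nothing in the setup pushes $\gfitt{N}$ into $M$. Likewise, verifying $\op{p}{M;A}=\op{p}{N;A}$ for the characteristic~$p$ case is asserted as a ``task'' but not carried out, and it is not clear how the hypothesis would deliver it.

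The paper's proof is genuinely different and much shorter: it cites an argument of Bender (Theorem~4.2 of~\cite{PFsft}) that shows directly from the hypothesis $[\theta(a),e]\leq M$ that $\theta(e)$ normalizes $[M,e]$ and that $\theta$ is complete with $\theta(G)=\theta(e)[M,e]$; then $\theta(G)$ is a $K$-group because $\theta(e)$ and $[M,e]$ are, and Lemma~\ref{psgps:6} handles the composition-factor clause, giving the contradiction. That argument produces the completion explicitly rather than trying to collapse $M$ and $N$ via maximal-subgroup comparisons, and it sidesteps the linking and characteristic-$p$ verifications that stall your approach. If you want to pursue your route you would need to actually prove the linking relations and the $p$-radical equality, and I do not see how the hypothesis supplies the $N\linksto M$ direction at all.
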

\begin{proof}
    Assume that it does hold.
    An argument of Bender, see \cite[Theorem~4.2]{PFsft},
    implies that $\theta(e)$ normalizes $[M,e]$ and that
    $\theta$ is complete with $\theta(G) = \theta(e)[M,e]$.
    Then $\theta(G)$ is a $K$-group because $\theta(e)$ and $[M,e]$ are $K$-groups.
    Lemma~\ref{psgps:6} implies that the composition factors of $\theta(G)$
    are to be found amongst the composition factors of the subgroups $\theta(a); a \in A\nonid$,
    contrary to $(A,G,\theta)$ being a counterexample to the Signalizer Functor Theorem.
\end{proof}
\begin{Corollary}\label{m:7}
    Suppose that $\psi$ is a subfunctor of $\theta$, $e \in A\nonid$
    and \[
        [\theta(a),e] \leq \psi(a)
    \]
    for all $a \in A\nonid$.
    Then $\psi = \theta$.
\end{Corollary}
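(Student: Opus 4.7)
The plan is to argue by contradiction, leveraging the minimality of $\norm{\theta}$ to force completeness of $\psi$, and then derive the forbidden configuration of Lemma~\ref{m:6}. The key observation is that any strict subfunctor of $\theta$ has strictly smaller norm by Lemma~\ref{er:5}(a), and therefore cannot itself be a counterexample.

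First, I would suppose for contradiction that $\psi \not= \theta$, so $\norm{\psi} < \norm{\theta}$. Since $\psi(a) \leq \theta(a)$ is a $K$-group as a subgroup of a $K$-group, the triple $(A,G,\psi)$ satisfies the hypotheses of the Signalizer Functor Theorem but has strictly smaller norm than the minimal counterexample. By the inductive hypothesis, $\psi$ is complete; set $M = \psi(G)$, so that $M$ is $A$-invariant and $\psi(a) = \cz{M}{a}$ for all $a \in A\nonid$.

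Next I would verify that $M$ is automatically a $\theta$-subgroup of $G$: since $\psi$ is a subfunctor, $\cz{M}{a} = \psi(a) \leq \theta(a)$ for every $a \in A\nonid$. The argument then splits into two cases. If $M = G$, then $\cz{G}{a} = \psi(a) \leq \theta(a) \leq \cz{G}{a}$, where the last inclusion comes from the definition of a signalizer functor; this forces $\psi(a) = \theta(a)$ for all $a$, contradicting $\psi \not= \theta$. If $M \not= G$, then $M$ is a proper $A$-invariant $\theta$-subgroup, so $M \in \Theta$, and the hypothesis yields
\[
    [\theta(a),e] \leq \psi(a) = \cz{M}{a} \leq M
\]
for all $a \in A\nonid$, in direct contradiction with Lemma~\ref{m:6}.

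There is no substantive obstacle here: the whole content of the corollary is that Lemma~\ref{m:6} is not a narrow statement about the functor $\theta$ itself but survives passage through any subfunctor, because the completion of a strictly smaller subfunctor is the right $\theta$-subgroup to feed into Lemma~\ref{m:6}. The only point requiring care is the degenerate case $\psi(G) = G$, which would put $M$ outside $\Theta$; but as shown above this case collapses instantly to $\psi = \theta$.
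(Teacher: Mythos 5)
Your proof is correct and follows essentially the same route as the paper: use minimality of $\norm{\theta}$ (via Lemma~\ref{er:5}(a)) to force completeness of $\psi$, observe that $\psi(G)$ is a $\theta$-subgroup, and feed it into Lemma~\ref{m:6}. The only cosmetic difference is in disposing of the degenerate case $\psi(G)=G$: the paper cites the counterexample hypothesis directly, whereas you derive $\psi(a)=\cz{G}{a}=\theta(a)$ and contradict $\psi\neq\theta$, which is an equally valid (and arguably more self-contained) way to close that case.
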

\begin{proof}
    Suppose that $\psi \not= \theta$.
    Lemma~\ref{er:5}(a) implies that $\norm{\psi} < \norm{\theta}$
    so the minimality of $\norm{\theta}$ implies that $\psi$ is
    complete and that $\psi(G)$ is a $K$-group.
    Note that $\cz{\psi(G)}{a} = \psi(a) \leq \theta(a)$
    for all $a \in A\nonid$ so $\psi(G)$ is a $\theta$-subgroup.
    In particular,
    $\psi(G) \not= G$ as $(A,G,\theta)$ is a counterexample to the
    Signalizer Functor Theorem.
    Lemma~\ref{m:6}, with $\psi(G)$ in the role of $M$,
     supplies a contradiction.
\end{proof}

\section{Subfunctors}\label{sub}
Recall from \cite{PFsft} that if $p$ is a prime then
a $(p,\theta)$-subgroup is a $\theta$-subgroup that is also a $p$-group.
The collection of $(p,\theta)$-subgroups is partially ordered
by inclusion and its set of maximal elements is denoted by \[
    \syl{p}{G;\theta}.
\]
The Transitivity Theorem asserts that
$\theta(A)$ acts transitively on $\syl{p}{G;\theta}$.
In the proof of the Solvable Signalizer Functor Theorem it was
necessary to show that $\cz{A}{P} = 1$ whenever $1 \not= P \in \syl{p}{G;\theta}$.
This was accomplished using the subfunctor $\theta_{p'}$.
We shall extend those ideas to obtain information in the case
$\cz{A}{P} \not= 1$ and $\theta$ is nearsolvable.
First, a simple criterion for $\cz{A}{P}$ to be nontrivial.

\begin{Lemma}\label{sub:1}
    Let $e \in A\nonid$ and suppose $\theta(e) \leq M \in \fancyLstar$.
    Assume $p \in \primes{\ff{M}}$ and $[M,e]$ is a $p'$-group.
    Then $e$ centralizes every $(p,\theta)$-subgroup of $G$.
\end{Lemma}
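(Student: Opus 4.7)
The plan rests on the observation that since $[M,e]$ is a $p'$-group, $e$ centralizes every $p$-subgroup of $M$: for any $p$-subgroup $Q \leq M$ one has $[Q,e] \leq [M,e] \cap Q = 1$, an intersection of a $p'$-group with a $p$-group. In particular $e$ centralizes $\op{p}{M}$, which is nontrivial by the hypothesis $p \in \primes{\ff{M}}$, and it centralizes any $A$-invariant Sylow $p$-subgroup $S$ of $M$ (such $S$ exists by Coprime Action(a) and necessarily contains $\op{p}{M}$).

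Next I would use the Transitivity Theorem to reduce to a single maximal $(p,\theta)$-subgroup: $\theta(A)$ acts transitively on $\syl{p}{G;\theta}$, and since $\theta(A) \leq \cz{G}{A} \leq \cc{e}$, conjugation by $\theta(A)$ commutes with the action of $e$, so the property of being centralized by $e$ is constant on $\syl{p}{G;\theta}$. Thus it suffices to exhibit one $P \in \syl{p}{G;\theta}$ containing $S$ with $P \leq M$, for then $[P,e] \leq [M,e] \cap P = 1$ and the conclusion propagates to every $(p,\theta)$-subgroup (each lies in some member of $\syl{p}{G;\theta}$).

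For the key claim $P \leq M$, fix $P \in \syl{p}{G;\theta}$ with $S \leq P$ and apply Lemma~\ref{m:1}(e) with $X = \op{p}{M}$: the subgroup $\n{P}{\op{p}{M}}$ is an $A$-invariant $\theta$-subgroup of $\nn{\op{p}{M}}$, so it lies in $\theta(\nn{\op{p}{M}}) = M$, and as a $p$-subgroup of $M$ containing the Sylow $S$ it must equal $S$; hence $\n{P}{\op{p}{M}} = S$ and $P \cap M = S$. If $S < P$, the normalizer-growth property for $p$-groups yields $\n{P}{S} > S$, and the remaining task is to force $\n{P}{S} \leq M$, contradicting $S$ Sylow in $M$. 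The natural strategy is to exhibit a nontrivial characteristic subgroup of $M$ contained in $S$ (for example $\omegaone{\zz{\op{p}{M}}}$) that $\n{P}{S}$ is compelled to normalize, and invoke Lemma~\ref{m:1}(e) once more. The main obstacle I anticipate is precisely this final $p$-group extension step: identifying a characteristic subgroup of $M$ inside $S$ that is automatically $\n{P}{S}$-invariant. Once this is settled, $P = S \leq M$ and the lemma follows; the remainder of the argument is a routine bookkeeping exercise with Coprime Action and the structure of $\fancyLstar$.
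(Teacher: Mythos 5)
Your reduction via the Transitivity Theorem is correct: since $\theta(A) \leq \cc{e}$, the property of being centralized by $e$ is constant on the $\theta(A)$-orbit $\syl{p}{G;\theta}$, and every $(p,\theta)$-subgroup lies inside some member of that set. Your computation that $\n{P}{\op{p}{M}} \leq \theta(\nn{\op{p}{M}}) = M$ and hence $\n{P}{\op{p}{M}} = S$ is also sound.

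However, the gap you flagged at the end is genuine and your proposed repair does not close it. Having shown $\n{P}{\op{p}{M}} = S$, if $S < P$ then $\n{P}{S} > S$ and in particular $\n{P}{S}$ contains elements that do \emph{not} normalize $\op{p}{M}$. Consequently there is no reason for $\n{P}{S}$ to normalize $\omegaone{\zz{\op{p}{M}}}$ or any other characteristic subgroup of $M$ built from $\op{p}{M}$, so Lemma~\ref{m:1}(e) cannot be invoked the way you suggest; the argument stalls exactly where you predicted.

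The paper resolves the step by a different mechanism. Taking $P \in \syl{p}{M;A}$ (so $[P,e]=1$) and $P \leq Q \in \syl{p}{G;\theta}$, one applies the coprime decomposition to the $A$-invariant group $\n{Q}{P}$ to write $\n{Q}{P} = [\n{Q}{P},e]\,(\n{Q}{P} \cap \cc{e})$. The second factor lands in $\theta(e) \leq M$ directly. For the first factor, the key observation is that because $e$ centralizes $P$ and every element of $\n{Q}{P}$ normalizes $P$, a short commutator computation shows $[\n{Q}{P},e] \leq \cz{Q}{P}$; and $\cz{Q}{P} \leq \theta(\cc{P}) \leq \theta(\nn{\op{p}{M}}) = M$ since $\op{p}{M} \leq P$. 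Thus $\n{Q}{P} \leq M$, forcing $\n{Q}{P} = P$ and $P = Q$. This commutator-plus-coprime-splitting step is the missing idea; it replaces the search for a characteristic subgroup that you correctly identified as the obstacle.
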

\begin{proof}
    Choose $P \in \syl{p}{M;A}$.
    Then $[P,e] = 1$.
    Also, $1 \not= \op{p}{M} \leq P$ so as $M \in \fancyLstar$
    we have $\theta(\cc{P}) \leq \theta(\nn{\op{p}{M}}) = M$.
    Choose $Q$ with $P \leq Q \in \syl{p}{G;\theta}$.
    By Coprime Action(e),
    $\n{Q}{P} = [\n{Q}{P},e](\n{Q}{P} \cap \cc{e})$.
    Now $[P,e] = 1$ so $[\n{Q}{P},e] \leq \cz{Q}{P} \leq \theta(\cc{P}) \leq M$.
    Also $\n{Q}{P} \cap \cc{e} \leq \theta(e) \leq M$ whence $\n{Q}{P} \leq M$.
    Since $P \in \syl{p}{M}$ this forces $\n{Q}{P} = P$
    and then $P = Q \in \syl{p}{G;\theta}$.
    As $[P,e] = 1$ and $\theta(A) \leq \cc{e}$,
    the Transitivity Theorem implies that $e$ centralizes
    every member of $\syl{p}{G;\theta}$.
    The conclusion follows.
\end{proof}

Recall that if $p$ is a prime and $X$ is a group then $\op{p-\mathrm{sol}}{X}$
is the largest normal $p$-solvable subgroup of $X$.
Theorem~\ref{psgps:5} asserts that the map $\thetapsol$ defined by
\begin{align*}
    \thetapsol(a)   &= \op{p-\mathrm{sol}}{\theta(a);A} \\
                    &= \mbox{the unique maximal $A\theta(A)$-invariant $p$-solvable subgroup of $\theta(a)$}
\end{align*}
is a subfunctor of $\theta$.
Similarly so is the map $\theta_{p'}$ defined by
\begin{align*}
    \theta_{p'}(a) &= \op{p'}{\theta(a);A}.
\end{align*}

\noindent We state the main result of this section.

\begin{Theorem}\label{sub:2}
    Assume the following:
    \begin{itemize}
        \item   $p \in \primes{\theta}$.

        \item   $e \in A\nonid$ and $e$ centralizes every $(p,\theta)$-subgroup of $G$.

        \item   $\theta$ is nearsolvable.
    \end{itemize}
    Then the following hold:
    \begin{enumerate}
        \item[(a)]  $G$ possesses a unique maximal $\theta(A)$-invariant
                    $p$-solvable $\theta$-subgroup.

        \item[(b)] For all $a \in A\nonid$, \[
                        \theta(a) = [\theta(a),e]\thetapsol(a).
                    \]
                    In particular,
                    $\theta(e)$ is $p$-solvable.
    \end{enumerate}
\end{Theorem}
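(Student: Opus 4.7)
The plan is to establish (a) via the minimality of $\norm{\theta}$ applied to the $p$-solvable subfunctor, and to reduce (b) to showing that $\theta(e)$ is itself $p$-solvable.

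For (a), I would consider the subfunctor $\thetapsol$ guaranteed by Theorem~\ref{psgps:5}(a) and aim to show it is a proper subfunctor of $\theta$. If $\thetapsol = \theta$ then every $\theta(a)$ is $p$-solvable; since $p \in \primes{\theta}$, choose $a$ with $p \mid |\theta(a)|$ and a nontrivial $P \in \syl{p}{\theta(a);A}$. Then $P$ is a $(p,\theta)$-subgroup, hence $[P,e] = 1$ by hypothesis, and Theorem~\ref{p:4}(h) applied to the $p$-solvable group $\theta(a)$ gives $[\theta(a),e] \leq \op{p'}{\theta(a)} \leq \theta_{p'}(a)$, the inclusion being automatic for $a$ with $p \nmid |\theta(a)|$. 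Corollary~\ref{m:7} with $\psi = \theta_{p'}$ then forces $\theta_{p'} = \theta$, contradicting $p \in \primes{\theta}$. Hence $\thetapsol \neq \theta$; the minimality of $\norm{\theta}$ makes $\thetapsol$ complete with $\thetapsol(G)$ a $K$-group, and Theorem~\ref{psgps:5}(b) supplies the unique maximality asserted in (a).

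For (b), the asserted equation is in fact equivalent to the $p$-solvability of $\theta(e)$. Setting $a=e$ gives $\theta(e) = \thetapsol(e)$ since $[\theta(e),e]=1$; conversely, if $\theta(e)$ is $p$-solvable then $\theta(a)\cap\theta(e)$ is an $A\theta(A)$-invariant $p$-solvable subgroup of $\theta(a)$, hence contained in $\thetapsol(a)$, so Theorem~\ref{p:4}(c) and the signalizer functor property yield
\[
\theta(a) = [\theta(a),e]\cz{\theta(a)}{e} = [\theta(a),e](\theta(a)\cap\theta(e)) \leq [\theta(a),e]\thetapsol(a).
\]
The crucial structural input for establishing $p$-solvability of $\theta(e)$ is the observation that every $(p,\theta)$-subgroup $Q$ of $G$ lies in $\theta(e)$: from $[Q,e]=1$ we get $Q=\cz{Q}{e}\leq\theta(e)$ by the signalizer functor property. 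Supposing $\theta(e)$ is not $p$-solvable, the nearsolvable hypothesis together with the $(A,\mathrm{sol})$-analogue of Lemma~\ref{p:2} supplies an $(A,\mathrm{sol})$-component $K$ of $\theta(e)$ with a component of $K/\sol{K}$ isomorphic to one of \badfourlist\ and having order divisible by $p$. Combining the inclusion of $(p,\theta)$-subgroups in $\theta(e)$ with Theorem~\ref{as:1} and the explicit automorphism data for the four families — specifically, that the order-$r$ outer field automorphism of each has solvable centralizer of strictly smaller $p$-rank than the ambient simple group — yields the sought contradiction.

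The main obstacle is this last structural analysis for (b): carefully tracing $A$'s permutation of the simple components of $K/\sol{K}$, controlling the action on the centres of the quasisimple factors in view of the centres coming from Schur multipliers of the four families, and extracting the contradiction from the classification data. The reduction steps in (a) and the equivalence in (b) are routine applications of Coprime Action, Theorem~\ref{psgps:5}, and Corollary~\ref{m:7}, but the structural reduction that forces $p$-solvability of $\theta(e)$ is the technical heart of the proof.
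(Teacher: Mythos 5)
Your argument for part (a) is correct and follows the paper's route exactly: assume $\thetapsol = \theta$, use Coprime Action(h) to get $[\theta(a),e] \leq \op{p'}{\theta(a)}$, apply Corollary~\ref{m:7} to force $\theta = \theta_{p'}$, contradicting $p \in \primes{\theta}$, and then conclude via minimality and Theorem~\ref{psgps:5}(b). Your observation that part (b)'s equation is logically equivalent to the $p$-solvability of $\theta(e)$ is also correct and cleanly argued.

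However, there is a genuine gap in the strategy for establishing that $\theta(e)$ is $p$-solvable. You propose to take an $(A,\mathrm{sol})$-component $K$ of $\theta(e)$ with a component of $K/\sol{K}$ of order divisible by $p$, and to derive a contradiction from ``the order-$r$ outer field automorphism'' having solvable centralizer. But $\theta(e) \leq \cz{G}{e}$ by the definition of a signalizer functor, so $e$ acts trivially on $\theta(e)$ and on $K$; the automorphism whose centralizer you want to control simply is not there. There is no reason the remaining elements of $A$ (which has rank $3$, so $A/\listgen{e}$ has rank $2$) should induce a field automorphism on $K/\sol{K}$, and even if one did, the containment of $(p,\theta)$-subgroups in $\theta(e)$ does not produce a contradiction: the Sylow $p$-structure of $\theta(e)$ can perfectly well carry a nonsolvable quotient of order divisible by $p$ while every $(p,\theta)$-subgroup sits inside $\theta(e)$. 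The direct attack on $\theta(e)$ is structurally blocked.

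The paper avoids this by never analyzing $\theta(e)$ directly. Instead it proves a preliminary lemma (Lemma~\ref{sub:3}): for any $X \in \fancyL$ with $X = [X,e]$, the quotient $\br{X} = X/\op{p-\mathrm{sol}}{X}$ has $\ff{\br{X}}=1$, each component of $\br{X}$ is normalized but \emph{not centralized} by $e$ (this is where $X=[X,e]$ is used to kill the possibility of $e$ acting trivially), hence is one of the four nearsolvable families with $e$ inducing an order-$r$ automorphism whose fixed points are solvable, and consequently $\cz{X}{e}$ is $p$-solvable. Armed with this, the paper defines $\psi(a) = [\theta(a),e]\thetapsol(a)$, verifies that $\psi$ is an $A$-signalizer functor by decomposing $\psi(a) \cap \cc{b}$ via Coprime Action(e) and using Lemma~\ref{sub:3}(b) to show that $\cz{[\theta(a),e] \cap \cc{b}}{e}$ lands in $\thetapsol(b)$, and then applies Corollary~\ref{m:7} to conclude $\psi = \theta$. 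The $p$-solvability of $\theta(e)$ then drops out \emph{afterwards}, as $\theta(e) = \psi(e) = \thetapsol(e)$ since $[\theta(e),e]=1$. So the classification input is applied where $e$ acts nontrivially, namely to $[\theta(a),e]$, and the statement about $\theta(e)$ is a consequence of the subfunctor identity rather than the thing one proves directly. You would need to find and prove an analogue of Lemma~\ref{sub:3} and then set up the subfunctor $\psi$; the reduction you propose cannot be closed by examining $\theta(e)$ on its own.
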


\begin{proof}[Proof of Theorem~\ref{sub:2}(a)]
    Assume that $\thetapsol = \theta$.
    Let $a \in A\nonid$.
    By hypothesis, $e$ centralizes every $A$-invariant Sylow $p$-subgroup
    of $\theta(a)$ so using Coprime Action(h) we have \[
        [\theta(a),e] \leq \op{p'}{\theta(a)} \leq \theta_{p'}(a).
    \]
    Corollary~\ref{m:7} implies that $\theta = \theta_{p'}$.
    But then $\theta(a)$ is a $p'$-group for all $a \in A\nonid$,
    contrary to $p \in \primes{\theta}$.
    We deduce that $\thetapsol \not= \theta$.
    The minimality of $\norm{\theta}$ implies that $\thetapsol$
    is complete and that $\thetapsol(G)$ is $p$-solvable.
    Theorem~\ref{psgps:5} implies that $\thetapsol(G)$ is the
    unique maximal $\theta(A)$-invariant $p$-solvable $\theta$-subgroup.
\end{proof}

\begin{Lemma}\label{sub:3}
    Assume the hypotheses of Theorem~\ref{sub:2}.
    Let $X \in \fancyL$ and suppose that $X = [X,e]$.
    Set $\br{X} = X/\op{p-\mathrm{sol}}{X}$.
    \begin{enumerate}
        \item[(a)]  $\gfitt{\br{X}} = \layerr{\br{X}}, \zz{\layerr{\br{X}}} = 1$
                    and each component of $\br{X}$ is normalized but not
                    centralized by $e$.

        \item[(b)]  $\cz{X}{e}$ is $p$-solvable.
    \end{enumerate}
\end{Lemma}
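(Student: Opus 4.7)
I would dispatch the easy structural parts of (a) via Fitting-subgroup machinery, establish the component assertions by combining a Sylow argument with the minimality device of Corollary~\ref{m:7}, and deduce (b) directly from (a) via the defining property of the bad-four family. For the first two assertions of (a): since $\ff{\br{X}}$ is nilpotent, hence $p$-solvable, we have $\ff{\br{X}}\leq\op{p-\mathrm{sol}}{\br{X}}=1$, so $\gfitt{\br{X}}=\layerr{\br{X}}$, and $\zz{\layerr{\br{X}}}$, being normal abelian in $\br{X}$, vanishes. Nearsolvability forces each component to be a simple bad-four group and gives the direct decomposition $\layerr{\br{X}} = \br{K}_{1}\times\cdots\times\br{K}_{n}$.

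For the component assertions, I would first extract the Sylow information. By Coprime Action $X$ has an $A$-invariant Sylow $p$-subgroup $Q$, which is a $(p,\theta)$-subgroup of $G$, so $[Q,e]=1$. Its image $\br{Q}$ in $\br{X}$ is an $A$-invariant Sylow $p$-subgroup centralised by $e$, and $\br{Q}\cap\layerr{\br{X}}$ decomposes as the direct product of Sylow $p$-subgroups of the $\br{K}_{i}$. Consequently any component moved by $e$ must be a $p'$-group. The residual $p'$-cases (non-invariant orbits of $p'$-components, and components centralised by $e$) lie outside the Sylow bound and are eliminated via a minimality argument: using $X=[X,e]$, nearsolvability and Theorem~\ref{aut:3}, I would construct a proper subfunctor $\psi$ of $\theta$ satisfying $[\theta(a),e]\leq\psi(a)$ for every $a\in A\nonid$ whose value at each $a$ omits the offending component or diagonal, and then apply Corollary~\ref{m:7} to force $\psi=\theta$, a contradiction.

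Once (a) is in hand, (b) is immediate. Each $\br{K}_{i}$ is simple bad-four, $e$ acts non-trivially, and $r\nmid\card{\br{K}_{i}}$ forces the action to be an outer (field) automorphism of order $r$; the defining property of the bad-four family gives $\cz{\br{K}_{i}}{e}$ solvable. Hence $\cz{\layerr{\br{X}}}{e}=\prod_{i}\cz{\br{K}_{i}}{e}$ is solvable. Moreover $\br{X}/\layerr{\br{X}}$ is solvable: it embeds in $\mathrm{Out}(\layerr{\br{X}})$, and any non-solvable subgroup there would introduce non-bad-four composition factors into $\br{X}$, contradicting nearsolvability. Thus $\cz{\br{X}}{e}$ is solvable; Coprime Action gives $\cz{\br{X}}{e}=\br{\cz{X}{e}}$, and $\cz{X}{e}$ is an extension of the $p$-solvable kernel $\op{p-\mathrm{sol}}{X}\cap\cz{X}{e}$ by a solvable group, hence $p$-solvable.

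The principal obstacle is the $p'$-component case in (a): components of $\br{X}$ whose order is coprime to $p$ are invisible to the Sylow hypothesis, so both the ``normalised'' and ``not centralised'' assertions rest entirely on the minimality argument via Corollary~\ref{m:7}. Constructing the required subfunctor $\psi$ --- containing $[\theta(a),e]$ for every $a\in A\nonid$ while omitting the offending component --- is the delicate point of the proof.
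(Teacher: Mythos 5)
There is a genuine gap in your treatment of part (a), and it stems from overlooking a short but decisive consequence of $\op{p-\mathrm{sol}}{\br{X}} = 1$. You correctly deduce $\ff{\br{X}} = 1$, but you stop there. The same hypothesis also yields $\op{p'}{\br{X}} = 1$, and more to the point it forces \emph{every component of $\br{X}$ to have order divisible by $p$}: a component of $p'$-order is a subnormal $p'$-group, hence a subnormal $p$-solvable subgroup, hence (by the standard subnormal-closure property of $\op{\fancyP}$ for a class $\fancyP$ closed under subgroups, quotients and extensions) contained in $\op{p-\mathrm{sol}}{\br{X}} = 1$. This observation eliminates at a stroke the entire ``residual $p'$-case'' that you single out as the principal obstacle. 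Once every component carries a nontrivial $p$-element, your own Sylow argument shows $e$ fixes every component; then $\br{X} = [\br{X},e]$ forces the action of $\br{X}$ on $\compp{\br{X}}$ to be trivial (so each component is normal), and if some normal component $\br{K}$ satisfied $[\br{K},e]=1$ then $[\br{X},e]=\br{X}$ would centralize $\br{K}$, which is absurd. That is the whole of (a); no minimality device is needed.

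The subfunctor-plus-Corollary~\ref{m:7} route you sketch for the $p'$-cases is not merely unnecessary but problematic. You have not specified the subfunctor $\psi$, and it is far from clear that ``omitting the offending component or diagonal'' from each $\theta(a)$ produces a signalizer functor. Worse, Lemma~\ref{sub:3} is precisely the lemma the paper needs in order to verify the signalizer-functor axioms for the map $\psi(a) = [\theta(a),e]\thetapsol(a)$ in the proof of Theorem~\ref{sub:2}(b), so invoking a construction of that same flavour \emph{inside} the proof of Lemma~\ref{sub:3} risks circularity. You also mislabel the ``component centralised by $e$'' possibility as a $p'$-case: such a component may well have order divisible by $p$, and it is excluded by the normality argument above, not by any coprimality consideration. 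Part (b) is fine and close to the paper's argument; your justification that $\br{X}/\layerr{\br{X}}$ is solvable via nearsolvability is a legitimate alternative to the paper's direct appeal to the Schreier property (which works because the components are normal and $\cz{\br{X}}{\layerr{\br{X}}} = \zz{\layerr{\br{X}}} = 1$).
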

\begin{proof}
    Since $\op{p-\mathrm{sol}}{\br{X}} = 1$ we have
    $\zz{\layerr{\br{X}}} = \ff{\br{X}} = \op{p'}{\br{X}} = 1$.
    In particular,
    each component of $\br{X}$ has order divisible by $p$.
    By hypothesis,
    $e$ centralizes a Sylow $p$-subgroup of $\layerr{\br{X}}$.
    Then $e$ acts trivially on $\compp{\br{X}}$.
    Since $\br{X} = [\br{X},e]$ it follows that each component
    of $\br{X}$ is normal in $\br{X}$.
    If $\br{K} \in \compp{\br{X}}$ and $[\br{K},e] = 1$
    then $\br{X} = [\br{X},e]$ centralizes $\br{K}$,
    a contradiction.
    Thus (a) holds.

    Let $\br{K} \in \compp{\br{X}}$.
    Since $\theta$ is nearsolvable, $\br{X}$ is nearsolvable and
    $\br{K} \isomorphic $ \badfourlist.
    Now $e$ induces a nontrivial automorphism of order $r$
    on $\br{K}$ and $\br{K}$ is an $r'$-group.
    It follows that $\cz{\br{K}}{e}$ is solvable.
    Then $\cz{\layerr{\br{X}}}{e}$ is solvable.
    By (a) and the Schreier Property,
    $\br{X}/\layerr{\br{X}}$ is solvable.
    Now $\br{X} = X/\op{p-\mathrm{sol}}{X}$, whence $\cz{X}{e}$
    is $p$-solvable and (b) holds.
\end{proof}

\begin{proof}[Proof of Theorem~\ref{sub:2}(b)]
    For each $a \in A\nonid$ define $\psi(a)$ by \[
        \psi(a) = [\theta(a),e]\thetapsol(a).
    \]
    We claim that $\psi$ is an $A$-signalizer functor.
    Indeed, let $a,b \in A\nonid$.
    Coprime Action(e) implies \[
        \psi(a) \cap \cc{b} = ([\theta(a),e] \cap \cc{b})(\thetapsol(a) \cap \cc{b}).
    \]
    Let $Y = [\theta(a),e] \cap \cc{b}$.
    Then $Y \leq \theta(a) \cap \cc{b} \leq \theta(b)$.
    Now $Y = [Y,e]\cz{Y}{e}$ by Coprime Action(c) and $[Y,e] \leq \psi(b)$.
    Lemma~\ref{sub:3}(b) implies that $[\theta(a),e] \cap \cc{e}$ is $p$-solvable.
    Then $\cz{Y}{e}$ is an $A\theta(A)$-invariant $p$-solvable subgroup of $\theta(b)$,
    so $\cz{Y}{e} \leq \thetapsol(b) \leq \psi(b)$.
    Thus $Y \leq \psi(b)$.
    As $\thetapsol$ is an $A$-signalizer functor we have
    $\thetapsol(a) \cap \cc{b} \leq \thetapsol(b) \leq \psi(b)$.
    Hence $\psi(a) \cap \cc{b} \leq \psi(b)$ and the claim is established.
    Corollary~\ref{m:7} implies $\psi = \theta$.
    Also,
    $\theta(e) = \psi(e) = \thetapsol(e)$ so $\theta(e)$ is $p$-solvable.
\end{proof}

\section{The First Uniqueness Theorem} \label{fut}
The aim of this section is to prove a result that deals with the
characteristic $p$ case arising in conclusion (c) of Bender's Maximal Subgroup Theorem.
First we recall the following: let $p$ be a prime.
\begin{itemize}
    \item   A group $M$
            has \emph{characteristic $p$} if $\gfitt{M} = \op{p}{M}$.

    \item   If $A$ acts coprimely in the group $M$ then $\op{p}{M;A}$
            is the intersection of all $A$-invariant Sylow $p$-subgroups of $M$.
            It is the unique maximal $A\cz{M}{A}$-invariant $p$-subgroup of $M$.

    \item   $\op{p}{G;\theta}$ is the intersection of the members of $\syl{p}{G;\theta}$.
            It is the unique maximal $\theta(A)$-invariant $(p,\theta)$-subgroup.
\end{itemize}
The uniqueness assertions follow from Coprime Action(a) and the Transitivity Theorem.

\begin{Theorem}[The First Uniqueness Theorem] \label{fut:1}
    Let $p$ be a prime and suppose $M \in \fancyLstar$ has characteristic $p$.
    \begin{enumerate}
        \item[(a)]  $\op{p}{G;\theta} \leq M$.

        \item[(b)]  $M$ is the only member of $\fancyLstar$ with characteristic $p$.
    \end{enumerate}
\end{Theorem}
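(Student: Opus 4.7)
The plan is to prove (a) and (b) together by locating a companion maximal subgroup $N^* \in \fancyLstar$ containing $P = \op{p}{G;\theta}$ and showing $M = N^*$. If $P = 1$ then (a) is trivial, so assume $P \neq 1$. Since $M$ has characteristic $p$, $\op{p}{M} = \gfitt{M}$ is a nontrivial characteristic subgroup of $M$ and a $\theta(A)$-invariant $(p,\theta)$-subgroup, whence $\op{p}{M} \leq P$. By Lemma~\ref{m:1}(d), $\nn{P}$ is a proper $A$-invariant subgroup of $G$, so $\theta(\nn{P}) \in \fancyL$ is defined and lies in some $N^* \in \fancyLstar$; clearly $P \leq N^*$.

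The first step is to invoke Theorem~\ref{ben:2} on the pair $(M, N^*)$. By Lemma~\ref{m:3} with $x = 1$ the pair is comaximal, and $X = \gfitt{M} = \op{p}{M} \leq N^*$ witnesses $M \linksto N^*$; Theorem~\ref{ben:2}(a) yields $M \cap \op{q}{N^*} = 1$ for every $q \neq p$. The second step is to show $N^*$ has characteristic $p$. For $\op{p'}{N^*} = 1$: this normal $p'$-subgroup of $N^*$ centralizes $\op{p}{M}$ by coprimality, lies in $\n{N^*}{\op{p}{M}} \leq M$ by Lemma~\ref{m:1}(f), and then in $\cz{M}{\op{p}{M}} \leq \op{p}{M}$ by the constraint property of characteristic-$p$ groups, forcing triviality. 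For $\layerr{N^*} = 1$: a hypothetical component $K$ is quasisimple with $\zz{K} \leq \op{p}{N^*}$ a $p$-group, so $K$ is not a $p$-group; since the characteristic-$p$ group $M$ has no components, $K \not\leq M$, so $K$ does not normalize $\op{p}{M}$, and the subsequent analysis using Lemma~\ref{m:1}(f) together with the balance properties of components yields a contradiction. Hence $\gfitt{N^*} = \op{p}{N^*}$ and $N^*$ has characteristic $p$.

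The third step is to apply Theorem~\ref{ben:3} to the comaximal pair $(M, N^*)$ of $K$-groups of characteristic $p$ on which $A$ acts coprimely, which requires $\op{p}{M;A} = \op{p}{N^*;A}$. Clearly $\op{p}{N^*;A} = P$, and by Lemma~\ref{m:1}(e) applied with $X = \op{p}{M}$, $\n{P}{\op{p}{M}} \leq \theta(\nn{\op{p}{M}}) = M$, giving $\n{P}{\op{p}{M}} = \op{p}{M;A} = P \cap M$. To upgrade this to $P \leq M$, I invoke Theorem~\ref{ben:4}: applied to $(M, N^*)$ its Thompson-subgroup equality $J(\op{p}{M}\op{p}{N^*}) = J(\op{p}{N^*})$, together with $\op{p}{N^*} \leq P$ and the constraint $\cz{M}{\op{p}{M}} \leq \op{p}{M}$, forces $\op{p}{M}$ to be normalized by all of $P$; then $P \leq \nn{\op{p}{M}}$ and $P \leq \theta(\nn{\op{p}{M}}) = M$ by Lemma~\ref{m:1}(e). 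Thus $\op{p}{M;A} = P = \op{p}{N^*;A}$, and Theorem~\ref{ben:3} concludes $M = N^*$, proving (a). Part (b) follows: any $M' \in \fancyLstar$ of characteristic $p$ satisfies $P \leq M'$ by (a), so $\op{p}{M';A} = P = \op{p}{M;A}$, and Theorem~\ref{ben:3} applied to the comaximal pair $(M, M')$ gives $M = M'$.

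The hardest step will be eliminating components from $N^*$, as one must rule out every potential quasisimple subnormal subgroup using only the characteristic-$p$ data of $M$; this requires a careful combination of Lemma~\ref{m:1}(f) with component balance and the analysis of how components must fail to normalize $\op{p}{M}$. A secondary difficulty is the passage from $\op{p}{M;A} = P \cap M$ to the equality $P \leq M$, since $\op{p}{M;A}$ is not characteristic in $M$ and a naive normalizer-chain iteration in the $p$-group $P$ does not close; Theorem~\ref{ben:4}'s Thompson-subgroup machinery is the essential tool to bridge this gap.
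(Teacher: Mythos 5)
Your proposal takes a genuinely different route from the paper, and unfortunately it contains gaps that are not incidental but structural.

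The paper's proof never attempts to show that the companion subgroup has characteristic $p$. It sets $N = \theta(\nn{\op{p}{M;A}})$ (note: the normalizer of $\op{p}{M;A}$, not of $P$), and for $x \in \cz{N}{B}$ applies Theorem~\ref{ben:3} to the comaximal pair $(M, M^{x})$ --- both of which trivially have characteristic $p$ and satisfy $\op{p}{M;A} = \op{p}{M^{x};A^{x}}$ because $x$ normalizes $\op{p}{M;A}$. This forces $M = M^{x}$, hence $x \in M$, hence $\cz{N}{B} \leq M$ for every $B \in \hyp{A}$, and Coprime Action(d) gives $N \leq M$. Then $P \cap M = \op{p}{M;A}$ and $\n{P}{P \cap M} \leq N \leq M$ forces $P \cap M$ to be self-normalizing in the $p$-group $P$, i.e.\ $P \leq M$. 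The entire argument circles around the object $M$ itself and its $\theta(B)$-conjugates; no structural information about a second maximal subgroup is ever needed.

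Your proposal instead tries to show that $N^{*} \supseteq \theta(\nn{P})$ has characteristic $p$ and then apply Theorem~\ref{ben:3} to the pair $(M, N^{*})$. Three things go wrong. First, the claim that $\op{p'}{N^{*}}$ centralizes $\op{p}{M}$ ``by coprimality'' is false: two subgroups of coprime order need not commute (e.g.\ a $2$-element and $\op{3}{\cdot}$ in $\sym{3}$), and normality of $\op{p'}{N^{*}}$ in $N^{*}$ only gives $[\op{p'}{N^{*}},\op{p}{M}] \leq \op{p'}{N^{*}}$, not triviality. Without centralization the chain $\op{p'}{N^{*}} \leq \n{N^{*}}{\op{p}{M}} \leq M$ does not get started. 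Second, the elimination of $\layerr{N^{*}}$ is conceded to be the hard step but the argument offered is a sketch that does not close. Third, and most seriously, the application of Theorem~\ref{ben:4} (and later Theorem~\ref{ben:3}) to the pair $(M, N^{*})$ is circular: both theorems require $\op{p}{M;A_{m}} = \op{p}{N^{*};A_{s}}$ as a hypothesis, but at that point in your argument you only have $\op{p}{M;A} = P \cap M \leq P = \op{p}{N^{*};A}$, and the whole purpose of invoking Theorem~\ref{ben:4} is to upgrade this inclusion to equality. You cannot use the theorem to establish its own hypothesis. The paper's device of conjugating $M$ by elements of $\cz{N}{B}$ is precisely what sidesteps this difficulty, since $\op{p}{M;A}$ and $\op{p}{M^{x};A^{x}}$ are automatically equal by the choice of $x$.
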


\begin{proof}[Proof of Theorem~\ref{fut:1}]
    Let $N = \theta(\nn{\op{p}{M;A}})$.
    Note that $\op{p}{M} \leq \op{p}{M;A}$ so $\op{p}{M;A} \not= 1$
    since $M$ has characteristic $p$.
    Let $B \in \hyp{A}$ and $x \in \cz{N}{B}$.
    Then \[
        \op{p}{M;A} = \op{p}{M;A}^{x} = \op{p}{M^{x};A^{x}}.
    \]
    Using Lemma~\ref{m:3} we see that the hypotheses of Theorem~\ref{ben:3}
    are satisfied with $M_{1} = M, A_{1} = A, M_{2} = M^{x}$ and $A_{2} = A^{x}$.
    Consequently $M = M^{x}$.
    Then $x \in \theta(B) \cap \nn{M} \leq \theta(\nn{M}) = M$.
    We deduce that $\cz{N}{B} \leq M$ for all $B \in \hyp{A}$.
    Coprime Action(d) implies $N \leq M$.

    Let $P = \op{p}{G;\theta}$.
    Then $P$ contains every $\theta(A)$-invariant $(p,\theta)$-subgroup.
    Also $M \in \fancyLstar$ so $\theta(A) \leq M$,
    in fact $\theta(A) = \cz{M}{A}$.
    It follows that \[
        P \cap M = \op{p}{M;A}.
    \]
    Then $\n{P}{P \cap M} \leq \theta(\nn{\op{p}{M;A}}) = N \leq M$
    so $\n{P}{P \cap M} \leq P \cap M$.
    This forces $P = P \cap M \leq M$ so $P = \op{p}{M;A}$ and (a) holds.

    To prove (b),
    suppose $N \in \fancyLstar$ also has characteristic $p$.
    Then $\op{p}{M;A} = P = \op{p}{N;A}$.
    Another application of Theorem~\ref{ben:3} forces $M = N$.
\end{proof}

\section{The subgroups $M_{a}$} \label{ma}
The main result of this section is the following:

\begin{Theorem} \label{sm:1}
    Let $a \in A\nonid$.
    There exists $M_{a}$ such that the following hold:
    \begin{enumerate}
        \item[(a)]  $\theta(a) \leq M_{a} \in \fancyLstar$.

        \item[(b)]  If $N \in \fancyLstar$ satisfies \[
                        M_{a} \linksto N \quad\text{and}\quad \theta(a) \leq N
                    \]
                    then $M_{a} = N$.

        \item[(c)]  If $X \not= 1$ is an $A\theta(a)$-invariant subnormal
                    subgroup of $M_{a}$ then $\theta(\nn{X}) \leq M_{a}$.
    \end{enumerate}
\end{Theorem}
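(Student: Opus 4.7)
The plan is to choose $M_a$ as a maximal element of
\[
\Omega_a = \{\,M \in \fancyLstar : \theta(a) \leq M\,\}
\]
with respect to a suitable invariant, and then to deduce (b) and (c) from Bender's Maximal Subgroup Theorem together with the First Uniqueness Theorem. The set $\Omega_a$ is nonempty: since $(A,G,\theta)$ is a counterexample we have $\theta(a) \neq G$, so $\theta(a) \in \Theta$; it contains $\theta(A)$, hence lies in $\fancyL$, and Lemma~\ref{m:1}(b) places it in some $M \in \fancyLstar$. A natural choice of invariant is the lexicographic pair $(|\ostar{M}|,|M|)$; property (a) is then immediate by construction.

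For (b), suppose $N \in \fancyLstar$ satisfies $M_a \linksto N$ and $\theta(a) \leq N$, so that $N \in \Omega_a$. By Lemma~\ref{m:1}(f) the pair $M_a,N$ is comaximal, and Bender's Theorem~\ref{ben:2}(a) yields $\layerr{M_a} \leq N$. Exploiting Lemma~\ref{p:2}(f) along the chains $\ostar{M_a} \leq M_a \cap N \leq M_a$ and $\ostar{M_a \cap N} \leq M_a \cap N \leq N$, the maximality of $|\ostar{M_a}|$ in $\Omega_a$ should force $\ostar{M_a} = \ostar{N}$, whence $\layerr{N} \leq M_a$ and $\primes{\ff{N}} \subseteq \primes{\ff{M_a}}$. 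Bender's Theorem~\ref{ben:2}(c)(ii) then concludes that $M_a = N$ or both groups have characteristic $p$ for some prime $p$; in the latter case the First Uniqueness Theorem~\ref{fut:1}(b) forces $M_a = N$.

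For (c), let $X \neq 1$ be an $A\theta(a)$-invariant subnormal subgroup of $M_a$. Then $X$ is a $\theta$-subgroup (an $A$-invariant subgroup of the $\theta$-subgroup $M_a$), so $X \in \Theta$, and by Lemma~\ref{m:1}(d), $\nn{X} \neq G$, making $\theta(\nn{X})$ defined. Since $\theta(a) \leq M_a$ normalizes $X$, $\theta(a) \leq \theta(\nn{X})$, and hence $\theta(\nn{X})$ is contained in some $N \in \Omega_a$. It suffices to verify $M_a \linksto N$, for then (b) forces $M_a = N$ and yields $\theta(\nn{X}) \leq M_a$. Since every nontrivial subnormal subgroup of a finite group meets its generalized Fitting subgroup nontrivially (through its Fitting radical if solvable, or through a component otherwise), $X_0 := X \cap \gfitt{M_a}$ is nontrivial and subnormal in $\gfitt{M_a}$; moreover $X_0 \leq X \leq \nn{X}$ gives $X_0 \leq \theta(\nn{X}) \leq N$.

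The main obstacle is the verification $\cz{\gfitt{M_a}}{X_0} \leq N$ required to complete $M_a \linksto N$ in (c); the crude choice $X_0 = X \cap \gfitt{M_a}$ need not force its centralizer to normalize $X$, so a more refined selection --- such as a suitable minimal $A\theta(a)$-invariant subnormal piece of $X$ inside $\gfitt{M_a}$ --- must be made so that its $\gfitt{M_a}$-centralizer is driven into $\nn{X}$. The parallel subtlety in (b) is extracting $\ostar{N} \leq M_a$ from the chosen maximality; both issues reduce to a structural analysis of $\gfitt{M_a}$ combined with Coprime Action.
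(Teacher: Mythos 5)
Your proposal takes a genuinely different route from the paper, and both of the gaps you flag at the end are real; moreover the invariant you chose doesn't have the leverage needed to close them.

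The paper does not maximize $(|\ostar{M}|,|M|)$ over $M \supseteq \theta(a)$. Instead it first chooses a subgroup $W$ maximal subject to $W \in \fancyL$, $W$ is $\theta(a)$-invariant, and $W = \layerr{W} = [W,a]$; then $M_a$ is taken to contain $\theta(\nn{W})$ (and when $W = 1$, to contain $\theta(a)$ and, if possible, to satisfy $\cz{\op{p}{M}}{a}=1$ for some $p \in \primes{\ff{M}}$). The condition $W = [W,a]$ is the whole point: it makes Theorem~\ref{aut:3}(a) applicable, and that theorem is the engine of part (b). Given $N$ with $\theta(a)\le N$, one notes that $\cz{N}{a} = \theta(a) \leq M_a$, so $M_a\cap N$ is $A\cz{N}{a}$-invariant in $N$; applying Theorem~\ref{aut:3}(a) transports $W$ into $\layerr{N}$, and then $W = [\layerr{N},a]\normal\gfitt{N}$ by maximality of $W$. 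That is how one obtains $N \linksto M_a$ (when $W\ne1$) or $\layerr{N}\le\theta(a)\le M_a$ and eventually $\primes{\ff N}\subseteq\primes{\ff{M_a}}$ (when $W=1$), at which point Bender~\ref{ben:2}(c) together with the First Uniqueness Theorem finishes.

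In your (b), the chain $\ostar{M_a} \leq M_a\cap N$ is not available: from $M_a\linksto N$ Bender gives only $\layerr{M_a}\leq N$, and $\layerr{M_a}$ is in general a proper subgroup of $\ostar{M_a} = \sol{M_a}\layersol{M_a}$. Similarly you have no mechanism for $\ostar{N}\leq M_a$, which is the other hypothesis Corollary~\ref{m:2} needs. Maximality of $|\ostar{M_a}|$ controls the size $|\ostar{N}|$ but not the location of $\ostar{N}$; the order-theoretic invariant carries no information about the action of $a$, which is precisely what Theorem~\ref{aut:3} exploits. This is not a detail to be filled in by ``structural analysis combined with Coprime Action''; it is a different proof mechanism.

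For (c) the paper's fix to your correctly identified obstacle is a small but decisive change of target: do not take $N\supseteq\theta(\nn{X})$ and then struggle to build a link; instead take $N\supseteq\theta(\nn{\gfitt{X}})$. Since $X \subnormal M_a$ gives $\gfitt{X}\subnormal\gfitt{M_a}$, and $\gfitt{X}\cz{\gfitt{M_a}}{\gfitt{X}}$ is an $A$-invariant subgroup of $M_a$ normalizing $\gfitt{X}$, one gets $M_a \linksto N$ immediately. Then (b) yields $N = M_a$, and $\theta(\nn{X}) \leq \theta(\nn{\gfitt{X}}) \leq M_a$ because $\nn{X}$ normalizes the characteristic subgroup $\gfitt{X}$ of $X$. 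This is exactly the ``more refined selection'' you suspected was needed.
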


\noindent Throughout the remainder of this paper,
we let $\set{M_{a}}{a \in A\nonid}$ be the family of subgroups constructed
in Theorem~\ref{sm:1}.

It is a trivial consequence of Coprime Action and the fact that
$G = \gen{\theta(a)}{a \in A\nonid}$ that if $B \in \hyp{A}$
then there exists $b,b' \in B\nonid$ with $M_{b} \not= M_{b'}$.
In fact, we can go a little further.

\begin{Lemma}\label{sm:2}
    Let $B \in \hyp{A}$.
    \begin{enumerate}
        \item[(a)]  Let $B_{0} \in \hyp{B}$.
                    Then $M_{b}$ takes at least two values as $b$ ranges over $B \setminus B_{0}$.

        \item[(b)]  $M_{a}$ takes at least three values as $a$ ranges over $A \setminus B$.
    \end{enumerate}
\end{Lemma}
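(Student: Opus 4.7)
The plan is to prove both parts by deriving a contradiction with Lemma~\ref{m:6}.

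For part (a), suppose $M_b = M$ is constant as $b$ ranges over $B \setminus B_0$, and fix a generator $e$ of $B_0$. I would verify that $[\theta(a),e] \leq M$ for every $a \in A\nonid$. The cases $a \in B \setminus B_0$ and $a \in B_0\nonid$ are immediate: in the first, $\theta(a) \leq M$ and $M$ is $A$-invariant; in the second, $\listgen{a} = \listgen{e}$ forces $e$ to centralize $\theta(a)$. For $a \in A \setminus B$ I would apply Coprime Action(d) with the noncyclic subgroup $B$ acting on $\theta(a)$, writing $[\theta(a),e]$ as the join of the $[\theta(a) \cap \cc{b},e]$ for $b \in B\nonid$. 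The pieces with $b \in B_0\nonid$ vanish since $\theta(a) \cap \cc{e} \leq \theta(e)$ and $e$ centralizes $\theta(e)$; the pieces with $b \in B \setminus B_0$ land inside $M$ since $\theta(b) \leq M$ and $M$ is $A$-invariant.

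For part (b), suppose for contradiction that $M_a$ takes at most two values $M$ and $N$ as $a$ ranges over $A \setminus B$. The key reduction, on which the rest depends, is to show that $\theta(a) \leq M$ for every $a \in A \setminus B$. I would prove this via Lemma~\ref{er:5}(e), writing $\theta(a) = \gen{\theta(C)}{a \in C \in \hyp{A}}$. Any such $C$ is a rank-$2$ subgroup of $A$ containing $a \notin B$, so $C \neq B$ and $C \cap B = \listgen{e_C}$ has rank $1$. Then $C \setminus \listgen{e_C} \subseteq A \setminus B$, and part (a) applied to the pair $(C,\listgen{e_C})$ guarantees that both values $M$ and $N$ occur among $\{M_c : c \in C \setminus \listgen{e_C}\}$. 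Choosing $c_1, c_2$ in $C \setminus \listgen{e_C}$ with $\theta(c_1) \leq M$ and $\theta(c_2) \leq N$ gives $\theta(C) \leq \theta(c_1) \cap \theta(c_2) \leq M \cap N$, and joining over $C$ yields $\theta(a) \leq M \cap N \leq M$. This is the main obstacle and the crucial place where part (a) is used to prove part (b); it also explains why the statement requires a rank-$2$ hyperplane $B$.

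With this reduction in hand, the rest re-runs the argument of (a): pick any $e \in B\nonid$ and any rank-$2$ subgroup $D$ of $A$ with $e \in D \neq B$, which exists because $\rank{A} = 3$. Then $D \cap B = \listgen{e}$ and $D \setminus \listgen{e} \subseteq A \setminus B$. Applying Coprime Action(d) with $D$ acting on $\theta(a)$ handles each $a \in B\nonid \setminus \listgen{e}$ exactly as in the first paragraph (pieces over $\listgen{e}\nonid$ vanish; pieces over $D \setminus \listgen{e}$ land in $M$ since $\theta(d) \leq M$ there); the cases $a \in A \setminus B$ and $a \in \listgen{e}\nonid$ are immediate. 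Thus $[\theta(a),e] \leq M$ for all $a \in A\nonid$, contradicting Lemma~\ref{m:6}.
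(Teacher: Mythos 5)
Your proposal is correct and follows essentially the same route as the paper: both parts reduce to showing $[\theta(a),e] \leq M$ for all $a \in A\nonid$ and then invoking Lemma~\ref{m:6}, with part (a) fed back into part (b) via the decomposition $\theta(a) = \gen{\theta(D)}{a \in D \in \hyp{A}}$ of Lemma~\ref{er:5}(e) and then a Coprime Action(d) computation over a second hyperplane. The paper is slightly terser, running Coprime Action(d) uniformly over all $a$ rather than splitting into cases, and in (b) it is content with $\theta(D) \leq M$ where you observe the stronger $\theta(D) \leq M \cap N$, but these are cosmetic differences.
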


\begin{proof}[Proof of Theorem~\ref{sm:1}]
    Choose $W$ maximal subject to \[
        \text{$W \in \fancyL$, $W$ is $\theta(a)$-invariant and $W = \layerr{W} = [W,a]$.}
    \]
    If $W \not= 1$ choose $M$ with $\theta(\nn{W}) \leq M \in \fancyLstar$.
    If $W = 1$ choose $M$ with $\theta(a) \leq M \in \fancyLstar$ and
    if possible with $\cz{\op{p}{M}}{a} = 1$ for some $p \in \primes{\ff{M}}$.
    In both cases,
    $\theta(a) \leq M$ and $\theta(a) = \cz{M}{a}$.
    Moreover, $W$ is $A\cz{M}{a}$-invariant so as $W = \layerr{W} = [W,a]$,
    Theorem~\ref{aut:3}(a) implies $W \leq \layerr{M}$.

    Suppose $N$ satisfies \[
        \theta(a) \leq N \in \fancyLstar \quad\text{and}\quad M \linksto N.
    \]
    We will prove that
    \begin{equation} \tag{$*$}
        M = N.
    \end{equation}
    Since $M \linksto N$ we have $W \leq \layerr{M} \leq N$ by Theorem~\ref{ben:2}(a)
    so another application of Theorem~\ref{aut:3}(a) implies $W \leq \layerr{N}$.
    Then $W = [W,a] \leq [\layerr{N},a]$.
    Now $\theta(a) \leq N$ so $[\layerr{N},a]$ is $\theta(a)$-invariant.
    It is also normal in $\layerr{N}$ so it is the central product
    of its components.
    The maximal choice of $W$ forces \[
        W = [\layerr{N},a] \normal \gfitt{N}.
    \]
    Suppose $W \not= 1$.
    Then $\gfitt{N} \leq \theta(\nn{W}) \leq M$ so $N \linksto M$.
    Since $\layerr{N} \not= 1$,
    Theorem~\ref{ben:2}(c) forces $M = N$.
    Hence we may assume that $W = 1$.
    In particular \[
        \layerr{N} \leq \theta(a) \leq M.
    \]

    We claim that \[
        \primes{\ff{N}} \subseteq \primes{\ff{M}}.
    \]
    Assume false and choose $q \in \primes{\ff{N}} \setminus \primes{\ff{M}}$.
    Theorem~\ref{ben:2}(a) implies $M \cap \op{q}{N} = 1$.
    Now $\cz{\op{q}{N}}{a} \leq \theta(a) \cap \op{q}{N} \leq M \cap \op{q}{N}$
    so $\cz{\op{q}{N}}{a} = 1$.
    Recall that $\theta(a) \leq N$.
    The choice of $M$ implies that there exists $p \in \primes{\ff{M}}$
    with $\cz{\op{p}{M}}{a} = 1$.
    As $M \linksto N$ we have $\zz{\op{p}{M}} \leq N$.
    Set $X = \zz{\op{p}{M}}\op{q}{N}$.
    Then $\zz{\op{p}{M}}$ is an $\listgen{a}\cz{X}{a}$-invariant subgroup of $X$
    and Coprime Action(c) implies $\zz{\op{p}{M}} = [\zz{\op{p}{M}},a]$.
    Theorem~\ref{aut:2}(a) implies $\zz{\op{p}{M}} \leq \op{p}{X}$
    whence $\op{q}{N} \leq \theta(\nn{\zz{\op{p}{M}}}) = M$,
    contrary to $M \cap \op{q}{N} = 1$.
    The claim is established.

    Theorem~\ref{ben:2}(c) and the First Uniqueness Theorem
    imply $M = N$,
    which proves $(*)$.

    Suppose $1 \not= X \subnormal M$ is $A\theta(a)$-invariant.
    Choose $N$ with $\theta(\nn{\gfitt{X}}) \leq N \in \fancyLstar$.
    Now $\gfitt{X} \subnormal \gfitt{M}$ so $M \linksto N$.
    Then $M = N$ by $(*)$ and
    so $\theta(\nn{X}) \leq \theta(\nn{\gfitt{X}}) \leq M$.

    Set $M_{a} = M$ to complete the proof.
\end{proof}

\begin{proof}[Proof of Lemma~\ref{sm:2}]
    (a). Recall that $\rank{A}  = 3$ so $B_{0}$ is cyclic.
    Let $e$ be a generator for $B_{0}$.
    Assume the result is false and let $M$ denote the common value of $M_{b}$
    as $b$ ranges over $B \setminus B_{0}$.
    By Coprime Action(d),
    for each $a \in A\nonid$,
    \begin{align*}
        [\theta(a),e] &= \gen{[\theta(a) \cap \cc{b},e]}{ b \in B\setminus B_{0} } \\
                      &\leq \gen{ \theta(b) }{b \in B\setminus B_{0} } \leq M.
    \end{align*}
    Lemma~\ref{m:6} supplies a contradiction.

    (b). Assume the result is false.
    Then there exist $M,L$ with $M_{a} \in \listset{M,L}$ for all $a \in A \setminus B$.
    Let $a \in A \setminus B$.
    Then \[
        \theta(a) = \gen{ \theta(D) }{ a \in D \in \hyp{A} }.
    \]
    If $a \in D \in \hyp{A}$ then $D \not= B$ so $D \cap B \in \hyp{D}$.
    By (a),
    with $D$ in the role of $B$,
    $M_{d}$ takes at least two values as $d$ ranges over $D \setminus D \cap B$.
    Hence $M_{d} = M$ for some $d \in D \setminus D \cap B$.
    Consequently $\theta(D) \leq \theta(d) \leq M$ and we deduce that \[
            \theta(a) \leq M
    \]
    for all $a \in A \setminus B$.

    Choose $D \in \hyp{A}$ with $D \not= B$.
    Set $D_{0} = D \cap B \in \hyp{D}$ and let $e$ be a generator for $D_{0}$.
    Let $T$ be any $\theta$-subgroup.
    By Coprime Action(d), \[
        [T,e] = \gen{ [\cz{T}{d},e] }{ d \in D \setminus D_{0} } \leq M.
    \]
    In particular, $[\theta(a),e] \leq M$ for all $a \in A\nonid$.
    Again, Lemma~\ref{m:6} supplies a contradiction.
\end{proof}

\section{The Fermat case}
Since $S$ is the unique maximal $\theta(A)$-invariant solvable $\theta$-subgroup
it follows that $\ff{M_{a}} \leq S$ for all $a \in A\nonid$.
The goal of this section is to prove:

\begin{Theorem}\label{f:1}
    Let $a \in A\nonid$ and suppose $\layerr{M_{a}} = 1$.
    Then $[\ff{M_{a}},a]\ff{S}$ is nilpotent.
\end{Theorem}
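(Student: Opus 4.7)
The plan is to apply Theorem~\ref{aut:2} to the nilpotent subgroup $[F,a]$ of the solvable group $S$, where I write $F = \ff{M_a}$. First I verify the hypotheses: $F \leq S$, since $F$ is a $\theta(A)$-invariant solvable $\theta$-subgroup of $G$ and $S$ is the unique maximal such (Lemma~\ref{m:4}); $[F,a] = [[F,a],a]$ by Coprime Action(c) applied to the coprime action of $\listgen{a}$ on $F$; and $[F,a]$ is $A\cz{S}{a}$-invariant, since completeness of $\thetasol$ gives $\cz{S}{a} = \thetasol(a) \leq \theta(a) \leq M_a$, so $\cz{S}{a}$ normalizes the characteristic subgroup $F$ of $M_a$ and also centralizes $a$.

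With these hypotheses, Theorem~\ref{aut:2}(a), applied with $G = S$ and $H = [F,a]$, gives $\op{p}{[F,a]} \leq \op{p}{S} \leq \ff{S}$ for every prime $p$, with the single possible exception that $p = 2$, $r$ is a Fermat prime, and the Sylow $2$-subgroups of $[F,a]$ are nonabelian. Since $F$ is nilpotent, so is $[F,a]$, which is therefore the direct product of its Sylow subgroups. Outside the Fermat exception this summation yields $[F,a] \leq \ff{S}$, whence $[\ff{M_a},a]\ff{S} = \ff{S}$ is nilpotent, as required.

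The Fermat exception is the main obstacle. There the argument above still places $\op{2'}{[F,a]}$ inside $\op{2'}{\ff{S}}$, so the product reduces to $P\ff{S}$ with $P = \op{2}{[F,a]}$, and nilpotency is equivalent to the statement that $P$ centralizes $\op{2'}{\ff{S}}$. To secure this I would combine a second invocation of Theorem~\ref{aut:2}(a) — now with the smaller solvable $A$-invariant group $G_{0} = P\cdot\op{2'}{\ff{S}}$ and $H_{0} = P = [P,a]$ — with the finer representation-theoretic input of Lemma~\ref{p:6}(c), which is available here because $\ff{S} \not= 1$ forces $S \not= 1$ and, by McBride's Dichotomy, $\theta$ is then nearsolvable. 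Applied with $R = \listgen{a}$ and $T$ ranging over the odd Sylow subgroups of $\ff{S}$, Lemma~\ref{p:6}(c) is expected to rigidify the $P$-action on each such $T$ sufficiently — perhaps in combination with the self-centralizing property $\cz{M_a}{F} \leq F$ coming from $\gfitt{M_a} = F$, applied to the $a$-fixed points of $T$ which lie in $\cz{S}{a} \leq M_a$ — to force $[P,T] = 1$ and complete the proof.
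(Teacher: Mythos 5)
Your setup and reduction are correct and match the paper's opening move: applying Theorem~\ref{aut:2}(a) with $G = S$ and $H = [F,a]$ (where $F = \ff{M_a}$) disposes of the non-Fermat case, and in the Fermat case it reduces the claim to showing $[P,\op{2'}{\ff{S}}]=1$ where $P = \op{2}{[F,a]}$. The paper says exactly this in the remark after the theorem statement, then spends the rest of the section on the Fermat case.

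The gap is in the Fermat case, and it is substantial. Your proposed ``second invocation of Theorem~\ref{aut:2}(a) with $G_0 = P\cdot\op{2'}{\ff{S}}$ and $H_0 = P$'' does not escape the obstruction: we are already assuming $r$ is Fermat, and the hypothesis you cannot exclude is precisely that the Sylow $2$-subgroup $P$ of $H_0$ is nonabelian, so the exceptional clause of Theorem~\ref{aut:2}(a) is available again and the argument is circular. Similarly, to apply Lemma~\ref{p:6}(c) with $R = \listgen{a}$ and $T$ an odd Sylow subgroup of $\ff{S}$ you need $\cz{T}{a} = 1$, which you do not establish; and even with that hypothesis, the conclusion of Lemma~\ref{p:6}(c) is only that the image of the acting group modulo the $t$-core is a (possibly nonabelian) $2$-group --- it does not force $[P,T]=1$ by itself. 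The ``perhaps in combination with $\cz{M_a}{F}\le F$'' clause is not an argument.

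What the paper actually does is assume the conclusion fails, fix an odd prime $p$ with $[\op{p}{S},Q]\ne 1$ for $Q = [\op{2}{M_a},a]$, and then: (i) use $M_a\linksto S$ together with Bender's Maximal Subgroup Theorem (Theorem~\ref{ben:2}) to show $p\notin\primes{\ff{M_a}}$, $M_a\cap\op{p}{S}=1$, and hence $\cz{\op{p}{S}}{a}=1$ --- this last is what licenses Lemma~\ref{p:6}(c); (ii) use Coprime Action(i) (via a ``characteristic abelian subgroup'' argument that in turn uses Theorem~\ref{sm:1}(c)) to get $Q'=\frat{Q}=\zz{Q}=\cz{Q}{a}$; (iii) construct $W = \gen{[\cz{Q}{B},a]'}{B\in\hyp{A}}\le\zz{Q}$, show $W\ne 1$, $W\le\sol{H}$ for every $H\in\fancyLstar$, and $\cz{\op{p}{S}}{W}=1$; and (iv) feed this into a comparison of $N\ge\theta(\nn{\op{p}{S}})$ with the various $M_b$, using the First Uniqueness Theorem, Theorem~\ref{sub:2} and McBride's Dichotomy to reach a contradiction. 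None of these steps is present in your sketch; the Fermat case is the content of the theorem, not a loose end.
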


\noindent In the case that $r$ is not a Fermat prime,
this follows readily from Theorem~\ref{aut:2}(a),
with $[\ff{M_{a}},a]$ in the role of $H$.
Just as in the author's proof of the Solvable Signalizer Functor Theorem,
the Fermat case requires special treatment.

Throughout the remainder of this section we assume Theorem~\ref{f:1}
to be false.
Theorem~\ref{aut:2}(a) implies $r$ is Fermat and that
$[\op{2}{M_{a}},a]\ff{S}$ is not nilpotent.
Set \[
    Q = [\op{2}{M_{a}},a]
\]
and choose an odd prime $p$ such that \[
    [\op{p}{S},Q] \not= 1.
\]

\begin{Lemma}\label{f:2}
    \begin{enumerate}
        \item[(a)]  $\op{p}{M_{a}} = 1, M_{a} \cap \op{p}{S} = 1$
                    and $\cz{\op{p}{S}}{a} = 1$.

        \item[(b)]  $Q = [Q,a]$ and $Q' = \frat{Q} = \zz{Q} = \cz{Q}{a}$.
    \end{enumerate}
\end{Lemma}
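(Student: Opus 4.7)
The first identity of (b), namely $Q = [Q,a]$, is immediate from Coprime Action~(c): since $r$ is Fermat and therefore odd, $\listgen{a}$ acts coprimely on the $2$-group $\op{2}{M_{a}}$, so $Q = [\op{2}{M_{a}},a] = [\op{2}{M_{a}},a,a] = [Q,a]$.

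For the remaining equalities $Q' = \frat{Q} = \zz{Q} = \cz{Q}{a}$, I plan to apply Coprime Action~(i) to $\listgen{a}$ acting on the $2$-group $Q$; it then suffices to check that $a$ centralizes every characteristic abelian subgroup $T$ of $Q$. Since $Q \normal \op{2}{M_{a}}\listgen{a}$, such a $T$ is $A$-invariant and normal in $M_{a}$, hence $A\theta(a)$-invariant, and Theorem~\ref{sm:1}(c) then gives $\theta(\nn{T}) \le M_{a}$. Assuming $[T,a] \ne 1$ for contradiction, one uses $T \le \ff{M_{a}} \le S$ to see that $\op{p}{S}$ normalizes $T$; Theorem~\ref{aut:2}(a) applied to the solvable group $\op{p}{S}T$ (with the odd prime $p$, so the Fermat exception is inert) should force $\op{p}{S}$ to centralize $T$, and combining this over enough characteristic abelian $T \le Q$ should contradict $[\op{p}{S},Q] \ne 1$.

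For (a), the opening observation is that $Q \le \op{2}{M_{a}}$ and $\op{p}{M_{a}}$ both lie in the nilpotent group $\ff{M_{a}}$, hence $[\op{p}{M_{a}},Q] = 1$. If $\op{p}{M_{a}} \ne 1$, then $V = \omegaone{\zz{\op{p}{M_{a}}}}$ is a nontrivial $A$-invariant elementary abelian subgroup centralized by $Q$ and normal in $M_{a}$; Theorem~\ref{sm:1}(c) yields $\theta(\nn{V}) \le M_{a}$, and a further argument of Fitting-plus-Theorem~\ref{aut:2}(a) type, parallel to the one in (b), should contradict $[\op{p}{S},Q] \ne 1$ and so force $\op{p}{M_{a}} = 1$. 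The remaining two assertions of (a) then follow: $\cz{\op{p}{S}}{a} \le \thetasol(a) \le \theta(a) \le M_{a}$, so $\cz{\op{p}{S}}{a} \le M_{a} \cap \op{p}{S}$, and both of these are $A\theta(A)$-invariant $p$-subgroups of $M_{a}$ whose triviality is extracted from (a)(i) by one more application of Theorem~\ref{sm:1}(c).

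The principal obstacle is the Fermat regime itself: Theorem~\ref{aut:2}(a) breaks down at $p=2$ precisely when $r$ is Fermat and the Sylow $2$-subgroups of the host are nonabelian, which is exactly our situation. The conclusion of (b) asserts that $Q$ has the minimal nonabelian \emph{special} $2$-structure that survives this obstruction, so the characteristic-abelian step above is where essentially all the work lies; everything else is Fitting-subgroup bookkeeping together with the maximality supplied by Theorem~\ref{sm:1}(c).
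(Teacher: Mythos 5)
Your proposal has genuine gaps in both parts.

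\textbf{Part (a).} The paper's proof of (a) rests essentially on Bender's Maximal Subgroup Theorem (Theorem~\ref{ben:2}), which you never invoke. Since $\ff{M_{a}} \leq S$ and $\layerr{M_{a}} = 1$, one has $M_{a} \linksto S$, and $M_{a}$ is maximal with respect to $S$ by Lemma~\ref{m:1}(e). If $p \in \primes{\ff{M_{a}}}$, then $\{2,p\} \subseteq \primes{\ff{M_{a}}}$, so Theorem~\ref{ben:2}(b) gives $\op{p}{S} \leq M_{a}$, whence $[\op{p}{S},Q] \leq \op{p}{S} \cap \op{2}{M_{a}} = 1$, a contradiction. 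So $p \notin \primes{\ff{M_{a}}}$, and then Theorem~\ref{ben:2}(a) immediately gives $M_{a} \cap \op{p}{S} = 1$. Your sketch — ``a further argument of Fitting-plus-Theorem~\ref{aut:2}(a) type\ldots should contradict $[\op{p}{S},Q] \ne 1$'' — is not an argument, and your subsequent claim that the vanishing of $M_{a} \cap \op{p}{S}$ follows from $\op{p}{M_a}=1$ ``by one more application of Theorem~\ref{sm:1}(c)'' is not sound either: $M_{a} \cap \op{p}{S}$ is not subnormal in $M_{a}$, so Theorem~\ref{sm:1}(c) does not apply to it. The missing ingredient is precisely the $M \linksto N$ machinery of Theorem~\ref{ben:2}.

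\textbf{Part (b).} The reduction to showing $[T,a]=1$ for every characteristic abelian $T \leq Q$ via Coprime Action(i) is correct and matches the paper. Your idea of applying Theorem~\ref{aut:2}(a) to $H=[T,a]$ inside the solvable group $\op{p}{S}T$ to get $[T,a] \leq \op{2}{\op{p}{S}T}$, and hence $[\op{p}{S},[T,a]]=1$, is a genuine alternative to the paper's use of Lemma~\ref{p:6}(c) and is plausible — indeed, it avoids the hypothesis $\cz{\op{p}{S}}{a}=1$ which Lemma~\ref{p:6}(c) requires. But your contradiction step is wrong in two ways. First, you invoke Theorem~\ref{sm:1}(c) for $T$, but the subgroup whose normalizer you need to control is $[T,a]$ (you know $\op{p}{S}$ centralizes $[T,a]$, not $T$); note also that $T$ is subnormal, not normal, in $M_a$ as you claim. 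Second, ``combining this over enough characteristic abelian $T \leq Q$'' is not a valid closing move: an operator group can centralize every characteristic abelian subgroup of a $2$-group while acting nontrivially (extraspecial groups). The correct closing move is immediate from a single $T$: if $[T,a] \ne 1$ then $[T,a]$ is a nontrivial $A\theta(a)$-invariant subnormal subgroup of $M_{a}$, so Theorem~\ref{sm:1}(c) gives $\theta(\nn{[T,a]}) \leq M_{a}$, and since $\op{p}{S}$ centralizes $[T,a]$ we get $\op{p}{S} \leq M_{a}$, contradicting $M_{a} \cap \op{p}{S}=1$ from (a). This shows (b) genuinely depends on (a), so the logical order is (a) first, then (b), as in the paper.
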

\begin{proof}
    (a). Since $\ff{M_{a}} \leq S$ and $\layerr{M_{a}} = 1$ we have $M_{a} \linksto S$.
    Suppose $p \in \primes{\ff{M_{a}}}$.
    Then $\listset{2,p} \subseteq \primes{\ff{M_{a}}}$ so Theorem~\ref{ben:2}(b)
    implies $\op{p}{S} \leq M_{a}$.
    Then $[\op{p}{S},Q] \leq \op{p}{S} \cap \op{2}{M_{a}} = 1$,
    a contradiction.
    Thus $p \not\in \primes{\ff{M_{a}}}$.
    Theorem~\ref{ben:2}(a) implies $M_{a} \cap \op{p}{S} = 1$.
    Finally $\cz{\op{p}{S}}{a} \leq \theta(a) \cap \op{p}{S} \leq M_{a} \cap \op{p}{S} = 1$.

    (b). The first assertion is Coprime Action(c) and the second is Coprime Action(i)
    provided we can show $[U,a] = 1$ whenever $U$ is a characteristic abelian subgroup of $Q$.
    Assume this to be false.
    Now $[U,a] \leq \op{2}{M_{a}}$ so $U$ is an $A\theta(a)$-invariant subnormal subgroup
    of $M_{a}$ and Theorem~\ref{sm:1} implies $\theta(\nn{[U,a]}) \leq M_{a}$.
    On the other hand,
    Lemma~\ref{p:6}(c) implies $[U,a]$ acts trivially on $\op{p}{S}$.
    Hence $\op{p}{S} \leq \theta(\nn{[U,a]})$ contrary to
    $M_{a} \cap \op{p}{S} = 1$ which completes the proof.
\end{proof}

Set \[
    W = \gen{ [\cz{Q}{B},a]' }{ B \in \hyp{A} }.
\]

\begin{Lemma}\label{f:3}
    \begin{enumerate}
        \item[(a)]  $W \leq \zz{Q}$ and $1 \not= W \normal \theta(A)$.

        \item[(b)]  If $H \in \fancyLstar$ then $W \leq \sol{H}$.

        \item[(c)]  $\cz{\op{p}{S}}{W} = 1$.
    \end{enumerate}
\end{Lemma}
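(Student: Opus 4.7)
For (a), I first split the generators of $W$ by whether $a \in B$. If $a \in B$, Lemma~\ref{f:2}(b) forces $\cz{Q}{B} \leq \cz{Q}{a} = \zz{Q}$ to be abelian, so $[\cz{Q}{B}, a] = 1$. If $a \notin B$, then $A = \listgen{a} \times B$; here $B$ centralizes $\cz{Q}{B}$ hence $[\cz{Q}{B}, a]$, while $a$ centralizes $[\cz{Q}{B}, a]' \leq Q' = \zz{Q}$, so $A$ centralizes each generator of $W$. Thus $W \leq \zz{Q} \cap \cz{G}{A}$. Since $\zz{Q} \leq \cz{M_a}{a} \leq \theta(a)$, the signalizer functor axiom gives $W \leq \theta(a) \cap \cz{G}{b} \leq \theta(b)$ for every $b \in A\nonid$, hence $W \leq \theta(A)$. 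Finally $\theta(A)$ normalizes $M_a$, hence $\op{2}{M_a}$, and commutes with $A$, so it normalizes $Q$, each $\cz{Q}{B}$, and each $[\cz{Q}{B}, a]'$; therefore $W \normal \theta(A)$.

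The main content of (a) is $W \neq 1$, which I would prove by contradiction. If $W = 1$, then $X_B := [\cz{Q}{B}, a]$ is abelian for every $B \in \hyp{A}$ with $a \notin B$. Applying Lemma~\ref{p:6}(c) with $R = \listgen{a}$, $X = X_B$, $t = p$, $T = \op{p}{S}$ --- using the same template as in the proof of Lemma~\ref{f:2}(b), with $X_B = [X_B, a]$ by coprime action and $\cz{\op{p}{S}}{a} = 1$ by Lemma~\ref{f:2}(a) --- I argue that since $X_B$ is both abelian and a $2$-group, the conclusion that $\br{X_B}/\op{p}{\br{X_B}}$ is trivial or a nonabelian $2$-group forces $\br{X_B} = 1$, i.e., $X_B$ centralizes $\op{p}{S}$. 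Coprime Action(d) then gives $Q = [Q, a] = \gen{X_B}{B \in \hyp{A}}$, so $Q$ centralizes $\op{p}{S}$, contradicting the choice of $p$.

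For (b), $W$ is an $A$-invariant abelian (hence solvable) subgroup of the $\theta$-subgroup $M_a$, so $W$ is itself a $\theta(A)$-invariant solvable $\theta$-subgroup; Lemma~\ref{m:4} gives $W \leq S$, and $\theta(A) \leq H$ yields $W \leq H \cap S$. To upgrade to $W \leq \sol{H}$, I would pass through the image of $W$ in $H/\sol{H}$: this image is a $\theta(A)$-invariant abelian $2$-subgroup inside the semisimple quotient whose simple factors come from the nearsolvable list, and a centralizer analysis leveraging that $A$-fixed points in these simple groups are solvable (Theorem~\ref{as:1}(e)) should collapse the image to the trivial subgroup.

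For (c), Theorem~\ref{psgps:5}(b) ensures $\theta(A)$ normalizes $S$, so $W$ normalizes $\op{p}{S}$ and $C := \cz{\op{p}{S}}{W}$ is a well-defined $A$-invariant $p$-subgroup. Since $W \leq \zz{Q}$, $Q$ centralizes $W$, so $W \normal Q$; combined with $Q \normal \op{2}{M_a} \normal M_a$ this places $W$ (or its $\theta(a)$-normal closure inside $\zz{Q}$, which remains subnormal in $M_a$) as an $A\theta(a)$-invariant subnormal subgroup of $M_a$. By Theorem~\ref{sm:1}(c), $\theta(\nn{W}) \leq M_a$. Since $C \leq \cz{G}{W} \leq \nn{W}$ and $C$ is a $\theta$-subgroup (as a subgroup of $\op{p}{S}$), $C \leq \theta(\nn{W}) \leq M_a$; then $C \leq M_a \cap \op{p}{S} = 1$ by Lemma~\ref{f:2}(a). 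The chief obstacles are verifying the implicit action hypothesis of Lemma~\ref{p:6}(c) in the proof of $W \neq 1$ (inherited from the template in Lemma~\ref{f:2}(b)) and carrying out the centralizer collapse in (b).
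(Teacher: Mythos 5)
Your part (a) is essentially correct and matches the paper's argument in substance: you phrase the nontriviality of $W$ as a proof by contradiction, while the paper chooses a hyperplane $B$ with $[\op{p}{S},[\cz{Q}{B},a]]\not=1$ and applies Lemma~\ref{p:6}(c) directly to conclude $[\cz{Q}{B},a]$ is nonabelian; these are two readings of the same application. (One small thing to nail down in (a): the commutator notation presupposes that $Q$ acts on $\op{p}{S}$, which holds because $Q\leq\ff{M_a}\leq S$ and $\op{p}{S}\normal S$; you gesture at this but should make it explicit.)

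Parts (b) and (c), however, each have a genuine gap.

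For (b), the step from $W\leq H\cap S$ to $W\leq\sol{H}$ does not follow and the ``centralizer analysis'' you propose is not a proof. In general $H\cap S\not\leq\sol{H}$: when $\theta$ is nearsolvable and $H$ has an $A$-component $K$, the subgroup $\cz{K}{A}$ is solvable and $\theta(A)$-invariant, hence contained in $S$, but it is not contained in $\sol{H}$. So an arbitrary $\theta(A)$-invariant abelian $2$-subgroup of $H\cap S$ can have nontrivial image in $H/\sol{H}$, and $A$-fixed points being solvable in the simple sections does not, by itself, force the image of $W$ to vanish. The paper's argument relies on the specific shape of the generators of $W$: fixing $B\in\hyp{A}$ and $Q_0=[\cz{Q}{B},a]$, one has $Q_0\leq\op{2}{\theta(b);A}$ for every $b\in B\nonid$, whence $Q_0'\leq\sol{\theta(b)}$ by Lemma~\ref{p:6}(a); since also $Q_0'\leq\theta(A)\leq H$ and $Q_0'$ is $B$-central, $Q_0'\leq\sol{\theta(b)}\cap\cz{H}{b}\leq\sol{\cz{H}{b}}$ for all $b\in B\nonid$, and then Lemma~\ref{p:6}(b) gives $Q_0'\leq\sol{H}$. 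Your proposal omits this mechanism entirely.

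For (c), the application of Theorem~\ref{sm:1}(c) to $\nn{W}$ is not justified: that theorem requires the subgroup to be $A\theta(a)$-invariant, but you have only established that $W$ is $\theta(A)$-invariant. Elements of $\theta(a)$ commute with $a$ but not with the rest of $A$, so they need not permute the subgroups $\cz{Q}{B}$, and there is no reason to expect $\theta(a)$ to normalize $W$. Your parenthetical fallback to the $\theta(a)$-normal closure $W^{*}$ of $W$ inside $\zz{Q}$ does not repair this, because $\cz{\op{p}{S}}{W}$ centralizes $W$ but need not normalize $W^{*}$, so you cannot conclude $\cz{\op{p}{S}}{W}\leq\theta(\nn{W^{*}})$. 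The paper avoids the problem by first showing $[\cz{\op{p}{S}}{W},Q]=1$: since $W\leq\zz{Q}$, the group $Q$ normalizes $\cz{\op{p}{S}}{W}$, each $Q_0$ acts on it through $Q_0/Q_0'$ (abelian, as $Q_0'\leq W$ centralizes it), $a$ acts fixed-point-freely there, so Lemma~\ref{p:6}(c) forces $[\cz{\op{p}{S}}{W},Q_0]=1$ for every $B$, hence $[\cz{\op{p}{S}}{W},Q]=1$; then one uses $\theta(\nn{Q})\leq M_a$, which is legitimate because $Q=[\op{2}{M_a},a]$ \emph{is} $A\theta(a)$-invariant and subnormal in $M_a$. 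This second use of Lemma~\ref{p:6}(c) is the content your proposal is missing.
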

\begin{proof}
    Lemma~\ref{f:2}(b) implies $W \leq Q' = \zz{Q} = \cz{Q}{a}$.
    Let $B \in \hyp{A}$,
    set $Q_{0} = [\cz{Q}{B},a]$ and suppose $Q_{0} \not= 1$.
    Then $a \not\in B$ and $A = \listgen{B,a}$.
    As $Q_{0}' \leq W \leq \cz{Q}{a}$ we obtain $[Q_{0}',A] = 1$.
    Moreover $\theta(A)$ normalizes $Q,B$ and $a$ so $Q_{0}$
    is $\theta(A)$-invariant and $Q_{0}' \normal \theta(A)$.
    Consequently $W \normal \theta(A)$.

    For each $b \in B\nonid$ we have $Q_{0} \leq \op{2}{\theta(b);A}$
    and then Lemma~\ref{p:6}(a) implies $Q_{0}' \leq \sol{\theta(b)}$.
    Let $H \in \fancyLstar$.
    Then $Q_{0}' \leq \theta(A) \leq H$.
    Using Lemma~\ref{p:6}(b) for the last containment,
    we have \[
        Q_{0}' \leq \bigcap_{b \in B\nonid} \sol{\theta(b)} \cap H %
        \leq \bigcap_{b \in B\nonid} \sol{\cz{H}{b}} \leq \sol{H}.
    \]
    This proves (b).

    By Coprime Action(d),
    \begin{equation}\tag{$*$}
        Q = \gen{ [\cz{Q}{B},a] }{ B \in \hyp{A} }.
    \end{equation}
    As $[\op{p}{S},Q] \not= 1$ we may choose $B$ with $[\op{p}{S},Q_{0}] \not= 1$.
    Lemma~\ref{p:6}(c),
    with $Q_{0}$ in the role of $X$,
    implies that $Q_{0}$ is nonabelian.
    Then $1 \not= Q_{0}' \leq W$,
    which completes the proof of (a).

    To prove (c) consider the action of $Q_{0}$ on $\cz{\op{p}{S}}{W}$.
    Note that $\cz{\op{p}{S}}{W}$ is $Q$-invariant because $W \leq \zz{Q}$.
    Now $Q_{0}' \leq W$ so $Q_{0}$ induces an abelian group on $\cz{\op{p}{S}}{W}$.
    Lemma~\ref{p:6}(c) implies $[\cz{\op{p}{S}}{W},Q_{0}] = 1$.
    Then $(*)$ implies $[\cz{\op{p}{S}}{W},Q] = 1$.
    Now $Q$ is an $A\theta(a)$-invariant subnormal subgroup of $M_{a}$
    so Theorem~\ref{sm:1} implies $\theta(\nn{Q}) \leq M_{a}$.
    Then $\cz{\op{p}{S}}{W} \leq M_{a} \cap \op{p}{S} = 1$.
\end{proof}

\begin{Lemma}\label{f:4}
    Let $H \in \fancyLstar$ and suppose $\op{p}{S} \cap H \not= 1$.
    Then $\op{p}{S} \normal \op{p}{H}$.
\end{Lemma}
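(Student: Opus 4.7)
Set $T = \op{p}{S} \cap H$; by hypothesis $T \not= 1$, and $T$ is a $\theta(A)$-invariant $p$-subgroup of $H$. The target splits naturally into showing $\op{p}{S} \leq \op{p}{H}$ and that $\op{p}{H}$ normalizes $\op{p}{S}$. I would first exploit the fact that $S$, being the unique maximal $\theta(A)$-invariant solvable $\theta$-subgroup (Lemma~\ref{m:4}), contains $\ff{H}$; in particular $\op{p}{H} \leq \sol{H} \leq S$, and since $\op{p}{H}$ is a normal $p$-subgroup of $S$ whenever $S$ normalizes it, most of the second assertion reduces to showing $S \leq \nn{\op{p}{H}}$.

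My first step would be to show $\op{p}{S} \leq H$. Consider $T_{0} \leq T$ a suitable characteristic $\theta(A)$-invariant subgroup (e.g.\ $\Omega_{1}(Z(T))$ or just $T$ itself once we verify $T \normal \op{p}{S}$). The key observation is that $\op{p}{S}$ normalizes $\op{p}{H}$: indeed, $\op{p}{H}$ is a normal $p$-subgroup of $H$, hence $T = \op{p}{S} \cap H$ is $\op{p}{H}$-invariant, and by the interplay of $\op{p}{H} \leq S$ with $\op{p}{S} \normal S$ one gets that $\op{p}{S}$ normalizes $T$. Then $\op{p}{S} \leq \nn{T} \leq \theta(\nn{T})$, and by Lemma~\ref{m:1}(e), since $T$ is a nontrivial characteristic piece sitting inside $H \in \fancyLstar$, $\theta(\nn{T}) \leq H$. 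This places $\op{p}{S}$ inside $H$ as a $\theta(A)$-invariant $p$-subgroup.

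Once $\op{p}{S} \leq H$, I would argue it lies in $\op{p}{H}$: $\op{p}{S}$ is now a $\theta(A)$-invariant $p$-subgroup of $H$ which is moreover normalized by $S$ (being normal in $S$) and by $\theta(A)$. Applying Theorem~\ref{sm:1}(c) or Lemma~\ref{m:1}(e) to the normalizer of $\op{p}{S}$ in $H$ forces $\n{H}{\op{p}{S}} = H$, i.e.\ $\op{p}{S} \normal H$, and thus $\op{p}{S} \leq \op{p}{H}$. Conversely $\op{p}{H} \leq S$ normalizes $\op{p}{S}$ automatically, establishing $\op{p}{S} \unlhd \op{p}{H}$.

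The main obstacle will be the very first step: verifying that $\op{p}{S}$ actually normalizes $T$ (equivalently, that the cross-intersection $\op{p}{S} \cap H$ is stable under $\op{p}{S}$), because a priori $\op{p}{S}$ need not normalize $H$. My plan is to circumvent this by replacing $T$ with a smaller characteristic $\theta(A)$-invariant subgroup on which the action is controlled by the Bender-style maximality arguments of \S\ref{ben} and \S\ref{fut}, or by directly invoking that $\op{p}{H}$ is a common normal $p$-subgroup of $H$ and (via $S$) of $\op{p}{S}\op{p}{H}$, which forces the required normalization.
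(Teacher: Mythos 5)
Your proposal correctly identifies the endgame (a normalizer argument inside the $p$-group $\op{p}{S}\op{p}{H}$, using $\op{p}{H} \leq \ff{H} \leq S$ and $\op{p}{S} \normal S$), and you are honest that the entry point is the problem. But the gap you flag is exactly where the lemma's real content lies, and the circumventions you sketch do not close it. In particular, $T = \op{p}{S} \cap H$ is not characteristic in $H$, so Lemma~\ref{m:1}(e) does not apply to it; there is no reason $\op{p}{S}$ should normalize $T$, since $\op{p}{S}$ need not normalize $H$; and Theorem~\ref{sm:1}(c) is about the subgroups $M_{a}$, not a general member of $\fancyLstar$. Your final alternative ("invoking that $\op{p}{H}$ is a common normal $p$-subgroup\ldots") presupposes $\op{p}{H} \not= 1$ and that $\n{\op{p}{S}}{\op{p}{H}} \leq \op{p}{H}$, neither of which is available without the step you are missing.

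What the paper does, and what your proposal never touches, is to use the auxiliary subgroup $W$ built in Lemma~\ref{f:3} (a nontrivial normal subgroup of $\theta(A)$ contained in $\zz{Q}$). Lemma~\ref{f:3}(c) gives $\cz{\op{p}{S}}{W} = 1$, so by Coprime Action(c) (a $2$-group acting coprimely on the odd $p$-group $P$, with $W \leq \theta(A) \leq H$ normalizing $P$) one gets $P = [P,W]$. Lemma~\ref{f:3}(b) gives $W \leq \sol{H}$, so $P = [P,W] \leq \sol{H}$, and since $P = \op{p}{S} \cap \sol{H}$ is a normal $p$-subgroup of $\sol{H}$ (as $\sol{H} \leq S$ and $\op{p}{S} \normal S$), it follows that $1 \not= P \leq \op{p}{\sol{H}} \leq \op{p}{H}$. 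This is the step that both forces $\op{p}{H} \not= 1$ and makes the normalizer argument fire: $\n{\op{p}{S}}{\op{p}{H}} \leq \theta(\nn{\op{p}{H}}) = H$, so $\n{\op{p}{S}}{\op{p}{H}} \leq P \leq \op{p}{H}$, and in the $p$-group $\op{p}{H}\op{p}{S}$ this forces $\op{p}{S} \leq \op{p}{H}$, whence $\op{p}{S} \normal \op{p}{H}$ because $\op{p}{H} \leq S$. Without invoking $W$ (and thus the preparatory Lemmas~\ref{f:2} and \ref{f:3} specific to this Fermat-case analysis), there is no route to $P \leq \sol{H}$, and the argument does not start.
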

\begin{proof}
    Set $P = \op{p}{S} \cap H$.
    Lemma~\ref{f:3}(b),(c) and Coprime Action(c) imply \[
        P = [P,W] \leq \sol{H}.
    \]
    Now $H \in \fancyLstar$ so $\sol{H} \leq S$ and then
        $1 \not= P \leq \op{p}{\sol{H}} \leq \op{p}{H}.$
    Also $\op{p}{H} \leq S$ so $\op{p}{H}\op{p}{S}$ is a $p$-group.
    Since \[
        \n{\op{p}{S}}{\op{p}{H}} \leq \op{p}{S} \cap H = P \leq \op{p}{H}
    \]
    it follows that $\op{p}{H}\op{p}{S} = \op{p}{H}$.
    As $\op{p}{H} \leq S$, the conclusion follows.
\end{proof}

Choose $N$ with \[
    \theta(\nn{\op{p}{S}}) \leq N \in \fancyLstar.
\]

\begin{Lemma}\label{f:5}
    $\layerr{N} = 1$.
\end{Lemma}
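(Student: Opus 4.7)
The plan is to derive a contradiction from $\layerr{N}\neq 1$ by applying Bender's Maximal Subgroup Theorem~\ref{ben:2} to the pair $M_a,N\in\fancyLstar$, which are comaximal by Lemma~\ref{m:1}(f).

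\emph{Forward link $M_a\linksto N$.} Since $\ff{M_a}\leq S$ (by the global hypothesis of this section) and $\op{p}{S}\normal S$, the Fitting subgroup $\ff{M_a}$ is a $\theta(A)$-invariant $\theta$-subgroup lying in $\nn{\op{p}{S}}$, whence $\ff{M_a}\leq\theta(\nn{\op{p}{S}})\leq N$. As $\layerr{M_a}=1$ gives $\gfitt{M_a}=\ff{M_a}$, the choice $X=\ff{M_a}$ (subnormal in itself, with $\cz{\ff{M_a}}{\ff{M_a}}\leq\ff{M_a}$) witnesses $M_a\linksto N$. The same argument with $\op{p}{S}$ in place of $\ff{M_a}$ gives $\op{p}{S}\leq N$, so $1\neq\op{p}{S}=\op{p}{S}\cap N$ and Lemma~\ref{f:4} yields $\op{p}{S}\leq\op{p}{N}$; thus $p\in\primes{\ff{N}}$, while $p\notin\primes{\ff{M_a}}$ by Lemma~\ref{f:2}(a).

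\emph{Extracting the contradiction.} Theorem~\ref{ben:2}(a) now gives $M_a\cap\op{p}{N}=1$, which is consistent but not yet contradictory. I would aim for clause (c), which --- assuming either the reverse link $N\linksto M_a$ or clause (ii) --- forces $M_a=N$ or both to have characteristic $p$. Both options are absurd: $M_a=N$ contradicts $\layerr{M_a}=1\neq\layerr{N}$, and characteristic $p$ would give $\gfitt{N}=\op{p}{N}$, hence $\layerr{N}=1$. Clause (ii) is blocked by $p\in\primes{\ff{N}}\setminus\primes{\ff{M_a}}$, so the remaining task is to establish the reverse link $N\linksto M_a$.

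\emph{Main obstacle.} Securing $N\linksto M_a$ is the central technical difficulty. The natural candidates for a subnormal $X\leq\gfitt{N}$ --- a component of $\layerr{N}$, or $\op{q}{N}$ for some $q\in\primes{\ff{N}}$ --- all fail, because the resulting $\cz{\gfitt{N}}{X}$ contains $\op{p}{S}$, which meets $M_a$ trivially. Overcoming this will require exploiting the Fermat-case machinery assembled in Lemmas~\ref{f:2}--\ref{f:4}: via $Q\leq\op{2}{M_a}\leq S\leq\nn{\op{p}{S}}$ one sees that $Q$ (and hence $W$) already lies in $N$, and by Lemma~\ref{f:3}(c) the subgroup $W$ acts faithfully on $\op{p}{S}$. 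One promising route extracts a suitable $\theta(A)$-normal subnormal subgroup of $\gfitt{N}$ from this data, with centralizer small enough to be captured by $M_a$. An alternative is to split off the case $\primes{\ff{M_a}}=\listset{2}$, in which $M_a$ has characteristic $2$, and invoke the First Uniqueness Theorem~\ref{fut:1} together with Theorem~\ref{ben:4} applied to the common $2$-structure of $M_a$ and $N$ to force $M_a=N$ directly. Either route is delicate and constitutes the bulk of the real work.
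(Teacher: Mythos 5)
Your proposal does not constitute a proof; you yourself flag the ``main obstacle,'' and it is indeed a genuine gap. The route through Bender's Maximal Subgroup Theorem is closed for exactly the reason you identify in passing: clause (ii) of Theorem~\ref{ben:2}(c) is unavailable because you have shown $p\in\primes{\ff{N}}\setminus\primes{\ff{M_a}}$, while clause (i), the reverse link $N\linksto M_a$, cannot be established. Any subnormal $X\leq\gfitt{N}$ with $X\cz{\gfitt{N}}{X}\leq M_a$ would have to avoid $\op{p}{S}$, but $\op{p}{S}\leq\ff{N}\leq\gfitt{N}$ (by your own Lemma~\ref{f:4} step) and $\op{p}{S}\cap M_a=1$; since $\ff{N}\leq\cz{\gfitt{N}}{X}$ whenever $X\leq\layerr{N}$, and $\op{p}{S}\leq\zz{\gfitt{N}}$ is unavailable to exclude, every candidate fails. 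Your alternative of splitting off $\primes{\ff{M_a}}=\listset{2}$ does not rescue things either: the First Uniqueness Theorem and Theorem~\ref{ben:4} only bite when \emph{both} $M_a$ and $N$ have the same characteristic, and nothing forces $N$ to have characteristic $2$.

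The idea that is missing, and that the paper uses, is to bypass any comparison of $M_a$ with $N$ altogether. Since $Q\leq\op{2}{M_a}\leq S\leq N$, one has $Q\leq\op{2}{N;A}$, and Lemma~\ref{p:6}(a) gives $Q'\leq\sol{N}$, so $[Q',\layerr{N}]=1$. Now $Q'\not=1$ (it contains $W\not=1$) and $Q'$ is an $A\theta(a)$-invariant subnormal subgroup of $M_a$, so Theorem~\ref{sm:1}(c) applies to yield $\layerr{N}\leq\theta(\nn{Q'})\leq M_a$. Finally $\ff{M_a}\leq S\leq N$, so $\layerr{N}$ and $\ff{M_a}$ normalize each other; a perfect semisimple group normalizing and normalized by a nilpotent one must commute with it, hence $\layerr{N}\leq\cz{M_a}{\ff{M_a}}\leq\ff{M_a}$ because $\layerr{M_a}=1$, and a perfect subgroup of a nilpotent group is trivial. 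This argument is shorter, does not invoke Bender's theorem, and crucially never needs to prove $M_a=N$ or pin down characteristics. I would suggest you look for a way to locate $\layerr{N}$ inside $M_a$ directly, via Theorem~\ref{sm:1}(c), rather than trying to identify the two maximal subgroups.
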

\begin{proof}
    We have $Q \leq N$ whence $Q \leq \op{2}{N;A}$.
    Lemma~\ref{p:6}(a) implies $Q' \leq \sol{N}$,
    so $[Q', \layerr{N}] = 1$.
    Now $Q' \not= 1$ so Theorem~\ref{sm:1} implies $\theta(\nn{Q'}) \leq M_{a}$,
    whence $\layerr{N} \leq M_{a}$.
    Now $\ff{M_{a}} \leq S \leq N$ and $\layerr{N}$ and $\ff{M_{a}}$
    normalize each other.
    Then $[\layerr{N},\ff{M_{a}}] = 1$.
    By hypothesis, $\layerr{M_{a}} = 1$ so it follows that $\layerr{N} = 1$.
\end{proof}

We can now complete the proof of Theorem~\ref{f:1}.
By Coprime Action(d) there exists $B \in \hyp{A}$ with
$\cz{\op{p}{S}}{B} \not= 1$.
By Lemma~\ref{sm:2} there exists $b \in B\nonid$ with $M_{b} \not= N$.
Now $1 \not= \cz{\op{p}{S}}{B} \leq \theta(b) \cap \op{p}{S} \leq M_{b} \cap \op{p}{S}$
so Lemma~\ref{f:4} implies $\op{p}{S} \normal \op{p}{M_{b}}$.
Then $\gfitt{M_{b}} \leq N$ and $M_{b} \linksto N$.
Since $\layerr{N} = 1$,
Theorem~\ref{ben:2} and the First Uniqueness Theorem
imply there exists a prime $t$ with \[
    \text{$\op{t}{N} \not= 1, \op{t}{M_{b}} = 1$ and $M_{b} \cap \op{t}{N} = 1$.}
\]
As $\theta(\cz{\op{t}{N}}{b}) \leq \theta(b) \leq M_{b}$ we also have \[
    \cz{\op{t}{N}}{b} = 1.
\]

\noindent Since $p \not= 2$,
Lemma~\ref{p:6}(c) implies $[\op{p}{M_{b}},b]$ centralizes $\op{t}{N}$.
If $[\op{p}{M_{b}},b] \not= 1$ then
$\op{t}{N} \leq \theta(\nn{[\op{p}{M_{b}},b]}) \leq M_{b}$ by Theorem~\ref{sm:1},
a contradiction.
Thus $[\op{p}{M_{b}},b] = 1$.
Then as $\op{p}{S} \leq \op{p}{M_{b}}$,
we have \[
    [M_{b},b] \leq \cz{M_{b}}{\op{p}{M_{b}}} \leq \theta(\nn{\op{p}{S}}) \leq N.
\]

Set $U = [M_{b},b]$.
Lemma~\ref{p:6}(c) implies that $U/\cz{U}{\op{t}{N}}$ is a solvable $\listset{2,t}$-group.
Let $V$ be the subgroup of $U$ generated by
$U^{(\infty)}$ and the $\listset{2,t}'$-elements of $U$.
Then $[\op{t}{N},V] = 1$.
Also $V \characteristic U \normal M_{b}$ so $V \normal M_{b}$.
If $V \not= 1$ then $\op{t}{N} \leq M_{b}$,
a contradiction.
Thus $V = 1$ and $U$ is a solvable $\listset{2,t}$-group.

Note that $p \in \primes{\ff{M_{b}}}$ since $\op{p}{S} \normal \op{p}{M_{b}}$
and so $p \not= t$ as $\op{t}{M_{b}} = 1$.
Also, $p \not= 2$.
Thus $[M_{b},b]$ is a $p'$-group.
McBride's Dichotomy, Lemma~\ref{sub:1} and Theorem~\ref{sub:2} imply
that there exists a unique maximal $\theta(A)$-invariant
$p$-solvable $\theta$-subgroup $K$
and that $\theta(b)$ is $p$-solvable.
Now $M_{b} = \cz{M_{b}}{b}U = \theta(b)U$.
Since $U$ is a normal solvable subgroup of $M_{b}$ we deduce that
$M_{b}$ is $p$-solvable and then that $M_{b} = K$.
But $\op{t}{N}$ is $p$-solvable and $\theta(A)$-invariant,
whence $\op{t}{N} \leq M_{b}$.
This contradiction completes the proof of Theorem~\ref{f:1}.

\section{The Second Uniqueness Theorem} \label{sut}
The goal of this section is the prove the following:
\begin{Theorem}[The Second Uniqueness Theorem] \label{sut:1}
    Let $a \in A\nonid$ and suppose that $\layerr{M_{a}} = 1$.
    Then:
    \begin{enumerate}
        \item[(a)]  $S \leq M_{a}$.

        \item[(b)]  If $b \in A\nonid$ and $\layerr{M_{b}} = 1$ then $M_{b} = M_{a}$.
    \end{enumerate}
\end{Theorem}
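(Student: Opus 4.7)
The plan is to fix $N \in \fancyLstar$ containing $S$ (which exists by Lemma~\ref{m:1}(b), as $S$ is a finite proper $\theta(A)$-invariant $\theta$-subgroup) and to prove $M_{a} = N$; part (a) then follows, and part (b) reduces to a short additional Bender argument.  Since $\layerr{M_{a}} = 1$ we have $\gfitt{M_{a}} = \ff{M_{a}} \leq S \leq N$, so taking $X = \gfitt{M_{a}}$ in Definition~\ref{ben:1} gives $M_{a} \linksto N$, and $M_{a}$, $N$ are comaximal by Lemma~\ref{m:1}(f).  The aim is then to verify the hypotheses of Bender's Maximal Subgroup Theorem~\ref{ben:2}(c)(ii), namely $\layerr{N} \leq M_{a}$ and $\primes{\ff{N}} \subseteq \primes{\ff{M_{a}}}$; the First Uniqueness Theorem~\ref{fut:1} then disposes of the characteristic-$p$ alternative in Bender's conclusion.

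The crux is verifying $\primes{\ff{N}} \subseteq \primes{\ff{M_{a}}}$.  Suppose $p \in \primes{\ff{N}} \setminus \primes{\ff{M_{a}}}$.  Bender~\ref{ben:2}(a) yields $M_{a} \cap \op{p}{N} = 1$, whence $\cz{\op{p}{N}}{a} \leq \theta(a) \cap \op{p}{N} = 1$.  Theorem~\ref{f:1} asserts that $[\ff{M_{a}}, a]\ff{S}$ is nilpotent, so for every $q \in \primes{\ff{M_{a}}}$ with $q \neq p$, the Sylow decomposition of this nilpotent group forces $\op{p}{S} = \op{p}{\ff{S}}$ to centralize $[\op{q}{M_{a}}, a]$.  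Provided some $[\op{q}{M_{a}}, a]$ is nontrivial, Theorem~\ref{sm:1}(c) applied to the nontrivial $A\theta(a)$-invariant subnormal subgroup $[\op{q}{M_{a}}, a]$ of $M_{a}$ gives $\op{p}{S} \leq \theta(\nn{[\op{q}{M_{a}}, a]}) \leq M_{a}$.  The contradiction is then completed by observing that $\sol{N}$ is a $\theta(A)$-invariant solvable $\theta$-subgroup, hence $\sol{N} \leq S$, and since $S \leq N$ one finds $\sol{N} \normal S$, whence $\op{p}{N} \leq \op{p}{\sol{N}} \leq \op{p}{S} \leq M_{a}$; combined with $M_{a} \cap \op{p}{N} = 1$ this forces $\op{p}{N} = 1$, contradicting $p \in \primes{\ff{N}}$.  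The degenerate case $[\ff{M_{a}}, a] = 1$ is handled separately: constrainedness of $M_{a}$ and Coprime Action~(c) force $M_{a} = \theta(a)$, and the contradiction is extracted using McBride's Dichotomy~\ref{er:7} together with the $p$-solvable subfunctor $\thetapsol$ from Theorem~\ref{sub:2}.  The hypothesis $\layerr{N} \leq M_{a}$ is a parallel but easier application of Theorem~\ref{aut:4}: each $K \in \comp{A}{N}$ lifts to $\widetilde{K} \in \compasol{G}$, and Theorem~\ref{sm:1}(c) applied to appropriate subnormal subgroups of $M_{a}$ forces $\widetilde{K} \leq M_{a}$.

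For (b): Applying (a) to both $a$ and $b$ gives $S \leq M_{a} \cap M_{b}$.  Hence $\gfitt{M_{a}} = \ff{M_{a}} \leq S \leq M_{b}$ and symmetrically $\gfitt{M_{b}} \leq M_{a}$, so $M_{a} \linksto M_{b}$ and $M_{b} \linksto M_{a}$; they are comaximal by Lemma~\ref{m:1}(f).  Bender~\ref{ben:2}(c)(i) then forces $M_{a} = M_{b}$ or both have characteristic $p$, and the First Uniqueness Theorem~\ref{fut:1}(b) closes out the latter alternative.

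The principal obstacle is the verification of $\primes{\ff{N}} \subseteq \primes{\ff{M_{a}}}$: although the Fermat nilpotency of Theorem~\ref{f:1} provides strong control over the interaction of $\ff{M_{a}}$ and $\ff{S}$ under the action of $a$, transferring this information from $\op{p}{S}$ to $\op{p}{N}$ via $\sol{N} \normal S$, and disposing of the degenerate case $[\ff{M_{a}}, a] = 1$, requires the full force of near-solvability, the subfunctor machinery of \S\ref{sub}, and the $(A, \mathrm{sol})$-component structure of \S\ref{aut}.  Once this is achieved, $\layerr{N} \leq M_{a}$ and part (b) are relatively routine applications of the now-available Bender and First Uniqueness machinery.
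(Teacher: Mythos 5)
Your overall strategy — fix $N \in \fancyLstar$ with $S \leq N$, verify $M_{a} \linksto N$, then feed the pair $(M_{a},N)$ into Bender~\ref{ben:2}(c)(ii) and dispose of the characteristic~$p$ alternative with the First Uniqueness Theorem — is a reasonable plan, and the sub-argument you give for $\primes{\ff{N}} \subseteq \primes{\ff{M_{a}}}$ (via Theorem~\ref{f:1}, Theorem~\ref{sm:1}(c), and $\op{p}{N} \leq \op{p}{\sol{N}} \leq \op{p}{S}$) is sound and closely parallels the paper's own proof that $\primes{\ff{S}} \subseteq \primes{\ff{M_{a}}}$. Your treatment of the degenerate case $[\ff{M_{a}},a]=1$ and of part (b) also matches the paper in substance.

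The genuine gap is the claim $\layerr{N} \leq M_{a}$, without which hypothesis~(ii) of Bender~\ref{ben:2}(c) is not available and the whole plan stalls. Your one-sentence justification — ``each $K \in \comp{A}{N}$ lifts to $\widetilde{K} \in \compasol{G}$, and Theorem~\ref{sm:1}(c) applied to appropriate subnormal subgroups of $M_{a}$ forces $\widetilde{K} \leq M_{a}$'' — does not work. The ambient group $G$ here is possibly infinite and not a $K$-group, so $\compasol{G}$ is not even defined; and even within a suitable finite overgroup, Theorem~\ref{aut:4} only lifts $K$ inside the containing group, and you have given no mechanism by which the lift should land in $M_{a}$. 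There is in fact no reason to expect $\layerr{N} \leq M_{a}$ for an arbitrary $N \in \fancyLstar$ containing $S$, and the paper does not attempt to prove it in that generality. Instead, after proving $\primes{\ff{S}} \subseteq \primes{\ff{M_{a}}}$, the paper splits on $\card{\primes{\ff{M_{a}}}}$: when $\card{\primes{\ff{M_{a}}}} \geq 2$ it replaces the single subgroup $N$ by the family of conjugates $M_{a}^{x}$, $x \in \cz{S}{B}$, $B \in \hyp{A}$ — here $\layerr{M_{a}^{x}} = 1$ makes Bender's layer hypothesis vacuous, and the nilpotency criterion of Lemma~\ref{sut:0} together with the comaximality from Lemma~\ref{m:3} yields $M_{a}^{x} = M_{a}$, hence $\cz{S}{B} \leq M_{a}$ and finally $S \leq M_{a}$ by Coprime Action; when $\ff{M_{a}}$ is a $p$-group it uses the $J$-subgroup result (Theorem~\ref{ben:4}) to handle the abelian case and, in the nonabelian case, obtains $\layerr{N} \leq M_{a}$ only via the extra structure $1 \neq \op{p}{M_{a}}' \leq \sol{N}$ coming from Lemma~\ref{p:6}(a). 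So the paper never needs $\layerr{N} \leq M_{a}$ as a standalone fact. You would need to find a genuine argument for it (or abandon the fixed-$N$ strategy in favour of something like the paper's conjugate-$M_{a}^{x}$ device) before your proof could be completed.
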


\begin{Lemma}\label{sut:0}
    Suppose $I$ and $J$ are subgroups of the group $X$.
    Suppose also that $\layerr{X} = 1$ and that $I\ff{X}$ and $J\ff{X}$ are nilpotent.
    Let $p$ and $q$ be distinct primes.
    Then $[\op{p}{I},\op{q}{J}] = 1$.
\end{Lemma}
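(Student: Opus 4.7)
The plan is to first reduce the problem to an abelian calculation inside $\zz{\ff{X}}$, then conclude via a homomorphism argument that exploits the coprimality of $|P|$ and $|Q|$, where $P = \op{p}{I}$ and $Q = \op{q}{J}$.

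First I would use the nilpotency of $I\ff{X}$ to show that $P$ centralizes $\op{p'}{X}$: since $I\ff{X}$ is nilpotent, it is the direct product of its Sylow subgroups, so $P \leq \op{p}{I\ff{X}}$ commutes with $\op{p'}{I\ff{X}} \supseteq \op{p'}{\ff{X}} = \op{p'}{X}$. Symmetrically $Q$ centralizes $\op{q'}{X}$. A prime-by-prime application of the Three Subgroups Lemma then gives $[P,Q] \leq \cz{X}{\ff{X}}$: for $r \neq p, q$ both $P$ and $Q$ centralize $\op{r}{X}$ directly; for $r = p$, the relations $[Q, \op{p}{X}] = 1$ and $[\op{p}{X}, P] \leq \op{p}{X}$ combine to give $[[P,Q], \op{p}{X}] = 1$, and the case $r = q$ is symmetric. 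Since $\layerr{X} = 1$ we have $\gfitt{X} = \ff{X}$, whence $\cz{X}{\ff{X}} = \zz{\ff{X}} \leq \ff{X}$, and therefore $[P, Q] \leq \zz{\ff{X}}$, an abelian subgroup.

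The main step is to show $[P, Q] = 1$. For fixed $p \in P$ consider the map $\phi_p : Q \to \zz{\ff{X}}$ defined by $\phi_p(q) = [p, q]$. The standard identities $[p, qq'] = [p, q']\,[p, q]^{q'}$ and $[p, q]^{q'} = [p, q]\,[[p, q], q']$ combine to yield $[p, qq'] = [p, q'] \,[p, q]\, [[p, q], q']$. Because $q' \in Q$ centralizes $\op{q'}{X}$ and hence $\zz{\op{q'}{X}}$, and because $\zz{\ff{X}} = \zz{\op{q}{X}} \times \zz{\op{q'}{X}}$, the error term $[[p, q], q']$ lies in $\zz{\op{q}{X}}$. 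Consequently the composite of $\phi_p$ with the projection $\zz{\ff{X}} \to \zz{\ff{X}}/\zz{\op{q}{X}}$ is a group homomorphism. The domain is a $q$-group and the target has order coprime to $q$, so this homomorphism is trivial, giving $[p, q] \in \zz{\op{q}{X}}$ for all $p \in P$ and $q \in Q$. The symmetric argument (using nilpotency of $J\ff{X}$, fixing $q$ and varying $p$, and reducing modulo $\zz{\op{p}{X}}$) yields $[p, q] \in \zz{\op{p}{X}}$. Since $\op{p}{X} \cap \op{q}{X} = 1$ inside the nilpotent $\ff{X}$, we conclude $[p, q] = 1$.

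The main obstacle is the last passage from $[P, Q] \leq \zz{\ff{X}}$ to $[P, Q] = 1$. Coprime-order subgroups of a single nilpotent group must commute, but $P$ and $Q$ do not a priori lie in a common nilpotent subgroup of $X$, so that fact cannot be invoked directly; the homomorphism trick modulo $\zz{\op{q}{X}}$ is the required substitute and constitutes the genuinely new content of the proof.
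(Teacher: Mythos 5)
Your proof is correct. Both you and the paper begin the same way: establishing $[P,Q]\leq Z=\zz{\ff{X}}$ from the fact that $P$ centralizes $\op{p'}{\ff{X}}$ and $Q$ centralizes $\op{q'}{\ff{X}}$ (you run a prime-by-prime three-subgroups argument; the paper observes more compactly that $\cz{X}{\op{p'}{\ff{X}}}$ and $\cz{X}{\op{q'}{\ff{X}}}$ are normal in $X$ with intersection $\cz{X}{\ff{X}}=Z$, so their mutual commutator already lands in $Z$). Where you diverge is the second half. The paper notes that $QZ$ is nilpotent (because $Q$ centralizes $\op{q'}{Z}$, so $QZ=Q\op{q}{Z}\times\op{q'}{Z}$) and that $P$ normalizes $QZ$; hence $P$ normalizes the characteristic Sylow $q$-subgroup $\op{q}{QZ}=Q\op{q}{Z}$, giving $[P,Q]\leq[P,\op{q}{QZ}]\leq\op{q}{QZ}$, a $q$-group, and symmetrically a $p$-group, hence trivial. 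You instead make the commutator map $q\mapsto[p,q]$ into a genuine homomorphism after passing to $Z/\zz{\op{q}{X}}$ and then quote coprimality of the domain and target. Both routes ultimately establish $[p,q]\in\zz{\op{q}{X}}=\op{q}{Z}$, so the endpoint is the same, but the mechanisms differ: the paper leans on the standard fact that Sylow subgroups of nilpotent groups are characteristic, while yours is a more explicit commutator-calculus computation. Incidentally, your remark that ``$P$ and $Q$ do not a priori lie in a common nilpotent subgroup'' is exactly the obstruction the paper sidesteps — not by putting them in one nilpotent group, but by having $P$ act on a nilpotent group $QZ$ that contains $Q$.
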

\begin{proof}
    Set $Z = \zz{\ff{X}}$.
    Since $\layerr{X} = 1$ we have $\cz{X}{\ff{X}} = Z$.
    Now \[
        [\op{p}{I},\op{q}{J}] \leq [\cz{X}{\op{p'}{\ff{X}}}, \cz{X}{\op{q'}{\ff{X}}}] %
                              \leq \cz{\ff{X}} = Z.
    \]
    Hence $\op{p}{I}$ normalizes the nilpotent group $\op{q}{X}Z$.
    Then $[\op{p}{I},\op{q}{J}] \leq \op{q}{\op{q}{J}Z}$
    and the commutator is a $q$-group.
    Similarly, it is a $p$-group and hence is trivial.
\end{proof}

\begin{proof}[Proof of Theorem~\ref{sut:1}]
    Set $M = M_{a}$.
    Since $\layerr{M} = 1$ we have $\ff{M} \not= 1$ so
    McBride's Dichotomy implies that $\theta$ is nearsolvable.

    (a). Suppose first that $[\ff{M},a] = 1$.
    Coprime Action(g) implies $[M,a] = 1$.
    Thus $M = \theta(a)$.
    Choose $p \in \primes{\ff{M}}$.
    Theorem~\ref{sub:1} implies that there exists a unique maximal $\theta(A)$-invariant
    $p$-solvable $\theta$-subgroup and that $\theta(a)$,
    and hence $M$ is $p$-solvable.
    Since $M \in \fancyLstar$ it follows that $M$ is the said subgroup.
    Now $S$ is $\theta(A)$-invariant and solvable so $S \leq M$.
    Hence we may assume that $[\ff{M},a] \not= 1$.

    Note that \[
        \ff{M} \leq S
    \]
    because $\ff{M}$ is $\theta(A)$-invariant and solvable.
    In particular, $M \linksto S$.

    We claim that $\primes{\ff{S}} \subseteq \primes{\ff{M}}$.
    Indeed, suppose $q$ is a prime with $q \not\in \primes{\ff{M}}$.
    Using Theorems~\ref{f:1} and \ref{sm:1} we have
    $\op{q}{S} \leq \theta(\nn{[\ff{M},a]}) \leq M$.
    On the other hand,
    Theorem~\ref{ben:2}(a) implies $M \cap \op{q}{S} = 1$.
    Hence $\op{q}{S} = 1$ and the claim is established.

    Consider the case that $\card{\primes{\ff{M}}} \geq 2$.
    Theorem~\ref{ben:2}(b) implies \[
        \ff{S} \leq M.
    \]
    Since $\ff{M} \leq S$ we deduce that $\ff{M}\ff{S}$ is nilpotent.
    Let $x \in S$ and let $p,q \in \primes{\ff{M}}$ be distinct.
    Now $\ff{M}^{x}\ff{S}$ is nilpotent so Lemma~\ref{sut:0} implies $[\op{p}{M},\op{q}{M}^{x}] = 1$.
    Then $\op{q}{M}^{x} \leq \n{S}{\op{p}{M}} \leq M$.
    We deduce that $\ff{M}^{x} \leq M$ and so $M^{x} \linksto M$.
    If $x \in \cz{S}{B}$ for some $B \in \hyp{A}$ then Lemma~\ref{ben:3}
    implies that $M$ and $M^{x}$ are comaximal and so $M = M^{x}$ by Theorem~\ref{ben:2}(c)(ii).
    Then $x \in \n{S}{M} \leq M$ and we deduce that $\cz{S}{B} \leq M$
    for all $B \in \hyp{A}$.
    Coprime Action(d) forces $S \leq M$.
    Hence we may assume that \[
        \mbox{$\ff{M}$ is a $p$-group}
    \]
    for some prime $p$.

    The First Uniqueness Theorem implies $\op{p}{G;\theta} \leq M$
    so $\op{p}{G;\theta} = \op{p}{M;A}$.
    As $\op{p}{G;\theta}$ is $\theta(A)$-invariant and solvable
    we have $\op{p}{G;\theta} \leq S$
    and so $\op{p}{G;\theta} = \op{p}{S;A}$.
    Consequently $\op{p}{M;A} = \op{p}{S;A}$.
    Note that $\op{p}{M} \not= 1$ since $\ff{M}$ is a $p$-group.
    If $\op{p}{M}$ is abelian then Theorem~\ref{ben:4} implies
    $J(\op{p}{M}) = J(\op{p}{S})$.
    Then $S \leq \theta(\nn{J(\op{p}{M})}) = M$.
    Hence we may assume that $\op{p}{M}$ is nonabelian.

    Choose $N$ with $S \leq N \in \fancyLstar$.
    As above, $\op{p}{G;\theta} = \op{p}{N;A}$
    whence $\op{p}{M} \leq \op{p}{N;A}$.
    Lemma~\ref{f:2}(a) implies $\op{p}{M}' \leq \sol{N}$.
    Consequently $\layerr{N} \leq \theta(\nn{\op{p}{M}'}) \leq M$.
    As $S \leq N$ and $\ff{N}$ is $\theta(A)$-invariant and solvable
    we  have $\ff{N} \leq \ff{S}$.
    Then $\primes{\ff{N}} \subseteq \primes{\ff{S}} \subseteq \primes{\ff{M}} = \listset{p}$.
    Theorem~\ref{ben:2}(c) and the First Uniqueness Theorem imply $M_{a} = N$,
    so $S \leq M$.

    (b). Recall that $S$ contains every $\theta(A)$-invariant solvable $\theta$-subgroup.
    Using (a) we have $\gfitt{M_{a}} = \ff{M_{a}} \leq S \leq M_{b}$
    so $M_{a} \linksto M_{b}$.
    Similarly $M_{b} \linksto M_{a}$.
    Theorem~\ref{ben:2}(c) and the First Uniqueness Theorem
    force $M_{b} = M_{a}$.
\end{proof}

\begin{UnnumberedRemark}
    In \cite{PFpp} it is conjectured that if $p$ is a prime then to each
    nontrivial $p$-group $P$ there exists a nontrivial characteristic
    subgroup $W(P)$ such that whenever $A$ acts coprimely on the group $M$
    and $M$ has characteristic $p$ then $W(\op{p}{M;A}) \normal M$.
    A proof of this conjecture would lead to a much cleaner proof of the
    Second Uniqueness Theorem.
    The conjecture is known to be true if $p>3$,
    see \cite{PFezj}.
\end{UnnumberedRemark}

\section{$A$-components} \label{acomp}
For each $a \in A\nonid$ let \[
    \Omega_{a} = \set{K}{ \text{$K \in \comp{A}{M}$ for some $M$ with $\theta(a) \leq M \in \fancyL$}}.
\]
In particular, $\comp{A}{\theta(a)} \cup \comp{A}{M_{a}} \subseteq \Omega_{a}$.
Let \[
    \Omega = \bigcup_{a \in A\nonid} \Omega_{a}.
\]
The Second Uniqueness Theorem and Lemma~\ref{sm:2} imply \[
    \Omega \not= \emptyset.
\]
The subsequent analysis is dominated by the elements of $\Omega$.
Recall the definition of $\cstar{K}{A}$ given in \S\ref{as}.

\begin{Theorem}\label{acomp:1}
    Let $K,L \in \Omega$.
    The following are equivalent:
    \begin{enumerate}
        \item[(a)]  $[K,L] \not= 1$.

        \item[(b)]  $\cstar{K}{A} = \cstar{L}{A}$.

        \item[(c)]  $[\cstar{K}{A}, \cstar{L}{A}] \not= 1$.
    \end{enumerate}
    In particular `does not commute' is an equivalence relation on $\Omega$.
\end{Theorem}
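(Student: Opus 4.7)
The implications (b) $\Rightarrow$ (c) and (c) $\Rightarrow$ (a) are immediate and I would dispose of them first. By Lemma~\ref{as:4}(a), $\cstar{K}{A}$ is nonabelian, so assuming $\cstar{K}{A} = \cstar{L}{A}$ gives $[\cstar{K}{A},\cstar{L}{A}] = [\cstar{K}{A},\cstar{K}{A}] \not= 1$, which is (c); and $\cstar{K}{A} \leq K$, $\cstar{L}{A} \leq L$ yields $[\cstar{K}{A},\cstar{L}{A}] \leq [K,L]$, so (c) implies (a). The ``equivalence relation'' assertion is then an immediate corollary of (a) $\Leftrightarrow$ (b), since (b) is manifestly an equivalence relation on $\Omega$.

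The substantive direction is (a) $\Rightarrow$ (b). My plan is, given $[K,L] \not= 1$, to place $K$ and $L$ as $A$-components of a common finite $\theta$-subgroup $H$, pass via Theorem~\ref{aut:4} to $(A,\mathrm{sol})$-components $\widetilde{K}, \widetilde{L} \in \compasol{H}$ containing $K$ and $L$, and then use Theorem~\ref{aut:4}(d) to force $\widetilde{K} = \widetilde{L}$. Concretely, fix $a, b \in A\nonid$ with $K \in \Omega_{a}$, $L \in \Omega_{b}$, realised by finite $K$-group ambients $M, N \in \fancyL$, and take $H$ to be a $\theta$-subgroup — for instance $\theta(\nn{X})$ for a suitably chosen $A$-invariant $X \leq \theta(A)$ — that contains both $M$ and $N$ as subnormal subgroups. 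Then $K$ and $L$ are subnormal $A$-quasisimple in $H$, giving their $\widetilde{K}, \widetilde{L}$. If $\widetilde{K} \not= \widetilde{L}$, choose $a' \in A\nonid$ with $\widetilde{L} = [\widetilde{L},a']$ (possible because $\widetilde{L}$ is perfect and $A$-invariant, hence $A$ cannot centralise $\widetilde{L}/\sol{\widetilde{L}}$); Theorem~\ref{aut:4}(d) then forces $[\widetilde{K},\widetilde{L}] = 1$ and hence $[K,L] = 1$, against (a). So $\widetilde{K} = \widetilde{L} =: \widetilde{J}$, and $K, L$ are both subnormal in $\widetilde{J}$, in particular each normalises the other.

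With mutual normalisation in hand, $K \cap L$ is an $A\theta(A)$-invariant subgroup normal in both $K$ and $L$. If $K \cap L$ were solvable, then since $L/\zz{L}$ is $A$-simple the image of $K \cap L$ there would be trivial, so $K \cap L \leq \zz{L}$; symmetrically $K \cap L \leq \zz{K}$. Then $[K,L] \leq K \cap L$ is central in $\listgen{K,L}$, and a coprime-action / three-subgroups argument combined with $K = [K,A]$ (valid because $\cstar{K}{A}$ is nonabelian so $A$ cannot centralise $K$) forces $[K,L] = 1$, contradicting (a). Hence $K \cap L$ is nonsolvable. Lemma~\ref{as:4}(b) applied inside $K$ to the nonsolvable $A\cz{K}{A}$-invariant subgroup $K \cap L$ delivers $\cstar{\layerr{K \cap L}}{A} = \cstar{K}{A}$, and symmetrically $\cstar{\layerr{K \cap L}}{A} = \cstar{L}{A}$, which gives (b). The main obstacle is the first step: engineering an ambient finite $H$ in which $K$ and $L$ are genuine $A$-components and verifying the hypothesis $\widetilde{L} = [\widetilde{L},a']$ needed for Theorem~\ref{aut:4}(d). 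Everything after that reduction is mechanical bookkeeping with the structural lemmas already in the paper.
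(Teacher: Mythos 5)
Your disposal of (b)$\Rightarrow$(c)$\Rightarrow$(a) and of the ``equivalence relation'' assertion matches the paper and is fine. For (a)$\Rightarrow$(b), however, you have named the central obstruction and then asserted that it is routine, when in fact it is exactly the point the paper's proof is built to circumvent. You propose to find a single finite $\theta$-subgroup $H$ containing both $K$ and $L$ subnormally (``for instance $\theta(\nn{X})$ for a suitably chosen $X$''), but $K$ and $L$ arise inside possibly unrelated $M, N \in \fancyL$ attached to possibly different elements $a, b \in A\nonid$, and there is no mechanism in the paper that manufactures such a common ambient; $\listgen{M,N}$ need not be a $\theta$-subgroup at all and can equal $G$. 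The paper's device is the opposite one: rather than \emph{ascend} to a common ambient, it \emph{descends} to fixed points. Using Lemma~\ref{as:5} twice it finds a single $d \in A\nonid$ with $\cz{K}{d}$ and $\cz{L}{d}$ both $A$-quasisimple and $[\cz{K}{d},\cz{L}{d}] \neq 1$; these two subgroups automatically live in the common $\theta$-subgroup $\theta(d)$, and then Lemma~\ref{acomp:2} (applied with $N = \theta(d)$, $K_0 = \cz{K}{d}$, $L_0 = \cz{L}{d}$) does the rest. Without some analogue of that descent step your reduction does not get off the ground.

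There are two further problems downstream. First, in invoking Theorem~\ref{aut:4}(d) you need some $a' \in A\nonid$ with $\widetilde{L} = [\widetilde{L},a']$, and you justify this by saying $A$ ``cannot centralise $\widetilde{L}/\sol{\widetilde{L}}$''; but nothing forbids $A$ from acting trivially on an $(A,\mathrm{sol})$-component (an ordinary quasisimple group with trivial $A$-action is $A$-quasisimple), so this needs a genuine argument and is in general false. Theorem~\ref{aut:4}(d) is also tied to the \emph{same} element $a$ appearing in Hypothesis~\ref{aut:1}, so switching to an $a'$ is not free. Second, once you have $\widetilde{K} = \widetilde{L} = \widetilde{J}$ you assert that $K$ and $L$, being subnormal in $\widetilde{J}$, normalise each other; subnormal subgroups do not in general normalise one another, and the assertion actually fails in the constrained case, where by Theorem~\ref{aut:4}(c) one has $\widetilde{K} = K\sol{\widetilde{K}}$ and $K$ is \emph{not} subnormal in $\widetilde{K}$. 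The constrained case is precisely the delicate one, and it is handled in the paper by the detailed case analysis inside Lemma~\ref{acomp:2}, not by the mutual-normalisation shortcut you propose. Your solvable-versus-nonsolvable dichotomy for $K \cap L$ and the three-subgroups argument in the solvable case are fine as far as they go, but they are predicated on mutual normalisation, which you have not established.
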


\begin{Lemma}\label{acomp:2}
    Suppose $a,b \in A\nonid, K \in \Omega_{a}, L \in \Omega_{b}$
    and $N \in \fancyL$.
    Set $K_{0} = \layerr{K \cap N}$ and $L_{0} = \layerr{L \cap N}$.
    Assume that $[K_{0},L_{0}] \not= 1$.
    Then there exists $X$ such that:
    \begin{enumerate}
        \item[(a)]  $\listgen{K_{0},L_{0}} \leq X \in \compasol{N}$.

        \item[(b)]  If $[K_{0},a] \not= 1$ then $X = K_{0}$ and
                    if $[L_{0},b] \not= 1$ then $X = L_{0}$.

        \item[(c)]  $\cstar{K}{A} = \cstar{L}{A}$.

        \item[(d)]  If $X$ is constrained then $K_{0} = L_{0}$
                    and $X = K_{0}\sol{X}$.
    \end{enumerate}
\end{Lemma}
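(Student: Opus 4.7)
First, I would show that $K_0$ and $L_0$ are $A$-quasisimple with $\cstar{K_0}{A}=\cstar{K}{A}$ and $\cstar{L_0}{A}=\cstar{L}{A}$. For $K_0$ this is Lemma~\ref{as:4}(b) applied to the $A$-quasisimple $K$-group $K$ with $H=K\cap N$: the subgroup $K\cap N$ is $A$-invariant, it is normalized by $\cz{K}{A}$ because $\cz{K}{A}\le\theta(A)\le N$ (so $\cz{K}{A}$ normalizes both $K$ and $N$), and it is nonsolvable since $K_0\ne 1$. The lemma gives that $K_0=(K\cap N)^{(\infty)}=\layerr{K\cap N}$ is $A$-quasisimple with $\cstar{K_0}{A}=\cstar{K}{A}$, and the argument for $L_0$ is identical.

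Next, I would apply Theorem~\ref{aut:4} inside $N$ to produce $X$. Note that $N$ is a $K$-group by Lemma~\ref{m:1}(a) on which $A$ acts coprimely. Taking the element $a$ associated to $K\in\Omega_a$ and an $A\cz{N}{a}$-invariant subgroup $H\le N$ in which $K_0$ appears as an $A$-component (for instance $H$ built as the $\cz{N}{a}$-normal closure of $K\cap N$ inside $N$), Theorem~\ref{aut:4} yields $\widetilde{K_0}\in\compasol{N}$ with $K_0\le\widetilde{K_0}$. The analogous application using $b$ and $L$ gives $\widetilde{L_0}\in\compasol{N}$ with $L_0\le\widetilde{L_0}$. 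To obtain $\widetilde{K_0}=\widetilde{L_0}=:X$, I would argue by contradiction: if they were distinct, the analogue of Lemma~\ref{p:2}(d) for $(A,\mathrm{sol})$-components gives $[\widetilde{K_0},\widetilde{L_0}]\le\sol{N}$, so $[K_0,L_0]\le\sol{N}$; a commutator calculation exploiting the perfectness of $K_0$ and $L_0$ (via the three-subgroup lemma, noting that the conjugation action of $K_0$ on $L_0$ becomes trivial modulo the solvable radical and that automorphisms of a perfect group acting trivially on its central quotient are trivial) then forces $[K_0,L_0]=1$, contradicting the hypothesis.

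With $X$ in hand, the remaining assertions follow quickly. For (b), if $[K_0,a]\ne 1$ then Theorem~\ref{aut:4}(a) gives $K_0=[K_0,a]=\widetilde{K_0}=X$, and symmetrically for $L_0$. For (d), if $X$ is constrained then Theorem~\ref{aut:4}(c) applied to each of $K_0$ and $L_0$ yields $X=K_0\sol{X}=L_0\sol{X}$; since $K_0$ and $L_0$ are perfect and $\sol{X}$ is solvable, both equal $X^{(\infty)}$, whence $K_0=L_0$. For (c), if $X$ is semisimple then $X$ is itself $A$-quasisimple and a second application of Lemma~\ref{as:4}(b) inside $X$ (with $K_0$ and $L_0$ as $A\cz{X}{A}$-invariant nonsolvable subgroups) gives $\cstar{K_0}{A}=\cstar{X}{A}=\cstar{L_0}{A}$; in the constrained case $K_0=L_0$ by (d). Combined with the first paragraph this yields $\cstar{K}{A}=\cstar{L}{A}$.

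The main obstacle is the second paragraph. Although $K_0$ is $A$-invariant and characteristic in $K\cap N$, the subgroup $\cz{N}{a}$ need not normalize $K_0$ (it need not even preserve the set of $A$-components of $M$), so identifying the right $H$ and verifying that $K_0$ is genuinely an $A$-component of $H$ requires exploiting the internal structure of $K_0$ as the layer of an intersection. The subsequent commutator argument for $\widetilde{K_0}=\widetilde{L_0}$ is the other delicate point, where the perfectness of $K_0$ and $L_0$ must be leveraged against the fact that distinct $(A,\mathrm{sol})$-components need only commute modulo the solvable radical.
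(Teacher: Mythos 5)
Your first paragraph is essentially the paper's opening move and is correct. But the rest of the argument has two serious gaps, one of which you acknowledge and one of which you do not.

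The acknowledged gap — finding the right $H$ — is resolved in the paper much more cleanly than any variant of "$\cz{N}{a}$-normal closure of $K\cap N$'': since $K\in\Omega_a$, there is by definition an $M$ with $\theta(a)\le M\in\fancyL$ and $K\in\comp{A}{M}$. The paper takes $H=M\cap N$. Then $\cz{N}{a}\le\theta(a)\cap N\le M\cap N=H$, so $H$ is $A\cz{N}{a}$-invariant, and $K\subnormal M$ gives $K\cap N\subnormal H$, so $K_{0}=\layerr{K\cap N}\subnormal H$, hence $K_{0}\in\comp{A}{H}$ once you know $K_{0}$ is $A$-quasisimple (your first paragraph). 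No normalizing of $K_{0}$ is needed because one works with the subnormal subgroup $K\cap N$, not with $K_{0}$ itself, and one chooses an ambient subgroup $H$ in which subnormality is automatic.

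The unacknowledged gap is the argument that $\widetilde{K_{0}}=\widetilde{L_{0}}$. You propose to deduce $[K_{0},L_{0}]=1$ from $[K_{0},L_{0}]\le\sol{N}$ via a three-subgroups/perfectness argument. This does not work as stated: $K_{0}$ does not act on $L_{0}$ (it acts on $L_{0}\sol{N}$), the quotient $L_{0}\sol{N}/\sol{N}$ is not $L_{0}/\zz{L_{0}}$, and perfect groups with solvable commutator need not commute. The paper's proof of $\widetilde{K}=\widetilde{L}$ is genuinely different and split by cases. In the semisimple case both $\widetilde{K}$, $\widetilde{L}$ are $A$-components of $N$ and distinct $A$-components commute outright, contradicting $[\widetilde{K},\widetilde{L}]\ne 1$. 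In the constrained case the argument is delicate: Theorem~\ref{aut:4}(c) gives $[K_{0},a]=1$ and $\widetilde{K}=K_{0}\sol{\widetilde{K}}$; one then shows $[L_{0},a]=1$ too, for otherwise $\widetilde{L}=[\widetilde{L},a]$ while $[\widetilde{K},a]\le\sol{\widetilde{K}}$ forces $\widetilde{K}\ne\widetilde{L}$ and Theorem~\ref{aut:4}(d) gives $[\widetilde{K},\widetilde{L}]=1$, a contradiction. Placing both $K_{0},L_{0}\in\comp{A}{\theta(a)\cap\theta(b)\cap N}$ with $[K_{0},L_{0}]\ne 1$ then gives $K_{0}=L_{0}$ directly, and $\widetilde{K}=\widetilde{L}$ follows. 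There is also a smaller unaddressed point in your paragraph on (c): applying Lemma~\ref{as:4}(b) inside $X$ requires $K_{0}$ to be $\cz{X}{A}$-invariant, which the paper establishes (via Theorem~\ref{aut:4}(b), which identifies $K_{0}=\layerr{\cz{X}{a}}$ when $[K_{0},a]=1$) and you do not.
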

\begin{proof}
    Choose $M \in \fancyL$ with $\theta(a) \leq M$ and $K \in \comp{A}{M}$.
    Now $\cz{N}{a} \leq \theta(a) \cap N \leq M \cap N$ so Hypothesis~\ref{aut:1}
    is satisfied with $N$ and $M \cap N$ in the roles of $G$ and $H$ respectively.
    Also $K \subnormal M$ so $K_{0} \subnormal M \cap N$.
    As $[K_{0},L_{0}] \not= 1$ we have $K_{0} \not= 1$ and then
    Lemma~\ref{as:4} implies $K_{0}$ is $A$-quasisimple,
    so $K_{0} \in \comp{A}{M \cap N}$.
    Theorem~\ref{aut:4} implies there exists $\widetilde{K}$ with \[
        K_{0} \leq \widetilde{K} \in \compasol{N}.
    \]
    Similarly there exists $\widetilde{L}$ with \[
        L_{0} \leq \widetilde{L} \in \compasol{N}.
    \]
    Since $[K_{0},L_{0}] \not= 1$ we have $[\widetilde{K},\widetilde{L}] \not= 1$.
    Consequently either $\widetilde{K}$ and $\widetilde{L}$ are both
    semisimple or both constrained.

    Suppose that $\widetilde{K}$ and $\widetilde{L}$ are both semisimple.
    Since $[\widetilde{K},\widetilde{L}] \not= 1$ this forces $\widetilde{K} = \widetilde{L}$.
    Put $X = \widetilde{K}$.
    Then (a) holds and (b) follows from Theorem~\ref{aut:4}(a).
    We claim that $\cstar{\widetilde{K}}{A} = \cstar{K_{0}}{A}$.
    If $[K_{0},a] \not= 1$ then (b) implies $\widetilde{K} = K_{0}$ and the claim is clear.
    Suppose $[K_{0},a] = 1$.
    Theorem~\ref{aut:4}(b) implies $K_{0} = \layerr{\cz{\widetilde{K}}{a}}$.
    In particular, $K_{0}$ is $\cz{\widetilde{K}}{A}$-invariant.
    The claim follows from Lemma~\ref{as:4}.
    Similarly $\cstar{\widetilde{L}}{A} = \cstar{L_{0}}{A}$.
    Also by Lemma~\ref{as:4},
    $\cstar{K}{A} = \cstar{K_{0}}{A}$ and $\cstar{L}{A} = \cstar{L_{0}}{A}$.
    Since $\widetilde{K} = \widetilde{L}$,
    (c) follows.
    Note that (d) is not applicable in this case.

    Suppose $\widetilde{K}$ and $\widetilde{L}$ are both constrained.
    Theorem~\ref{aut:4} implies \[
        [K_{0},a] = 1 \quad\text{and}\quad \widetilde{K} = K_{0}\sol{\widetilde{K}}.
    \]
    In particular $[\widetilde{K},a] \leq \sol{\widetilde{K}}$.
    Recall that $K_{0} \in \comp{A}{M \cap N}$.
    As $\theta(a) \leq M$ and $[K_{0},a] = 1$
    it follows that $K_{0} \in \comp{A}{\theta(a) \cap N}$.
    Similarly
    \begin{equation}\tag{$*$}
        L_{0} \in \comp{A}{\theta(b) \cap N}.
    \end{equation}
    Since $L_{0}$ is $A$-quasisimple,
    we have $[L_{0},a] = 1$ or $L_{0}$.
    Suppose $[L_{0},a] = L_{0}$.
    Then $[\widetilde{L},a] \not\leq \sol{\widetilde{L}}$
    and since every proper $A$-invariant normal subgroup of $\widetilde{L}$
    is solvable it follows that $\widetilde{L} = [\widetilde{L},a]$.
    As $[\widetilde{K},a] \leq \sol{\widetilde{K}}$ we have
    $\widetilde{K} \not= \widetilde{L}$.
    Theorem~\ref{aut:4}(d) implies $[\widetilde{K},\widetilde{L}] = 1$,
    a contradiction.
    We deduce that $[L_{0},a] = 1$.
    Then $(*)$ implies \[
        L_{0} \in \comp{A}{\theta(a) \cap \theta(b) \cap N}.
    \]
    Similarly $K_{0} \in \comp{A}{\theta(a) \cap \theta(b) \cap N}$
    so as $[K_{0},L_{0}] \not= 1$ we must have $K_{0} = L_{0}$.
    In particular,
    $\widetilde{K} \cap \widetilde{L}$ is nonsolvable
    so $\widetilde{K} = \widetilde{L}$.
    Put $X = \widetilde{K}$.
    Then (a) holds.
    (b) is not applicable in this case.
    Lemma~\ref{as:4} implies that
    $\cstar{K}{A} = \cstar{K_{0}}{A} = \cstar{L_{0}}{A} = \cstar{L}{A}$.
    Then (c) holds.
    (d) has also been proved.
\end{proof}

\begin{proof}[Proof of Theorem~\ref{acomp:1}]
    Suppose (a) holds, so $[K,L] \not= 1$.
    Lemma~\ref{as:5}(a) implies there exists $D \in \hyp{A}$ such that
    $\cz{L}{d}$ is $A$-quasisimple for all $d \in D\nonid$
    and $[K,\cz{L}{D}] \not= 1$.
    Lemma~\ref{as:5}(b) implies there exists $d \in D\nonid$ such that
    $\cz{K}{d}$ is $A$-quasisimple and $[\cz{K}{d}, \cz{L}{D}] \not= 1$.
    Then $[\cz{K}{d},\cz{L}{d}] \not= 1$.

    Put $N = \theta(d)$.
    Now $\cz{K}{d} = K \cap N$ so $\cz{K}{d} = \layerr{K \cap N}$.
    Similarly $\cz{L}{d} = \layerr{L \cap N}$.
    Lemma~\ref{acomp:2} implies $\cstar{K}{A} = \cstar{L}{A}$ so (b) holds.

    Lemma~\ref{as:4} implies that $\cstar{K}{A}$ is nonabelian.
    The remaining implications follow trivially.
\end{proof}

\section{The Balance Theorem} \label{b}
The aim of this section is to prove the following.

\begin{Theorem}[The Balance Theorem] \label{b:1}
    Suppose $a,b \in A\nonid, K \in \Omega_{a}$
    and $\layerr{K \cap M_{b}} \not= 1$.
    Then $\layerr{K \cap M_{b}}$ is $A$-quasisimple and is contained
    in an $A$-component of $M_{b}$.
    In particular \[
        \layerr{K \cap M_{b}} \leq \layerr{M_{b}}.
    \]
\end{Theorem}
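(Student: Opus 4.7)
The approach has three stages: show $K_{0}:=\layerr{K\cap M_{b}}$ is $A$-quasisimple; situate $K_{0}$ inside an $(A,\mathrm{sol})$-component $\widetilde{K_{0}}$ of $M_{b}$ via Theorem~\ref{aut:4}; and rule out the constrained alternative for $\widetilde{K_{0}}$.

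For the first stage, use $K\in\Omega_{a}$ to fix $M\in\fancyL$ with $\theta(a)\leq M$ and $K\in\comp{A}{M}$. Since $M$ is a $\theta$-subgroup we have $\cz{M}{A}\leq\theta(A)$, and Lemma~\ref{m:1}(g) gives $\theta(A)\leq M_{b}$. Hence
\[
    \cz{K}{A}\leq K\cap\cz{M}{A}\leq\theta(A)\leq M_{b},
\]
so $K\cap M_{b}$ is $A\cz{K}{A}$-invariant (it is trivially $A$-invariant, and $\cz{K}{A}$ normalizes both $K$ and $M_{b}$). The hypothesis $\layerr{K\cap M_{b}}\neq 1$ makes this subgroup nonsolvable, so Lemma~\ref{as:4}(b) applied to the $A$-quasisimple group $K$ shows that $K_{0}=(K\cap M_{b})^{(\infty)}=\layerr{K\cap M_{b}}$ is $A$-quasisimple, which gives the first assertion.

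For the second stage, mimic the opening of the proof of Lemma~\ref{acomp:2}: setting $N=M_{b}$ we have $\cz{M_{b}}{a}\leq\theta(a)\cap M_{b}\leq M\cap M_{b}$, so Hypothesis~\ref{aut:1} is satisfied with $M_{b}$ and $M\cap M_{b}$ playing the roles of $G$ and $H$. Since $K\subnormal M$, we get $K\cap M_{b}\subnormal M\cap M_{b}$, and hence $K_{0}\subnormal M\cap M_{b}$; combined with the $A$-quasisimplicity of $K_{0}$ this places $K_{0}\in\comp{A}{M\cap M_{b}}$. Theorem~\ref{aut:4} then supplies $\widetilde{K_{0}}\in\compasol{M_{b}}$ with $K_{0}\leq\widetilde{K_{0}}$.

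The hard part is the third stage. If $\widetilde{K_{0}}$ were constrained, Theorem~\ref{aut:4}(c) would force $[K_{0},a]=1$ and $\widetilde{K_{0}}=K_{0}\sol{\widetilde{K_{0}}}$; since a constrained group has trivial layer, the $A$-quasisimple subgroup $K_{0}$ would act faithfully modulo its centre on $\ff{\widetilde{K_{0}}}$. To exclude this I would choose a prime $p\in\primes{\ff{\widetilde{K_{0}}}}$ on which $K_{0}$ acts nontrivially and combine McBride's Dichotomy with Theorem~\ref{sub:2} and the First Uniqueness Theorem to force a $p$-local uniqueness statement on $\widetilde{K_{0}}$ that is incompatible with the defining maximality property of $M_{b}$ supplied by Theorem~\ref{sm:1}(b). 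Once the constrained alternative is eliminated, $\widetilde{K_{0}}$ is semisimple and therefore an $A$-component of $M_{b}$; then $K_{0}\leq\widetilde{K_{0}}\leq\layerr{M_{b}}$ yields both remaining assertions.
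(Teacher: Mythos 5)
Your first two stages are correct and match the setup the paper uses (essentially the opening of Lemma~\ref{b:3} in the paper). The appeal to Lemma~\ref{as:4}(b) to get $A$-quasisimplicity of $K_{0}$ and to Theorem~\ref{aut:4} to produce $\widetilde{K_{0}} \in \compasol{M_{b}}$ is exactly right, and the observation $\cz{M}{A}\leq\theta(A)\leq M_{b}$ is a valid way to obtain the needed invariance.

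However, stage 3 — the elimination of the constrained case — is where the real content lies, and what you offer there is not a proof but a direction, and the direction does not match how the argument actually runs. Concretely: you propose fixing a prime $p\in\primes{\ff{\widetilde{K_{0}}}}$ and invoking Theorem~\ref{sub:2} and the First Uniqueness Theorem. But Theorem~\ref{sub:2} needs an element $e\in A^{\#}$ centralizing \emph{every} $(p,\theta)$-subgroup of $G$, and you give no mechanism for producing such an $e$ from the constrained $\widetilde{K_{0}}$. Likewise, the First Uniqueness Theorem applies to a member of $\fancyLstar$ of characteristic $p$, and $M_{b}$ is certainly not of characteristic $p$ here (it has a nontrivial layer unless $\theta$ is nearsolvable, and even then nothing forces $\gfitt{M_{b}}=\op{p}{M_{b}}$). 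So the step "force a $p$-local uniqueness statement incompatible with Theorem~\ref{sm:1}(b)" does not follow from what you have in hand.

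The paper's own elimination of the constrained case is genuinely global, not $p$-local. One assumes the Balance Theorem false, chooses the triple $(a,b,K)$ so as to \emph{maximize} $\widetilde{K}$, and then proves: (i) no $L\in\Omega$ fails to commute with $K$ unless $L=K$ (Claim~1, via Lemma~\ref{acomp:2} and Theorem~\ref{acomp:1}); (ii) for every $c\in A\setminus\cz{A}{K}$ with $\layerr{M_{c}}\neq 1$, the \emph{same} $\widetilde{K}$ lies in $\compasol{M_{c}}$ (Claim~2, using maximality of $\widetilde{K}$); and (iii) consequently all such $M_{c}$ coincide with a fixed $N\supseteq\theta(\nn{\widetilde{K}})$ (Claim~3, via Theorem~\ref{aut:3}(b) and Corollary~\ref{m:2}). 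Combined with the Second Uniqueness Theorem this leaves $M_{c}$ with at most two possible values on $A\setminus B$ for a suitable hyperplane $B\supseteq\cz{A}{K}$, contradicting Lemma~\ref{sm:2}(b). None of this apparatus — the maximal choice, the claims, the counting against Lemma~\ref{sm:2}(b), and the preparatory Lemma~\ref{b:3} establishing $K\leq M_{b}$, nearsolvability of $\theta$, and $K\cap\layerr{M_{b}}\leq\zz{K}$ — appears in your sketch, so the third stage has a genuine gap that your proposed $p$-local route does not repair.
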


\noindent A number of lemmas are required.
Recall that $S$ is the unique maximal $\theta(A)$-invariant
solvable $\theta$-subgroup.

\begin{Lemma} \label{b:2}
    Suppose $a \in A\nonid, K \in \Omega_{a}$ and $\theta$ is nearsolvable.
    Let $Y$ be a nonsolvable $A\cz{K}{A}$-invariant subgroup of $K$.
    Then \[
        K = \listgen{Y, K \cap S}.
    \]
\end{Lemma}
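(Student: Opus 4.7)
\emph{Proof plan.} The plan is to identify $K \cap S$ with the unique maximal $A\cz{K}{A}$-invariant solvable subgroup $S_{K}$ of $K$ supplied by Theorem~\ref{as:1}, and then exploit the additional fact that $S_{K}$ is in fact maximal among all $A\cz{K}{A}$-invariant \emph{proper} subgroups of $K$.

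Since $\theta$ is nearsolvable, $\theta(A)$ is solvable and hence so is $\cz{K}{A} \leq \theta(A)$. Applying Theorem~\ref{as:1}(e) to $\bar{K} = K/\zz{K}$ yields $\card{A_{\infty}} = r$, places the simple factors of $\bar{K}$ on the list $\badfourset$, and produces a unique maximal $A\cz{K}{A}$-invariant solvable subgroup $S_{K}$ of $K$; by clause (e)(iii), $S_{K}$ is moreover maximal subject to being an $A\cz{K}{A}$-invariant proper subgroup of $K$. The next step is to verify $K \cap S = S_{K}$. The inclusion $K \cap S \leq S_{K}$ is direct: $K \cap S$ is $A$-invariant (as $K, S$ are), $\cz{K}{A}$-invariant (using $\cz{K}{A} \leq \theta(A) \leq S$, the latter by Lemma~\ref{m:4}, so that $\cz{K}{A}$ sits inside and normalizes both $K$ and $S$), and solvable. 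For the reverse $S_{K} \leq K \cap S$, note that $S_{K}$ is $\theta(A)$-invariant by uniqueness of the maximum: $\theta(A) = \cz{M}{A}$ normalizes the $A$-component $K$ and hence permutes the $A\cz{K}{A}$-invariant solvable subgroups of $K$, fixing the unique maximum. Then $S_{K}$ is a $\theta(A)$-invariant solvable $\theta$-subgroup (as $K$ is a $\theta$-subgroup), so $S_{K} \leq S$ by Lemma~\ref{m:4}, and together with $S_{K} \leq K$ we conclude $S_{K} \leq K \cap S$.

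With $K \cap S = S_{K}$ in hand, the conclusion is immediate: since $Y$ is nonsolvable, $Y \not\leq S_{K} = K \cap S$, so $\listgen{Y, K \cap S}$ is an $A\cz{K}{A}$-invariant subgroup of $K$ strictly containing $S_{K}$; the maximality clause in Theorem~\ref{as:1}(e)(iii) then forces $\listgen{Y, K \cap S} = K$, as required. The main obstacle I anticipate is the equality $K \cap S = S_{K}$, specifically the $\theta(A)$-invariance of $S_{K}$, which rests on $\theta(A) = \cz{M}{A}$ normalizing the $A$-component $K$; this is a standard feature of $A$-components of a $\theta(A)$-invariant $M \in \fancyL$, but would need to be spelled out, perhaps by observing that inner automorphisms of the ambient $M$ coming from $\cz{M}{A}$ commute with the $A$-action on the components of $M$ and therefore preserve each $A$-orbit.
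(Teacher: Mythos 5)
Your overall strategy---identify $K\cap S$ with the unique maximal $A\cz{K}{A}$-invariant solvable subgroup $S_{K}$ of $K$ supplied by Theorem~\ref{as:1}(e), then invoke the maximality clause of (e)(iii)---is the same as the paper's, and your forward inclusion $K\cap S\leq S_{K}$ is fine. The gap is in the reverse inclusion. You reduce it to the claim that $\theta(A)=\cz{M}{A}$ normalizes the $A$-component $K$, justified by the observation that conjugation by $\cz{M}{A}$ commutes with the $A$-action on $\compp{M}$. But that observation only shows $\cz{M}{A}$ permutes the $A$-orbits on $\compp{M}$, i.e.\ permutes $\comp{A}{M}$; it does \emph{not} show that each orbit is fixed, and there is no reason a priori why $\cz{M}{A}$ cannot move $K$ to a different $A$-component. ``Commutes with the $A$-action'' gives ``preserves the partition into $A$-orbits,'' not ``fixes each block,'' so as written the step is a genuine gap, and this is not a standard feature one can quietly appeal to.

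The paper avoids the issue entirely via Corollary~\ref{psgps:4}: since $K$ is $A$-invariant and subnormal in $M$, $\op{\mathrm{sol}}{K;A}=K\cap\op{\mathrm{sol}}{M;A}$, and since $\theta(A)\leq M$ one has $\op{\mathrm{sol}}{M;A}=S\cap M$, giving $S_{K}=\op{\mathrm{sol}}{K;A}=K\cap S$ in one stroke. The proof of Corollary~\ref{psgps:4} descends carefully through a subnormal chain precisely to circumvent the lack of $\cz{G}{A}$-invariance of a subnormal subgroup. Your argument can be repaired by citing that corollary, or by the following patch: $\theta(A)$ permutes the finitely many $A$-components $K'\in\comp{A}{M}$ and sends $S_{K'}$ to $S_{K''}$ whenever it sends $K'$ to $K''$; distinct $A$-components commute, so the product of all the $S_{K'}$ is a central product, hence a solvable $A\theta(A)$-invariant subgroup of $M$, hence contained in $\op{\mathrm{sol}}{M;A}=S\cap M$, and in particular $S_{K}\leq S$. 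Either way, some version of the subnormal-descent idea is needed; the step cannot be replaced by the claim that $\theta(A)$ fixes $K$.
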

\begin{proof}
    Choose $M$ with $\theta(a) \leq M \in \fancyL$ and $K \in \comp{A}{M}$.
    Since $\theta(A) \leq M$ we have $S \cap M = \op{\mathrm{sol}}{M;A}$.
    Since $K$ is an $A$-invariant subnormal subgroup of $M$,
    Corollary~\ref{psgps:4} implies
    $\op{\mathrm{sol}}{K;A} = K \cap \op{\mathrm{sol}}{M;A}$.
    Then $\op{\mathrm{sol}}{K;A} = K \cap S$.
    Now $\theta$ is nearsolvable so $\theta(A)$ and hence $\cz{K}{A}$ is solvable.
    Apply Theorem~\ref{as:1}(e) to $K/\zz{K}$.
\end{proof}

\begin{Lemma}\label{b:3}
    Suppose $a,b \in A\nonid, K \in \Omega_{a}$ and $\layerr{K \cap M_{b}} \not= 1$.
    Then $\layerr{K \cap M_{b}}$ is $A$-quasisimple.
    Suppose also that $\layerr{K \cap M_{b}}$
    is not contained in an $A$-component of $M_{b}$.
    Then:
    \begin{enumerate}
        \item[(a)]  $K \leq M_{b}$ and there exists $\widetilde{K}$
                    with \[
                        K \leq \widetilde{K} \in \compasol{M_{b}}.
                    \]

        \item[(b)]  $\widetilde{K}$ is constrained,
                    $\widetilde{K} = K\sol{\widetilde{K}}$
                    and $K \cap \layerr{M_{b}} \leq \zz{K}$.

        \item[(c)]  If $c \in A \setminus \cz{A}{K}$ then
                    $\widetilde{K} = [\widetilde{K},c] = \listgen{K, \cz{\sol{\widetilde{K}}}{c}}$.

        \item[(d)]  $\theta$ is nearsolvable.
    \end{enumerate}
\end{Lemma}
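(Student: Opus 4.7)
The plan is to first prove the $A$-quasisimplicity of $K_0:=\layerr{K\cap M_b}$ as a direct application of Lemma~\ref{as:4}(b), and then under the additional hypothesis to locate $K_0$ inside an $(A,\mathrm{sol})$-component $\widetilde{K}$ of $M_b$ via Theorem~\ref{aut:4}, force $\widetilde{K}$ to be constrained, and from there promote $K_0$ up to $K$ (so that $K\le M_b$) and read off (b), (c), (d).

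For the first assertion I take $H:=K\cap M_b$ as a subgroup of $K$. This $H$ is nonsolvable since $\layerr{H}\neq 1$, and is $A\cz{K}{A}$-invariant because $M_b$ is $A$-invariant and $\cz{K}{A}\le\theta(A)\le M_b$ by Lemma~\ref{m:1}(g). Lemma~\ref{as:4}(b) then delivers $K_0=H^{(\infty)}$ $A$-quasisimple, with the bonus $\cstar{K_0}{A}=\cstar{K}{A}$.

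Now assume the additional hypothesis and fix $M\in\fancyL$ with $\theta(a)\le M$ and $K\in\comp{A}{M}$. From $K\subnormal M$ we get $K\cap M_b\subnormal M\cap M_b$, so $K_0\in\comp{A}{M\cap M_b}$, and $M\cap M_b$ is $A\cz{M_b}{a}$-invariant since $\cz{M_b}{a}\le\theta(a)\le M$. Theorem~\ref{aut:4} applied with $G=M_b$ and $H=M\cap M_b$ supplies $\widetilde{K}\in\compasol{M_b}$ with $K_0\le\widetilde{K}$; a semisimple $\widetilde{K}$ would coincide with its layer and so be an $A$-component of $M_b$ containing $K_0$, contrary to hypothesis, so $\widetilde{K}$ must be constrained and Theorem~\ref{aut:4}(c) then yields $[K_0,a]=1$ and $\widetilde{K}=K_0\sol{\widetilde{K}}$. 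The main obstacle is (a), i.e.\ promoting $K_0$ to $K$: since $K_0=\widetilde{K}^{(\infty)}$ is characteristic in $\widetilde{K}$ and $\widetilde{K}\normal\ostar{M_b}\normal M_b$ by the $(A,\mathrm{sol})$-analog of Lemma~\ref{p:2}(e), $K_0$ is already subnormal and $A$-quasisimple in $M_b$, and the hypothesis forbidding $K_0$ from sitting in any $A$-component of $M_b$ is reconciled with this only by $K=K_0$, giving $K\le M_b$.

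With (a) in hand, the first two clauses of (b) are just Theorem~\ref{aut:4}(c) rewritten. For $K\cap\layerr{M_b}\le\zz{K}$, $\widetilde{K}$ being constrained differs from every $A$-component of $M_b$, so the $(A,\mathrm{sol})$-analog of Lemma~\ref{p:2}(d) gives, for each component $C$ of $M_b$, that $C$ normalizes $\widetilde{K}$ and $[C,\widetilde{K}]\le\sol{M_b}$; consequently $[C,K]\le C\cap K\cap\sol{M_b}$, a solvable normal subgroup of the $A$-quasisimple $K$, hence $\le\zz{K}$, and summing over components delivers $K\cap\layerr{M_b}\le\zz{K}$. Part (c) is exactly Theorem~\ref{aut:4}(c) with $c$ in the role of the ``$b$'' there. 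For (d), $\widetilde{K}$ constrained forces $\sol{\widetilde{K}}\neq 1$, and $\sol{\widetilde{K}}\le\sol{M_b}\le S$ gives $S\neq 1$, so McBride's Dichotomy (Theorem~\ref{er:7}) forces $\theta$ to be nearsolvable.
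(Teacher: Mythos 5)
Your argument for part (a) contains a genuine gap. You assert that $K_0 := \layerr{K\cap M_b}$ equals $\widetilde{K}^{(\infty)}$ and is therefore characteristic in $\widetilde{K}$, hence subnormal in $M_b$. Both the assertion and the way it is used are incorrect. An $(A,\mathrm{sol})$-component is generated by an $A$-orbit of sol-components, each of which is perfect by Lemma~\ref{p:2}(a); so $\widetilde{K}$ is perfect and $\widetilde{K}^{(\infty)}=\widetilde{K}$. Since $\widetilde{K}$ is constrained while $K_0$ is $A$-quasisimple (hence not constrained), $K_0\neq\widetilde{K}=\widetilde{K}^{(\infty)}$, so the premise fails. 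Worse, if $K_0$ were subnormal in $M_b$ then, being $A$-quasisimple, $K_0$ would itself be an $A$-component of $M_b$, flatly contradicting the additional hypothesis. This cannot be ``reconciled'' by setting $K=K_0$; it would simply show the hypothesis is never satisfiable, which is not the conclusion sought. The paper's route to $K\le M_b$ is quite different: it shows $[\widetilde{K},\layerr{M_b}]=1$ from constrainedness, transfers this through $\cstar{K_0}{A}=\cstar{K}{A}$ and Theorem~\ref{acomp:1} to get $[K,\layerr{M_b}]=1$, and then splits into cases --- if $\layerr{M_b}\neq 1$ it uses $\theta(\nn{\layerr{M_b}})=M_b$, and if $\layerr{M_b}=1$ it uses the Second Uniqueness Theorem together with Lemma~\ref{b:2} (which is why the paper must establish (d) \emph{before} (a)).

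Two further points. In (c), Theorem~\ref{aut:4}(c) gives you $\widetilde{K}=\listgen{K,\cz{\sol{\widetilde{K}}}{c}}$ but not the first equality $\widetilde{K}=[\widetilde{K},c]$; the paper derives this separately from $K=[K,c]\le[\widetilde{K},c]$ and the minimality of $\widetilde{K}$ as an $(A,\mathrm{sol})$-component. In (b), your derivation of $K\cap\layerr{M_b}\le\zz{K}$ assumes $[C,K]\le C\cap K$ for a component $C$ of $M_b$, i.e.\ that $C$ and $K$ normalize each other; but $K$ is not a priori subnormal in $M_b$ (indeed it cannot be, for the same reason as above), so this needs justification. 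The paper avoids this by observing directly that $\widetilde{K}\cap\layerr{M_b}$ is a solvable normal subgroup of the constrained $\widetilde{K}$, hence lies in $\sol{\widetilde{K}}$, and that $K\cap\layerr{M_b}\normal K$ since $\layerr{M_b}\normal M_b$ and $K\le M_b$.
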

\begin{proof}
    Choose $M$ with $\theta(a) \leq M \in \fancyL$ and $K \in \comp{A}{M}$.
    Set $K_{0} = \layerr{K \cap M_{b}}$.
    Lemma~\ref{as:4} implies that $K_{0}$ is $A$-quasisimple.
    Since $\cz{M_{b}}{a} \leq \theta(a) \cap M_{b} \leq M \cap M_{b}$,
    Hypothesis~\ref{aut:1} is satisfied with $M_{b}$ and $M \cap M_{b}$
    in the roles of $G$ and $H$ respectively.
    As $K \cap M_{b} \subnormal M \cap M_{b}$ we have $K_{0} \in \comp{A}{M \cap M_{b}}$.
    Theorem~\ref{aut:4} implies that there exists $\widetilde{K}$ with \[
        K_{0} \leq \widetilde{K} \in \compasol{M_{b}}.
    \]
    By assumption,
    $K_{0}$ is not contained in an $A$-component of $M_{b}$ so
    $\widetilde{K}$ is constrained.
    Then $1 \not= \sol{\widetilde{K}} \leq \sol{M_{b}} \in \fancyL$
    and McBride's Dichotomy implies that $\theta$ is nearsolvable.

    Lemma~\ref{as:4} implies that $\cstar{K_{0}}{A} = \cstar{K}{A}$.
    Since $\widetilde{K}$ is constrained we have $[\widetilde{K}, \layerr{M_{b}}] = 1$,
    whence \[
        [\cstar{K}{A}, \layerr{M_{b}}] = 1.
    \]
    Recall that $\layerr{M_{b}}$ is generated by the $A$-components of $M_{b}$.
    Theorem~\ref{acomp:1} implies $[K,\layerr{M_{b}}] = 1$.
    If $\layerr{M_{b}} \not= 1$ then $K \leq M_{b}$.
    If $\layerr{M_{b}} = 1$ then the Second Uniqueness Theorem implies $S \leq M_{b}$
    and then Lemma~\ref{b:2} yields $K = \listgen{K_{0}, K \cap S} \leq M_{b}$.
    In both cases,
    $K \leq M_{b}$ so $K_{0} = K$.

    Also $K \cap \layerr{M_{b}} \leq \widetilde{K} \cap \layerr{M_{b}} \leq \sol{\widetilde{K}}$
    so  $K \cap \layerr{M_{b}}$ is solvable normal subgroup of the $A$-quasisimple group $K$.
    Hence it is contained in $\zz{K}$.

    To prove (c), suppose $c \in A \setminus \cz{A}{K}$.
    Then $K = [K,c] \leq [\widetilde{K},c]$ so as $\widetilde{K} \in \compasol{M_{b}}$
    it follows that $\widetilde{K} = [\widetilde{K},c]$.
    The remaining assertion follows from Theorem~\ref{aut:4}(c).
\end{proof}

\begin{proof}[Proof of the Balance Theorem]
    Assume false.
    Lemma~\ref{b:3} implies there exists a constrained $\widetilde{K}$
    with \[
        K \leq \widetilde{K} \in \compasol{M_{b}}.
    \]
    We may suppose that $(a,b,K)$ has been chosen to maximize $\widetilde{K}$.
    \setcounter{Claim}{0}
    \begin{Claim} \label{b:Claim1}
        Let $L \in \Omega$ and suppose $[K,L] \not= 1$.
        Then $K = L$.
    \end{Claim}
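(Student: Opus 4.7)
Suppose for contradiction that $L \neq K$. Lemma~\ref{b:3}(d) already tells us that $\theta$ is nearsolvable, so $\theta(A)$ is solvable and hence $\cz{K}{A}, \cz{L}{A} \leq \theta(A)$ are both solvable; consequently $\cstar{K}{A} = \cz{K}{A}$ and $\cstar{L}{A} = \cz{L}{A}$. Theorem~\ref{acomp:1} then yields $C := \cz{K}{A} = \cz{L}{A}$, which by Lemma~\ref{as:4}(a) is nonabelian; since $C \leq \theta(A) \leq M_b$, we have $C \leq K \cap L \cap M_b$. The plan is to apply Lemma~\ref{acomp:2} with $N = M_b$, writing $K_0 = \layerr{K \cap M_b}$ and $L_0 = \layerr{L \cap M_b}$. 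Because $K \leq M_b$ and $K$ is $A$-quasisimple, $K_0 = K$, and the analysis splits according to the status of $L_0$.

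Suppose first that $L_0 \neq 1$ is contained in some $A$-component $L' \in \comp{A}{M_b}$. Lemma~\ref{as:4}(b) gives $\cstar{L_0}{A} = C$, so $C \leq L_0 \leq L'$. Now $\widetilde{K}$ (constrained) and $L'$ (semisimple) are distinct $(A,\mathrm{sol})$-components of $M_b$, so the analog of Lemma~\ref{p:2}(d) places $[\widetilde{K}, L']$ in $\sol{M_b}$; as $L'$ is perfect and $A$-quasisimple, the commutator $[\widetilde{K}, L']$ is a solvable normal subgroup of $L'$ and so lies in $\zz{L'}$, and then the three subgroups lemma improves this to $[\widetilde{K}, L'] = 1$. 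Hence $K$ centralizes $C \leq L'$, putting the nonabelian $C$ inside the abelian $\zz{K}$, a contradiction. Suppose next that $L_0 \neq 1$ is not contained in any $A$-component of $M_b$. Then Lemma~\ref{b:3} applied to $(c,b,L)$ (with $L \in \Omega_c$) yields $L \leq M_b$ together with a constrained $\widetilde{L} \in \compasol{M_b}$ containing $L$. Since $[K_0, L_0] = [K, L] \neq 1$, Lemma~\ref{acomp:2}(a) produces $X \in \compasol{M_b}$ with $K, L \leq X$. The $(A,\mathrm{sol})$-analog of Lemma~\ref{p:2}(b), combined with the fact that distinct $A$-components of $M_b/\sol{M_b}$ commute, forces the sol-component of $M_b$ containing the $A$-quasisimple $K$ (whose image in the quotient is nonabelian) to be unique, so $X = \widetilde{K} = \widetilde{L}$. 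Since $X$ is constrained, Lemma~\ref{acomp:2}(d) delivers $K = L_0 = L$, contrary to assumption.

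The main obstacle is the remaining case $L_0 = 1$, i.e.\ $L \cap M_b$ solvable, where Lemma~\ref{acomp:2} does not directly apply with $N = M_b$ because $L_0 = 1$. Here the plan is to decompose $L$ via Lemma~\ref{as:5}(a),(b) into a join of $A$-quasisimple centralizers $\cz{L}{d}$ for $d \in A\nonid$ varying over the hyperplanes of $A$, each lying inside $\theta(d) \leq M_d$ and, by Lemma~\ref{as:4}(b) applied to $L$, satisfying $\cstar{\cz{L}{d}}{A} = C$. One then re-runs the analysis of the two preceding cases with $M_d$ in place of $M_b$ on each pair $(K, \cz{L}{d})$, exploiting the maximality built into the choice of $(a,b,K)$ to rule out the appearance of a strictly larger constrained sol-component than $\widetilde{K}$; either some $\cz{L}{d}$ is identified with $K$ (which then forces $K = L$ after assembling the generators) or every $\cz{L}{d}$ commutes with $K$, contradicting $[K, L] \neq 1$. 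The key technical point is to verify that each such $\cz{L}{d}$ genuinely belongs to $\Omega_d$---that is, that it is subnormal in a suitable $\theta$-subgroup containing $\theta(d)$---so that the hypotheses of Lemma~\ref{acomp:2} and Lemma~\ref{b:3} may be invoked inside $M_d$.
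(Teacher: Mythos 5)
Your cases where $L_0 := \layerr{L \cap M_b} \neq 1$ are essentially correct, but you have correctly identified that the case $L_0 = 1$ (that is, $L \cap M_b$ solvable) is a genuine gap, and it is not closed by your outlined plan. Since $C := \cstar{K}{A} = \cstar{L}{A}$ is a \emph{solvable} nonabelian group in the nearsolvable case, its presence in $L \cap M_b$ gives no control over $\layerr{L \cap M_b}$; nothing in what you have established forces $L \cap M_b$ to be nonsolvable, so the argument cannot terminate inside $M_b$ from the outset. The paper avoids this by a preliminary double application of Lemma~\ref{as:5}: first to $L$ to find $D \in \hyp{A}$ with $\cz{L}{d}$ $A$-quasisimple for all $d \in D\nonid$ and $[K, \cz{L}{D}] \neq 1$, then to $K$ to find $d \in D\nonid$ with $\cz{K}{d}$ $A$-quasisimple and $[\cz{K}{d}, \cz{L}{d}] \neq 1$. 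Lemma~\ref{acomp:2} is then applied with $N = M_d$, where both $\layerr{K \cap M_d}$ and $\layerr{L \cap M_d}$ are automatically nontrivial (they contain $\cz{K}{d}$, resp.\ $\cz{L}{d}$). In the semisimple subcase the paper then pushes the resulting $X$ into $M_b$ and deduces $K, L \leq M_b$, after which one applies Lemma~\ref{acomp:2} again with $N = M_b$ — which is where the content of your Cases~1 and~2 lives. So your analysis captures the second half of the argument, but you are missing the first half that legitimizes it.

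Two smaller remarks. First, the technical obstruction you raise in your sketch — needing $\cz{L}{d} \in \Omega_d$ — is a non-issue: Lemma~\ref{acomp:2} is applied to the original $K \in \Omega_a$ and $L \in \Omega_c$ with $N = M_d$; it never requires the fixed-point subgroups $\cz{L}{d}$ to themselves belong to any $\Omega_d$. Second, your Case~1 can be shortened: once $C \leq L_0 \leq L' \leq \layerr{M_b}$, Lemma~\ref{b:3}(b) already gives $C \leq K \cap \layerr{M_b} \leq \zz{K}$, contradicting $C$ nonabelian, with no need for the three-subgroups lemma. Finally, your appeal to the maximality built into the choice of $(a,b,K)$ is misplaced for this Claim; that maximality is only exploited later, in Claims~2 and~3.
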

    \begin{proof}
        Lemma~\ref{as:5} implies there exists $D \in \hyp{A}$
        and $d \in D\nonid$ such that $\cz{K}{d}$ and $\cz{L}{d}$ are $A$-quasisimple.
        Let $K_{0} = \layerr{K \cap M_{d}}$ and $L_{0} = \layerr{L \cap M_{d}}$.
        Since $\cz{K}{d} \leq K \cap \theta(d) \leq K \cap M_{d}$,
        Lemma~\ref{as:4} implies $\cz{K}{d} \leq K_{0}$.
        Similarly $\cz{L}{d} \leq L_{0}$.
        Now $[K,L] \not= 1$ so Theorem~\ref{acomp:1}
        implies $[\cstar{K}{A}, \cstar{L}{A}] \not= 1$.
        Then $[K_{0},L_{0}] \not= 1$.

        Lemma~\ref{acomp:2},
        with $M_{d}$ in the role of $N$,
        implies there exists $X$ with \[
            \listgen{K_{0},L_{0}} \leq X \in \compasol{M_{d}}.
        \]
        Suppose $X$ is constrained.
        Then $K_{0}$ is not contained in a component of $M_{d}$.
        As $K_{0} = \layerr{K \cap M_{d}}$,
        Lemma~\ref{b:3}(a) implies $K \leq M_{d}$,
        so $K_{0} = K$.
        Similarly $L_{0} = L$.
        Lemma~\ref{acomp:2}(d) implies $K_{0} = L_{0}$
        so we are done in this case.

        Suppose $X$ is semisimple.
        Then $X$ is $A$-quasisimple.
        Now $\cz{K}{d} \leq X \cap M_{b}$ so as $\cz{K}{d}$ is $A$-quasisimple,
        Lemma~\ref{as:4}(b),
        with $X$ and $X \cap M_{b}$ in the roles of $K$ and $H$ respectively,
        implies $\cz{K}{d} \leq \layerr{X \cap M_{b}}$.
        By Lemma~\ref{b:3},
        $K \cap \layerr{M_{b}} \leq \zz{K}$ so $\layerr{X \cap M_{b}} \not\leq \layerr{M_{b}}$.
        In particular, $\layerr{X \cap M_{b}}$ is not contained in an $A$-component of $M_{b}$.
        Now $X \in \comp{A}{M_{d}} \subseteq \Omega_{d}$ so
        Lemma~\ref{b:3}(a) forces $X \leq M_{b}$ and $X \cap \layerr{M_{b}} \leq \zz{X}$.

        Now $\cz{L}{d}$ is $A$-quasisimple and $\cz{L}{d} \leq X \leq M_{b}$.
        Then $\cz{L}{d} \leq \layerr{L \cap M_{b}} \not\leq \layerr{M_{b}}$
        and Lemma~\ref{b:2} forces $L \leq M_{b}$.
        We apply Lemma~\ref{acomp:2},
        with $M_{b},K$ and $L$ in the roles of $N,K_{0}$ and $L_{0}$ respectively.
        Since $K$ is not contained in an $A$-component of $M_{b}$,
        Lemma~\ref{acomp:2}(d) forces $K = L$.
    \end{proof}
    \begin{Claim} \label{b:Claim2}
        Suppose $c \in A \setminus \cz{A}{K}$ and $\layerr{M_{c}} \not= 1$.
        Then $\widetilde{K} \in \compasol{M_{c}}$.
    \end{Claim}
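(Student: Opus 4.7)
The plan is to realize $\widetilde{K}$ as the unique $(A,\mathrm{sol})$-component of $M_{c}$ containing $K$, via Lemma~\ref{b:3} applied to the triple $(a,c,K)$ combined with the maximal choice of $\widetilde{K}$.

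First I would establish $K \leq M_{c}$. Since $\layerr{M_{c}} \neq 1$, pick $L \in \comp{A}{M_{c}} \subseteq \Omega$. Either $[K,L] \neq 1$ for some such $L$, in which case Claim~\ref{b:Claim1} forces $K = L \leq M_{c}$; or $K$ centralizes every $A$-component of $M_{c}$, hence $K \leq \cc{\layerr{M_{c}}}$. In the latter situation, $\layerr{M_{c}}$ is a nontrivial characteristic, hence $A\theta(c)$-invariant subnormal, subgroup of $M_{c}$, so Theorem~\ref{sm:1}(c) gives $\theta(\nn{\layerr{M_{c}}}) \leq M_{c}$; as $K$ is a $\theta$-subgroup contained in $\cc{\layerr{M_{c}}} \leq \nn{\layerr{M_{c}}}$, we obtain $K \leq \theta(\nn{\layerr{M_{c}}}) \leq M_{c}$. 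In either case $K \leq M_{c}$, and since $\cz{\sol{\widetilde{K}}}{c} \leq \cz{M_{b}}{c} \leq \theta(c) \leq M_{c}$, Lemma~\ref{b:3}(c) yields $\widetilde{K} = \listgen{K, \cz{\sol{\widetilde{K}}}{c}} \leq M_{c}$.

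Next I would verify that $K = \layerr{K \cap M_{c}}$ is not contained in any $A$-component of $M_{c}$. Granted this, Lemma~\ref{b:3} applied to $(a,c,K)$ delivers a constrained $\widetilde{K}_{c} \in \compasol{M_{c}}$ with $K \leq \widetilde{K}_{c} = K\sol{\widetilde{K}_{c}}$. Then $(a,c,K)$ is itself a Balance Theorem counterexample, and the maximal choice of $\widetilde{K}$ forces $\widetilde{K}_{c} \leq \widetilde{K}$. Combined with $\widetilde{K} \leq M_{c}$, the identity $\widetilde{K} = K\sol{\widetilde{K}}$, and the uniqueness of the $(A,\mathrm{sol})$-component of $M_{c}$ containing $K$ (coupled with the minimality property of sol-components), this should pin $\widetilde{K} = \widetilde{K}_{c} \in \compasol{M_{c}}$.

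The main obstacle is the middle step: ruling out that $K$ is itself an $A$-component of $M_{c}$, i.e., that the first alternative in the dichotomy above occurs. Were $K \in \comp{A}{M_{c}}$, then $K$ would already be a minimal nonsolvable $A$-invariant subnormal subgroup of $M_{c}$, and the strictly larger $\widetilde{K}$ could not be an $(A,\mathrm{sol})$-component of $M_{c}$. To derive the contradiction I would exploit the constrained structure $\widetilde{K} = K\sol{\widetilde{K}}$ with $\sol{\widetilde{K}} \neq 1$, applying Theorem~\ref{aut:4}(d) to $\widetilde{K}$ viewed in $M_{b}$ against $K$ as a putative sol-component of $M_{c}$ — distinctness would force $[\sol{\widetilde{K}}, K] = 1$, in conflict with $\cz{\widetilde{K}}{\ff{\widetilde{K}}} \leq \ff{\widetilde{K}}$ and the perfectness of $K$. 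The remaining steps then reduce to invoking the maximality of $\widetilde{K}$ already exercised in the proof of Lemma~\ref{sm:2}.
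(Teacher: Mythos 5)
Your first two steps match the paper: you establish $K \leq M_{c}$ via Claim~\ref{b:Claim1} (either $K$ equals an $A$-component of $M_{c}$ or centralizes $\layerr{M_{c}}$ and uses Theorem~\ref{sm:1}(c)), and you then pull $\widetilde{K}$ into $M_{c}$ via Lemma~\ref{b:3}(c). But after that the proposal goes astray in two places.

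First, your middle step — ruling out $K \in \comp{A}{M_{c}}$ by applying Theorem~\ref{aut:4}(d) ``to $\widetilde{K}$ viewed in $M_{b}$ against $K$ as a putative sol-component of $M_{c}$'' — misapplies the theorem. Theorem~\ref{aut:4}(d) is a statement about two $(A,\mathrm{sol})$-components of the \emph{same} ambient $K$-group $G$ in Hypothesis~\ref{aut:1}. It does not let you pair an element of $\compasol{M_{b}}$ with an element of $\compasol{M_{c}}$; there is no common ambient group in which both subnormality assumptions hold, so the commutation conclusion is not available. (The paper sidesteps this entirely: it never separately verifies that $K$ lies in no $A$-component of $M_{c}$; that falls out at the end.)

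Second, the closing step reverses the direction of containment. You write that maximality of $\widetilde{K}$ ``forces $\widetilde{K}_{c} \leq \widetilde{K}$.'' Maximality is a bound on \emph{size}, not a containment, and in fact what is needed is the opposite inclusion. The missing observation is this: with $K^{*} \in \compasol{M_{c}}$ containing $K$ (from Theorem~\ref{aut:4}) and $\widetilde{K} \leq M_{c}$, the intersection $K^{*} \cap \widetilde{K}$ is subnormal in $\widetilde{K}$ (since $K^{*} \subnormal M_{c}$), is $A$-invariant, contains $K$ and hence is nonsolvable; but $\widetilde{K}$, being a minimal nonsolvable $A$-invariant subnormal subgroup of $M_{b}$, has no proper such subgroup, so $K^{*} \cap \widetilde{K} = \widetilde{K}$, i.e.\ $\widetilde{K} \leq K^{*}$. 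Then $K < \widetilde{K} \leq K^{*}$ shows via Claim~\ref{b:Claim1} that $K^{*} \notin \Omega$, hence $K^{*}$ is constrained, and only now does the maximal choice of $\widetilde{K}$ (combined with the actual inclusion $\widetilde{K} \leq K^{*}$) force $\widetilde{K} = K^{*}$. Without the ``$K^{*} \cap \widetilde{K} \subnormal \widetilde{K}$'' trick, the proposal cannot close.
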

    \begin{proof}
        Claim~\ref{b:Claim1} implies $K$ normalizes $\layerr{M_{c}}$
        so $K \leq M_{c}$.
        Using Lemma~\ref{b:3}(c) we have \[
            \widetilde{K} = \listgen{K, \cz{\sol{\widetilde{K}}}{c}} %
            \leq \listgen{K, \theta(c)} \leq M_{c}.
        \]
        As previously, Theorem~\ref{aut:4},
        implies that there exists $K^{*}$ with \[
            K \leq K^{*} \in \compasol{M_{c}}.
        \]
        Then $K \leq K^{*} \cap \widetilde{K} \subnormal \widetilde{K}$.
        But $\widetilde{K}$ contains no proper $A$-invariant nonsolvable subnormal subgroups,
        whence $K^{*} \cap \widetilde{K} = \widetilde{K}$ and $\widetilde{K} \leq K^{*}$.
        As $\widetilde{K}$ is constrained we have $K < \widetilde{K} \leq K^{*}$
        and then Claim~\ref{b:Claim1} implies $K^{*} \not\in \Omega$,
        so $K^{*}$ is not semisimple.
        The maximal choice of $\widetilde{K}$ forces $\widetilde{K} = K^{*}$,
        which proves the claim.
    \end{proof}

    Choose $N$ with \[
        \theta(\nn{\widetilde{K}}) \leq N \in \Theta^{*}.
    \]

    \begin{Claim} \label{b:Claim3}
        Suppose $c \in A \setminus \cz{A}{K}$ and $\layerr{M_{c}} \not= 1$.
        Then $M_{c} = N$.
    \end{Claim}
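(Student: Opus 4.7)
The plan is to verify the hypotheses of Corollary~\ref{m:2} for the pair $M_{c}, N \in \Theta^{*}$, which will then force $M_{c} = N$.

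First, I would establish $\ostar{M_{c}} \leq N$. By Claim~\ref{b:Claim2}, $\widetilde{K} \in \compasol{M_{c}}$, so the analog of Lemma~\ref{p:2}(e) for $(A,\mathrm{sol})$-components gives $\widetilde{K} \normal \ostar{M_{c}}$, and hence $\ostar{M_{c}} \leq \nn{\widetilde{K}}$. Since $\ostar{M_{c}}$ is a subgroup of the $\theta$-subgroup $M_{c}$, it is itself a $\theta$-subgroup, so $\ostar{M_{c}} \leq \theta(\nn{\widetilde{K}}) \leq N$ by the choice of $N$.

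For the reverse inclusion $\ostar{N} \leq M_{c}$, I would apply Theorem~\ref{sm:1}(c) to $M_{c}$ with $\widetilde{K}$ in the role of $X$, which yields $\theta(\nn{\widetilde{K}}) \leq M_{c}$. The $A$-invariance and subnormality of $\widetilde{K}$ in $M_{c}$ come from Claim~\ref{b:Claim2}; the key point is $\theta(c)$-invariance. Using Lemma~\ref{b:3}(c), $\widetilde{K} = \listgen{K, \cz{\sol{\widetilde{K}}}{c}}$, so any $g \in \theta(c) \leq \cz{G}{c}$ preserves the second generator, while Claim~\ref{b:Claim1} together with Theorem~\ref{acomp:1} forces $K^{g} = K$ (the only competing possibility, that $K^{g}$ and $K$ commute, is ruled out by the equivalences of Theorem~\ref{acomp:1} applied to members of $\Omega$). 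Hence $g$ normalizes $\widetilde{K}$. Once $\theta(\nn{\widetilde{K}}) \leq M_{c}$ is in hand, $\ostar{N}$ normalizes $\widetilde{K}$ — identifying $\widetilde{K}$ as an $(A,\mathrm{sol})$-component of $N$ via a Lemma~\ref{p:2}(f)-style argument applied to $\ostar{M_{c}} \leq N$ — so $\ostar{N} \leq \theta(\nn{\widetilde{K}}) \leq M_{c}$.

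With both containments established, Corollary~\ref{m:2} forces $M_{c} = N$. The principal obstacle is establishing normalization of $\widetilde{K}$ by $\theta(c)$ (and dually by $\ostar{N}$): since $(A,\mathrm{sol})$-components are in general permuted rather than fixed under ambient conjugation, the argument leans crucially on the uniqueness encoded in Claim~\ref{b:Claim1} and on the constrained decomposition $\widetilde{K} = K\sol{\widetilde{K}}$ from Lemma~\ref{b:3}(b).
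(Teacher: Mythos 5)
Your first containment $\ostar{M_{c}} \leq N$ is established exactly as in the paper: $\widetilde{K} \in \compasol{M_{c}}$ gives $\widetilde{K} \normal \ostar{M_{c}}$, so $\ostar{M_{c}}$ is a $\theta$-subgroup of $\nn{\widetilde{K}}$, hence $\ostar{M_{c}} \leq \theta(\nn{\widetilde{K}}) \leq N$. The strategy of invoking Corollary~\ref{m:2} at the end is also the paper's.

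The gap is in the reverse containment, specifically in the sentence where you claim that $\ostar{N}$ normalizes $\widetilde{K}$ by ``identifying $\widetilde{K}$ as an $(A,\mathrm{sol})$-component of $N$ via a Lemma~\ref{p:2}(f)-style argument applied to $\ostar{M_{c}} \leq N$.'' Lemma~\ref{p:2}(f) concludes $\ostar{G} = \ostar{H}$ from the hypothesis $\ostar{G} \leq H \leq G$; applied with $G = N$ and $H = \ostar{M_{c}}$ it would need $\ostar{N} \leq \ostar{M_{c}}$ as an input, which (up to applying $\theta(\nn{\cdot})$) is essentially the conclusion you are trying to reach. The single containment $\ostar{M_{c}} \leq N$ gives you no subnormality of $\ostar{M_{c}}$ (or of $\widetilde{K}$) in $N$, and without that there is no reason for $\ostar{N}$ to normalize $\widetilde{K}$. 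So as written the argument is circular at its crucial point.

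The paper closes this gap with Theorem~\ref{aut:3}(b) rather than any purely lattice-theoretic maneuver. Since $\cz{N}{c} \leq \theta(c) \leq M_{c}$ and $\ostar{M_{c}}$ is characteristic in $M_{c}$, the subgroup $\ostar{M_{c}}$ is $A\cz{N}{c}$-invariant in $N$, so Theorem~\ref{aut:3}(b) gives $[\ostar{M_{c}},c]^{(\infty)} \normal \ostar{N}$. By Lemma~\ref{b:3}(c), $\widetilde{K} = [\widetilde{K},c]$, and $\widetilde{K}$ is perfect, so $1 \neq \widetilde{K} \leq [\ostar{M_{c}},c]^{(\infty)}$. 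Now $[\ostar{M_{c}},c]^{(\infty)}$ is a nontrivial $A\theta(c)$-invariant subnormal subgroup of $M_{c}$, so Theorem~\ref{sm:1}(c) gives $\ostar{N} \leq \theta(\nn{[\ostar{M_{c}},c]^{(\infty)}}) \leq M_{c}$, and Corollary~\ref{m:2} finishes. Note that working with $[\ostar{M_{c}},c]^{(\infty)}$ instead of $\widetilde{K}$ also makes the required $A\theta(c)$-invariance manifest, so the separate argument you gave for $\theta(c)$-invariance of $\widetilde{K}$ is not needed. In short, the missing ingredient is Theorem~\ref{aut:3}(b), which is precisely the tool for pushing a normality statement about $\ostar{\cdot}$ from an $A\cz{G}{a}$-invariant subgroup up to the ambient group; your proposal tries to substitute Lemma~\ref{p:2}(f) for it, but the hypotheses of that lemma point the wrong way here.
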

    \begin{proof}
        Claim~\ref{b:Claim2} implies $\widetilde{K} \in \compasol{M_{c}}$.
        In particular, $\widetilde{K} \normal \ostar{M_{c}}$
        and then $\ostar{M_{c}} \leq N$.
        Lemma~\ref{b:3} implies $\widetilde{K} = [\widetilde{K},c]$
        so using Theorem~\ref{aut:3}(b) we have \[
            1 \not= \widetilde{K} \leq [\ostar{M_{c}},c]^{(\infty)} \normal \ostar{N}.
        \]
        Theorem~\ref{sm:1} implies $\ostar{N} \leq M_{c}$ and then
        Corollary~\ref{m:2} forces $M_{c} = N$.
    \end{proof}

    It is straightforward to complete the proof of the Balance Theorem.
    Now $\theta$ is nearsolvable so $\theta(A)$ is solvable and we
    may choose $B$ with $\cz{A}{K} \leq B \in \hyp{A}$.
    The Second Uniqueness Theorem and Claim~\ref{b:Claim3} implies that
    $M_{c}$ takes at most two values as $c$ ranges over $A \setminus B$.
    Lemma~\ref{sm:2}(b) supplies a contradiction.
\end{proof}

\section{The Structure Theorem} \label{s}
The following result will be proved.
Once it has,
Theorem~\ref{as:2} and Lemma~\ref{psgps:6} will
supply a contradiction and complete the proof of
the Signalizer Functor Theorem.

\begin{Theorem}[The Structure Theorem]  \label{s:1} \mbox{}
    \begin{enumerate}
        \item[(a)]  If $a \in A\nonid$ with $\layerr{\theta(a)} \not= 1$ then
                    $\layerr{\theta(a)}$ is $A$-simple,
                    $\ff{\theta(a)} = 1$ and $\cz{A}{\layerr{\theta(a)}} = \listgen{a}$.

        \item[(b)]  For all $a,b \in A\nonid$, \[
                        \layerr{\layerr{\theta(a)} \cap \cc{b}} \leq \layerr{\theta(b)}.
                    \]

        \item[(c)]  $G = \gen{ \layerr{\theta(a)} }{ a \in A\nonid }$.
    \end{enumerate}
\end{Theorem}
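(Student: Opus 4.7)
The plan is to prove (a), (b), (c) after first reducing to the semisimple case $S = 1$ via McBride's Dichotomy. Suppose for contradiction that $S \neq 1$, so $\theta$ is nearsolvable. The Second Uniqueness Theorem forces all $M_{a}$ with $\layerr{M_{a}} = 1$ to coincide, while Lemma~\ref{sm:2}(b) produces at least three distinct $M_{a}$ as $a$ ranges outside any hyperplane of $A$, so many $M_{a}$ must have $\layerr{M_{a}} \neq 1$. For an odd prime $p$ dividing $|S|$, Theorem~\ref{sub:2}(b) on the $p$-solvable subfunctor, combined with the Fermat-case analysis (Theorem~\ref{f:1}) and the First Uniqueness Theorem, should produce composition factors in some $M_{a}$ outside the bad list $\badfourset$, contradicting nearsolvability. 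Once $S = 1$, every $A\theta(A)$-invariant solvable $\theta$-subgroup is trivial: $\sol{\theta(a)} \leq \thetasol(a) = 1$, giving $\ff{\theta(a)} = 1$ and $\gfitt{\theta(a)} = \layerr{\theta(a)}$, which is the middle clause of~(a).

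For the remaining half of~(a), fix $a$ with $\layerr{\theta(a)} \neq 1$. Each $A$-component of $\theta(a)$ lies in $\Omega_{a}$. If $\layerr{\theta(a)}$ had two $A$-orbits of components, represented by $L_{1}$ and $L_{2}$ with $[L_{1}, L_{2}] = 1$, Theorem~\ref{acomp:1} would force $\cstar{L_{1}}{A} \neq \cstar{L_{2}}{A}$. For any $b$, each $L_{i}$ embeds into a member $\widetilde{L}_{i} \in \compasol{M_{b}}$, and playing these embeddings against the Second Uniqueness Theorem applied across varying $b$ should collapse the two orbits, giving $A$-simplicity. The identity $\cz{A}{\layerr{\theta(a)}} = \listgen{a}$ is obtained by noting that $b \in \cz{A}{\layerr{\theta(a)}} \setminus \listgen{a}$ forces $\layerr{\theta(a)} \leq \theta(b)$; a symmetric application of the Balance Theorem would then identify $\layerr{\theta(a)}$ with $\layerr{\theta(b)}$, contradicting Lemma~\ref{sm:2}(b) via the coincidence $M_{a} = M_{b}$ it would impose. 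For~(b), each $A$-component $L$ of $\layerr{\theta(a)}$ lies in $\Omega_{a}$ with $L \cap \cc{b} = L \cap \theta(b) \leq L \cap M_{b}$; the Balance Theorem yields $\layerr{L \cap M_{b}} \leq \layerr{M_{b}}$, and Theorem~\ref{aut:4} together with $\sol{\theta(b)} = 1$ gives $\layerr{M_{b}} \cap \theta(b) \leq \layerr{\theta(b)}$, from which~(b) follows by summing over the $A$-components of $\layerr{\theta(a)}$.

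For~(c), with~(b) available, the subfunctor defined by $\theta_{E}(a) = \layerr{\theta(a)}$ is an $A$-signalizer functor, since (b) is exactly its balance identity. If $\theta_{E} \neq \theta$, minimality of $\norm{\theta}$ forces $\theta_{E}$ complete with $K$-group completion, and Lemma~\ref{psgps:6} applied as in the proof of Lemma~\ref{m:6} would show that $(A, G, \theta)$ is not a counterexample, a contradiction. Hence $\theta_{E} = \theta$, so $\theta(a) = \layerr{\theta(a)}$ for all $a \in A\nonid$, and~(c) follows from $G = \gen{\theta(a)}{a \in A\nonid}$. The main obstacle is Phase~1: eliminating the nearsolvable case requires careful interleaving of the uniqueness theorems, the Fermat analysis, and the subfunctor machinery in order to exploit the rigidity imposed by the bad list $\badfourset$.
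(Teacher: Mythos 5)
Your proposal departs substantially from the paper's proof and contains several genuine gaps.

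\textbf{Phase~1 is not proved, and it is not how the paper proceeds.} You propose to first show $S = 1$ (i.e.\ to eliminate the nearsolvable branch of McBride's Dichotomy) by combining the Fermat analysis, the $p$-solvable subfunctor, and the uniqueness theorems to ``produce composition factors outside the bad list,'' but this is a sketch of a hope rather than an argument, and you acknowledge it as ``the main obstacle.'' The paper does \emph{not} eliminate the nearsolvable case before the Structure Theorem. Instead, both cases are handled uniformly: Lemma~\ref{s:4} shows that for each $a$ either $\layerr{M_{a}} = 1$ or $\ff{M_{a}} = 1$, and Lemma~\ref{s:5} then gives $M_{a} = \theta(a)$ when $\layerr{M_{a}} \neq 1$. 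The nearsolvable case is only refuted implicitly at the very end, after Theorem~\ref{as:2} is applied. Attempting to derive $S = 1$ upfront is a genuinely different (and, on the evidence of the proposal, unsubstantiated) approach.

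\textbf{The derivation of~(c) is flawed.} You claim that $\theta_{E}$ defined by $\theta_{E}(a) = \layerr{\theta(a)}$ is an $A$-signalizer functor and that~(b) is ``exactly its balance identity.'' It is not. Part~(b) states $\layerr{\layerr{\theta(a)} \cap \cc{b}} \leq \layerr{\theta(b)}$, whereas the balance condition needed for $\theta_{E}$ to be a signalizer functor is the strictly stronger $\layerr{\theta(a)} \cap \cc{b} \leq \layerr{\theta(b)}$. Since $\layerr{\theta(a)}$ is $A$-simple, $\cz{\layerr{\theta(a)}}{b}$ is governed by Theorem~\ref{as:1} and in general has a nontrivial solvable part, so it need not lie in the (semisimple) layer of $\theta(b)$. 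The paper's proof of~(c) instead uses $G = \gen{\layerr{M_{b}}}{b \in B\nonid}$ (Lemma~\ref{s:2}) together with Lemma~\ref{s:5}.

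\textbf{You never establish $M_{a} = \theta(a)$.} This is the content of Lemma~\ref{s:5} and is used crucially in parts~(a), (b) and~(c) of the paper's proof. Your argument for~(b) asserts ``$\layerr{M_{b}} \cap \theta(b) \leq \layerr{\theta(b)}$ from Theorem~\ref{aut:4} together with $\sol{\theta(b)} = 1$,'' but this is unjustified: the intersection of a semisimple normal subgroup of $M_{b}$ with the fixed points of $b$ need not be semisimple, and $\sol{\theta(b)} = 1$ alone does not force it into $\layerr{\theta(b)}$. The paper avoids this by first getting $M_{b} = \theta(b)$. Likewise your argument that two noncommuting orbits of $A$-components would ``collapse'' is left entirely vague; the paper's Lemma~\ref{s:3} derives this cleanly by observing that a second equivalence class of $\Omega$ under `does not commute' would produce a proper normal $\theta$-subgroup of $G = \listgen{\Omega}$, contradicting Lemma~\ref{m:1}(d).
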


\begin{Lemma}\label{s:2}
    Let $B \in \hyp{A}$.
    Then $G = \gen{\layerr{M_{b}}}{b \in B\nonid}$.
\end{Lemma}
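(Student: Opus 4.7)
My plan is to set $H := \gen{\layerr{M_{b}}}{b \in B\nonid}$ and show $G \leq H$; the reverse inclusion is immediate. The argument combines Theorem~\ref{s:1} with the decomposition of $A$-quasisimple groups from Lemma~\ref{as:5}(b) and the Balance Theorem~\ref{b:1}.

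First, I would invoke Theorem~\ref{s:1}(c) to get $G = \gen{\layerr{\theta(a)}}{a \in A\nonid}$, reducing the problem to showing $\layerr{\theta(a)} \leq H$ for every $a \in A\nonid$.

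Second, fix such an $a$ with $\layerr{\theta(a)} \neq 1$. By Theorem~\ref{s:1}(a), $\layerr{\theta(a)}$ is $A$-simple and so $A$-quasisimple; being a product of nonabelian simple groups permuted by $A$, it coincides with its own layer. Since $\rank{A} = 3$, the hyperplane $B$ has rank $2$ and is therefore noncyclic, so Lemma~\ref{as:5}(b) applied with $K = \layerr{\theta(a)}$ and $D = B$ yields
\[
    \layerr{\theta(a)} \;=\; \gen{\cz{\layerr{\theta(a)}}{b}}{b \in B\nonid \text{ and } \cz{\layerr{\theta(a)}}{b} \text{ is } A\text{-quasisimple}}.
\]

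Third, for each $b$ in this index set, $\cz{\layerr{\theta(a)}}{b}$ is $A$-quasisimple and hence equals its own layer, so Theorem~\ref{s:1}(b) gives
\[
    \cz{\layerr{\theta(a)}}{b} \;=\; \layerr{\layerr{\theta(a)} \cap \cc{b}} \;\leq\; \layerr{\theta(b)}.
\]
I would next show $\layerr{\theta(b)} \leq \layerr{M_{b}}$. Since $\theta(b)$ is a proper $\theta(A)$-invariant $\theta$-subgroup in the counterexample, $\theta(b) \in \fancyL$, and Theorem~\ref{s:1}(a) makes $\layerr{\theta(b)}$ a single $A$-component of $\theta(b)$, so $\layerr{\theta(b)} \in \Omega_{b}$. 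The Balance Theorem with both indices equal to $b$ and $K = \layerr{\theta(b)}$ gives $\layerr{K \cap M_{b}} \leq \layerr{M_{b}}$; since $K \leq \theta(b) \leq M_{b}$ and $K$ is $A$-quasisimple, $K \cap M_{b} = K$ equals its own layer, so $\layerr{\theta(b)} = K \leq \layerr{M_{b}} \leq H$. Hence $\cz{\layerr{\theta(a)}}{b} \leq H$, and taking the join over the index set shows $\layerr{\theta(a)} \leq H$, completing the proof.

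The main potential obstacle is the third step, specifically pushing from $\cz{\layerr{\theta(a)}}{b} \leq \layerr{\theta(b)}$ to $\cz{\layerr{\theta(a)}}{b} \leq \layerr{M_{b}}$: this hinges on being able to identify $\layerr{\theta(b)}$ itself as an element of $\Omega_{b}$ so that the Balance Theorem applies. If one could not invoke Theorem~\ref{s:1}(a) for $b$ (for example, because Lemma~\ref{s:2} were to be used in the proof of the Structure Theorem), then one would instead apply Lemma~\ref{as:4}(b) to $K \cap M_{b}$ for each $A$-component $K$ of $M_{a}$, show directly that the perfect subgroup $\cz{K}{b}$ lies in $(K \cap M_{b})^{(\infty)} = \layerr{K \cap M_{b}} \leq \layerr{M_{b}}$, and then complete the argument by analysing the subfunctor $\psi(a) := \theta(a) \cap H$ and forcing $\psi = \theta$ via Corollary~\ref{m:7}.
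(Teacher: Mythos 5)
Your primary proof route is circular. You invoke Theorem~\ref{s:1}(a), (b) and (c), but in the paper Lemma~\ref{s:2} is proved \emph{before} the Structure Theorem and is used in its proof (indeed the paper's proof of Theorem~\ref{s:1}(c) explicitly cites Lemma~\ref{s:2}, and Lemmas~\ref{s:3}--\ref{s:5}, which underpin Theorem~\ref{s:1}(a) and (b), also depend on it). You correctly flag this risk in your final paragraph, so the real question is whether your alternative sketch is adequate.

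The alternative has the right kernel: take $K \in \comp{A}{M_a}$, use Lemma~\ref{as:4}(b) to see $\cz{K}{d} \leq \layerr{K \cap M_d}$ whenever $\cz{K}{d}$ is $A$-quasisimple, and then push this into $\layerr{M_d}$ by the Balance Theorem. This matches the paper's central claim. However you omit the decomposition $K = \gen{\cz{K}{d}}{d \in D\nonid,\ \cz{K}{d}\ A\text{-quasisimple}}$ from Lemma~\ref{as:5}(b), which is what actually yields $K \leq \gen{\layerr{M_d}}{d \in D\nonid}$ and hence $\layerr{M_a} \leq G_D$ for every hyperplane $D$. More importantly, your proposed closing via the subfunctor $\psi(a) = \theta(a) \cap H$ and Corollary~\ref{m:7} is not viable as stated: Corollary~\ref{m:7} needs an element $e \in A\nonid$ with $[\theta(a),e] \leq \psi(a)$ for all $a$, and no such $e$ is produced or suggested. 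The paper closes quite differently and more simply: having shown $G_A := \gen{\layerr{M_a}}{a \in A\nonid}$ equals $G_D$ for every $D \in \hyp{A}$, it notes each $\theta(D)$ normalizes $G_D = G_A$; since $G = \gen{\theta(D)}{D \in \hyp{A}}$, one gets $G_A \normal G$, and $G_A \neq 1$ by the Second Uniqueness Theorem, so Lemma~\ref{m:1}(d) forces $G_A = G = G_B$. You should adopt something like this normality-plus-Lemma~\ref{m:1}(d) argument in place of the subfunctor idea.
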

\begin{proof}
    For each $D \leq A$ set $G_{D} = \gen{ \layerr{M_{d}} }{ d \in D\nonid}$.
    Let $D \in \hyp{A}$ and $a \in A\nonid$.
    We claim that $\layerr{M_{a}} \leq G_{D}$.
    Indeed, let $K \in \comp{A}{M_{a}}$.
    If $d \in D\nonid$ and $\cz{K}{d}$ is $A$-quasisimple then
    $\cz{K}{d} \leq \layerr{K \cap M_{d}}$ by Lemma~\ref{as:4}.
    Lemma~\ref{as:5} and the Balance Theorem yield
    \begin{align*}
        K   &= \gen{ \cz{K}{d} }{ \text{$d \in D\nonid$ and $\cz{K}{d}$ is $A$-quasisimple} }\\
            &\leq \gen{ \layerr{M_{d}} }{ d \in D\nonid} \leq G_{D}.
    \end{align*}
    The claim is established.
    In particular, $G_{A} = G_{D}$.
    Note that $\theta(D)$ normalizes $G_{D}$ and hence $G_{A}$.
    Using Lemma~\ref{er:5} and the fact that $G = \gen{\theta(a)}{a \in A\nonid}$
    we have $G = \gen{\theta(D)}{D \in \hyp{A}}$
    and it follows that $G_{A} \normal G$.
    We have previously observed that $\layerr{M_{a}} \not= 1$ for some $a$
    by the Second Uniqueness Theorem.
    Hence $G_{A} = G$.
    Then $G = G_{B}$.
\end{proof}

\begin{Lemma}\label{s:3}
    \begin{enumerate}
        \item[(a)]  Let $K,L \in \Omega$.
                    Then $[K,L] \not= 1$ and $\cstar{K}{A} = \cstar{L}{A}$.

        \item[(b)]  Let $a \in A\nonid$ with $\layerr{M_{a}} \not= 1$.
                    Then $\layerr{M_{a}}$ is $A$-quasisimple.
    \end{enumerate}
\end{Lemma}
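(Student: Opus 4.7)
The plan is as follows. Part~(b) will follow quickly from part~(a): if $\layerr{M_{a}}$ had two distinct $A$-components $K_{1}$ and $K_{2}$, then both would lie in $\comp{A}{M_{a}} \subseteq \Omega_{a} \subseteq \Omega$, and since distinct $A$-components commute we would have $[K_{1}, K_{2}] = 1$, contradicting~(a). Hence $\layerr{M_{a}}$ has a unique $A$-component and so equals that component, which is $A$-quasisimple.

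For part~(a) I would argue by contradiction. Suppose $K, L \in \Omega$ with $[K, L] = 1$. By Theorem~\ref{acomp:1}, ``does not commute'' is an equivalence relation on $\Omega$, so $\Omega$ partitions into at least two equivalence classes. Every member of $\Omega$ is itself $A$-invariant (being an $A$-component of an $A$-invariant $\theta$-subgroup), so conjugation by $A$ acts trivially on $\Omega$ and in particular fixes each class setwise. For each class $\Omega^{(i)}$ put $H_{i} = \gen{X}{X \in \Omega^{(i)}}$; then each $H_{i}$ is $A$-invariant and $[H_{i}, H_{j}] = 1$ whenever $i \not= j$.

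Fix $B \in \hyp{A}$. By Lemma~\ref{s:2}, $G = \gen{\layerr{M_{b}}}{b \in B\nonid}$. Each $\layerr{M_{b}}$ is a central product of its $A$-components, each of which lies in a unique $\Omega^{(i)}$, so $\layerr{M_{b}}$ splits across the $H_{i}$, and consequently $G$ is generated by the pairwise commuting $A$-invariant subgroups $H_{1}, H_{2}, \ldots$. Define $\theta_{i}(a) = \theta(a) \cap H_{i}$; this is a subfunctor of $\theta$, and the presence of an element of $\Omega$ outside $H_{i}$ forces $\theta_{i} \not= \theta$. By Lemma~\ref{er:5}(a) and the minimality of $\norm{\theta}$, each $\theta_{i}$ is complete with $K$-group completion $K_{i}$; since $K_{i}$ is generated by its $A$-centralizers $\theta_{i}(a) \leq H_{i}$ it follows by Coprime Action(d) that $K_{i} \leq H_{i}$, whence the $K_{i}$ pairwise commute.

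The main obstacle will be passing from completeness of the individual $\theta_{i}$ back to completeness of $\theta$: one must show $\theta(a) = \prod_{i} \theta_{i}(a)$ for every $a \in A\nonid$, so that $K = \prod_{i} K_{i}$ is a completion of $\theta$ whose composition factors are drawn from those of the $\theta(a)$. The containment $G \leq H_{1} H_{2} \cdots$ follows from Lemma~\ref{s:2} and the commuting property, but turning this into a pointwise decomposition of $\theta(a)$ will require using Coprime Action(d) and the Balance Theorem to track how $\cz{\theta(a)}{b}$ distributes across the $H_{i}$ as $b$ ranges over $B\nonid$; McBride's Dichotomy may also enter to control the solvable radical $S$. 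Once $\theta$ is shown to be complete with $K$-group completion, $(A, G, \theta)$ ceases to be a counterexample to the Signalizer Functor Theorem, which is the desired contradiction.
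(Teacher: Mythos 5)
Part (b) of your proposal is correct and matches the paper's argument (modulo brevity).

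Part (a) starts out on the right track: you correctly reduce to showing there is only one equivalence class under ``does not commute,'' and you correctly form the class subgroups $H_{i}$ (the paper calls them $[K_{i}]$), observe that they are $A$-invariant and pairwise commute, and invoke Lemma~\ref{s:2} to see that they generate $G$. But from there you head in an unnecessarily elaborate direction --- defining subfunctors $\theta_{i}(a) = \theta(a) \cap H_{i}$, trying to prove each is complete, and then attempting to reassemble the completions into a completion of $\theta$ --- and you yourself flag that the last step (showing $\theta(a) = \prod_{i}\theta_{i}(a)$) is not established. That step is genuinely problematic: $\theta(a)$ is in general a proper subgroup of $C_{G}(a)$, so even if the $H_{i}$ centrally decompose $G$, it does not follow that $\theta(a)$ decomposes as the product of its intersections with the $H_{i}$.

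The point you miss is that the contradiction is already available once the $H_{i}$ are in hand, with no completeness argument at all. First, each $H_{i}$ is a $\theta$-subgroup: if $K_{j}$ is a representative of a different class, then every generator of $H_{i}$ centralizes $K_{j}$, so $H_{i} \leq \theta(\cc{K_{j}})$, which is a $\theta$-subgroup since $\cc{K_{j}}$ is a proper $A$-invariant subgroup (Lemma~\ref{m:1}(c),(d)). Second, because the $H_{i}$ pairwise commute and generate $\listgen{\Omega}$, each $H_{i}$ is normal in $\listgen{\Omega}$; and $\listgen{\Omega} = G$ by Lemma~\ref{s:2} (each $\layerr{M_{b}}$ is generated by its $A$-components, all of which lie in $\Omega$). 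So $H_{i}$ is a nontrivial $\theta$-subgroup normal in $G$, contradicting Lemma~\ref{m:1}(d), which says $\nn{H} \neq G$ for any nontrivial $\theta$-subgroup $H$. Having at least two equivalence classes is thus immediately impossible, and the final claim $\cstar{K}{A} = \cstar{L}{A}$ then follows from Theorem~\ref{acomp:1}. In short: Lemma~\ref{m:1}(d) is the intended killer blow, and the subfunctor machinery you reach for is both unnecessary and unfinished.
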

\begin{proof}
    (a). Theorem~\ref{acomp:1} implies that `does not commute' is an equivalence
    relation on $\Omega$.
    Let $K_{1},\ldots,K_{n}$ be representatives for the equivalence classes
    and let $[K_{i}]$ denote the subgroup generated by the class of $K_{i}$.
    Suppose $n \geq 2$.
    Then for all $i \not= j$ we have $[K_{i}] \leq \theta(\cc{K_{j}})$
    so $[K_{i}]$ is a $\theta$-subgroup.
    Moreover $[K_{i}]$ and $[K_{j}]$ commute.
    Then $[K_{i}] \normal \listgen{\Omega}$.
    Lemma~\ref{s:2} implies $\listgen{\Omega} = G$
    and then Lemma~\ref{m:1}(d) supplies a contradiction.
    Hence there is only one equivalence class so $[K,L] \not= 1$.
    Theorem~\ref{acomp:1} implies $\cstar{K}{A} = \cstar{L}{A}$.

    (b). Distinct $A$-components of $M_{a}$ commute.
    Then (a) implies $\layerr{M_{a}}$ is $A$-quasisimple.
\end{proof}

\begin{Lemma}\label{s:4}
    Let $a \in A\nonid$.
    Then $\layerr{M_{a}} = 1$ or $\ff{M_{a}} = 1$.
\end{Lemma}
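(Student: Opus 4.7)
The plan is to derive a contradiction from the assumption that both $\layerr{M_a}\neq 1$ and $\ff{M_a}\neq 1$. Since $\ff{M_a}$ is a nontrivial $\theta(A)$-invariant solvable $\theta$-subgroup it lies in $S$, so $S\neq 1$, and McBride's Dichotomy (Theorem~\ref{er:7}) forces $\theta$ to be nearsolvable; in particular $\theta(A)$ is solvable. Set $K = \layerr{M_a}$, which by Lemma~\ref{s:3}(b) is $A$-quasisimple, so $K\in\Omega$. Since $\cz{K}{A}\leq\theta(A)$ is solvable, $\cstar{K}{A}=\cz{K}{A}$, and by Lemma~\ref{s:3}(a) this common value $C:=\cz{K}{A}=\cz{L}{A}$ is a (nonabelian, by Lemma~\ref{as:4}(a)) solvable subgroup shared by every $L\in\Omega$.

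My strategy is to show that $M_b = M_a$ for every $b\in A\setminus\cz{A}{K}$ and then contradict Lemma~\ref{sm:2}(b). First note that $\cz{A}{K}\neq A$, since otherwise $K=\cz{K}{A}\leq\theta(A)$ would be solvable. As $\rank{A}\geq 3$ we may pick $B\in\hyp{A}$ with $\cz{A}{K}\leq B$; then the desired uniqueness forces $M_b$ to take only the single value $M_a$ on $A\setminus B$, contradicting Lemma~\ref{sm:2}(b). To establish the uniqueness assertion, fix $b\in A$ with $[K,b]\neq 1$. Theorem~\ref{sm:1}(c) applied to the $A\theta(a)$-invariant subnormal subgroup $K$ of $M_a$ yields $\theta(\nn{K})\leq M_a$, so it suffices to show that $M_b$ normalizes $K$.

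In the main case $\layerr{M_b}\neq 1$, set $L=\layerr{M_b}$, so that $L$ is $A$-quasisimple, $L\in\Omega$, $[K,L]\neq 1$, and $\cstar{L}{A}=C$. The Balance Theorem (\ref{b:1}) tells us that whenever nonzero, $\layerr{K\cap M_b}$ is $A$-quasisimple and contained in the unique $A$-component $L$ of $M_b$, and symmetrically $\layerr{L\cap M_a}\leq K$. Using Theorem~\ref{as:1} together with Lemma~\ref{as:5} to guarantee, for $b\in A\setminus\cz{A}{K}$, the nonsolvability of one of the layer-intersections (the easy subcase being $b\notin A_\infty(K)$, where $\cz{K}{b}$ is $A$-simple), and then invoking the uniqueness of $A$-components of an $A$-quasisimple group, we conclude $L=K$. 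Hence $K\normal M_b$ and $M_b\leq\theta(\nn{K})\leq M_a$, forcing $M_b=M_a$ by maximality. The subcase $\layerr{M_b}=1$ is ruled out via the Second Uniqueness Theorem, which gives $S\leq M_b$ and hence $\ff{M_a}\leq M_b$; a short centralizer computation using Lemma~\ref{as:6} (noting that $\ff{M_a}$ centralizes $C=\cstar{K}{A}$) together with another application of Theorem~\ref{sm:1} then produces the required contradiction against $\layerr{M_b}=1\neq\layerr{M_a}$.

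The main obstacle will be the Balance-Theorem step that identifies $L$ with $K$: one must guarantee that at least one of $\layerr{K\cap M_b}$ and $\layerr{L\cap M_a}$ is nontrivial, and the delicate point is the exceptional situation $b\in A_\infty(K)$ permitted by Theorem~\ref{as:1}(d), where $\cz{K}{b}$ may be solvable. This will be handled by replacing $b$, within its coset mod $\cz{A}{K}$, by a suitable element supplied by Lemma~\ref{as:5} and by exploiting the common structure imposed on the elements of $\Omega$ by Lemma~\ref{s:3}(a); once the two Balance inclusions are in hand, the $A$-quasisimplicity of both $K$ and $L$ forces them to coincide and the remainder of the argument unfolds as sketched.
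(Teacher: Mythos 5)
Your proposal shares the opening move with the paper (McBride's Dichotomy to obtain nearsolvability, hence solvability of $\theta(A)$, and the use of Lemma~\ref{s:3} to relate $\layerr{M_a}$ to other elements of $\Omega$), but then diverges onto a genuinely different route: you try to force $M_b = M_a$ for all $b\in A\setminus B$ where $\cz{A}{K}\leq B\in\hyp{A}$, and then contradict Lemma~\ref{sm:2}(b). The paper instead does something slicker: it sets $Z=\cz{\ff{M_a}}{B}\neq 1$ for a suitable $B\in\hyp{A}$, notes that $Z$ centralizes $\layerr{M_a}$ and hence $\cstar{\layerr{M_b}}{A}=\cz{\layerr{M_b}}{A}$ for every $b\in B\nonid$ with $\layerr{M_b}\neq 1$, applies Lemma~\ref{as:6} inside $M_b$ to deduce $[Z,\layerr{M_b}]=1$, and then invokes Lemma~\ref{s:2} to conclude $Z\normal G$, contradicting Lemma~\ref{m:1}(d). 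No uniqueness of maximal subgroups is needed.

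The decisive gap in your plan is the step you yourself flag as the main obstacle: concluding $K=L$ (or even $K\normal M_b$) from the two Balance inclusions. Having $\layerr{K\cap M_b}$ nontrivial, $A$-quasisimple, and contained in $L$, and symmetrically $\layerr{L\cap M_a}\leq K$, does \emph{not} by itself force $K=L$; these are proper $A$-quasisimple subgroups of $L$ and $K$ respectively, and $A$-quasisimplicity alone gives no identification. The argument that does work in a nearby situation (Lemma~\ref{s:5}) crucially invokes Lemma~\ref{acomp:2}(b), which requires locating the relevant centralizers inside a common $\theta(d)$ and knowing that $[\cz{K}{d},a]\neq 1$ — and, importantly, Lemma~\ref{s:5} assumes $\ff{M_a}=1$, which is exactly what you cannot assume here. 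The subcase $\layerr{M_b}=1$ is also left at the level of a promise ("a short centralizer computation \dots produces the required contradiction") with no visible mechanism: $S\leq M_b$ and $\ff{M_a}\leq M_b$ do not obviously contradict $\layerr{M_b}=1$. As written, the proposal does not constitute a proof; the paper's centralizer argument via $Z$ sidesteps all of these difficulties.
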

\begin{proof}
    Assume false.
    Then $\ff{M_{a}} \not= 1$ and McBride's Dichotomy implies that $\theta$ is nearsolvable,
    whence $\theta(A)$ is solvable.
    By Coprime Action(d) there exists $B \in \hyp{A}$ with $\cz{\ff{M_{a}}}{B} \not= 1$.
    Set $Z = \cz{\ff{M_{a}}}{B}$.
    Let $b \in B\nonid$ and suppose $\layerr{M_{b}} \not= 1$.
    Lemma~\ref{s:3} implies $\cstar{\layerr{M_{a}}}{A} = \cstar{\layerr{M_{b}}}{A}$
    so as $[Z, \layerr{M_{a}}] = 1$ we have $[Z, \cstar{\layerr{M_{b}}}{A}] = 1$.
    Now $\theta$ is nearsolvable so $\cstar{\layerr{M_{b}}}{A} = \cz{\layerr{M_{b}}}{A}$
    and $Z \leq \theta(b) \leq M_{b}$.
    Lemma~\ref{as:6} implies $[Z,\layerr{M_{b}}] = 1$.
    But then Lemma~\ref{s:2} implies $Z \normal G$,
    contrary to Lemma~\ref{m:1}(d).
\end{proof}

\begin{Lemma}\label{s:5}
    Let $a \in A\nonid$ with $\layerr{M_{a}} \not= 1$.
    Then \[
        M_{a} = \theta(a).
    \]
\end{Lemma}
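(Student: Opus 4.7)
The plan is to suppose for contradiction that $M_a \supsetneq \theta(a)$ and to deduce that $b \mapsto M_b$ takes at most two distinct values on $A\nonid$, which contradicts Lemma~\ref{sm:2}(b).

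Set $K := \layerr{M_a}$. The hypothesis $\layerr{M_a} \neq 1$ combined with Lemma~\ref{s:4} gives $\ff{M_a} = 1$, so $\gfitt{M_a} = K$; Lemma~\ref{s:3}(b) says $K$ is $A$-quasisimple. Since $M_a$ is a $\theta$-subgroup containing $\theta(a)$, one has $\cz{M_a}{a} = \theta(a)$, so the assumption forces $[M_a,a] \neq 1$. Coprime Action(g) applied inside $M_a$ then yields $[K,a] \neq 1$, and a standard coprime-action argument on the $A$-simple quotient $K/\zz{K}$ (an $A$-invariant nontrivial normal subgroup of an $A$-simple group must be the whole group) promotes this to $K = [K,a]$.

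The central claim is that $M_b = M_a$ for every $b \in A\nonid$ with $\layerr{M_b} \neq 1$. Set $L := \layerr{M_b}$; by Lemma~\ref{s:3}(b), $L$ is $A$-quasisimple, and both $K,L \in \Omega$, so Lemma~\ref{s:3}(a) supplies the common nonabelian subgroup $\cstar{K}{A} = \cstar{L}{A}$ lying inside $K \cap L$. Apply the Balance Theorem to $L \in \Omega_b$ against $a$: $\layerr{L \cap M_a} \leq \layerr{M_a} = K$. Lemma~\ref{as:4}(b) applied inside $L$ to the subgroup $H = L \cap M_a$ identifies $\layerr{L \cap M_a}$ as an $A$-quasisimple subgroup of $L$ with $\cstar{\cdot}{A} = \cstar{L}{A}$; the rigidity of $L = \layerr{L}$ as an $A$-quasisimple group possessing this same $\cstar{\cdot}{A}$ forces $\layerr{L \cap M_a} = L$, whence $L \leq M_a$. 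Then $M_b$ normalizes $K$, because $K$ is the unique $A$-component of $M_a$ and the symmetric application of the Balance Theorem to $K \in \Omega_a$ against $b$ forces $K \leq L$ and hence $K = L$. Since $K$ is a nontrivial $A\theta(a)$-invariant subnormal subgroup of $M_a$, Theorem~\ref{sm:1}(c) gives $\theta(\nn{K}) \leq M_a$, so $M_b \leq M_a$, and maximality of $M_b$ in $\fancyLstar$ forces $M_b = M_a$.

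For $b \in A\nonid$ with $\layerr{M_b} = 1$, the Second Uniqueness Theorem collapses all such $M_b$ into a single subgroup $M^{0}$, so $\{M_c : c \in A\nonid\} \subseteq \{M_a, M^{0}\}$, contradicting Lemma~\ref{sm:2}(b). The main obstacle is promoting the conclusion of Lemma~\ref{as:4}(b) --- that $\layerr{L \cap M_a}$ is $A$-quasisimple with the correct $\cstar{\cdot}{A}$ --- into the equality $\layerr{L \cap M_a} = L$, particularly in the nearsolvable regime where $\cstar{L}{A} = \cz{L}{A}$ is merely nonabelian solvable and $L \cap M_a$ need not a priori be nonsolvable. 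To secure this one must exploit the restricted simple list from McBride's Dichotomy together with the rigidity of a common $\cstar{\cdot}{A}$ shared between $K$ and $L$ to produce enough nonsolvable overlap for Lemma~\ref{as:4}(b) to bite, and then to exclude proper $A$-quasisimple subgroups of $L$ with the same $\cstar{\cdot}{A}$.
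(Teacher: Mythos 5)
Your proposal takes a different route from the paper: you try to prove $M_b = M_a$ directly for every $b$ with $\layerr{M_b} \neq 1$, and then invoke Lemma~\ref{sm:2}(b), whereas the paper instead shows that every $L \in \Omega$ lies inside $K := \layerr{M_a}$ and derives the contradiction from Lemma~\ref{s:2}. Your preliminary steps (that $\ff{M_a} = 1$, that $K$ is $A$-quasisimple, that $K = [K,a]$) are correct.

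The gap is exactly where you flag it, and it cannot be closed in the form stated. You apply Lemma~\ref{as:4}(b) inside $L$ to $H := L \cap M_a$ to conclude that $\layerr{H}$ is $A$-quasisimple with $\cstar{\layerr{H}}{A} = \cstar{L}{A}$, and then appeal to ``rigidity'' to force $\layerr{H} = L$. But an $A$-quasisimple group $L$ generally does possess proper $A$-quasisimple $A\cz{L}{A}$-invariant subgroups sharing its $\cstar{\cdot}{A}$: by Theorem~\ref{as:1}(c), for any $c \in A \setminus A_{\infty}$ the subgroup $\cz{L}{c}$ is such a subgroup. So the conclusion of Lemma~\ref{as:4}(b) alone does not pin $\layerr{H}$ down as all of $L$. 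You also have no way, as written, to guarantee that $L \cap M_a$ is nonsolvable, which is a hypothesis of Lemma~\ref{as:4}(b). The paper sidesteps both problems with a different tool: from $K = [K,a]$, Lemma~\ref{as:5}(a) gives a hyperplane $D$ with $[\cz{K}{D},a] \neq 1$ and $\cz{K}{d}$ $A$-quasisimple for all $d \in D\nonid$; then for any $L \in \Omega$ one picks a suitable $d \in D\nonid$ with $\cz{L}{d}$ $A$-quasisimple, verifies $[\cz{K}{d},\cz{L}{d}] \neq 1$ via Lemma~\ref{s:3}(a) and Theorem~\ref{acomp:1}, and feeds the pair into Lemma~\ref{acomp:2}(b) with $\theta(d)$ as ambient group. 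The hypothesis $[\cz{K}{d},a] \neq 1$ is precisely what forces the common $(A,\mathrm{sol})$-component to equal $\cz{K}{d}$, giving $\cz{L}{d} \leq \cz{K}{d} \leq K$, and Lemma~\ref{as:5}(b) then assembles $L \leq K$. It is this use of Lemma~\ref{acomp:2}(b), with its $[\cdot,a]\neq 1$ clause supplying the missing rigidity, that replaces the step your argument cannot make.
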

\begin{proof}
    Assume false.
    Recall that $\theta(a) = \cz{M_{a}}{a}$.
    Lemma~\ref{s:4} implies $\ff{M_{a}} = 1$ so
    Coprime Action(g) implies $[\layerr{M_{a}},a] \not= 1$.
    Set $K = \layerr{M_{a}}$,
    so $K \in \Omega$ by Lemma~\ref{s:3}(b).
    Lemma~\ref{as:5}(a) implies there exists $D \in \hyp{A}$ such that
    $[\cz{K}{D},a] \not= 1$ and $\cz{K}{d}$ is $A$-quasisimple for all $d \in D\nonid$.

    Let $L \in \Omega$.
    Lemma~\ref{s:3}(a) and Theorem~\ref{acomp:1} imply $[\cstar{K}{A},\cstar{L}{A}] \not= 1$.
    Let $d \in D\nonid$ and suppose $\cz{L}{d}$ is $A$-quasisimple.
    Note that $[\cz{K}{d},a] \not= 1$ and that
    $1 \not= [\cstar{K}{A},\cstar{L}{A}] \leq [\cz{K}{d},\cz{L}{d}]$.
    Lemma~\ref{acomp:2}(b),
    with $\theta(d), \cz{K}{d}$ and $\cz{L}{d}$
    in the roles of $N,K_{0}$ and $L_{0}$ respectively,
    forces $\cz{L}{d} \leq \cz{K}{d} \leq K$.
    Then Lemma~\ref{as:5}(b) implies $L \leq K$.
    But then, by Lemma~\ref{s:2},
    $G = \listgen{\Omega} \leq K$,
    a contradiction.
\end{proof}

\begin{proof}[Proof of the Structure Theorem]
(a). The Balance Theorem implies $\layerr{\theta(a)} \leq \layerr{M_{a}}$
so $\layerr{M_{a}} \not= 1$.
Lemma~\ref{s:5} implies $M_{a} = \theta(a)$.
Lemmas~\ref{s:4} and \ref{s:3}(b) imply that $\ff{\theta(a)} = 1$
and then that $\layerr{\theta(a)}$ is $A$-simple.
Let $B = \cz{A}{\layerr{\theta(a)}}$.
Coprime Action(g) implies $B = \cz{A}{\theta(a)}$,
so as $\theta(a) = M_{a} \in \fancyLstar$ we have $M_{a} \leq \theta(b) \leq M_{b}$
and then $M_{a} = M_{b}$ for all $b \in B\nonid$.
Recall that $\rank{A} = 3$.
Then Lemma~\ref{sm:2} implies $B$ is cyclic.
Consequently $B = \listgen{a}$ as required.

(b). Set $J = \layerr{\layerr{\theta(a)} \cap \cc{b}} \leq \theta(b)$
and $H = \layerr{\theta(a)} \cap M_{b}$.
We may assume that $J \not= 1$.
Then $\layerr{\theta(a)} \not= 1$.
(a) implies $\layerr{\theta(a)}$ is $A$-simple,
so $\layerr{\theta(a)} \in \Omega$.
Since $J \leq H$,
Lemma~\ref{as:4} implies $H^{(\infty)}$ is $A$-quasisimple.
The Balance Theorem implies $H^{(\infty)} \leq \layerr{M_{b}}$,
whence $M_{b} = \theta(b)$ by Lemma~\ref{s:5}.
Since $J = J^{(\infty)} \leq H^{(\infty)}$ we have $J \leq \layerr{\theta(b)}$.

Finally, (c) follows from Lemma~\ref{s:2} and Lemma~\ref{s:5}.
This completes the proof of the Structure Theorem and hence of the
Signalizer Functor Theorem.
\end{proof}

\bibliographystyle{amsplain}

\begin{thebibliography}{99}% Replace 9 by 99 if 10 or more references

    \bibitem{ALSS}  M. Aschbacher, R. Lyons, S.D. Smith, R.M. Solomon,
                    {\em The Classification of the Finite Simple Groups:
                    Groups of Characteristic $2$-type,}
                    Mathematical Surverys and Monographs, {\bf 172},
                    American Math. Soc.,
                    Providence Rhode Island 2011


    \bibitem{Bab}   H. Bender,
                    {\em On groups with abelian Sylow $2$-subgroups,}
                    Math. Z., {\bf 117} (1970) 164--176


    \bibitem{PFsft} P. Flavell,
                    {\em A new proof of the Solvable Signalizer Functor Theorem,}
                    J. Algebra {\bf 398} (2014) 350--363

     \bibitem{PFezj} P. Flavell,
                    {\em An equivariant analogue of Glauberman's $ZJ$-Theorem,}
                    J. Algebra {\bf 257}(2) (2002) 249--264

    \bibitem{PFsol} P. Flavell,
                    {\em Automorphisms of soluble groups,}
                    Proceedings of the London Mathematical Society 2016 112 (4): 623--650
                    doi: 10.1112/plms/pdw005


    \bibitem{PFI}   P. Flavell,
                    {\em Automorphisms of $K$-groups I,}
                    Preprint: http://arxiv.org/abs/1609.01969


    \bibitem{PFII}  P. Flavell,
                    {\em Automorphisms of $K$-groups II,}
                    Preprint: http://arxiv.org/abs/1609.02380

    \bibitem{PFpp}  P. Flavell,
                    {\em Primitive pairs of $K$-groups,}
                    Preprint: http://arxiv.org/abs/1609.03026

    \bibitem{PFcas} P. Flavell,
                    {\em A characterization of $A$-simple groups,}
                    Preprint: http://arxiv.org/abs/1609.03028

    \bibitem{GG}    G. Glauberman,
                    {\em On solvable signalizer functors in finite groups.}
                    Proc. Lond. Math. Soc. (3) {\bf 33} (1976) 1--27.

    \bibitem{GLS1}   D. Gorenstein, R. Lyons, R.M. Solomon,
                    {\em The classification of the finite simple groups,}
                    Mathematical Surverys and Monographs, {\bf 40},
                    American Math. Soc.,
                    Providence Rhode Island 1994

    \bibitem{GLS2}   D. Gorenstein, R. Lyons, R.M. Solomon,
                    {\em The classification of the finite simple groups. Number 2,}
                    Mathematical Surverys and Monographs, {\bf 40},
                    American Math. Soc.,
                    Providence Rhode Island 1996

    \bibitem{Gor}    D. Gorenstein,
                    {\em Finite Simple Groups: An Introduction to their Classification.}
                    Plenum Press, New York 1982.


    \bibitem{KS}    H. Kurzweil and B. Stellmacher,
                    {\em The theory of finite groups. An introduction.}
                    Universitext, Springer-Verlag, New York 2004.

    \bibitem{McB1}  P.P. McBride,
                    {\em Near solvable signalizer functors on finite groups,}
                    J. Algebra {\bf 78}(1) (1982) 181-214

    \bibitem{McB2}  P.P. McBride,
                    {\em Nonsolvable signalizer functors on finite groups,}
                    J. Algebra {\bf 78}(1) (1982) 215-238

    \bibitem{SuzII} M. Suzuki,
                    {\em Group Theory II.}
                    Springer-Verlag, Berlin 1986



\end{thebibliography}

\end{document}